\newtheorem{thm}{Theorem}[section]
\newtheorem{la}[thm]{Lemma}
\newtheorem{Defn}[thm]{Definition}
\newtheorem{Remark}[thm]{Remark}
\newtheorem{prop}[thm]{Proposition}
\newtheorem{cor}[thm]{Corollary}
\newtheorem{Example}[thm]{Example}
\newtheorem{Number}[thm]{\!\!}
\newenvironment{defn}{\begin{Defn}\rm}{\end{Defn}}
\newenvironment{example}{\begin{Example}\rm}{\end{Example}}
\newenvironment{rem}{\begin{Remark}\rm}{\end{Remark}}
\newenvironment{numba}{\begin{Number}\rm}{\end{Number}}
\newenvironment{proof}{{\noindent\bf Proof.}}%
                  {\nopagebreak\hspace*{\fill}$\Box$\medskip\medskip\par}   
\newcommand{\Punkt}{\nopagebreak\hspace*{\fill}$\Box$}
\newcommand{\ve}{\varepsilon}
\newcommand{\wt}{\widetilde}
\newcommand{\tensor}{\otimes}
\newcommand{\impl}{\Rightarrow}
\newcommand{\mto}{\mapsto}
\newcommand{\isom}{\cong}
\newcommand{\N}{{\mathbb N}}
\newcommand{\R}{{\mathbb R}}
\newcommand{\K}{{\mathbb K}}
\newcommand{\bL}{{\mathbb L}}
\newcommand{\bD}{{\mathbb D}}
\newcommand{\Z}{{\mathbb Z}}
\newcommand{\C}{{\mathbb C}}
\DeclareMathOperator{\ad}{ad}
\newcommand{\cO}{{\mathcal O}}
\newcommand{\cS}{{\mathcal S}}
\newcommand{\cg}{{\mathfrak g}}
\DeclareMathOperator{\Diff}{Diff}
\newcommand{\dl}{{\displaystyle \lim_{\longrightarrow}}}
\newcommand{\pl}{{\displaystyle \lim_{\longleftarrow}}}
\newcommand{\wh}{\widehat}
\newcommand{\sub}{\subseteq}
\DeclareMathOperator{\GL}{GL}
\DeclareMathOperator{\pr}{pr}
\DeclareMathOperator{\id}{id}
\newcommand{\cT}{{\mathcal T}}
\newcommand{\cA}{{\mathcal A}}
\DeclareMathOperator{\Supp}{supp}
\DeclareMathOperator{\ev}{ev}
\DeclareMathOperator{\absconv}{absconv}
\DeclareMathOperator{\op}{op}
\newcommand{\aeq}{\Leftrightarrow}
\begin{document}
$\;$\\[-28mm]
\begin{center}
{\Large \bf Aspects of differential calculus related to infinite-\\[2mm]
dimensional vector bundles and Poisson vector spaces}\\[4mm]
{\bf Helge Gl\"{o}ckner}\vspace{2mm}
\end{center}
\begin{abstract}
\hspace*{-5.9mm}We prove various results in infinite-dimensional
differential calculus
which relate\linebreak
differentiability properties
of functions and associated operator-valued functions
(e.g., differentials).
The results are applied in two areas:
 \begin{itemize}
\item[(1)]
in the theory of infinite-dimensional
vector bundles, to construct new bundles
from given ones,
like dual bundles, topological
tensor products, infinite direct sums,
and completions
(under suitable hypotheses).
\item[(2)]
in the theory of locally convex Poisson vector spaces,
to prove continuity
of the Poisson bracket and continuity of passage
from a function to the associated
Hamiltonian vector field.
\end{itemize}
Topological properties of topological vector spaces
are essential for the studies, which allow
hypocontinuity of bilinear mappings to
be exploited.
Notably, we encounter $k_\R$-spaces and
locally convex spaces~$E$
such that $E\times E$ is a $k_\R$-space.
\end{abstract}
{\footnotesize
{\em Subject Classification.}
26E15 (primary);
%
%
17B63, 
22E65, 
26E20,
%
46G20, 
54B10, 
54D50, 
55R25, 
58B10\\[1mm]
%
{\em Keywords and Phrases.}
Vector bundle, dual bundle,
direct sum, completion, tensor product,
cocycle, smoothness, analyticity, hypocontinuity,
$k$-space, compactly generated space,
infinite-dimensional Lie group, Poisson vector space,
Poisson bracket, Hamiltonian vector field, group action, multilinear map}\vspace{-2mm}
{\footnotesize
\tableofcontents}\pagebreak

\begin{center}
{\bf\large Introduction}
\end{center}
We study questions
of infinite-dimensional
differential calculus
in the setting of
Keller's $C^k_c$-theory~\cite{Kel}
(going back to \cite{Bas}).
Applications to
infinite-dimensional vector bundles are given,
and also
applications in the theory of locally convex Poisson vector spaces.\\[3mm]
{\bf Differentiability properties of operator-valued maps.}
\,Our results
are centered around the following basic problem:
Consider locally convex spaces
$X$, $E$ and $F$, an open set
$U\sub X$ and
a map $f \colon U\to L(E,F)_b$
to the space
of continuous linear maps,
endowed with the topology of uniform
convergence on bounded sets.
How are differentiability\linebreak
properties
of the operator-valued map $f$ related to those
of
\[
f^\wedge\colon U\times E\to F\,,\quad f^\wedge(x,v)\,:=\, f(x)(v)\,?
\]
We show that if~$f^\wedge$ is smooth,
then also~$f$ is smooth (Proposition~\ref{reallyneed1}).
Conversely,\linebreak
exploiting the hypocontinuity of the bilinear
evaluation map
\[
L(E,F)_b\times E\to F\, , \quad (\alpha,v)\mto \alpha(v)\, ,
\]
we find natural hypotheses on~$E$ and~$F$ ensuring
that smoothness of~$f$
entails smoothness of~$f^\wedge$
(Proposition~\ref{reallyneed2}; likewise for compact sets
in place of bounded sets).
Without extra hypotheses on~$E$ and~$F$,
this conclusion becomes false,
e.g.\ if $U=X$ is a non-normable
real locally convex convex space
with dual space $X':=L(X,\R)$.
Then $f:=\id_{X'}\colon X'_b\to X'_b$
is continuous linear and thus smooth, but
$f^\wedge\colon X'_b\times X\to\R$
is the bilinear evaluation map taking $(\lambda,x)$ to
$\lambda(x)$,
which is discontinuous
for non-normable~$X$
(see \cite[p.\,2]{KaM})
and hence not smooth
in the sense of Keller's
$C^\infty_c$-theory.
We also obtain results concerning finite order
differentiability properties,
as well as real and complex analyticity.
Furthermore, $L(E,F)$
can be replaced with the space $L^k(E_1,\ldots, E_k, F)$ of continuous
$k$-linear maps $E_1\times \cdots\times E_k\to F$,
if $E_1,\ldots, E_k$ are locally convex
spaces.\footnote{Related questions also
play a role in the comparative study of
differential calculi~\cite{Kel}.}
As a very special case of our studies,
the differential
\[
f'\colon U\to L(E,F)_b
\]
is $C^{r-2}$,
for each $r\in \N\cup\{\infty\}$ with $r\geq 2$, locally convex
spaces $E$ and $F$, and $C^r$-map
$f\colon U\to F$ on an open set $U\sub E$
(see Proposition~\ref{dfisC}).
This result is used,
e.g., in~\cite{IMP},
to study implicit functions from topological
vector spaces to Banach spaces.\\[3mm]
{\bf Applications to infinite-dimensional vector bundles.}
\,Apparently, mappings of the specific form
just described play a vital role in the theory of vector
bundles:
If~$F$ is a locally convex space, $M$ a (not
necessarily finite-dimensional)
smooth manifold, and $(U_i)_{i\in I}$ an open cover of~$M$,
then the smooth vector bundles $E\to M$, with fibre~$F$,
which are trivial over the sets~$U_i$,
can be described by cocycles
$g_{ij}\colon U_i\cap U_j\to \GL(F)$
such that $G_{ij}:=g_{ij}^\wedge \colon (U_i\cap U_j)\times F\to F$,
$(x,v)\mto g_{ij}(x)(v)$ is smooth
(Proposition~\ref{standfact},
Remark~\ref{stand2}.
Then $g_{ij}$
is smooth as a mapping to the space
$L(F)_b:=L(F,F)_b$
(see Proposition~\ref{reallyneed1}).
In various contexts, for example
when trying to construct dual bundles,
we are in an\linebreak
opposite situation:
we know that each $g_{ij}$ is smooth,
and would like to conclude that also
the mappings $G_{ij}$ are smooth.
Although this is not possible
in general (as examples show),
our results provide
additional
conditions ensuring that the
conclusion is correct in the specific situation
at hand.
Notably,
we obtain conditions
ensuring the existence
of a canonical dual bundle (Proposition~\ref{finalprop}).
Without extra conditions, a canonical dual bundle
need not exist (Example~\ref{nodual}).\\[2.5mm]
Besides dual bundles, we discuss
a variety of construction principles
of new vector bundles from given ones,
including topological tensor products,
completions, and finite or infinite direct sums.
More generally, given
a (finite- or infinite-dimensional)
Lie group acting on the base manifold~$M$,
we discuss the
construction of new
equivariant vector bundles
from given ones.
Most of the constructions
require specific hypotheses on the
base manifold, the fibre of the bundle,
and the Lie group.\\[2.3mm]
As to completions,
complementary topics were
considered in the literature:
Given an infinite-dimensional
smooth manifold~$M$, completions
of the tangent bundle with respect to a weak
Riemannian metric occur in~\cite[p.\,549]{MMM},
in hypotheses for a so-called \emph{robust}
Riemannian manifold.
Each $C^{r+1}$-map between open subsets of locally convex spaces
locally factors
over a $C^r$-map between open subsets
of Banach spaces (see \cite[Appendix A]{GaH}).\\[2.3mm]
We mention that multilinear algebra
and vector bundle constructions can be performed
much more easily in an inequivalent setting of infinite-dimensional
calculus, the convenient differential calculus~\cite{KaM}.
However,
a weak notion of vector bundles
is used there, which need not be
topological vector bundles.
Our discussion of vector bundles
intends to pinpoint additional conditions
ensuring that the natural construction
principles lead to vector bundles
in a stronger sense (which are, in particular,
topological vector bundles).\\[2.3mm]
The work \cite{Wur} was particularly important for our studies.
For an open subset~$U$ of a Fr\'{e}chet space~$E$,
smoothness of $f^\wedge\colon U\times E^k\to \R$
is deduced from smoothness of $f\colon U\to\Lambda^k(E')_b$
in the proof of \cite[Proposition~IV.6]{Wur}.
A typical hypocontinuity argument
already appears in the proof of \cite[Lemma~IV.7]{Wur}.
In contrast to the local calculations in charts,
the global structure on a dual bundle
(and bundles of $k$-forms) asserted in the first remark of \cite[p.\,339]{Wur}
are problematic if Keller's $C^\infty_c$-theory
is used, without further hypotheses.\\[2.3mm]
{\bf Applications related to locally convex Poisson vector spaces.}
\,In the wake of works
by
Odzijewicz and Ratiu on
Banach-Poisson vector spaces and Banach-Poisson manifolds
\cite{OR1,OR2},
certain locally convex Poisson vector spaces
were introduced~\cite{MEM},
which generalize the
Lie-Poisson structure on the dual
space of a finite-dimensional
Lie algebra going back to
Kirillov, Kostant and Souriau.
By now, the latter spaces can be embedded
in a general theory of locally convex Poisson manifolds
(see \cite{NST};
for generalizations of finite-dimensional
Poisson geometry with a different thrust, cf.\ \cite{Bel}).
Recall that many important
examples of bilinear mappings between
locally convex topological vector spaces
are not continuous, but at least
hypocontinuous (cf.\ \cite{Bou}
for this classical concept).
In Sections~\ref{nwsec1} and \ref{nwsec2},
we provide the proofs for two fundamental results
in the theory of locally convex Poisson vector
spaces which are related to hypocontinuity.\footnote{These proofs were
stated in the preprint version of~\cite{MEM},
but not included in the
actual publication.}
We show that the Poisson bracket associated with a continuous
Lie bracket is always continuous
(Theorem~\ref{poiss2})
and that the linear map $C^\infty(E,\R)\to C^\infty(E,E)$
taking a smooth function to the associated
Hamiltonian vector field is continuous (Theorem~\ref{poiss3}).
Ideas from~\cite{MEM} and the current article
were also taken further in \cite[Section 13]{SMO}.\\[2.3mm]
{\footnotesize {\em Acknowledgements.}
A very limited first draft
was written in 2001/02, supported by
the research group FOR~363/1-1
of the German Research Foundation, DFG
(working title:
\emph{Bundles of locally convex spaces,
group actions, and hypocontinuous bilinear mappings}).
The material was expanded in 2007, supported by
DFG grant
GL 357/5-1. Substantial
extensions and a major rewriting
were carried out
in~2022.}
\section{Preliminaries and notation}\label{secprelim}
We describe our setting of differential calculus
and compile
useful facts.
Either references to the literature
are given or a proof;
the proofs can be looked up in Appendix~\ref{appA}.\\[3mm]
{\bf Infinite-dimensional calculus.}
\,We work in the framework
of infinite-dimensional
differential calculus known as Keller's $C^k_c$-theory~\cite{Kel}.
Our main references are \cite{RES} and \cite{GaN}
(see also \cite{Mic}, \cite{Ham} \cite{Mil},
and \cite{NSU}).
If $\K\in \{\R,\C\}$,
we let $\bD:=\{t\in\K\colon |t|\leq 1\}$
and $\bD_\ve:=\{t\in\K\colon |t|\leq \ve\}$
for $\ve>0$.
We write $\N:=\{1,2,\ldots\}$ and $\N_0:=\N\cup\{0\}$.
All topological vector spaces
considered in the article are assumed Hausdorff,
unless the contrary is stated.
For brevity,
Hausdorff locally convex topological
vector spaces will be called locally convex spaces.
As usual, a subset $M$ of a $\K$-vector space
is called \emph{balanced}
if $tx\in M$ for all $x\in M$ and $t\in\bD$.
The subset $M$ is called \emph{absolutely convex}
if it is both convex and balanced.
If $q\colon E\to[0,\infty[$
is a seminorm on a $\K$-vector space~$E$,
we write $B^q_\ve(x):=\{y\in E\colon q(y-x)<\ve\}$
for $x\in E$ and $\ve>0$.
We also write $\|x\|_q$ in place of $q(x)$.
If $E$ is a locally convex $\K$-vector space,
we let $E'$ be the dual space
of continuous $\K$-linear functionals
$\lambda\colon E\to\K$.
We write $M^\circ:=\{\lambda\in E'\colon \lambda(M)\sub\bD\}$
for the polar of a subset $M\sub E$.
If $\alpha\colon E\to F$ is a continuous
$\K$-linear map between locally convex $\K$-vector spaces,
we let $\alpha'\colon F'\to E'$, $\lambda\mto \lambda\circ\alpha$
be the dual linear map.
We say that a mapping $f\colon X\to Y$ between
topological spaces is a \emph{topological embedding}
if it is a homeomorphism onto its image.
\begin{numba}\label{defsmoo}
Let $E$ and~$F$ be locally convex $\K$-vector spaces
over $\K\in\{\R,\C\}$ and $U\sub E$ be an open subset.
A map $f\colon U\to F$ is called $C^0_\K$ if it is
continuous, in which case we set $d^0f:=f$.
Given
$x\in U$ and $y\in E$,
we define
\[
df(x,y):=(D_yf)(x):=\lim_{t\to 0}\frac{f(x+ty)-f(x)}{t}
\]
if the limit exists
(using $t\in \K^\times$ such that $x+ty\in U$).
Let $r\in \N\cup\{\infty\}$.
We say that a continuous map $f\colon U\to F$
is a \emph{$C^r_\K$-map} if the iterated directional
derivative
\[
d^kf(x,y_1,\ldots, y_k):= (D_{y_k}\cdots D_{y_1})(f)(x)
\]
exists for all
$k\in \N$ such that $k\leq r$
and all $(x,y_1,\ldots,y_k)\in U\times E^k$,
and if the maps $d^kf\colon U\times E^k\to F$
so obtained are continuous. Thus $d^1f=df$.
If $\K$ is understood, we write $C^r$ instead
of $C^r_\K$. As usual, $C^\infty$-maps
are also called \emph{smooth}.
\end{numba}
\begin{numba}\label{fanddf}
For $k\in \N$,
it is known that a map
$f\colon U\to F$ as before
is $C^k_\K$ if and only if
$f$ is $C^1_\K$ and $df\colon U\times E\to F$
is $C^{k-1}_\K$
(cf.\ \cite[Proposition 1.3.10]{GaN} or
\cite[Lemma~1.14]{RES}).
\end{numba}
\begin{numba}\label{defcxan}\label{cxlinan}
If $\K=\C$, it is known that a map
$f\colon E\supseteq U\to F$ as before
is $C^\infty_\C$ if and only if it is
\emph{complex analytic} in the sense
of \cite[Definition~5.6]{BaS}:
$f$ is continuous and for each $x\in U$, there exists a
$0$-neighbourhood $Y\sub E$ such that $x+Y\sub U$
and $f(x+y)=\sum_{n=0}^\infty\beta_n(y)$ for
all $y\in Y$ as a pointwise limit,
where $\beta_n\colon E\to F$
is a continuous homogeneous polynomial over~$\C$
of degree~$n$, for each $n\in \N_0$ \cite[Theorem 2.1.12]{GaN}.
Furthermore, $f$ is complex analytic
if and only if $f$ is $C^\infty_\R$
and $df(x,\cdot)\colon E\to F$ is complex linear for
all $x\in U$ (see \cite[Lemma~2.5]{RES}).
Complex analytic maps will also
be called \emph{$\C$-analytic} of $C^\omega_\C$.
\end{numba}
\begin{numba}
If $\K=\R$, then a map
$f\colon U\to F$ as in~\ref{defsmoo}
is called \emph{real analytic}
(or \emph{$\R$-analytic}, or~$C^\omega_\R$)
if it extends
to a complex analytic mapping $\wt{U}\to F_\C$
on some open neighbourhood~$\wt{U}$ of~$U$
in the complexification~$E_\C$ of~$E$.
\end{numba}
In the following, $r\in \N_0\cup\{\infty,\omega\}$
(unless the contrary is stated).
We use the conventions $\infty+k:=\infty-k:=\infty$
and $\omega+k:=\omega-k:=\omega$, for each $k\in \N$.
Furthermore, we extend the order on $\N_0$
to an order on $\N_0\cup \{\infty,\omega\}$
by declaring $n<\infty<\omega$ for each $n\in\N_0$.
\begin{numba}
Compositions of composable
$C^r_\K$-maps are $C^r_\K$-maps
(see \cite[Proposition~1.3.4]{GaN} and\linebreak
\cite[Propositions 2.7 and 2.8]{RES}).
Thus $C^r_\K$-manifolds
modeled on locally convex $\K$-vector spaces can be defined
in the usual way (see \cite[Chapter 3]{GaN} for a detailed exposition).
In this article,
the word ``manifold'' (resp., ``Lie group'')
always refers to a manifold (resp., Lie group)
modeled on a locally convex space.
\end{numba}
The following basic fact
will be used repeatedly.
\begin{la}\label{wippe}
For $k\in \N$, let $X$, $E_1,\ldots, E_k$, and $F$
be locally convex $\K$-vector spaces,
$U\sub X$ be an open subset and
\[
f\colon U\times E_1\times\cdots\times E_k\to F
\]
be a $C^1_\K$-map such that $f^\vee(x):=f(x,\cdot)\colon
E_1\times \cdots\times E_k\to F$ is $k$-linear,
for each $x\in U$. Let $x\in U$ and~$q$ be a continuous
seminorm on~$F$.
Then there exist
a continuous seminorm~$p$ on~$X$
with $B^p_1(x)\sub U$, and
continuous seminorms $p_j$ on~$E_j$
for $j\in\{1,\ldots, k\}$ such that
\begin{equation}\label{wuse1}
\|f(y,v_1,\ldots, v_k)\|_q\leq \|v_1\|_{p_1}\cdots\|v_k\|_{p_k}\quad\mbox{and}
\end{equation}
\begin{equation}\label{wuse2}
\|f(y,v_1,\ldots, v_k)-f(x,v_1,\ldots, v_k)\|_q\leq \|y-x\|_p
\|v_1\|_{p_1}\cdots\|v_k\|_{p_k}
\end{equation}
for all $y\in B^p_1(x)$ and $(v_1,\ldots, v_k)\in E_1\times\cdots \times E_k$.
\end{la}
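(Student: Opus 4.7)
The plan is to first establish (\ref{wuse1}) from mere continuity together with $k$-linearity in the fibre variables, and then obtain (\ref{wuse2}) by writing the difference as an integral of the differential along the segment from $x$ to $y$, treating the resulting integrand as a $(k+1)$-linear object in $(v_1,\ldots,v_k,y-x)$.

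For (\ref{wuse1}), note that $f(x,0,\ldots,0)=0$ by the $k$-linearity of $f^\vee(x)$. Since $f$ is continuous, there exist a continuous seminorm $p_0$ on $X$ with $B^{p_0}_1(x)\subseteq U$ and continuous seminorms $q_j$ on $E_j$ such that $\|f(y,v_1,\ldots,v_k)\|_q\leq 1$ whenever $p_0(y-x)<1$ and $q_j(v_j)<1$ for all $j$. A standard homogeneity/rescaling argument, applied in each fibre variable $v_j$ separately and using $k$-linearity, then yields $\|f(y,v_1,\ldots,v_k)\|_q\leq q_1(v_1)\cdots q_k(v_k)$ on $B^{p_0}_1(x)\times E_1\times\cdots\times E_k$ (when some $q_j(v_j)=0$, rescaling $v_j\mapsto v_j/\varepsilon$ for $\varepsilon\downarrow 0$ forces the left side to vanish).

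For (\ref{wuse2}), fix $v_1,\ldots,v_k$ and consider the curve $\gamma\colon[0,1]\to F$, $\gamma(t):=f(x+t(y-x),v_1,\ldots,v_k)$, for $y$ in a convex $p$-ball around $x$ contained in $U$. Since $f$ is $C^1_\K$, $\gamma$ is $C^1_\K$ and the fundamental theorem of calculus gives $\gamma(1)-\gamma(0)=\int_0^1\gamma'(t)\,dt$, where $\gamma'(t)=df\bigl((x+t(y-x),v_1,\ldots,v_k);(y-x,0,\ldots,0)\bigr)$. Define $h(z,v_1,\ldots,v_k,w):=df\bigl((z,v_1,\ldots,v_k);(w,0,\ldots,0)\bigr)$. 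For each fixed $z$, the map $h(z,\cdot,\ldots,\cdot,\cdot)$ is $(k{+}1)$-linear: $k$-linearity in $(v_1,\ldots,v_k)$ descends from $k$-linearity of $f^\vee(z')$ for all $z'$ via the defining limit, and linearity in $w$ is the standard linearity of the directional derivative in the direction argument for $C^1$-maps. Applying precisely the same continuity-plus-rescaling argument to $h$ at the point $(x,0,\ldots,0,0)\in U\times E_1\times\cdots\times E_k\times X$ where $h=0$, we obtain continuous seminorms $p'$ on $X$ and $p_j'$ on $E_j$ such that $\|h(z,v_1,\ldots,v_k,w)\|_q\leq p_1'(v_1)\cdots p_k'(v_k)\,p'(w)$ for all $z\in B^{p'}_1(x)$.

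Finally, replace $p$ by $p_0+p'$ and $p_j$ by $q_j+p_j'$ (both continuous seminorms) and shrink if necessary so that $B^p_1(x)\subseteq U$; this preserves both prior bounds. Integrating the bound on $h$ along $\gamma$ yields $\|f(y,v_1,\ldots,v_k)-f(x,v_1,\ldots,v_k)\|_q\leq \int_0^1 p(y-x)\,p_1(v_1)\cdots p_k(v_k)\,dt$, which equals $\|y-x\|_p\|v_1\|_{p_1}\cdots\|v_k\|_{p_k}$, as required. The only mildly delicate point is verifying that $h$ is jointly $(k{+}1)$-linear in its last $k{+}1$ arguments; everything else is a routine hypocontinuity-style rescaling combined with the fundamental theorem of calculus along a line segment inside~$U$.
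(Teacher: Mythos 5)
Your proof is correct, and it follows the same basic strategy as the paper's: bound the relevant map on a product of seminorm balls, rescale by multilinearity, and control the difference $f(y,\cdot)-f(x,\cdot)$ by integrating the differential along the segment from $x$ to $y$. Two local points differ, though. First, you obtain (\ref{wuse1}) directly from continuity of $f$ at $(x,0,\ldots,0)$ plus rescaling, whereas the paper derives it from the bound on $df$ via the Mean Value Theorem identity $f(y,v)=\int_0^1 df(y,tv_1,\ldots,tv_k,0,v_1,\ldots,v_k)\,dt$; your route is more economical and in fact shows that (\ref{wuse1}) needs only $C^0$, not $C^1$. Second, for (\ref{wuse2}) you bound the integrand $h(z,v,w)=df\bigl((z,v),(w,0)\bigr)$ by establishing that it is $(k{+}1)$-linear in $(v_1,\ldots,v_k,w)$ and rescaling in all $k{+}1$ variables at once; the paper avoids ever needing $k$-linearity of $h(z,\cdot,w)$ in $v$, using only linearity in the direction $w$ to get $\|df(y,v_1,\ldots,v_k,z,0,\ldots,0)\|_q\leq\|z\|_p$ for $v_j$ in unit balls, and then rescaling the manifestly $k$-linear map $f(y,\cdot)-f(x,\cdot)$ at the very end. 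Your extra linearity claim, which you rightly flag as the delicate point, is correctly justified: for fixed $z$ and $w$ the difference quotients $t^{-1}\bigl(f(z+tw,\cdot)-f(z,\cdot)\bigr)$ are $k$-linear and pointwise limits of $k$-linear maps are $k$-linear, while linearity in $w$ is the standard linearity of $df(a,\cdot)$ for $C^1_\K$-maps. Both arguments are valid; the paper's bookkeeping is slightly lighter, while yours isolates the reusable observation that the partial differential is jointly multilinear in the fibre and direction variables.
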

We shall also use the following fact:
\begin{la}\label{cont-via-gateaux}
Let $E$ and $F$ be locally convex $\K$-vector
spaces, $k\geq 2$ be an integer
and $f\colon U\times E^k\to F$ be
a mapping such that $f(x,\cdot)\colon E^k\to F$
is $k$-linear and symmetric for each
$x\in U$. Let $r\in\N_0\cup\{\infty,\omega\}$.
If
\[
h\colon U\times E\to F,\quad (x,y)\mto f(x,y,\ldots, y)
\]
is $C^r_\K$, then also $f$ is~$C^r_\K$.
Notably, $f$ is continuous
if~$h$ is continuous.
\end{la}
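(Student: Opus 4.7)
The plan is to reduce the lemma to a single classical ingredient: the polarization identity. Since $f(x,\cdot)$ is symmetric and $k$-linear with associated diagonal $h(x,y)=f(x,y,\ldots,y)$, a standard combinatorial computation (expanding $h\bigl(x,\sum_i\ve_i y_i\bigr)$ by $k$-linearity, and observing that the signed sum $\sum_{\ve\in\{0,1\}^k}(-1)^{k-|\ve|}\ve_{i_1}\cdots\ve_{i_k}$ vanishes unless $\{i_1,\dots,i_k\}=\{1,\dots,k\}$, in which case symmetry accounts for exactly $k!$ equal contributions) yields
\[
f(x,y_1,\ldots,y_k)\;=\;\frac{1}{k!}\sum_{\ve\in\{0,1\}^k}(-1)^{k-|\ve|}\, h\bigl(x,\,\ve_1 y_1+\cdots+\ve_k y_k\bigr)
\]
for all $x\in U$ and all $y_1,\ldots,y_k\in E$, where $|\ve|:=\ve_1+\cdots+\ve_k$.

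Granting this identity, the conclusion is essentially automatic. For each fixed $\ve\in\{0,1\}^k$, the map
\[
\phi_\ve\colon (x,y_1,\ldots,y_k)\mto \bigl(x,\ve_1 y_1+\cdots+\ve_k y_k\bigr)
\]
is the restriction of a continuous linear map on the ambient locally convex space and sends $U\times E^k$ into $U\times E$. Continuous linear maps are $C^r_\K$ for every $r\in\N_0\cup\{\infty,\omega\}$ (in the $\R$-analytic case, they extend in the obvious way to continuous complex linear maps between the complexifications). Since $h$ is $C^r_\K$ by hypothesis, each composition $h\circ\phi_\ve$ is $C^r_\K$ by the chain rule in Keller's $C^k_c$-theory. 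Finite linear combinations of $C^r_\K$-maps are $C^r_\K$, hence $f$ is $C^r_\K$; the case $r=0$ gives the continuity statement.

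There is really no main obstacle here: the polarization identity carries all the work, and its verification is purely algebraic, using only $k$-linearity and symmetry of $f(x,\cdot)$. The argument is uniform in the differentiability class $r$, so the smooth, $C^k$, continuous, and real- or complex-analytic cases are all handled at once, without any extra hypothesis on $E$, $F$, or $U$.
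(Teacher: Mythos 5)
Your proof is correct and follows essentially the same route as the paper, which also reduces the lemma to a polarization identity for symmetric $k$-linear maps (citing it rather than verifying it) and then concludes by composing $h$ with continuous linear maps. The only cosmetic difference is that you use the finite-difference variant over $\ve\in\{0,1\}^k$ with factor $\frac{1}{k!}$, while the paper uses the signed variant over $\ve\in\{-1,1\}^k$ with factor $\frac{1}{k!2^k}$; both identities are valid and your verification of the combinatorial cancellation is sound.
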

{\bf {\boldmath$k$}-spaces, {\boldmath$k_\R$}-spaces, {\boldmath$k^\infty$}-spaces,
and {\boldmath$k_\omega$}-spaces.}
We recall topological concepts.
\begin{numba}
A topological space $X$ is said to be \emph{completely regular}
if it is Hausdorff
and its topology is initial with
respect to the set $C(X,\R)$
of all continuous real-valued functions
on~$X$.
\end{numba}
Every locally convex space is
completely regular,
like every Hausdorff
topological group (cf.\ \cite[Theorem~8.2]{HaR}).
Compare \cite{Eng} and \cite{Kly}
for the following.
\begin{numba}
A topological space~$X$
is called a \emph{$k$-space}
if it is Hausdorff and a subset $A\sub X$
is closed if and only if $A\cap K$ is closed in~$K$
for each compact subset $K\sub X$.
Every metrizable topological space is
a $k$-space, and every locally compact Hausdorff space.
A Hausdorff space $X$ is a $k$-space
if and only if, for each topological space,
a map $f\colon X\to Y$ is continuous
if and only if $f$ is \emph{$k$-continuous}
in the sense that $f|_K$ is continuous for each
compact subset $K\sub X$.
If $X$ is a $k$-space,
then also every subset $M\sub X$
which is open or closed in~$X$, when the induced
topology is used on~$M$.
\end{numba}
\begin{numba}
A topological space~$X$
is called a \emph{$k_\R$-space}
if it is Hausdorff and a function
$f\colon X\to\R$ is continuous if and only
if $f$ is $k$-continuous.
Then also a map $f\colon X\to Y$
to a completely regular
topological space~$Y$ is continuous
if and only if it is $k$-continuous
(as the latter condition implies continuity of
$g\circ f$ for each $g\in C(Y,\R)$).
For more information, \hspace*{-.2mm}cf.\hspace*{-.4mm}~\cite{Nob}.
\end{numba}
Every $k$-space is a $k_\R$-space.
The converse is not true:
$\R^I$ is known to be a $k_\R$-space
for each set~$I$
(see \cite{Nob}, also \cite{GaM}).
If $I$ has cardinality $\geq 2^{\aleph_0}$,
then $\R^I$ is not a $k$-space.\footnote{If
$\R^I$ was a $k$-space,
then a certain non-discrete
subgroup $G$ of $(\R^\R,+)$
constructed in \cite{FHS}
would be discrete,
contradiction
(see \cite[Remark~A.6.16\,(a)]{GaN}
for more details).
Compare also \cite{Nob}.}\\[2.5mm]
The following facts are well known (cf.\ \cite{Nob}):
\begin{la}\label{basic-k-R}
\begin{itemize}
\item[\rm(a)]
If a $k_\R$-space~$X$ is a direct product
$X_1\times X_2$ of Hausdorff spaces and $X_1\not=\emptyset$,
then $X_2$ is a $k_\R$-space.
\item[\rm(b)]
Every open subset~$U$ of a completely regular
$k_\R$-space~$X$ is a $k_\R$-space
in the induced topology.
\end{itemize}
\end{la}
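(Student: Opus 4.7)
The plan is to prove each part by producing a suitable continuous test function on all of~$X$ and using the $k_\R$-property of~$X$ to upgrade $k$-continuity to continuity for the function~$f$ at hand.

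For~(a), pick $x_1\in X_1$, possible since $X_1\not=\emptyset$, and let $f\colon X_2\to\R$ be $k$-continuous. Setting $F:=f\circ\pr_2\colon X_1\times X_2\to\R$, for every compact $K\sub X$ the image $\pr_2(K)\sub X_2$ is compact, so $f|_{\pr_2(K)}$ is continuous by hypothesis, and hence $F|_K=f|_{\pr_2(K)}\circ(\pr_2|_K)$ is continuous. Thus~$F$ is $k$-continuous on the $k_\R$-space~$X$, so~$F$ is continuous. Composing with the continuous inclusion $X_2\to X$, $y\mto(x_1,y)$, recovers~$f$ as a continuous function. Since $X_2$ is Hausdorff by hypothesis, this proves~(a).

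For~(b), first note that~$U$ is Hausdorff as a subspace of~$X$. Let $f\colon U\to\R$ be $k$-continuous; it suffices to show that~$f$ is continuous at an arbitrary $x_0\in U$. Using complete regularity of~$X$ applied to $x_0\not\in X\setminus U$, I would choose a continuous $\phi\colon X\to[0,1]$ with $\phi(x_0)=1$ and $\phi|_{X\setminus U}=0$, and then pass to the cut-off $\psi:=\max(2\phi-1,0)\colon X\to[0,1]$, which is continuous, satisfies $\psi(x_0)=1$, and vanishes on $\{\phi\leq 1/2\}\supseteq X\setminus U$. Put $L:=\phi^{-1}([1/2,1])$ (closed in~$X$, and contained in~$U$ since $\phi=0$ off~$U$) and $V:=\phi^{-1}((1/2,1])\sub L$ (open in~$X$, containing~$x_0$). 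The key step is to define $g\colon X\to\R$ by $g(x):=\psi(x)f(x)$ for $x\in U$ and $g(x):=0$ for $x\notin U$, and to show via the $k_\R$-property that~$g$ is continuous on~$X$: for any compact $K\sub X$, the set $K\cap L$ is a compact subset of~$U$, so $f|_{K\cap L}$ is continuous by the $k$-continuity hypothesis, whence $g|_{K\cap L}=\psi|_{K\cap L}\cdot f|_{K\cap L}$ is continuous; on the closed subset $K\setminus V$ of~$K$, the function~$g$ is identically zero (either $x\notin U$, or $x\in U$ with $\psi(x)=0$); and since $K=(K\cap L)\cup(K\setminus V)$ is a covering by closed subsets, the closed pasting lemma yields continuity of $g|_K$. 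As~$g$ and~$\psi$ are continuous and $\psi>0$ on~$V$, the quotient $f=g/\psi$ is continuous on the open neighbourhood~$V$ of~$x_0$, which is exactly what we need.

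The main obstacle is the choice of cut-off in~(b). The naive extension $g:=\phi\cdot f$ (by zero off~$U$) need \emph{not} be continuous on compact sets meeting $\partial U$, because~$f$ may grow unboundedly as one approaches~$\partial U$ from inside~$U$, while $\phi$ only vanishes on $X\setminus U$ itself, not on a neighbourhood of it. Replacing~$\phi$ by the truncation~$\psi$ forces the set where~$g$ can be nonzero to lie in the closed subset $L\sub U$, so that the delicate intersection~$K\cap L$ is automatically a compact subset of~$U$ to which the $k$-continuity of~$f$ applies directly. Part~(a) is by contrast straightforward once one notes that $\pr_2$ sends compacta to compacta.
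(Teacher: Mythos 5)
Your proof is correct and takes essentially the same route as the paper's: part~(a) is the paper's argument verbatim, and for part~(b) the paper likewise multiplies $f$ by a continuous cut-off $g\colon X\to\R$ with $g(x_0)\neq 0$ and $\Supp(g)\subseteq U$, checks $k$-continuity of the resulting global extension on each compactum, invokes the $k_\R$-property of~$X$, and recovers continuity of~$f$ on the open set where the cut-off is nonzero. The only cosmetic differences are that your explicit truncation $\psi=\max(2\phi-1,0)$ spells out the step hidden in the paper's phrase ``support contained in~$U$'', and that you verify continuity of $g|_K$ by closed pasting over $K\cap L$ and $K\setminus V$, where the paper instead covers~$K$ by compact neighbourhoods each contained in~$U$ or in $X\setminus\Supp(g)$.
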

Notably, $U$ is a $k_\R$-space
for each open subset $U$ of a locally convex space~$E$
which is a $k_\R$-space.
If $E\times E$ is a $k_\R$-space, then also~$E$.
\begin{numba}
Following~\cite{MEM}, a topological space $X$ is called a \emph{$k^\infty$-space}
if the cartesian power $X^n$ is a $k$-space for each $n\in\N$,
using the product topology.
A Hausdorff space $X$ is called \emph{hemicompact}
if $X=\bigcup_{n\in\N}K_n$
for a sequence $K_1\sub K_2\sub\cdots$
of compact subsets $K_n\sub X$ such that
each compact subset of~$X$ is a subset of some~$K_n$.
Hemicompact $k$-spaces are also called \emph{$k_\omega$-spaces}.
If $X$ and $Y$ are $k_\omega$-spaces,
then the product topology makes $X\times Y$ a $k_\omega$-space.
Notably, every $k_\omega$-space
is a $k^\infty$-space. See
\cite{Cha,Fra,GGH} for further information.
Finite products of metrizable spaces
being metrizable, every metrizable topological space is a $k^\infty$-space.
Recall that a locally convex space~$E$
is said to be a \emph{Silva space} or (DFS)-space
if it is the locally convex inductive limit of a sequence
$E_1\sub E_2\sub\cdots$ of Banach spaces
such that each inclusion map $E_n\to E_{n+1}$
is a compact operator. Every Silva space is a $k_\omega$-space
(see \cite[Example~9.4]{COM}).
\end{numba}
{\bf Spaces of multilinear maps.}
\,Given $k\in \N$, locally convex $\K$-vector spaces
$E_1,\ldots, E_k$ and $F$,
and a set $\cS$ of bounded subsets
of $E_1\times\cdots \times E_k$,
we write $L^k(E_1,\ldots, E_k,F)_\cS$ or
$L_\K^k(E_1,\ldots, E_k,F)_\cS$ for the space
of
continuous $k$-linear maps
$E_1\times\cdots\times E_k\to F$,
endowed with the topology $\cO_\cS$ of uniform convergence
on the sets $B\in \cS$.
Recall that finite intersections of
sets of the form
\[
\lfloor B, U\rfloor\; :=\;
\{\beta\in L^k(E_1,\ldots, E_k,F)\colon \beta(B)\sub U\}
\]
yield a basis of $0$-neighbourhoods
for this (not necessarily Hausdorff) locally convex vector topology,
for $U$ ranging through the $0$-neighbourhoods
in~$F$ and $B$ through~$\cS$.
If $\bigcup_{B\in\cS}B=E_1\times\cdots\times E_k$,
then $\cO_\cS$ is Hausdorff.
If $E_1=\cdots=E_k$, we abbreviate $L^k(E,F)_\cS:=L^k(E,\ldots,E,F)_\cS$.
If $k=1$ and $E:=E_1$,
we abbreviate $L(E,F)_\cS:=L^1(E,F)_\cS$,
$L_\K(E,F)_\cS$ $:=L^1_\K(E,F)_\cS$
and $L(E)_\cS:=L(E,E)_\cS$. We write $\GL(E)=L(E)^\times$
for the group of all automorphisms
of the locally convex $\K$-vector space~$E$.
If $\cS$ is the set of all bounded,
compact, and finite
subsets of $E_1\times\cdots\times E_k$, respectively,
we shall usually write ``$b$,'' ``$c$,'' and ``$p$''
in place of $\cS$. For example, we shall write
$L^k(E_1,\ldots, E_k,F)_b$,
$L^k(E_1,\ldots,E_k,F)_c$, and $L^k(E_1,\ldots, E_k,F)_p$.
\begin{numba}\label{closedsub}
Let $E_1,\ldots, E_k$ and $F$ be complex locally convex
spaces and $f\colon \!U\! \to L^k_\C(E_1,\ldots, E_k,F)$ be a map,
defined on an open subset~$U$ of a real
locally convex space. Let $\cS:=b$ or $\cS:=c$.
Since $L_\C^k(E_1,\ldots, E_k,F)_\cS$
is a closed real vector subspace of $L_\R^k(E_1,\ldots, E_k,F)_\cS$,
the map~$f$ is $C^r_\R$
as a map to $L_\C^k(E_1,\ldots, E_k,F)_\cS$
if and only if $f$ is $C^r_\R$
as a map to $L_\R^k(E_1,\ldots, E_k,F)_\cS$
(see \cite[Lemma~1.3.19]{GaN}
and \cite[Proposition~2.11]{RES}).
\end{numba}
\begin{numba}
Given a $C^r_\K$-map $f\colon E\supseteq U\to F$ as
in~\ref{defsmoo}, we define $f^{(0)}:=f$ and
\[
f^{(j)}\colon U\to L^j_\K(E,F),\quad f^{(j)}(x):=(d^jf)^\vee(x)=d^jf(x,\cdot)
\] 
for $j\in \N$ such that $j\leq r$.
\end{numba}
{\bf Hypocontinuous multilinear maps.}
Beyond normed spaces, typical multilinear
maps are not continuous, but merely
hypocontinuous.
Hypocontinuous \emph{bi}linear maps
are discussed in many textbooks.
An analogous notion
of hypocontinuity for multilinear maps
(to be described presently)
is useful us.
It can be discussed like
the bilinear case.
\begin{la}\label{prehypo}
For an integer $k\geq 2$,
let $\beta\colon E_1\times \cdots\times E_k\to F$
be a separately continuous $k$-linear mapping and
$j\in \{2,\ldots,k\}$ such that,
for each $x\in E_1\times\cdots\times E_{j-1}$, the map 
\[
\beta^\vee(x):=\beta(x,\cdot)\colon E_j\times \cdots\times E_k\to F
\]
is continuous.
Let $\cS$ be a set of bounded subsets
of $E_j\times\cdots \times E_k$.
Consider the conditions:
\begin{itemize}
\item[\rm(a)]
For each $M \in \cS$
and each $0$-neighbourhood $W \sub F$,
there exists a $0$-neighbourhood
$V\sub E_1\times\cdots\times E_{j-1}$
such that $\beta(V\times M)\sub W$.
\item[\rm(b)]
The $(j-1)$-linear map $\beta^\vee\colon
E_1\times \cdots\times E_{j-1}\to L^{k-j+1}(E_j,\ldots, E_k,F)_\cS$
is continuous.
\item[\rm(c)]
$\beta|_{E_1\times\cdots\times E_{j-1}\times M}\colon E_1\times \cdots\times E_{j-1}\times
M\to F$
is continuous, for each $M\in \cS$.
\end{itemize}
The {\rm(a)} and {\rm(b)}
are equivalent, and {\rm(b)}
implies {\rm(c)}.
If
\begin{equation}\label{simplf}
(\forall M\in \cS)\;(\exists N\in \cS)\;\;\;
\bD M\sub N,
\end{equation}
then {\rm(a)}, {\rm(b)}, and {\rm(c)}
are equivalent.
\end{la}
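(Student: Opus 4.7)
The plan is to prove the three implications (a)$\Leftrightarrow$(b), (b)$\Rightarrow$(c), and finally (c)$\Rightarrow$(a) under (\ref{simplf}), with the last being the only non-routine step.

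First, (a)$\Leftrightarrow$(b) is pure unwinding of definitions. The topology $\cO_\cS$ on $L^{k-j+1}(E_j,\ldots,E_k,F)_\cS$ has the sets $\lfloor M,W\rfloor$ as a subbasis of $0$-neighbourhoods, with $M\in\cS$ and $W\sub F$ a $0$-neighbourhood. The map $\beta^\vee$ is $(j-1)$-linear because $\beta$ is $k$-linear and each $\beta(x,\cdot)$ lies in $L^{k-j+1}(E_j,\ldots,E_k,F)$ by hypothesis, so its continuity reduces to continuity at the origin. The relation $\beta^\vee(V)\sub\lfloor M,W\rfloor$ is literally $\beta(V\times M)\sub W$, which gives the equivalence.

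For (b)$\Rightarrow$(c), I would fix $M\in\cS$ and a point $(x_0,y_0)\in E_1\times\cdots\times E_{j-1}\times M$ and use the decomposition
\[
\beta(x,y)-\beta(x_0,y_0)=\bigl[\beta^\vee(x)-\beta^\vee(x_0)\bigr](y)+\beta^\vee(x_0)(y)-\beta^\vee(x_0)(y_0).
\]
By (b), as $x\to x_0$ the operator $\beta^\vee(x)-\beta^\vee(x_0)$ tends to $0$ in the topology of uniform convergence on $\cS$, which bounds the first bracket uniformly for $y\in M$; the second difference is controlled by the continuity of the $(k-j+1)$-linear map $\beta^\vee(x_0)\in L^{k-j+1}(E_j,\ldots,E_k,F)$ at $y_0$.

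The substantive part is (c)$\Rightarrow$(a) under (\ref{simplf}). Given $M\in\cS$ and a $0$-neighbourhood $W\sub F$, I would first shrink $W$ to a balanced one and pick $N\in\cS$ with $\bD M\sub N$. Applying (c) to $N$, continuity of $\beta$ at $(0,\ldots,0)\in E_1\times\cdots\times E_{j-1}\times N$ produces $0$-neighbourhoods $V_0^{(1)}\sub E_1,\ldots,V_0^{(j-1)}\sub E_{j-1}$ and $U_0\sub E_j\times\cdots\times E_k$ with $\beta\bigl((V_0^{(1)}\times\cdots\times V_0^{(j-1)})\times(U_0\cap N)\bigr)\sub W$. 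Boundedness of $M$ yields $\lambda\geq 1$ with $(1/\lambda)M\sub U_0$, and $(1/\lambda)M\sub\bD M\sub N$ places the rescaled set inside $U_0\cap N$. Exploiting $(k-j+1)$-homogeneity of $\beta$ in the $y$-slot, $\beta(x,y)=\lambda^{k-j+1}\beta(x,y/\lambda)\in\lambda^{k-j+1}W$ for $(x,y)\in(V_0^{(1)}\times\cdots\times V_0^{(j-1)})\times M$; then, using linearity of $\beta$ in $x_1$, the neighbourhood $V:=\lambda^{-(k-j+1)}V_0^{(1)}\times V_0^{(2)}\times\cdots\times V_0^{(j-1)}$ absorbs the offending scalar (since $W$ is balanced) and gives $\beta(V\times M)\sub W$.

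The main obstacle, and the sole place where (\ref{simplf}) is used, is this final rescaling step: without some $N\in\cS$ swallowing the whole disc $\bD M$, the continuity furnished by (c) on $E_1\times\cdots\times E_{j-1}\times N$ cannot be transported to the bounded set $M$, and (c)$\Rightarrow$(a) breaks down. Everything else is bookkeeping with the definition of $\cO_\cS$ and multilinearity.
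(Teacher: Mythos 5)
Your proof is correct and takes essentially the same route as the paper's: (a)$\Leftrightarrow$(b) by unwinding the $\cS$-topology and reducing to continuity at the origin, (b)$\Rightarrow$(c) via uniform convergence over $M\in\cS$ (your three-term decomposition just reproves inline the evaluation-continuity lemma the paper cites), and (c)$\Rightarrow$(a) by the identical rescaling trick based on $\bD M\sub N$. The only cosmetic difference is in the last step: the paper distributes the compensating factor $n^{-1}$ over all $j-1$ slots using the exponent $a=\frac{j-1}{k-j+1}$, whereas you load $\lambda^{-(k-j+1)}$ onto the first slot, where the scalar in fact cancels exactly, so your appeal to balancedness of $W$ is not even needed.
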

\begin{defn}\label{defhypo}
A $k$-linear map $\beta$
which satisfies the hypotheses and Condition~(a)
of Lemma~\ref{prehypo}
is called \emph{$\cS$-hypocontinuous}
in its arguments $(j,\ldots, k)$.
If $j=k$, we also say that $\beta$ is \emph{$\cS$-hypocontinuous in
the $k$-th argument}.
Analogously,
we define $\cS$-hypocontinuity of~$\beta$ in the $j$-th argument,
if $j\in \{1,\ldots, k\}$
and a set $\cS$ of bounded subsets of $E_j$
are given.
\end{defn}
We are mainly interested in
$b$-, $c$-, and $p$-hypocontinuity,
viz., in $\cS$-hypocontinuity
with respect
to the set~$\cS$ of all bounded subsets of $E_j\times\cdots\times E_k$,
the set~$\cS$ of all compact subsets,
and the set $\cS$ of all finite subsets,
respectively.
If $\cS$ and $\cT$ are sets
of bounded subsets of $E_j\times\cdots\times E_k$
such that $\cS\sub\cT$
and $\beta$ is $\cT$-hypocontinuous in its variables
$(j,\ldots, k)$,
then $\beta$ is also $\cS$-hypocontinuous
in the latter.
The following is obvious from
Lemma~\ref{prehypo}\,(c)
(as the elements of a convergent sequence,
together with its limit, form a compact set):
\begin{la}\label{hypo-seq-cts}
If $\beta\colon E_1\times\cdots \times E_k\to F$
is $c$-hypocontinuous in some argument,
or in its arguments $(j,\ldots,k)$
for some $j\in \{2,\ldots,k\}$,
then $\beta$ is sequentially continuous.\,\Punkt
\end{la}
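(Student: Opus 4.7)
The plan is to reduce to Lemma~\ref{prehypo}\,(c) by observing that any convergent sequence, together with its limit, yields a compact set that can serve as the element $M\in\cS:=c$. First I would verify the hypothesis~(\ref{simplf}) for $\cS$ the set of all compact subsets of the relevant product space: if $M$ is compact, then $\bD\cdot M$ is the image of $\bD\times M$ under continuous scalar multiplication and hence compact, so it lies in~$\cS$. Consequently the equivalence of (a), (b), (c) in Lemma~\ref{prehypo} is available, and $c$-hypocontinuity in the arguments $(j,\ldots,k)$ entails that $\beta|_{E_1\times\cdots\times E_{j-1}\times M}$ is continuous for every compact $M\sub E_j\times\cdots\times E_k$.

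Next I would take a convergent sequence $(x^{(n)}_1,\ldots,x^{(n)}_k)\to (x_1,\ldots,x_k)$ in $E_1\times\cdots\times E_k$. The set
\[
M\,:=\,\bigl\{(x^{(n)}_j,\ldots,x^{(n)}_k)\colon n\in\N\bigr\}\,\cup\,\bigl\{(x_j,\ldots,x_k)\bigr\}
\]
is a convergent sequence with its limit, hence compact in $E_j\times\cdots\times E_k$. Since each $(x^{(n)}_j,\ldots,x^{(n)}_k)$ lies in $M$ and converges to $(x_j,\ldots,x_k)\in M$, the continuity of $\beta|_{E_1\times\cdots\times E_{j-1}\times M}$ yields
\[
\beta(x^{(n)}_1,\ldots,x^{(n)}_k)\,\to\,\beta(x_1,\ldots,x_k),
\]
which is the claimed sequential continuity.

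The case of $c$-hypocontinuity in a single argument $j\in\{1,\ldots,k\}$ is treated analogously, using the compact set $M:=\{x^{(n)}_j\colon n\in\N\}\cup\{x_j\}\sub E_j$ and the symmetric analogue of Lemma~\ref{prehypo} for hypocontinuity in the $j$-th argument. There is no serious obstacle here; the only point requiring a line of justification is the verification of~(\ref{simplf}) for compact sets, which is needed to pass from the defining condition~(a) to the usable continuity statement~(c).
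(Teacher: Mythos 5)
Your proof is correct and takes essentially the same route as the paper, which disposes of the lemma in one line: hypocontinuity gives continuity of the restriction $\beta|_{E_1\times\cdots\times E_{j-1}\times M}$ via Lemma~\ref{prehypo}\,(c), applied to the compact set $M$ formed by a convergent sequence together with its limit (and analogously for a single argument). One small remark: your verification of~(\ref{simplf}) is superfluous, since Lemma~\ref{prehypo} yields (a)$\Rightarrow$(b)$\Rightarrow$(c) unconditionally, so that the defining condition~(a) already implies the continuity statement~(c) you use; condition~(\ref{simplf}) is only required for the converse implication (c)$\Rightarrow$(a), which plays no role here.
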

In many cases,
separately continuous
bilinear maps are automatically
hypocontinuous.\linebreak
Recall that
a subset~$B$ of a locally convex space~$E$
is a \emph{barrel}
if it is closed,
absolutely convex, and absorbing.
The space $E$ is called
\emph{barrelled}
if every barrel is a $0$-neighbourhood.
See Proposition~6 in \cite[Chapter~III, \S5, no.\,3]{Bou}
for the following fact.
\begin{la}\label{auothypo}
If $\beta \colon E_1\times E_2 \to F$
is a separately continuous
bilinear map and $E_1$ is\linebreak
barrelled, then~$\beta$
is $\cS$-hypocontinuous in its second argument,
with respect to
any set~$\cS$ of bounded subsets of~$E_2$.\,\Punkt
\end{la}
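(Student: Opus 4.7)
The plan is to verify condition~(a) of Lemma~\ref{prehypo} directly. Fix $M\in\cS$; since closed absolutely convex $0$-neighbourhoods form a basis of $0$-neighbourhoods in the locally convex space~$F$, it suffices to exhibit, for each such $W\sub F$, a $0$-neighbourhood $V\sub E_1$ satisfying $\beta(V\times M)\sub W$. With this reduction in place I would define
\[
B\;:=\;\{x\in E_1\colon \beta(x,y)\in W\text{ for all }y\in M\}\;=\;\bigcap_{y\in M}\beta(\cdot,y)^{-1}(W),
\]
and argue that $B$ is a barrel in~$E_1$; the conclusion then follows at once from the barrelledness hypothesis by taking $V:=B$.

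To see that $B$ is a barrel, I would check the three defining properties separately. First, each preimage $\beta(\cdot,y)^{-1}(W)$ is closed, because $\beta(\cdot,y)\colon E_1\to F$ is continuous by separate continuity and $W$ is closed; an arbitrary intersection of closed sets being closed, $B$ is closed. Second, each preimage is absolutely convex, because $\beta(\cdot,y)$ is linear and $W$ is absolutely convex; hence the intersection~$B$ is absolutely convex. Third, and this is the step where the remaining hypotheses enter crucially, $B$ is absorbing: for a given $x\in E_1$, the map $\beta(x,\cdot)\colon E_2\to F$ is continuous and linear, hence carries the bounded set~$M$ to a bounded subset $\beta(x,M)\sub F$, so there exists $\lambda>0$ with $\beta(x,M)\sub\lambda W$, which reads $tx\in B$ for every $t\in\K$ with $|t|\leq 1/\lambda$.

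Having established that $B$ is a barrel, barrelledness of $E_1$ gives that $B$ is a $0$-neighbourhood, thereby verifying Lemma~\ref{prehypo}(a) for $\beta$ in its second argument with respect to~$\cS$. Lemma~\ref{prehypo} then yields $\cS$-hypocontinuity. The only delicate point in this outline is the absorption property, which is where separate continuity in the \emph{other} variable and the boundedness of~$M$ are simultaneously exploited; the remainder of the argument is a matter of unpacking definitions.
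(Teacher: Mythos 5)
Your proof is correct and complete: the reduction to closed absolutely convex $W$, the verification that $B=\bigcap_{y\in M}\beta(\cdot,y)^{-1}(W)$ is a barrel (with separate continuity in the first variable giving closedness, and continuity plus linearity of $\beta(x,\cdot)$ together with boundedness of~$M$ giving absorption), and the appeal to barrelledness exactly establish condition~(a) of Lemma~\ref{prehypo}. The paper itself gives no proof of this lemma but cites Proposition~6 in \cite[Chapter~III, \S5, no.\,3]{Bou}, and the argument given there is precisely this classical barrel argument, so your route coincides with the intended one.
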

Evaluation maps are paradigmatic examples of
hypocontinuous multilinear maps.
\begin{la}\label{resulteval}
Let $E_1,\ldots, E_k$ and~$F$ be locally convex $\K$-vector
spaces
and~$\cS$ be a set of bounded
subsets of $E:=E_1\times\cdots \times E_k$
with $\bigcup_{M\in \cS}M=E$.
Then the
$(k+1)$-linear map
\[
\ve\colon L^k(E_1,\ldots, E_k,F)_\cS \times E_1\times\cdots\times E_k \to F\, ,\quad
(\beta, x)\mto \beta(x)
\]
is $\cS$-hypocontinuous
in its arguments $(2,\ldots, k+1)$.
If $k=1$ and $E=E_1$ is barrelled, then
$\ve\colon L(E,F)\times E\to F$ is also hypocontinuous
in the first argument,
with respect to any locally
convex topology~$\cO$
on $L(E,F)$ which is finer
than the topology of pointwise
convergence, and any set~$\cT$ of bounded subsets
of $(L(E,F),\cO)$.
\end{la}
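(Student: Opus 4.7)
The plan is to verify the prerequisites and condition~(a) of Lemma~\ref{prehypo} for the $(k+1)$-linear evaluation map $\ve$ (with the lemma's integer ``$k$'' replaced by $k+1$ and ``$j$'' equal to $2$), and then to treat the extra assertion for $k=1$ with $E$ barrelled via the Banach--Steinhaus theorem.

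First I would check the prerequisites of Lemma~\ref{prehypo}. For fixed $\beta\in L^k(E_1,\ldots,E_k,F)$, the partial map $\ve(\beta,\cdot)=\beta\colon E_1\times\cdots\times E_k\to F$ is continuous by definition, which simultaneously supplies the partial continuity hypothesis of the lemma and separate continuity of $\ve$ in each of the arguments $2,\ldots,k+1$. Separate continuity in the first argument amounts to showing that, for each fixed $x=(x_1,\ldots,x_k)\in E$, the evaluation $\beta\mto\beta(x)$ is continuous on $L^k(E_1,\ldots,E_k,F)_\cS$. Since $\bigcup_{M\in\cS}M=E$, I pick $M\in\cS$ with $x\in M$; then for every $0$-neighbourhood $W\sub F$ the basic $0$-neighbourhood $\lfloor M,W\rfloor$ of $L^k(E_1,\ldots,E_k,F)_\cS$ is mapped into $W$ by this evaluation.

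Condition~(a) of Lemma~\ref{prehypo} is then essentially the definition of the topology $\cO_\cS$: given $M\in\cS$ and an absolutely convex $0$-neighbourhood $W\sub F$, set $V:=\lfloor M,W\rfloor$, which is a $0$-neighbourhood in $L^k(E_1,\ldots,E_k,F)_\cS$ by construction. Then $\ve(V\times M)=\{\beta(x)\colon \beta\in V,\,x\in M\}\sub W$ holds by the very definition of $V$. Invoking Lemma~\ref{prehypo} delivers the $\cS$-hypocontinuity of $\ve$ in its arguments $(2,\ldots,k+1)$.

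For the final assertion ($k=1$, $E$ barrelled), I would verify the analogue of condition~(a) for the first argument: given $T\in\cT$ (a bounded subset of $(L(E,F),\cO)$) and a $0$-neighbourhood $W\sub F$, I must produce a $0$-neighbourhood $V\sub E$ with $\alpha(V)\sub W$ for all $\alpha\in T$. Because the topology $\cO$ is finer than the topology of pointwise convergence, $T$ is pointwise bounded, i.e.\ $\{\alpha(x)\colon\alpha\in T\}$ is bounded in $F$ for every $x\in E$. The Banach--Steinhaus theorem for barrelled spaces then guarantees that~$T$ is equicontinuous, which provides exactly the required $0$-neighbourhood~$V$. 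I do not anticipate any genuine obstacle: everything but this last step is an unpacking of the definitions of $\cO_\cS$ and of separate and partial continuity, and the one substantive input is Banach--Steinhaus.
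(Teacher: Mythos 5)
Your proof is correct and takes essentially the same route as the paper: where you verify condition~(a) of Lemma~\ref{prehypo} directly via $V:=\lfloor M,W\rfloor$, the paper verifies the equivalent condition~(b) (continuity of $\ve^\vee=\id$), which is the same observation. For the barrelled case, the paper simply cites Lemma~\ref{auothypo} (the classical Bourbaki fact), whose underlying proof is exactly your Banach--Steinhaus/equicontinuity argument, so there is no gap and no substantive difference.
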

\begin{la}\label{compo-is-cts}
Let $k\geq 2$ be an integer,
$E_1,\ldots, E_k$ and $F$
be locally convex spaces,
and $\beta\colon E_1\times\cdots\times E_k\to F$
be a $k$-linear map.
\begin{itemize}
\item[\rm(a)]
If $\beta$ is sequentially continuous,
then $\beta\circ f$
is continuous for each continuous function
$f\colon X\to E_1\times\cdots\times E_k$
on a topological space~$X$
which is metrizable or satisfies
the first axiom of countability.
\item[\rm(b)]
If $\beta$ is $c$-hypocontinuous
in its arguments $(j,\ldots,k)$
for some $j\in\{2,\ldots,k\}$
and $X$ is a $k_\R$-space,
then $\beta\circ f$
is continuous for each continuous
function $f\colon X\to E_1\times\cdots\times E_k$.
\end{itemize}
\end{la}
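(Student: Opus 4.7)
The plan is to handle the two parts separately, since each has its own characteristic flavor, and then to note that both reduce immediately to facts recorded earlier in the excerpt.

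For part (a), my approach is to exploit that metrizable spaces and first-countable spaces allow continuity to be tested on sequences. Fix $x\in X$ and a sequence $x_n\to x$ in~$X$. Continuity of~$f$ gives $f(x_n)\to f(x)$ in $E_1\times\cdots\times E_k$, and sequential continuity of~$\beta$ then yields $\beta(f(x_n))\to \beta(f(x))$ in~$F$. Hence $\beta\circ f$ is sequentially continuous at every point of a first-countable space, so it is continuous. This part is essentially routine.

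For part (b), the strategy is to combine two inputs: the $k_\R$-property of~$X$, which together with complete regularity of the locally convex target space~$F$ allows continuity of $\beta\circ f$ to be verified by $k$-continuity (cf.\ the remark following the definition of $k_\R$-space in the preliminaries); and the restriction-continuity statement in Lemma~\ref{prehypo}\,(c), which is a consequence of the $c$-hypocontinuity hypothesis via the implication {\rm(a)}$\Rightarrow${\rm(c)} in that lemma. Concretely, I would fix a compact set $K\subseteq X$, let $\pi_i\colon E_1\times\cdots\times E_k\to E_i$ be the coordinate projection, and set $K_i:=\pi_i(f(K))$, which is compact in~$E_i$. Then
\[
M:=K_j\times\cdots\times K_k
\]
is compact in $E_j\times\cdots\times E_k$, and $f(K)\subseteq K_1\times\cdots\times K_{j-1}\times M\subseteq E_1\times\cdots\times E_{j-1}\times M$. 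By Lemma~\ref{prehypo}, the restriction $\beta|_{E_1\times\cdots\times E_{j-1}\times M}$ is continuous, so $(\beta\circ f)|_K$ is continuous as a composition of continuous maps. Thus $\beta\circ f$ is $k$-continuous, hence continuous.

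The only delicate point, and thus the step I would check most carefully, is the application of Lemma~\ref{prehypo}: one must verify that the argument uses only the directly stated implication {\rm(a)}$\Rightarrow${\rm(c)} (not the equivalence, whose converse requires the side condition~(\ref{simplf})). That is precisely what is offered by the lemma, so no extra work is needed. Everything else consists of standard manipulations: compactness of projections of compact sets, the product topology, and the $k_\R$/complete-regularity interplay recorded in the preliminaries.
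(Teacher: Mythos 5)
Your proposal is correct and follows essentially the same route as the paper's proof: part (a) by sequential continuity on a first-countable domain, and part (b) by restricting $\beta$ to $E_1\times\cdots\times E_{j-1}\times M$ for a compact $M$ via the unconditional implication in Lemma~\ref{prehypo} and then invoking the $k_\R$-property of~$X$ together with complete regularity of~$F$. The only (immaterial) difference is that the paper takes $M:=(f_j,\ldots,f_k)(K)$ directly, whereas you use the slightly larger compact product $K_j\times\cdots\times K_k$ of the projected images; both work equally well.
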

{\bf Lipschitz differentiable maps.}
\,In Section~\ref{sec-complete},
it will be useful to work with
certain Lipschitz differentiable maps,
instead of $C^r$-maps.
We briefly recall concepts and facts.
\begin{defn}\label{lip-diffble}
Let $E$ and $F$ be locally convex $\K$-vector spaces,
$U\sub E$ be open and $f\colon U\to F$ be a map.
We say that $f$ is \emph{locally Lipschitz continuous}
or $LC^0_\K$
if it has the following property:
For each $x\in U$ and continuous
seminorm $q$ on~$F$, there exists
a continuous seminorm $p$ on~$E$
such that $B^p_1(x)\sub U$
and
\[
q(f(z)-f(y))\leq p(z-y)\quad 
\mbox{for all $y,z\in B^p_1(x)$.}
\]
Given $r\in \N_0\cup\{\infty\}$,
we say that $f$ is $LC^r_\K$
if $f$ is $C^r_\K$ and $d^kf\colon U\times E^k\to F$
is $LC^0_\K$ for each $k\in \N_0$ such
that $k\leq r$.
\end{defn}
Every $C^1$-map is $LC^0_\K$
(see, for example, \cite[Lemma~1.59]{MRG}).
As a consequence, for each $r\in \N\cup\{\infty\}$
every $C^r_\K$-map
is $LC^{r-1}_\K$. Notably,
every smooth map is $LC^\infty_\K$.
Moreover, a $C^r_\K$-map
with finite~$r$ is $LC^r_\K$
if and only if $d^rf$ is $LC^0_\K$.
The following facts are known, or part of the folklore.
\begin{la}\label{lem-lipdiff}
For locally convex spaces over
$\K\in\{\R,\C\}$ and $r\in \N_0\cup\{\infty\}$,
we have:
\begin{itemize}
\item[\rm(a)]
A map $f\colon E\supseteq U\to \prod_{j\in J}F_j$
to a direct product
of locally convex spaces is $LC^r_\K$ if and only
each component is $LC^r_\K$;
\item[\rm(b)]
Compositions
of composable $LC^r_\K$-maps are $LC^r_\K$;
\item[\rm(c)]
Let $F$ be a locally convex space and $F_0\sub F$ be
a vector subspace which is closed in~$F$,
or sequentially closed.
Then a map $f\colon E\supseteq U\to F_0$
is $FC^r_\K$ if and only if it is $FC^r_\K$
as a map to~$F$.
\item[\rm(d)]
A map $E\supseteq U\to P$
to a projective limit $P=\pl\,F_j$\vspace{-.4mm}
of locally convex spaces is $LC^r_\K$
if and only if $p_j\circ f\colon U\to F_j$
is $LC^r_\K$ for all $j\in J$,
where $p_j\colon P\to F_j$ is the limit~map.
\end{itemize}
\end{la}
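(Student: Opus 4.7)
My plan is to handle the four assertions in the listed order, treating (a)--(c) as Lipschitz-refinements of standard facts for $C^r_\K$-maps and deriving (d) as a formal consequence of (a) and (c). I will rely throughout on two basic observations: every continuous seminorm on a locally convex subspace (with the induced topology) is the restriction of a continuous seminorm on the ambient space, and every continuous seminorm on $\prod_{j\in J}F_j$ is dominated by a finite sum $\sum_{i=1}^n q_{j_i}\circ \pr_{j_i}$ of pullbacks of continuous seminorms on individual factors.

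For part~(a), the $C^r_\K$-version is standard, so only the Lipschitz control needs attention. Given $x\in U$ and a continuous seminorm $q$ on $\prod_j F_j$, dominate $q$ by such a finite sum; the hypothesis that each component $\pr_{j_i}\circ f$ is $LC^0_\K$ yields continuous seminorms $p^{(i)}$ on $E$ with valid Lipschitz estimates, and $p:=\sum_i p^{(i)}$ serves as a common witness on $B^p_1(x)$. Conversely, projections being continuous linear, each component inherits the $LC^0_\K$-estimate by composition with $\pr_{j_i}$. For $LC^r_\K$, apply the $LC^0_\K$-case to each $d^kf$ after using that $\pr_j\circ d^kf = d^k(\pr_j\circ f)$.

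For part~(b), the $LC^0_\K$-case is immediate: given composable $LC^0_\K$-maps $f$ and $g$, a continuous seminorm $q$ on $G$, and $x\in U$, choose $p$ on $F$ with $B^p_1(f(x))\sub V$ witnessing the $LC^0_\K$-property of $g$ at $f(x)$ for $q$, then $p'$ on $E$ with $B^{p'}_1(x)\sub U$ witnessing that of $f$ at $x$ for $p$. Setting $y=x$ in the $f$-estimate shows $f(B^{p'}_1(x))\sub B^p_1(f(x))$, so the two estimates chain to $q(g(f(z))-g(f(y)))\le p'(z-y)$ on $B^{p'}_1(x)$. For the $LC^r_\K$-case with $r\in \N\cup\{\infty\}$, I would induct on $r$ using the packaged chain rule $d(g\circ f) = dg\circ \Phi$ with $\Phi\colon U\times E\to V\times F$, $(x,y)\mto (f(x),df(x,y))$. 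Here $dg$ is $LC^{r-1}_\K$ by hypothesis, and $\Phi$ is $LC^{r-1}_\K$ by part~(a) applied to its two components (noting that $(x,y)\mto f(x)$ inherits $LC^{r-1}_\K$-ness from $f$ by direct inspection of derivatives, since post-composition with the continuous linear projection only affects arguments trivially). The induction hypothesis then gives $d(g\circ f)\in LC^{r-1}_\K$, whence $g\circ f\in LC^r_\K$.

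For part~(c), \cite[Lemma~1.3.19]{GaN}/\cite[Proposition~2.11]{RES} establish the $C^r_\K$-version and simultaneously ensure that every $d^kf$ takes values in $F_0$ (this is where closedness or sequential closedness is used: the relevant limits lie in $F_0$). Continuous seminorms on $F_0$ being restrictions of continuous seminorms on $F$, the Lipschitz inequality transfers unchanged in either direction, yielding (c) at each order $k\le r$. For part~(d), realize $P=\pl F_j$ as the closed vector subspace of $\prod_{j\in J}F_j$ cut out by the equations $\phi_{ij}(x_i)=x_j$ associated with the bonding maps. Then $f\colon U\to P$ is $LC^r_\K$ iff, by (c), it is $LC^r_\K$ as a map to $\prod F_j$, iff, by (a), each $p_j\circ f$ is $LC^r_\K$. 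The main obstacle I foresee is the inductive step in (b): keeping the Lipschitz bookkeeping clean through the repeated application of the chain rule. Packaging the chain rule as the single composition $d(g\circ f)=dg\circ \Phi$, rather than unrolling a Faà di Bruno expansion, is the crucial simplification that lets the induction close cleanly.
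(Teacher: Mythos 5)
Your treatment of (a), (c), and (d) matches the paper's proof essentially step for step: the finite combination of pulled-back seminorms for products, reduction of the differentiable cases to the $LC^0_\K$-case via $d^kf=(d^kf_j)_{j\in J}$ resp.\ $d^k(\iota\circ f)=\iota\circ d^kf$, the seminorm-extension fact for subspaces (which you assume as standard and the paper actually proves, taking the Minkowski functional of the absolutely convex hull of $V\cup B^Q_1(0)$), and the one-line derivation of (d) from (a) and (c) by viewing $P$ as a closed subspace of $\prod_{j\in J}F_j$. The genuine divergence is the higher-order case of (b). The paper does not induct on $r$: it invokes Fa\`{a} di Bruno's formula, writing $d^k(g\circ f)$ as a sum over partitions of terms $d^jg(f(x),d^{|I_1|}f(x,y_{I_1}),\ldots,d^{|I_j|}f(x,y_{I_j}))$, each of which is $LC^0_\K$ by (a) combined with the already-established $r=0$ case. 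Your induction via the packaged chain rule $d(g\circ f)=dg\circ\Phi$, $\Phi(x,y)=(f(x),df(x,y))$, is correct and avoids the partition bookkeeping, but it silently uses two facts that are not literally the definition of $LC^r_\K$ and deserve a line each: first, ``$g\in LC^r_\K$ implies $dg\in LC^{r-1}_\K$'' is a claim about \emph{all} iterated directional derivatives of $dg\colon V\times F\to Y$, including those in the linear slot, so one must express $d^j(dg)$ as a sum of maps $d^mg$ (with $m\leq j+1$) precomposed with continuous linear maps, e.g.\ $d(dg)((x,y),(u,v))=d^2g(x,y,u)+dg(x,v)$, and use that $LC^0_\K$ is stable under sums and under precomposition with continuous linear maps; second, to pass from ``$d(g\circ f)$ is $LC^{r-1}_\K$'' back to ``$g\circ f$ is $LC^r_\K$'' you need the recovery identity $d^k(g\circ f)(x,y_1,\ldots,y_k)=d^{k-1}(d(g\circ f))((x,y_1),(y_2,0),\ldots,(y_k,0))$, again composed with a continuous linear insertion map. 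Both verifications are routine three-line computations and play exactly the role that Fa\`{a} di Bruno plays in the paper: your route trades one explicit combinatorial expansion for two small structural lemmas, which is a legitimate and arguably cleaner organization, while the paper's version is more self-contained once (a) and the $r=0$ case are in place.
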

Our concept of local Lipschitz continuity is weaker
than the one in~\cite[Definition 1.5.4]{GaN}.\\[2.5mm]
{\bf The compact-open {\boldmath$C^r$}-topology}
\begin{numba}\label{def-co-Cr}
If $E$ and~$F$ are locally convex
$\K$-vector spaces,
$U\sub E$ is an open set
and $r\in \N_0\cup\{\infty\}$,
then the vector space $C^r_\K(U,F)$
of all $C^r_\K$-maps $U\to F$
carries a natural topology
(the ``compact-open $C^r$-topology''),
namely the initial
topology with respect to the mappings
\[
C^r_\K(U,F)\to C(U\times E^j,F)_{c.o.}\,\quad
f \mto d^jf
\]
for $j\in \N_0$ such that $j\leq r$,
where the right hand side is
endowed with the compact-open topology.
Then $C^r_\K(U,F)$ is a locally convex
$\K$-vector space. If~$F$ is a complex locally convex space, then
also $C^r_\K(U,F)$.
See, e.g., \cite[\S{}1.7]{GaN}
for further information, or~\cite{ZOO}.
\end{numba}
The following observation was useful in an earlier
version of the manuscript, to enable
exponential laws (as in \cite{AaS})
to be applied.
We retain it as it may be useful
elsewhere.
\begin{la}\label{new-Ck-top-on-multi}
Let $\bL\in\{\R,\C\}$,
$\K\in\{\R,\bL\}$,
$k\in\N$, $r\in \N_0\cup\{\infty\}$,
and $E_1,\ldots, E_k$
as well as $F$ be locally convex
$\bL$-vector spaces.
Then $L_\bL^k(E_1,\ldots, E_k,F)$
is a closed vector subspace
of $C^r_\K(E_1\times \cdots\times E_k,F)$
with respect to the compact-open
$C^r$-topology.
The latter induces on $L^k_\bL(E_1,\ldots, E_k,F)$
the compact-open topology.
\end{la}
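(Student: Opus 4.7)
The plan is to verify three things: that the inclusion $L^k_\bL(E_1,\ldots,E_k,F)\subseteq C^r_\K(E_1\times\cdots\times E_k,F)$ is well defined, that the image is closed, and that the induced topology coincides with the compact-open topology. Write $E:=E_1\times\cdots\times E_k$.

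Well-definedness is standard: every continuous $\bL$-$k$-linear map $E\to F$ is smooth (in fact $\C$-analytic if $\bL=\C$), hence lies in $C^\infty_\bL(E,F)\subseteq C^r_\K(E,F)$. For closedness, observe that $L^k_\bL(E_1,\ldots,E_k,F)$ is the common zero set inside $C^r_\K(E,F)$ of the functionals
\[
f\mto f(x_1,\ldots,\lambda u+\mu v,\ldots,x_k)-\lambda f(x_1,\ldots,u,\ldots,x_k)-\mu f(x_1,\ldots,v,\ldots,x_k),
\]
one for each slot index~$i$, each choice of $x_j\in E_j$ ($j\neq i$), $u,v\in E_i$, and $\lambda,\mu\in\bL$. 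Each such functional is continuous on $C^r_\K(E,F)$ because the $j=0$ factor $f\mto d^0f=f$ in the initial topology of~\ref{def-co-Cr} shows the compact-open $C^r$-topology is finer than the compact-open topology on $C(E,F)$, which in turn dominates pointwise evaluations $\ev_x\colon f\mto f(x)$.

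The same $j=0$ factor shows that, restricted to $L^k_\bL$, the compact-open $C^r$-topology is at least as fine as the compact-open topology. For the reverse, I need continuity of $f\mto d^jf$ as a map
\[
L^k_\bL(E_1,\ldots,E_k,F)_c\;\longrightarrow\; C(E\times E^j,F)_{c.o.}
\]
for each $j\in\N_0$ with $j\leq r$. The key is the derivative formula
\[
d^jf\bigl((x_1,\ldots,x_k),h^{(1)},\ldots,h^{(j)}\bigr)\,=\,\sum_{\sigma}f\bigl(\phi_\sigma(x,h^{(1)},\ldots,h^{(j)})\bigr),
\]
the sum running over all injections $\sigma\colon\{1,\ldots,j\}\to\{1,\ldots,k\}$ (empty, hence $0$, if $j>k$), with $\phi_\sigma\colon E\times E^j\to E$ the continuous linear substitution map whose $i$-th coordinate equals $h^{(\sigma^{-1}(i))}_i$ for $i\in\im\sigma$ and $x_i$ otherwise. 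Once this is granted, each pull-back $g\mto g\circ\phi_\sigma$ is continuous from $C(E,F)_{c.o.}$ to $C(E\times E^j,F)_{c.o.}$ (continuous images of compact sets being compact), and a finite sum of continuous maps is continuous.

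The one computational step is the derivative formula, which I would prove by induction on~$j$: the base case $df(x,h)=\sum_{i=1}^k f(x_1,\ldots,h_i,\ldots,x_k)$ comes from expanding $f(x+th)-f(x)$ by multilinearity and passing to the limit as in~\ref{defsmoo}, and the inductive step uses that any further perturbation placed into an already-perturbed slot contributes $0$ because $f$ is linear in that slot. This bookkeeping is the main (and essentially only) obstacle; everything else is a formal continuity check for pull-backs along continuous linear maps.
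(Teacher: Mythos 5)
Your proposal is correct and follows essentially the same route as the paper: closedness via (continuous) point evaluations testing multilinearity, the $d^0$-factor for one direction of the topology comparison, and for the other direction an expression of $d^j\beta$ as a finite sum of terms $\beta\circ\phi$ with $\phi$ continuous, so that each $f\mto d^jf$ is continuous from the compact-open topology by pull-back stability. The only difference is cosmetic: you give the closed-form sum over injections $\sigma\colon\{1,\ldots,j\}\to\{1,\ldots,k\}$ (correctly yielding $d^j\beta=0$ for $j>k$), whereas the paper establishes the same representation $d^j\beta=\sum_\mu\beta\circ g_{j,\mu}$ abstractly by induction without naming the maps.
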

\section{Differentiability properties of operator-valued maps}\label{sec2}
Let $\bL\in\{\R,\C\}$, $\K\in\{\R,\bL\}$,
and $r\in \N_0\cup\{\infty,\omega\}$.
In this section,
we establish the following proposition.
\begin{prop}\label{reallyneed1}
Let $k\in \N$, $r\in\N_0\cup\{\infty,\omega\}$,
$E_1,\ldots, E_k$ and~$F$ be locally convex
$\bL$-vector spaces,
$X$ be a locally convex $\K$-vector space,
and $U\sub X$ be an open subset.
Let $f \colon U\to L_\bL^k(E_1,\ldots, E_k,F)$
be a map such that
\[
f^\wedge\colon U\times E_1\times \cdots\times E_k\to F\,, \quad
f^\wedge(x,v):=f(x)(v)\quad\mbox{for $x\in U$, $v\in E_1\times\cdots\times E_k$}
\]
is $C^r_\K$. Then the following holds:
\begin{itemize}
\item[\rm(a)]
$f$ is $C^r_\K$ as a map to $L_\bL^k(E_1,\ldots, E_k,F)_c$.
\item[\rm(b)]
If $r\geq 1$, then
$f$ is $C^{r-1}_\K$ as a map to $L_\bL^k(E_1,\ldots, E_k,F)_b$.
\end{itemize}
Furthermore,
\begin{equation}\label{ugly}
d^jf(x,y_1,\ldots, y_j)(v)=d^j(f^\wedge)((x,v),(y_1,0),\ldots, (y_j,0))
\end{equation}
for all $j\in \N$ with $j\leq r$
$($resp., $j\leq r-1$, in {\rm (b))},
all $x\in U$, $v\in E_1\times \cdots\times E_k$, and $y_1,\ldots, y_j\in X$.
\end{prop}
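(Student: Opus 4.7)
The plan is to induct on $r$ for finite $r$, reducing the statement at level $r$ to the statement at level $r-1$ applied to the pointwise derivative $df$; the case $r=\infty$ then follows, and $r=\omega$ is handled by complexification, using that an $\R$-analytic $f^\wedge$ extends to a $\C$-analytic map on an open set in the complexification and that complex analyticity is equivalent to $C^\infty_\C$ by~\ref{defcxan}. The cases $\K=\R$ and $\K=\C$ are treated in parallel, since Lemma~\ref{wippe} and the fundamental theorem of calculus (integrating along real segments) are available in both; the case $\bL=\C$, $\K=\R$ reduces via~\ref{closedsub} to $\bL=\R$.

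For the base case $r=0$ in~(a), I show $f$ is continuous into $L^k_\bL(E_1,\ldots,E_k,F)_c$ by a tube-lemma argument. Given a compact set $K\sub E_1\times\cdots\times E_k$ and an absolutely convex $0$-neighbourhood $W\sub F$, the continuity of $f^\wedge$ at each $(x_0,v)$ provides product neighbourhoods whose $f^\wedge$-image deviates from $f^\wedge(x_0,v)$ by at most half of $W$; compactness of $K$ and a finite subcover then produce a neighbourhood $\Omega$ of $x_0$ in $U$ with $f(x)-f(x_0)\in\lfloor K,W\rfloor$ for all $x\in\Omega$.

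For $r\ge 1$, pointwise in $v$ the difference quotient equals $\frac{f^\wedge(x+ty,v)-f^\wedge(x,v)}{t}$, which tends as $t\to 0$ to $d(f^\wedge)((x,v),(y,0))$. Multilinearity in $v$ is preserved by this limit (since each $f^\wedge(x+ty,\cdot)$ is $k$-linear) and the resulting map is continuous in $v$ by continuity of $d(f^\wedge)$, so it defines an element $df(x,y)\in L^k_\bL(E_1,\ldots,E_k,F)$. To upgrade pointwise convergence to convergence in the target topology, I write via the fundamental theorem of calculus in the $x$-direction
\[
\frac{f(x+ty)(v)-f(x)(v)}{t}-df(x,y)(v)=\int_0^1\bigl[d(f^\wedge)((x+sty,v),(y,0))-d(f^\wedge)((x,v),(y,0))\bigr]\,ds.
\]
For compact $v$-sets, the integrand tends to $0$ uniformly in $(s,v)\in[0,1]\times K$ by continuity of $d(f^\wedge)$ and compactness, giving convergence in $L^k_c$. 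For bounded $v$-sets in~(b) with $r\ge 2$, I apply Lemma~\ref{wippe} to $\phi((x,y),v_1,\ldots,v_k):=d(f^\wedge)((x,v),(y,0))$, which is $C^1$ exactly when $f^\wedge$ is $C^2$ and is $k$-linear in $v$; the resulting Lipschitz estimate, integrated in $s$, provides the uniform control on bounded sets. The case $r=1$ in~(b) (only continuity into $L^k_b$) is obtained instead by applying Lemma~\ref{wippe} directly to $f^\wedge$.

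In the inductive step, once $df$ is established with $(df)^\wedge((x,y),v)=d(f^\wedge)((x,v),(y,0))$ of class $C^{r-1}_\K$, the inductive hypothesis applied to $df$ (with base space $X\times X$) gives that $df$ is $C^{r-1}$ into $L^k_c$ and $C^{r-2}$ into $L^k_b$; combining with~\ref{fanddf} yields that $f$ is $C^r$ into $L^k_c$ and $C^{r-1}$ into $L^k_b$. The higher-order formula~(\ref{ugly}) follows by iterating the formula for $df$ and invoking the symmetry of iterated directional derivatives. The main obstacle is the one-derivative loss in~(b): to upgrade from uniform convergence on compact sets to uniform convergence on bounded sets, Lemma~\ref{wippe} requires the object being differentiated to be $C^1$, hence one extra degree of smoothness on $f^\wedge$.
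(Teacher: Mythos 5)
Your treatment of the finite-order cases and $r=\infty$ is correct, and for part~(b) it is essentially the paper's argument: the same base case (Lemma~\ref{wippe} applied directly to $f^\wedge$ for $r=1$), the same identification $(df)^\wedge((x,y),v)=d(f^\wedge)((x,v),(y,0))$, and the same closing of the induction via~\ref{fanddf}, with the expected one-derivative loss entering exactly where you say it does. The organization differs in two places. For~(a), the paper does not estimate directly: it views $L^k_\bL(E_1,\ldots,E_k,F)_c$ as a closed subspace of $C(E,F)_{c.o.}$ carrying the induced topology and reduces everything to Lemma~\ref{passfvee} via the exponential-law identity $f=(f^\wedge)^\vee$; your tube-lemma base case and the uniform-on-compacta bound obtained from the integral representation replace that reduction and are a legitimate, more self-contained alternative. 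For the convergence of the operator-valued difference quotients, the paper introduces the auxiliary map $g(t,v)$ (the difference quotient in $t$, extended by $d(f^\wedge)((x,v),(y,0))$ at $t=0$), establishes that it is $C^{r-1}$ using \cite[Propositions~7.4 and~7.7]{BGN}, and applies the inductive claim to $g^\vee$; your route through the fundamental theorem of calculus plus the Lipschitz estimate of Lemma~\ref{wippe} applied to $\phi((x,y),v)=d(f^\wedge)((x,v),(y,0))$ avoids the $f^{[1]}$-machinery of \cite{BGN} at the cost of nothing essential, and is fine.

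The genuine gap is the case $r=\omega$, which you dispatch in one sentence. By definition, $\R$-analyticity of $f^\wedge$ only provides a $\C$-analytic extension $p\colon Q\to F_\C$ on \emph{some} open neighbourhood $Q$ of $U\times E_1\times\cdots\times E_k$ in $X_\C\times (E_1)_\C\times\cdots\times(E_k)_\C$; the slices $p(y,\cdot)$ are neither defined on the whole complexified fibre product nor complex $k$-linear, so the $C^\infty_\C$ case of the proposition (or of your induction) cannot be applied to $p$ as it stands. This is precisely where the paper has to work: on a tube $U_x\times W_x\sub Q$ with $W_x$ balanced, the Identity Theorem yields the homogeneity relation $p(y,zw)=z^k p(y,w)$, which makes $g(y,w):=z^k p(y,z^{-1}w)$ a well-defined $\C$-analytic extension to $U_x\times (E_1)_\C\times\cdots\times(E_k)_\C$ with complex $k$-linear slices; only then can the $C^\infty_\C$ case (resp.\ the final clause of Lemma~\ref{passfvee}) be invoked, followed for~(b) by the restriction map $\rho\colon L^k_\C((E_1)_\C,\ldots,(E_k)_\C,F_\C)_b\to (L^k_\R(E,F)_b)_\C$ and patching over the cover by the sets $U_x\cap U$. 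Your reduction of $(\bL,\K)=(\C,\R)$ to $\bL=\R$ via~\ref{closedsub} is correct, but without the homogeneity-extension step the analytic case of your proof does not go through.
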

\begin{cor}\label{dfisC}
Let $E$ and $F$ be locally
convex $\K$-vector spaces
and $f\colon U\to F$ be a $C^r_\K$-map on an
open subset $U\sub E$, where $r\in \N\cup\{\infty,\omega\}$.
Then the following holds:
\begin{itemize}
\item[\rm(a)]
The map
$f^{(k)}\colon U\to L^k_\K(E,F)_c$,
$x\mto f^{(k)}(x)=d^kf(x,\cdot)$
is $C^{r-k}_\K$,
for each $k\in \N$ such that $k\leq r$.
\item[\rm(b)]
The map $f^{(k)}\colon U\to L^k_\K(E,F)_b$
is $C^{r-k-1}_\K$, for each
$k\in \N$ such that $k\leq r-1$.
\end{itemize}
Furthermore,
$d^j(f^{(k)})(x,y_1,\ldots, y_j)=d^{j+k}f(x,\cdot,y_1,\ldots, y_j)$,
for all $j\in \N$ with $j+k\leq r$
$($resp., $j+k\leq r-1)$,
all $x\in U$, and $y_1,\ldots, y_j\in E$.
\end{cor}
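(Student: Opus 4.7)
The corollary specializes Proposition~\ref{reallyneed1} to the map $f^{(k)}$. Set $\bL:=\K$, $X:=E$ and $E_1=\cdots=E_k:=E$; by the very definition of $f^{(k)}$ one has
\[
(f^{(k)})^\wedge(x,v_1,\ldots,v_k)=d^kf(x,v_1,\ldots,v_k),
\]
so $(f^{(k)})^\wedge$ coincides with $d^kf\colon U\times E^k\to F$. The two things to verify before applying the proposition are that $f^{(k)}$ takes values in $L^k_\K(E,F)$ and that $d^kf$ is $C^{r-k}_\K$.

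For the first point, $d^kf$ is continuous by hypothesis, and each operator $D_y$ is linear in~$y$; hence $d^kf(x,\cdot)\colon E^k\to F$ is a continuous $k$-linear map, so $f^{(k)}(x)\in L^k_\K(E,F)$. For the second point, I would argue inductively: by~\ref{fanddf}, $f\in C^r_\K$ entails $df\in C^{r-1}_\K$, and iterating, $d(d^{j-1}f)\in C^{r-j}_\K$. Since taking a directional derivative of $d^{j-1}f$ at $(x,y_1,\ldots,y_{j-1})$ in direction $(y_j,0,\ldots,0)$ amounts to $D_{y_j}(D_{y_{j-1}}\cdots D_{y_1}f)(x)$, one has the identity
\[
d^jf(x,y_1,\ldots,y_j)=d(d^{j-1}f)\bigl((x,y_1,\ldots,y_{j-1}),(y_j,0,\ldots,0)\bigr),
\]
so $d^jf$ is the composition of $d(d^{j-1}f)\in C^{r-j}_\K$ with a continuous (hence smooth) linear inclusion. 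Therefore $d^kf\in C^{r-k}_\K$; the cases $r\in\{\infty,\omega\}$ follow analogously, using the standard fact that derivatives of analytic maps are analytic.

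With the hypotheses verified, Proposition~\ref{reallyneed1}\,(a) applied with $r-k$ in place of $r$ delivers part~(a), and part~(b) of that proposition (valid precisely when $r-k\geq 1$, i.e.\ when $k\leq r-1$) delivers part~(b). The derivative formula~\eqref{ugly} specializes to
\[
d^j(f^{(k)})(x,y_1,\ldots,y_j)(v)=d^j(d^kf)\bigl((x,v),(y_1,0),\ldots,(y_j,0)\bigr)
\]
for $v=(v_1,\ldots,v_k)\in E^k$. Iterating the identification above, the right-hand side equals $D_{y_j}\cdots D_{y_1}(D_{v_k}\cdots D_{v_1}f)(x)$, which by definition is $d^{j+k}f(x,v_1,\ldots,v_k,y_1,\ldots,y_j)$; this is precisely $d^{j+k}f(x,\cdot,y_1,\ldots,y_j)$ evaluated at~$v$, as asserted.

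The only mildly technical step is the inductive identification of $d^jf$ with the pullback of $d(d^{j-1}f)$ along the slice $(y_j,0,\ldots,0)$, which turns the recursive statement of~\ref{fanddf} into the needed regularity of $d^kf$. Everything else is a direct application of Proposition~\ref{reallyneed1}.
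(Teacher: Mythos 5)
Your proposal is correct and takes essentially the same route as the paper: both reduce the corollary to Proposition~\ref{reallyneed1} applied to $f^{(k)}$ with $r-k$ in place of $r$, via the identity $(f^{(k)})^\wedge=d^kf$, and read off the derivative formula from~(\ref{ugly}). The only difference is that where the paper cites \cite{GaN} (resp.\ \cite{BGN}) for the facts that $d^kf$ is $C^{r-k}_\K$ and that $f^{(k)}(x)$ is $k$-linear, you sketch these inline by the standard inductive slice argument based on~\ref{fanddf}, which is precisely the content of those citations.
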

\begin{proof}
For each $k\in\N$ such that $k\leq r$,
the map $d^kf\colon U\times E^k\to F$
is $C^{r-k}_\K$
(see \cite[Remark~1.3.13 and Exercise 2.2.7]{GaN}),\footnote{Alternatively,
with a view towards Remark~\ref{rem-non-lcx},
we might use that $d^kf$ is a partial
map of a certain $C^{r-k}$-map $f^{[k]}$
if $k\not=\omega$
(cf.\ \cite[Proposition~7.4]{BGN}),
which also settles the case $(r,\K)=(\omega,\C)$
(see \cite[Proposition~7.7]{BGN}).
The real analytic case follows via complex analytic extension.}
and $f^{(k)}(x)=d^kf(x,\cdot)$
is $k$-linear for each $x\in U$, by \cite[Proposition 1.3.17]{GaN}
(or \cite[Lemma~4.8]{BGN}).
Moreover, $(f^{(k)})^\wedge=d^kf$.
Thus Proposition~\ref{reallyneed1} applies
with $f^{(k)}$ in place of~$f$
and $r-k$ in place of~$r$.
\end{proof}
Given a topological space $X$ and
locally convex space~$F$,
we endow the space $C(X,F)$
of continuous $F$-valued functions
on~$X$ with the compact-open
topology.\footnote{It is known that
this topology coincides
with the topology of uniform convergence on compact sets.}
The next lemma will be useful when we discuss
mappings to $L^k(E,F)_c$.
\begin{la}\label{passfvee}
Let $X$, $E$, and $F$ be locally convex
$\K$-vector spaces, $U\sub X$ and $W \sub E$
be open subsets, and $f\colon U\times W\to F$
be a $C^r_\K$-map, with $r\in \N_0\cup\{\infty\}$.
Then also the map
\[
f^\vee \colon U\to C(W,F)\,,\quad
x\mto f(x,\cdot)
\]
is $C^r_\K$.
If $\K=\R$
and $f$ admits
a complex analytic extension $g\colon \wt{U}\times
\wt{W}\to F_\C$
for suitable open neighbourhoods
$\wt{U}$ of~$U$ in $X_\C$
and $\wt{W}$ of~$W$ in~$E_\C$,
then $f^\vee$ is real analytic.
\end{la}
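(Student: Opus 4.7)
The plan is to prove the statement by induction on $r\in\N_0$, deduce the $r=\infty$ case by letting~$r$ range over~$\N$, and handle the real analytic addendum via the complex $C^\infty$-case.

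For the base case $r=0$, continuity of $f^\vee\colon U\to C(W,F)$ in the compact-open topology reduces to the uniform statement: for each $x_0\in U$, each compact $K\sub W$, each continuous seminorm $q$ on~$F$, and each $\ve>0$, there is a neighbourhood $V$ of $x_0$ in $U$ with $q(f(x,w)-f(x_0,w))<\ve$ for all $x\in V$ and $w\in K$. This is the standard finite-subcover argument using joint continuity of~$f$ on $U\times W$ together with compactness of~$K$.

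For the inductive step, assume the lemma for $r-1$ and let $f$ be $C^r_\K$ with $r\geq 1$. Fix $x\in U$ and $y\in X$; I claim that $df^\vee(x,y)\in C(W,F)$ exists and equals $w\mapsto df((x,w),(y,0))$. Indeed, for a compact $K\sub W$ and a continuous seminorm $q$ on~$F$, the mean value inequality applied (for each fixed $w$) to the auxiliary curve $s\mapsto f(x+sty,w)-st\cdot df((x,w),(y,0))$ on $[0,1]$ gives
\[
q\!\left(\frac{f(x+ty,w)-f(x,w)}{t}-df((x,w),(y,0))\right)\;\leq\;\sup_{s\in[0,1]}q\bigl(df((x+sty,w),(y,0))-df((x,w),(y,0))\bigr)
\]
whenever $|t|$ is so small that $x+sty\in U$ for all $s\in[0,1]$. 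Continuity of~$df$ and compactness of $[0,1]\times K$ force the right-hand side to tend to zero uniformly in $w\in K$ as $t\to 0$, proving the claim. Next, the map $h\colon U\times X\times W\to F$, $(x,y,w)\mapsto df((x,w),(y,0))$, is the composition of the $C^{r-1}_\K$-map $df$ with a continuous linear rearrangement, hence is $C^{r-1}_\K$. Applying the inductive hypothesis to~$h$ (viewed as a map on $(U\times X)\times W$) yields that $h^\vee=df^\vee\colon U\times X\to C(W,F)$ is $C^{r-1}_\K$. Lemma~\ref{fanddf} now gives that $f^\vee$ is $C^r_\K$, and the case $r=\infty$ follows.

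For the real analytic case, the $C^\infty_\C$-version just proved applies to the complex analytic extension $g\colon \wt U\times\wt W\to F_\C$, yielding a $C^\infty_\C$-map $g^\vee\colon \wt U\to C(\wt W,F_\C)$; this is complex analytic by the characterisation in~\ref{cxlinan}. Composing with the continuous $\C$-linear restriction $C(\wt W,F_\C)\to C(W,F_\C)$ and invoking the natural topological isomorphism $C(W,F)_\C\isom C(W,F_\C)$ produces a complex analytic map $\wt U\to C(W,F)_\C$ whose restriction to~$U$ agrees with~$f^\vee$; hence $f^\vee$ is real analytic. The main obstacle in the whole argument is converting Keller's pointwise notion of differentiability of~$f$ into uniform-on-$K$ convergence of difference quotients in~$F$, which is exactly what the mean value inequality buys, and which lets us avoid any vector-valued integrability hypothesis on the possibly non-complete target space~$F$.
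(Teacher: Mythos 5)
Your proof is correct and essentially reproduces the paper's argument: the same induction on~$r$ with the standard case $r=0$, the same identification $d(f^\vee)=h^\vee$ for the $C^{r-1}_\K$-map $h((x,y),w):=df((x,w),(y,0))$ followed by the inductive hypothesis and~\ref{fanddf}, and the identical treatment of $r=\infty$ and of the real analytic addendum via the restriction map $C(\wt{W},F_\C)\to C(W,F_\C)=C(W,F)_\C$. The only deviation is technical rather than structural: where you prove uniform convergence of the difference quotients on compact sets directly, via the mean value inequality and a tube-lemma compactness argument, the paper instead represents the difference quotient as the parameter-dependent weak integral $\int_0^1 df((x+sty,w),(y,0))\,ds$, deduces joint continuity of the resulting auxiliary map $g(t,w)$, and applies the case $r=0$ to it --- and since that integral exists without any completeness assumption on~$F$ (being itself the difference quotient of a $C^1$-curve), your closing remark about avoiding a vector-valued integrability hypothesis does not constitute a genuine advantage over the paper's route.
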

\begin{proof}
We first assume that $r\in \N_0$,
and proceed by induction.
For $r=0$, the assertion is well known (see, e.g.,
\cite[Proposition A.6.17]{GaN}).
Now assume that $r\in \N$.
Given $x\in U$ and $y\in X$,
there exists
$\ve>0$ such that $x+ \bD_\ve^0 y\sub U$,
where $\bD_r^0:=\{t\in \K\colon |t|<\ve\}$.
Consider
\[
g\colon \bD^0_\ve \times W\to F\,,\quad
(t,w)\mto
\left\{
\begin{array}{cl}
\frac{f(x+ty,w)-f(x,w)}{t} &\,\mbox{\,if $\,t\not=0$;}\\
df((x,w),(y,0)) &\,\mbox{\,if $\,t=0$.}
\end{array}
\right.
\]
Then $g(t,w)=\int_0^1 df((x+sty,w),(y,0))\, ds$,
by the Mean Value Theorem.
The integrand being continuous,
also $g$ is continuous
(by the Theorem on Parameter-Dependent Integrals,
\cite[Lemma 1.1.11]{GaN}).
Hence $g^\vee\colon V\to C(W,F)$ is continuous,
by induction, and hence
\[
\frac{f^\vee(x+ty)-f^\vee(x)}{t}\;=\;
g^\vee(t)\;\to\; g^\vee(0)
\]
as $t\to 0$, where $g^\vee(0)=df((x,\cdot),(y,0))=h^\vee(x,y)$
with
\[
h\colon (U\times E)\times W\to F\,,\quad
(x,y,w)\mto df((x,w),(y,0))\,.
\]
Since $h$ is $C^{r-1}_\K$,
the map $d(f^\vee)=h^\vee$
is $C^{r-1}_\K$, by the inductive hypothesis.
In particular, $df$ is continuous and hence
$f$ is $C^1_\K$. Now $f$ being $C^1_\K$
with $df$ a $C^{r-1}_\K$-map,
$f$ is $C^r_\K$.\vspace{2mm}

\emph{The case $r=\infty$}. If $f$ is $C^\infty_\K$,
then $f$ is $C^k_\K$ for each $k\in \N_0$.
Hence $f^\vee$ is $C^k_\K$ for each $k\in \N_0$
(by the case already treated), and thus $f^\vee$ is $C^\infty_\K$.\vspace{2mm}

\emph{Final assertion}. By the $C^\infty_\C$-case
already treated, the map
\[
g^\vee\colon \wt{U}\to C(\wt{W},F_\C)
\]
is $C^\infty_\C$. The restriction map
\[
\rho\colon C(\wt{W},F_\C)\to C(W,F_\C)\,,\quad \gamma\mto
\gamma|_W
\]
being continuous $\C$-linear and thus $C^\infty_\C$,
it follows that the composition
\[
h:=\rho\circ g^\vee\colon \wt{U}\to C(W,F_\C) =C(W,F)_\C
\]
is $C^\infty_\C$ and hence complex analytic.
Since $h$ extends $f^\vee$, we see that $f^\vee$
is real analytic.
\end{proof}
{\bf Proof of Proposition~\ref{reallyneed1}.}
(a) Abbreviate $E:=E_1\times\cdots\times E_k$.
Because $L_\bL^k(E_1,\ldots,E_k,F)_c$ is a closed $\K$-vector
subspace of $C(E,F)$ and carries
the induced topology, $f$ will be $C^r_\K$
as a map to $L_\bL^k(E_1,\ldots, E_k,F)_c$
if we can show that $f$ is $C^r_\K$
as a map to $C(E,F)$
(see \cite[Lemma~1.3.19]{GaN}
and \cite[Proposition~2.11]{RES}).
Since $f^\wedge$ is $C^r_\K$
and $f=(f^\wedge)^\vee$, the latter
follows from Lemma~\ref{passfvee}.
This is obvious unless
$\K=\R$ and $r=\omega$.
In this case, the map $f^\wedge$
admits a $\C$-analytic extension
$p\colon Q\to F_\C$
to an open neighbourhood $Q$
of $U\times E$ in $X_\C\times E_\C$.
For each $x\in U$,
there exists an open, connected neighbourhood
$U_x$ of~$x$ in $X_\C$ and
a balanced,
open $0$-neighbourhood $W_x\sub E_\C$
such that $U_x\times W_x\sub Q$
and $U_x\cap X\sub U$.
Let $D:=\{z\in \C\colon |z|<1\}$. Then
\[
q\colon U_x\times W_x\times D\to F_\C\,,\quad
(y,w,z)\mto p(y,zw)-z^kp(y,w)
\]
is a $\C$-analytic map which vanishes on
$(U_x\times W_x\times D)\cap (X\times E\times \R)$.
Hence $q=0$, by the Identity Theorem
(see \cite[Theorem~2.1.16\,(c)]{GaN}).
Then $p(y,zw)=z^kp(y,w)$ for all $z\in \C$ such that
$|z|\leq 1$, by continuity.
This implies that the map
\[
g \colon U_x\times E_\C\to F_\C\,,
\quad (y,w)\mto z^kp(y,z^{-1}w)\quad\mbox{for some $z\in \C^\times$
with $z^{-1}w\in W_x$}
\]
is well defined. Since $g$ is $\C$-analytic,
the final statement
of Lemma~\ref{passfvee} applies.\vspace{1mm}

(b) We prove the assertion for
$r\in \N$ first; then also the case $r=\infty$ follows.
If $r=1$, let $x\in U$.
Given an open
$0$-neighbourhood
$W\sub F$
and bounded subset $B\sub E:=E_1\times\cdots \times E_k$,
let~$q$ be a continuous seminorm on~$F$
such that $B^q_1(0)\sub W$.
By Lemma~\ref{wippe},
there exist continuous seminorms
$p$ on~$X$ and $p_j$ on~$E_j$ for $j\in \{1,\ldots, k\}$
such that $B^p_1(x)\sub U$
and
\[
\|f^\wedge(y,v)-f^\wedge(x,v)\|_q\leq \|y-x\|_p\|v_1\|_{p_1}\cdots\|v_k\|_{p_k}
\]
for all $y\in B^p_1(x)$
and all $v=(v_1,\ldots, v_k)\in E_1\times \cdots\times E_k$.
Since $B$ is bounded, we have
\[
C:=\sup\{\|v_1\|_{p_1}\cdots\|v_k\|_{p_k}\colon
v=(v_1,\ldots, v_k)\in B\} \; <\; \infty\,.
\]
Choose $\delta\in\, ]0,1]$ such that $\delta C\leq 1$.
For each $y\in B^p_\delta(x)$,
we get
$\|f^\wedge(y,v)-f^\wedge(x,v)\|_q<\delta C\leq 1$
for each $v\in B$
and thus $f^\wedge(y,v)-f^\wedge(x,v)\in B^q_1(0)\sub W$.
Hence
\[
f(y)-f(x)\in \lfloor B, W\rfloor \quad \mbox{for each $y\in B^p_1(x)$}\,.
\]
entailing that~$f$ is continuous.\\[2.5mm]
Induction step:
Now assume that $r\geq 2$.
Given $x\in U$ and $y\in X$,
there exists
$\ve>0$ such that $x+ \bD_\ve^0 y\sub U$,
where $\bD_r^0:=\{t\in \K\colon |t|<\ve\}$.
Consider
\[
g\colon \bD^0_\ve \times E^k \to F\,,\quad
(t,v)\mto
\left\{
\begin{array}{cl}
\frac{f^\wedge(x+ty,v)-f^\wedge(x,v)}{t} &\,\mbox{\,if $\,t\not=0$;}\\
d(f^\wedge)((x,v),(y,0)) &\,\mbox{\,if $\,t=0$.}
\end{array}
\right.
\]
Then $g$ is $C^{r-1}_\K$ and hence $C^1_\K$,
as a consequence
of \cite[Propositions~7.4 and 7.7]{BGN}.
Since $g(t,v)$ is $k$-linear in~$v$,
it follows that $g^\vee\colon U\to L^k(E,F)_b$ is continuous,
by induction. As a consequence,
\[
\frac{f(x+ty)-f(x)}{t}\;=\;
g^\vee(t)\;\to\; g^\vee(0)
\]
as $t\to 0$, where $g^\vee(0)=d(f^\wedge)((x,\cdot),(y,0))=h^\vee(x,y)$
with
\[
h\colon (U\times E^k)\times W\to F\,,\quad
h((x,y),v):=d(f^\wedge)((x,v),(y,0))\,.
\]
Since $h$ is $C^{r-1}_\K$ and $h((x,y),v)$
is $k$-linear in~$v$,
the map $df=h^\vee$
is $C^{r-2}_\K$, by induction.
Hence $df$ is continuous and thus
$f$ is $C^1_\K$. Now $f$ being $C^1_\K$
with $df$ a $C^{r-2}_\K$-map,
$f$ is $C^{r-1}_\K$.\vspace{1.3mm}

The case $\K=\R$, $r=\omega$.
By \ref{closedsub},
we may assume that $\bL=\R$
(the case $\bL=\C$ then follows).
Given $x\in U$,
let $g\colon U_x\times (E_\C)^k \to F_\C$
be as in the proof of~(a).
Then $g$ is complex $k$-linear in the second
variable and hence
$g^\vee\colon U_x\to L_\C^k(E_\C,F_\C)_b$
is $\C$-analytic,
by the $C^\infty_\C$-case already discussed.
Because the
map $\rho\colon L_\C^k(E_\C,F_\C)_b\to L_\R^k(E,F_\C)_b=(L^k_\R(E,F)_b)_\C$,
$\alpha\mto \alpha|_E$
is continuous $\C$-linear,
the composition $\rho\circ g^\vee$
is $\C$-analytic. But this mapping
extends $f|_{U_x\cap E}$.
Hence $f|_{U_x\cap E}$
is real analytic and hence so is~$f$,
using that the open sets $U_x\cap U$
form an open cover of~$U$.\vspace{1.3mm}

\emph{Formula for the differentials}:
Let $j\in \N$ with $j\leq r$,
$x\in U$, $v\in E_1\times\cdots\times E_k$ and $y_1,\ldots, y_j\in X$.
Exploiting that
$\ev_v\colon L^k_\bL(E_1,\ldots, E_k,F)_c\to F$,
$\beta\mto \beta(v)$
is continuous and linear,
we deduce that
\begin{eqnarray*}
\ev_v(d^jf(x,y_1,\ldots, y_j))
&=&d^j(\ev_v\circ f)(x,y_1,\ldots, y_j)
\, =\, d^j(f^\wedge(\cdot,v))(x,y_1,\ldots,y_j)\\
&=& d^j(f^\wedge)((x,v),(y_1,0),\ldots, (y_j,0))
\end{eqnarray*}
for $f$ as a map to $L^k_\bL(E_1,\ldots, E_k,F)_c$.
If $j\leq r-1$,
the same calculation applies to $f$ as a mapping to
$L^k_\bL(E_1,\ldots, E_k,F)_b$.\,\vspace{2.3mm}\Punkt

\noindent
For the special case of~(a)
when $r=0$ and $X$ as well as $E_1=\cdots=E_k$
are metrizable, see already \cite[Lemma~0.1.2]{Kel}.
\begin{rem}\label{rem-non-lcx}
We mention that the local convexity of $X$
and $E_1,\ldots, E_k$ (resp., $\!E$) in
Lemma~\ref{wippe},
Proposition~\ref{reallyneed1} and Corollary~\ref{dfisC}
and their proofs
is unnecessary
(if one\linebreak replaces continuous seminorms
by continuous gauges in
Lemma~\ref{wippe} and the proof
of\linebreak
Proposition~\ref{reallyneed1}).
Only the local convexity of~$F$ is
essential.
In \cite{IMP}, we used Corollary~\ref{dfisC}
from the current article also
in the case of non-locally convex domains.
\end{rem}
\section{Compositions with hypocontinuous {\boldmath$k$}-linear maps}
We study differentiability properties
of compositions of the form $\beta\circ f$,
where $\beta$ is $k$-linear map
which need not be continuous.
\begin{la}\label{diff-compo-with-hypo}
Let $k\geq 2$ be an integer, $E_1,\ldots, E_k$, $X$, and $F$
be locally convex $\K$-vector spaces,
$\beta\colon E_1\times\cdots\times E_k\to F$
be a $k$-linear map,
$r\in \N_0\cup\{\infty,\omega\}$
and $f\colon U\to E_1\times \cdots\times E_k=:E$
be a $C^r_\K$-map on an open subset $U\sub X$.
Assume that
\begin{itemize}
\item[\rm(a)]
$\beta$ is sequentially continuous
and $X$ is metrizable; or
\item[\rm(b)]
For some $j\in\{2,\ldots, k\}$, the $k$-linear map
$\beta$ is $c$-hypocontinous
in its variables $(j,\ldots, k)$.
Moreover, $X\times X$ is a $k_\R$-space,
or $r=0$ and~$X$ is a $k_\R$-space,
or $(r,\K)=(\infty,\C)$ and~$X$ is a $k_\R$-space.
\end{itemize}
Then $\beta\circ f\colon U\to F$ is a $C^r_\K$-map.
\end{la}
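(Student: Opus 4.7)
The plan is to induct on the order of differentiation $m\leq r$, computing $d^m(\beta\circ f)$ via an explicit formula and deducing its continuity on $U\times X^m$; aggregating finite orders then gives $r=\infty$, and $r=\omega$ follows by complex extension. The base case $m=0$ is Lemma~\ref{compo-is-cts}: in (a), $X$ is metrizable and $\beta$ is sequentially continuous; in (b), $X$ is a $k_\R$-space (directly in the two sub-cases where this is assumed, or by Lemma~\ref{basic-k-R}(a) if $X\times X$ is $k_\R$), hence so is its open subset~$U$ by Lemma~\ref{basic-k-R}(b), and $\beta$ is $c$-hypocontinuous.

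For $m\geq 1$ I first derive a product rule. Writing $f=(f_1,\ldots,f_k)$ and setting $g_i(x,y,t):=(f_i(x+ty)-f_i(x))/t$ for $t\neq 0$, $g_i(x,y,0):=df_i(x,y)$, telescoping gives
\[
\frac{\beta(f(x+ty))-\beta(f(x))}{t}=\sum_{i=1}^k\beta\bigl(f_1(x+ty),\ldots,g_i(x,y,t),\ldots,f_k(x)\bigr).
\]
Since $\K$ is metrizable and $\beta$ is sequentially continuous (via Lemma~\ref{hypo-seq-cts} in case (b)), the limit $t\to 0$ passes inside each $\beta$, yielding $d(\beta\circ f)(x,y)=\sum_i\beta(f_1(x),\ldots,df_i(x,y),\ldots,f_k(x))$. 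Iterating the product rule produces
\[
d^m(\beta\circ f)(x;y_1,\ldots,y_m)=\sum_{(I_1,\ldots,I_k)}\beta\bigl(d^{|I_1|}f_1(x;y_{I_1}),\ldots,d^{|I_k|}f_k(x;y_{I_k})\bigr),
\]
summed over ordered partitions $(I_1,\ldots,I_k)$ of $\{1,\ldots,m\}$ into $k$ (possibly empty) blocks.

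Continuity of $d^m(\beta\circ f)$ is then handled as follows. For $m=1$, each summand is a composition $\beta\circ\Psi_i$ with $\Psi_i\colon U\times X\to E_1\times\cdots\times E_k$ continuous, and Lemma~\ref{compo-is-cts} applies since, in (a), $U\times X$ is metrizable, and in (b), $U\times X$ is open in the $k_\R$-space $X\times X$ and hence $k_\R$ by Lemma~\ref{basic-k-R}(b). For $m\geq 2$, the formula is visibly symmetric and $m$-linear in $(y_1,\ldots,y_m)$, so Lemma~\ref{cont-via-gateaux} reduces continuity on $U\times X^m$ to continuity of the Gateaux restriction $(x,y)\mapsto d^m(\beta\circ f)(x;y,\ldots,y)$ on the single space $U\times X$, where the argument from the case $m=1$ again applies to each summand.

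The case $(r,\K)=(\infty,\C)$ with only $X$ assumed $k_\R$ falls outside this $X\times X$-induction and requires a separate complex-analytic argument: near any $x_0\in U$, $f(x_0+y)=\sum_n P_n(y)$ pointwise for continuous homogeneous polynomials $P_n\colon X\to E_1\times\cdots\times E_k$ (see~\ref{defcxan}). Distributing $\beta$ through partial sums and passing to the limit by sequential continuity of $\beta$ yields $\beta\circ f(x_0+y)=\sum_n Q_n(y)$ with $Q_n:=\sum_{n_1+\cdots+n_k=n}\beta(P_{n_1}^{(1)},\ldots,P_{n_k}^{(k)})$, and each $Q_n$ is a continuous homogeneous polynomial of degree~$n$ by Lemma~\ref{compo-is-cts}(b) on the $k_\R$-space $X$; hence $\beta\circ f$ is $\C$-analytic by~\ref{defcxan}. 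The case $r=\omega$ reduces to this via complex-analytic extensions of $f$ and $\beta$. The principal obstacle throughout is continuity of $d^m(\beta\circ f)$ for $m\geq 2$ in case (b): a naive induction on $r$ would force $X^4$ to be $k_\R$, but the symmetry-based reduction via Lemma~\ref{cont-via-gateaux} keeps the continuity check on the single $k_\R$-space $U\times X$ at every order.
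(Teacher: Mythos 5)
Your main induction coincides with the paper's own proof: the telescoping product rule, the partition formula (cf.\ (\ref{partition-diff})), and---crucially---the reduction of the continuity check at every order to the diagonal map on $U\times X$ via the symmetry of the formula and Lemma~\ref{cont-via-gateaux}, which is exactly the device the paper uses to avoid ever needing more than $X\times X$ to be a $k_\R$-space. Your treatment of case (a), of $r=0$ and $r=\infty$, and your reduction of $r=\omega$ to $(\infty,\C)$ via extensions also match the paper (for the latter you should still record, as the paper does, that the explicit formula for the complex $k$-linear extension $\beta_\C$ shows it inherits sequential continuity, resp.\ $c$-hypocontinuity, from~$\beta$, and that $X_\C\cong X\times X$ is $k_\R$ in case~(b)).

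Where you genuinely depart from the paper is the case $(r,\K)=(\infty,\C)$ with $X$ only assumed $k_\R$, and there your argument has a gap. Sequential continuity of $\beta$ applied to the partial sums $S_N^{(\nu)}(y)=\sum_{n\le N}P_n^{(\nu)}(y)$ yields only convergence of the \emph{cube} partial sums $\sum_{\max_i n_i\le N}\beta\bigl(P^{(1)}_{n_1}(y),\ldots,P^{(k)}_{n_k}(y)\bigr)$ to $\beta(f(x_0+y))$; the asserted identity $\beta(f(x_0+y))=\sum_n Q_n(y)$ requires convergence of the \emph{triangular} sums $\sum_{n_1+\cdots+n_k\le N}$, and this regrouping does not follow from sequential continuity: already for $k=2$ and scalar series, the Cauchy product of two conditionally convergent series can diverge. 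The step can be repaired. Since $P_n^{(\nu)}(y)\to 0$ (terms of a convergent series), a subsequence argument using sequential continuity of $\beta$ shows that for each continuous seminorm $q$ on $F$ the values $q\bigl(\beta(P^{(1)}_{n_1}(y),\ldots,P^{(k)}_{n_k}(y))\bigr)$ are bounded in $(n_1,\ldots,n_k)$; writing an arbitrary point of the balanced open $0$-neighbourhood as $ty$ with $y$ in the neighbourhood and $|t|<1$ (possible because the neighbourhood is open), homogeneity then makes the multi-series absolutely Cauchy with respect to each~$q$, and a comparison of triangular with cube partial sums identifies the limit as $\beta(f(x_0+ty))\in F$, with no completeness assumption on~$F$. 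Alternatively, follow the paper: $\beta\circ f$ is continuous by the case $r=0$ and is $C^\infty_\C$ on $U\cap Y$ for every finite-dimensional subspace $Y\subseteq X$ by case~(a), hence is $\C$-analytic as a map into a completion of~$F$ by \cite[Theorem~6.2]{BaS}, and descends to~$F$ since all iterated directional derivatives lie in~$F$. With this one step justified or replaced, your proof is complete.
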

\begin{proof}
The case $r=0$ was treated in Lemma~\ref{compo-is-cts}.
We first assume that
$r\in \N$.\vspace{1mm}

(a) Assuming~(a), let $x\in U$, $y\in X$,
and $(t_n)_{n\in\N}$
be a sequence in $\K\setminus\{0\}$ such that $t_n\to 0$
as $n\to\infty$
and $x+t_n y\in U$ for all $n\in \N$.
Using the components of $f=(f_1,\ldots, f_k)$,
we can write the difference quotient
$\frac{1}{t_n}(\beta(f(x+t_ny))-\beta(f(x)))$
as the telescopic sum
\[
\sum_{\nu=1}^k\beta\Big(f_1(x+t_ny),\ldots, f_{\nu-1}(x+t_ny),
\frac{f_\nu(x+t_ny)-f_\nu(x)}{t_n},f_{\nu+1}(x),\ldots,f_k(x)\Big),
\]
which converges to
\begin{equation}\label{formula-der}
\sum_{\nu=1}^k\beta(f_1(x),\ldots,f_{\nu-1}(x),
df_\nu(x,y),f_{\nu+1}(x),\ldots, f_k(x))=d(\beta\circ f)(x,y)
\end{equation}
as $n\to\infty$, using the sequential continuity of~$\beta$.
By Lemma~\ref{compo-is-cts}, $d(\beta\circ f)$ is continuous,
whence $\beta\circ f$ is $C^1_\K$.
If $r\geq 2$, then
\[
g_\nu\colon U\times X\to E,\quad (x,y)\mto
(f_1(x),\ldots,f_{\nu-1}(x),
df_\nu(x,y),f_{\nu+1}(x),\ldots, f_k(x))
\]
is a $C^{r-1}_\K$-map and
$d(\beta\circ f)=\sum_{\nu=1}^k \beta\circ g_\nu$
is $C^{r-1}_\K$ by induction;
thus $\beta\circ f$ is $C^r_\K$.
If $r=\infty$, the preceding shows that $\beta\circ f$ is $C^s_\K$
for each $s\in\N_0$, whence $\beta\circ f$ is~$C^r_\K$.\vspace{1mm}

(b) If $X\times X$ is a $k_\R$-space,
then $U\times X$ and $U$ are $k_\R$-spaces.
By Lemma~\ref{hypo-seq-cts}, $\beta$ is sequentially
continuous.
The argument from~(a)
shows that $d(\beta\circ f)(x,y)$
exists for all $(x,y)\in U\times X$
and is given by~(\ref{formula-der}).
Thus $d(\beta\circ f)$ is continuous,
by Lemma~\ref{compo-is-cts}, and thus $\beta\circ f$
is $C^1_\K$.
Let $f$ be $C^{r+1}_\K$ now
and assume $\beta\circ f$ is $C^r_\K$
with $r$th differential of the form
\begin{equation}\label{partition-diff}
d^r(\beta\circ f)(x,y_1,\ldots,y_r)
=\sum_{(I_1,\ldots, I_r)}\beta(d^{|I_1|}f_1(x,y_{I_1}),\ldots,
d^{|I_k|}f_k(x,y_{I_k}))
\end{equation}
for $x\in U$ and $y_1,\ldots, y_r\in X$,
where $(I_1,\ldots,I_k)$
ranges through $k$-tuples of (possibly empty) disjoint sets
$I_1,\ldots,I_k$ with $I_1\cup\cdots\cup I_k=\{1,\ldots, r\}$,
and the following notation is used:
For $\nu\in\{1,\ldots,k\}$,
we let $|I_\nu|\in\N_0$
be the cardinality of~$I_\nu$
and define $y_{I_\nu}:=(y_{i_1},\ldots,y_{i_m})\in X^m$
if $i_1<i_2<\cdots< i_m$
are the elements of~$I_\nu$,
abbreviating $m:=|I_j|$
(if $I_\nu$ is empty, the symbol $y_\emptyset$
is to be ignored).
Holding $y_1,\ldots,y_r$ fixed,
we can apply the case $r=1$
to the function $d^rf(\cdot,y_1,\ldots,y_r)$
and find that, for each $x\in U$ and $y_{r+1}\in X$,
the directional derivative at~$x$ in the direction
$y_{r+1}$ exists and is given by
\begin{eqnarray*}
d^{r+1}(\beta\circ f)(x,y_1,\ldots,y_{r+1})\!\! &=& \!\!
\sum_{(I_1,\ldots, I_r)}\sum_{\nu=1}^k
\beta\big(d^{|I_1|}f_1(x,y_{I_1}),\ldots,
d^{|I_{\nu-1}|}f_{\nu-1}(x,y_{I_{\nu-1}}),\\[.3mm]
\!\! & & \!\! d^{|I_\nu|+1}f_\nu(x,y_{I_\nu},y_{r+1}),
d^{|I_{\nu+1}|}f_{\nu+1}(x,y_{I_{\nu+1}}),\ldots,
d^{|I_k|}f_k(x,y_{I_k})\big).
\end{eqnarray*}
Thus also $d^{r+1}(\beta\circ f)$
is of the form~(\ref{partition-diff}),
with~$r+1$ in place of~$r$.
Using Lemma~\ref{compo-is-cts},
we deduce from the preceding formula that the map
\[
U\times E\to F,\quad (x,y)\mto d^{r+1}(\beta\circ f)(x,y,\ldots,y)
\]
is continuous. Thus $d^{r+1}(\beta\circ f)$
is continuous, by Lemma~\ref{cont-via-gateaux},
and thus $\beta\circ f$ is $C^{r+1}_\K$.\vspace{1mm}

If $(r,\K)=(\infty,\R)$,
then $\beta\circ f$ is~$C^s_\R$ for each
$s\in \N_0$ and hence $C^\infty_\R$
(still assuming~(b)).

If $(r,\K)=(\infty,\C)$
and~$X$ is only assumed~$k_\R$,
then $\beta\circ f$ is continuous
by the case $r=0$.
Moreover, the restriction $\beta\circ f|_{U\cap Y}$
is $C^\infty_\C$ for each finite-dimensional
vector subspace $Y\sub X$, by case~(a).
Hence~$f$ is $C^\omega_\C$ (and thus $C^\infty_\C$)
as a mapping to a completion of~$F$
(see \cite[Theorem~6.2]{BaS}).
Then $f$ is also $C^\infty_\C$
as a map to~$F$,
as all of its iterated directional derivatives are in~$F$.

Both in~(a) and~(b),
it remains to consider the case
$(r,\K)=(\omega,\R)$.
Then $f$ admits a $\C$-analytic
extension $\wt{f}\colon \wt{U}\to (E_1)_\C\times\cdots\times (E_k)_\C$,
defined on an open neighbourhood $\wt{U}$ of~$U$
in $X_\C$.
The complex $k$-linear extension
$\beta_\C\colon (E_1)_\C\times\cdots\times (E_k)_\C\to F_\C$
of~$\beta$ is given by
\[
z\mto \sum_{a_1,\ldots,a_k=0}^1
i^{a_1+\cdots+a_k}\beta(x_{1,a_1},\ldots, x_{k,a_k})
\]
for $z=(x_{1,0}+ix_{1,1},\ldots,x_{k,0}+ix_{k,1})$
with $x_{\nu,0}\in E_\nu$ and $x_{\nu,1}\in E_\nu$ for $\nu\in\{1,\ldots,k\}$.
By the latter formula,
$\beta_\C$ is sequentially continuous
in the situation of~(a),
and $c$-hypocontinuous in its arguments
$(j,\ldots,k)$ in the situation of~(b).
The case $(\infty,\C)$ shows that $\beta_\C\circ \wt{f}$
is complex analytic. As this mapping extends
$\beta\circ f$, the latter map is real analytic.
In case~(b), we used here that $X_\C\cong X\times X$
is a $k_\R$-space.
\end{proof}
Also the following variant will be useful.
\begin{la}\label{usehypo}
Let $X_1$, $X_2$, $E_1$, $E_2$ and $F$ be locally convex $\K$-vector
spaces, and\linebreak
$U_1\sub X_1$, $U_2\sub X_2$ be open subsets.
Let $r\in \N_0\cup\{\infty,\omega\}$
and
$\beta\colon E_1\times E_2\to F$ be a $\K$-bilinear
map.
Assume that
$X_1$ is finite-dimensional and~$\beta$ is $c$-hypocontinuous
in its first variable.
Then, for all $C^r_\K$-maps $f_1\colon U_1\to E_1$ and
$f_2\colon U_1\times U_2\to E_2$, also
the following map is $C^r_\K$:
\[
g\colon U_1\times U_2  \to F\, , \quad
(x_1,x_2)\mto \beta(f_1(x_1),f_2(x_1,x_2))\,.
\]
\end{la}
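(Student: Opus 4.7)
My plan is to induct on $r \in \N_0$, reducing the cases $r \in \{\infty, \omega\}$ to the finite cases. The base case $r = 0$ exploits that $X_1$ is finite-dimensional, hence locally compact: for $(a_1, a_2) \in U_1 \times U_2$, I pick a compact neighbourhood $K_1 \subseteq U_1$ of $a_1$, so that $K := f_1(K_1)$ is compact in $E_1$. The analogue of Lemma~\ref{prehypo}\,(c) for hypocontinuity in the first argument (applicable because $\bD K$ is compact) gives that $\beta|_{K \times E_2}$ is jointly continuous; composing with the continuous map $(x_1, x_2) \mapsto (f_1(x_1), f_2(x_1, x_2))$, which sends a suitable neighbourhood of $(a_1, a_2)$ into $K \times E_2$, yields continuity of $g$ near $(a_1, a_2)$.

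For the induction step, assume the result for $r - 1$ (with $r \geq 1$) and let $f_1, f_2$ be $C^r_\K$. The standard bilinear telescoping applied to $g(a_1 + ty_1, a_2 + ty_2) - g(a_1, a_2)$ shows, upon letting $t \to 0$, that the directional derivative exists and equals
\[
dg((x_1, x_2), (y_1, y_2)) = \beta\bigl(df_1(x_1, y_1),\, f_2(x_1, x_2)\bigr) + \beta\bigl(f_1(x_1),\, df_2((x_1, x_2), (y_1, y_2))\bigr).
\]
Convergence of the first summand uses that the difference quotients of $f_1$, together with $df_1(a_1, y_1)$, form a compact subset of $E_1$ (a continuous image of a compact subset of $\K$), and that joint continuity of $\beta$ on that compact set $\times\, E_2$ passes to limits; the second summand uses only separate continuity of $\beta$. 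The key structural observation is that each summand is again of the form treated by the lemma: the first has finite-dimensional parameter space $X_1 \times X_1$, ``first slot'' $df_1 \in C^{r-1}_\K$, and ``second slot'' $(x_1, y_1, x_2) \mapsto f_2(x_1, x_2)$; the second has parameter space $X_1$, ``first slot'' $f_1$, and ``second slot'' $(x_1, (x_2, y_1, y_2)) \mapsto df_2((x_1, x_2), (y_1, y_2)) \in C^{r-1}_\K$. By the inductive hypothesis both summands are $C^{r-1}_\K$, so $dg$ is $C^{r-1}_\K$ and $g$ is $C^r_\K$. The case $r = \infty$ is then immediate. For $r = \omega$ with $\K = \R$, I complexify: $f_1, f_2$ extend to $\C$-analytic maps on open neighbourhoods in $(X_1)_\C$ and $(X_1 \times X_2)_\C$, and $\beta_\C \colon (E_1)_\C \times (E_2)_\C \to F_\C$ is still $c$-hypocontinuous in its first argument, as one checks directly from the identity $\beta_\C(a+ib, c+id) = \beta(a,c) - \beta(b,d) + i(\beta(a,d) + \beta(b,c))$; the $C^\infty_\C$-case then yields a $\C$-analytic extension of $g$, so $g$ is real analytic.

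The main obstacle is that Lemma~\ref{diff-compo-with-hypo} does not apply directly: $\beta$ is hypocontinuous in its \emph{first} (not last) argument, and $X_1 \times X_2$ is not assumed to be a $k_\R$-space. The fix is that the first slot of $\beta$ depends only on $x_1 \in X_1$, and finite-dimensionality of $X_1$ forces $f_1$ to send compact sets to compact sets---precisely what hypocontinuity in the first argument accommodates. What makes the induction go through is that this structural feature, that the first slot of $\beta$ depends only on variables ranging over a finite-dimensional space, is preserved under differentiation.
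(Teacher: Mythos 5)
Your proof is correct and follows essentially the same route as the paper's: the base case via local compactness of the finite-dimensional $X_1$ and joint continuity of $\beta$ on (compact)${}\times E_2$, the induction step via the product-rule formula $dg((x_1,x_2),(y_1,y_2))=\beta(df_1(x_1,y_1),f_2(x_1,x_2))+\beta(f_1(x_1),df_2((x_1,x_2),(y_1,y_2)))$ together with the observation that both summands again satisfy the lemma's hypotheses with finite-dimensional first parameter space, and the case $(\omega,\R)$ by complexifying $\beta$ and invoking the $(\infty,\C)$ case. Your explicit verification that the summands fit the inductive hypothesis (and that $\beta_\C$ inherits $c$-hypocontinuity in the first argument) matches the paper's argument in substance, merely with a slightly different grouping of the parameter variables.
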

\begin{proof}
We first prove the assertion for
$r\in \N_0$ (from which the case
$r=\infty$ follows).
If $r=0$, we have to show
that~$g$ is continuous.
If
$(x_1,x_2)\in U_1\times U_2$, then $x_1$
has a compact
neighbourhood $W=W_{x_1}$ in~$U_1$.
Then $f_1(W)$ is compact,
and thus $\beta|_{f_1(W)\times E_2}$ is continuous,
by $c$-hypocontinuity.
Hence
$g|_{W\times U_2}=\beta|_{f_1(W)\times E_2}\circ (f_1\circ \pi_W,f_2)$
is continuous, where $\pi_W\colon W\times U_2\to W$
is the projection onto the first factor.
Since $(W_{x_1}\times U_2)_{x_1\in U_1}$ is an
open cover of $U_1\times U_2$,
the map $g$ is continuous.\\[2.5mm]
Since $\beta$ is sequentially continuous
by Lemma~\ref{hypo-seq-cts},
we see as in the preceding proof
that the directional derivative $dg(x,y)$
exists for all $x=(x_1,x_2)\in U_1\times U_2$
and $y=(y_1,y_2)\in X_1\times X_2$,
and is given by
\begin{equation}\label{enabind2}
dg(x,y)=\beta(df_1(x_1,y_1),\, f_2(x))+\beta(f_1(x_1),\, df_2(x,y))\, .
\end{equation}
Note that $(x_1,y_1)\mto f_1(x_1)$ and $df_1$ are $C^{r-1}_\K$-mappings
$U_1\times X_1\to E_1$. Moreover,\linebreak
$((x_1,y_1),(x_2,y_2))\mto f_2(x_1,x_2)$ and
and $((x_1,y_1),(x_2,y_2))\mto df_2((x_1,x_2),(y_1,y_2))$
are $C^{r-1}_\K$-maps $(U_1\times X_1)\times (U_2\times X_2)\to E_2$
(cf.\hspace{-.2mm}~\ref{fanddf}).
By induction, the right hand side
of (\ref{enabind2}) is a $C^{r-1}_\K$-map.
Hence~$g$ is~$C^r_\K$.\\[2.5mm]
The case $(r,\K)=(\omega,\R)$
follows from the case $(\infty,\C)$
as in the preceding proof.
\end{proof}
\begin{rem}
In a setting of differential calculus in which
continuity on products is replaced with
$k$-continuity (as championed by E.\,G.\,F. Thomas),
every bilinear map $\beta$ which
is $c$-hypocontinuous in the second
factor is smooth (see \cite[Theorem 4.1]{Tho});
smoothness of $\beta\circ f$ for a smooth map~$f$
then follows from the Chain Rule
(cf.\ also \cite{Sei}).
Likewise, $\beta$ is smooth in the sense
of convenient differential calculus.
\end{rem}
\section{Differentiability properties of {\boldmath$f^\wedge$}}
For $k=1$, the following result
is essential for our counstructions
of vector bundles.
\begin{prop}\label{reallyneed2}
Let $\bL\in\{\R,\C\}$, $r\in\N_0\cup\{\infty,\omega\}$,
$\K\in\{\R,\bL\}$,
$k\in \N$,
$E_1,\ldots, E_k$ and $F$
be locally convex $\bL$-vector
spaces,
$X$ be a locally convex $\K$-vector space,
and $U\sub X$ be an open subset.
Then the following holds.
\begin{itemize}
\item[\rm(a)]
If
$(X\times E_1\times\cdots \times E_k)\times (X\times E_1\times\cdots\times E_k)$
is a $k_\R$-space,
or $r=0$ and $X\times E_1\times\cdots \times E_k$
is a $k_\R$-space,
or $(r,\K)=(\infty,\C)$
and
$X\times E_1\times\cdots \times E_k$ is a $k_\R$-space,
or all of the vector spaces
$E_1,\ldots, E_k$
are finite dimensional, then
\[
f^\wedge \colon U\times E_1\times\cdots\times E_k\to F\, ,\quad
(x,y_1,\ldots,y_k)\mto f(x)(y_1,\ldots, y_k)
\]
is $C^r_\K$ for each $C^r_\K$-map
$f \colon U\to L_\bL^k(E_1,\ldots, E_k,F)_c$.
\item[\rm(b)]
If
$E:=E_1=E_2=\cdots=E_k$
holds and, moreover,
$(X\times E)\times (X\times E)$ is a $k_\R$-space
or $r=0$ and $X\times E$ is a $k_\R$-space,
or $(r,\K)=(\infty,\C)$ and $X\times E$ is a $k_\R$-space,
then
\[
f^\wedge \colon U\times E^k\to F\, ,\quad
(x,y_1,\ldots,y_k)\mto f(x)(y_1,\ldots, y_k)
\]
is $C^r_\K$ for each $C^r_\K$-map
$f \colon U\to L_\bL^k(E,F)_c$
such that $f(x)$ is a symmetric $k$-linear
map for each $x\in U$.
\item[\rm(c)]
If $X$ is finite dimensional, $k=1$, and $E:=E_1$ is barrelled,
then $f^\wedge\colon U\times E\to F$, $(x,y)\mto f(x)(y)$
is $C^r_\K$ for each $C^r_\K$-map
$f \colon U\to L_\bL(E,F)_c$.
\item[\rm(d)]
If all of the spaces $E_1,\ldots, E_k$ are normable,
then $f^\wedge\colon U\times E_1\times \cdots\times E_k\to F$
is $C^r_\K$
for each
each $C^r_\K$-map
$f \colon U\to L_\bL^k(E_1,\ldots,E_k,F)_b$.
\end{itemize}
\end{prop}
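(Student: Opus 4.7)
The plan is to reduce all four parts to the chain rule together with Lemmas \ref{diff-compo-with-hypo}, \ref{cont-via-gateaux} and \ref{usehypo}, by writing $f^\wedge=\ev\circ(f\times\id)$ with the evaluation
\[
\ev\colon L^k_\bL(E_1,\ldots,E_k,F)_\cS\times E_1\times\cdots\times E_k\to F,\quad (\beta,v)\mapsto \beta(v),
\]
which by Lemma \ref{resulteval} is $\cS$-hypocontinuous in its arguments $(2,\ldots,k+1)$ for any suitable $\cS$; I take $\cS=c$ in (a)--(c) and $\cS=b$ in (d).

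For (a), I would invoke Lemma \ref{diff-compo-with-hypo}(b) applied to the $(k{+}1)$-linear map $\ev$ and the $C^r_\K$-map $(x,v)\mapsto(f(x),v)$ on the open subset $U\times E_1\times\cdots\times E_k$ of the ambient space $X\times E_1\times\cdots\times E_k$. The three $k_\R$-alternatives in (a) correspond line by line to the three alternatives in Lemma \ref{diff-compo-with-hypo}(b), using Lemma \ref{basic-k-R}(b) to pass from the ambient space to its open subset. If instead all $E_i$ are finite dimensional, then expanding $\beta(v_1,\ldots,v_k)$ in bases of the $E_i$ exhibits $\ev$ as a finite sum of products of continuous point evaluations and coordinate functions; in particular $\ev$ is jointly continuous and hence $C^\omega_\K$, and the chain rule closes this subcase.

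For (b), the symmetry assumption lets me invoke Lemma \ref{cont-via-gateaux}: it suffices to prove that the diagonal $h(x,y):=f^\wedge(x,y,\ldots,y)$ is $C^r_\K$. Writing $h=\ev\circ g$ with $g(x,y):=(f(x),y,\ldots,y)$, a $C^r_\K$-map $U\times E\to L^k_\bL(E,F)_c\times E^k$, a second application of Lemma \ref{diff-compo-with-hypo}(b)---this time with ambient space $X\times E$---yields the regularity of $h$; the three $k_\R$-hypotheses of (b) again match the hypotheses of the lemma verbatim. For (c), since $E$ is barrelled, the final clause of Lemma \ref{resulteval} additionally gives $c$-hypocontinuity of $\ev\colon L_\bL(E,F)_c\times E\to F$ in its first argument, and Lemma \ref{usehypo} then applies directly with $X_1:=X$ (finite dimensional), $X_2:=E$, $\beta:=\ev$, $f_1:=f$, and $f_2(x_1,x_2):=x_2$. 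Finally for (d), a short estimate with seminorms of the form $\sup\{q(\beta(v))\colon v_i\text{ in the unit ball of }E_i\}$ shows that normability of the $E_i$ forces $\ev$ on the $b$-topology to be jointly continuous; being continuous $(k{+}1)$-linear it is thus $C^\omega_\K$, and the chain rule applied to $f^\wedge=\ev\circ(f\times\id)$ finishes the case.

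The delicate bookkeeping lies in (a) and (b), where one must verify that the three $k_\R$-space hypotheses survive passage from the ambient space to the open subset and align with the alternatives in Lemma \ref{diff-compo-with-hypo}(b), and that the topology chosen on $L^k_\bL(\ldots)$ is exactly the one for which the required hypocontinuity of $\ev$ is available. The symmetry reduction in (b), the finite dimensionality of $X$ in (c), and the normability of the $E_i$ in (d) are precisely the devices that bypass the absence of joint continuity of $\ev$ in the general setting.
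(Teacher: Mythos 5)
Your proposal is correct and follows essentially the same route as the paper's proof: the decomposition $f^\wedge=\ev\circ(f\times\id)$ with Lemma~\ref{resulteval} supplying the hypocontinuity, Lemma~\ref{diff-compo-with-hypo}\,(b) for (a), the diagonal reduction via Lemma~\ref{cont-via-gateaux} for (b), Lemma~\ref{usehypo} with the same cast of maps for (c), and joint continuity of the evaluation on the $b$-topology for (d). The only (immaterial) deviation is the finite-dimensional subcase of (a), where you verify joint continuity of $\ev$ directly by expanding in bases, while the paper notes $L^k_\bL(E_1,\ldots,E_k,F)_c=L^k_\bL(E_1,\ldots,E_k,F)_b$ and reduces it to case (d).
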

\begin{proof}
Let $\ev\colon L^k_\bL(E_1,\ldots,E_k,F)_c\times E_1\times\cdots\times E_k\to F$
be the evaluation map, which is $c$-hypocontinuous
in its arguments $(2,\ldots,k+1)$ by Lemma~\ref{resulteval}.\vspace{1mm}

(a)
Assuming the respective $k_\R$-property,
the map $f^\wedge =\ev\circ (f\times \id_{E_1\times\cdots\times E_k})$
is $C^r_\K$, by Lemma~\ref{diff-compo-with-hypo}\,(b).
If $E_1,\ldots, E_k$ are finite dimensional,
then
$L^k_\bL(E_1,\ldots, E_k,F)_c$ equals\linebreak
$L^k_\bL(E_1,\ldots, E_k,F)_b$,
whence the conclusion of~(a)
is a special case of~(d).\vspace{1mm}

(b) By Lemma~\ref{diff-compo-with-hypo}\,(b), the map
\[
g\colon U\times E\to F,\quad (x,y)\mto f^\wedge(x,y,\ldots,y)
\]
is $C^r_\K$, as $g=\ev\circ (f\times\delta)$
with $\delta\colon E\to E^k$, $y\mto (y,\ldots,y)$,
which is continuous $\K$-linear.
Then also $f^\wedge$ is $C^r_\K$, by Lemma~\ref{cont-via-gateaux}.\vspace{1mm}

(c) The bilinear map $\ev\colon L_\K(E,F)_c\times E\to F$
is $c$-hypocontinuous
in its first argument, by
Lemma~\ref{resulteval}.
Hence $f^\wedge=\ev\circ (f\times \id_E)$
is $C^r_\K$, by Lemma~\ref{usehypo}.\vspace{1mm}

(d) If $E_1,\ldots, E_k$ are normable, then the
evaluation map
\[
\ve\colon L^k_\bL(E_1,\ldots, E_k,F)_b
\times E_1\times\cdots\times E_k\to F
\]
is continuous $(k+1)$-linear and hence $C^r_\K$,
whence also $f^\wedge=\ve\circ (f\times \id_{E_1\times\cdots\times E_k})$
is $C^r_\K$.
\end{proof}
\begin{rem}\label{metr-or-k-omega}
If $X$ and all of $E_1,\ldots, E_k$
are metrizable,
then the topological space\linebreak
$(X\times E_1\times \cdots\times E_k)\times (X\times E_1\times\cdots\times E_k)$
is metrizable and hence a $k$-space.
If $X$ and all of $E_1,\ldots, E_k$
are $k_\omega$-spaces,
then also
$(X\times E_1\times \cdots\times E_k)\times (X\times E_1\times\cdots\times E_k)$
is a $k_\omega$-space and hence a $k$-space.
In either case, we are in the situation of~(a).
\end{rem}
\section{Infinite-dimensional vector bundles}
In this section,
we provide foundational material
concerning vector bundles
modeled on locally convex spaces
(cf.\ also~\cite{DMS} and \cite[Chapter~3]{GaN}).
Notably, we discuss the description of vector
bundles via cocycles,
and define equivariant vector bundles.\\[2.5mm]
Let $\bL\in \{\R,\C\}$,
$\K\in\{\R,\bL\}$,
and $r\in \N_0\cup\{\infty,\omega\}$.
The word ``manifold'' always refers
to a manifold modeled on a locally convex
space. Likewise,
the Lie groups we consider
need not have finite dimension.
\begin{defn}\label{defnbun}
Let~$M$ be a~$C^r_\K$-manifold
and~$F$ be a locally convex
$\bL$-vector space.
An \emph{$\bL$-vector bundle of class $C^r_\K$
over~$M$, with typical fibre~$F$},
is a $C^r_\K$-manifold~$E$, together
with a surjective $C^r_\K$-map
$\pi\colon E\to M$
and endowed with an $\bL$-vector space structure
on each fibre $E_x:=\pi^{-1}(\{x\})$,
such that, for
each $x\in M$, there exists an open neighbourhood
$U\sub M$ of $x$
and a $C^r_\K$-diffeomorphism
\[
\psi\colon \pi^{-1}(U)\to U\times F
\]
(called a ``local trivialization'')
such that $\psi(E_y)=\{y\}\times F$
for each $y\in U$
and the map
$\pr_F\circ \psi|_{E_y}\colon E_y\to F$
is $\bL$-linear,\footnote{And hence an isomorphism of topological
vector spaces, if we give $E_y$
the topology induced by~$E$.}
where $\pr_F\colon U\times F\to F$
is the projection.
\end{defn}
\begin{numba}\label{defncocyc}
In the situation of Definition~\ref{defnbun},
let $(\psi_i)_{i\in I}$
be an atlas of local trivializations for~$E$,
{\em i.e.},
a family of local trivializations
\[
\psi_i\!: \pi^{-1}(U_i)\to U_i\times F
\]
of~$E$
whose domains $U_i$ cover~$M$.
Then, given $i,j\in I$, we have
\[
\psi_i(\psi^{-1}_j(x,v))=(x,g_{ij}(x)(v))
\]
for $x\in U_i\cap U_j$, $v\in F$,
for some function
\[
g_{ij}\!: U_i\cap U_j\to \GL(F)\sub L(F).
\]
Here
\[
G_{ij}\!:(U_i\cap U_j)\times F \to F,\;\;\;\;
(x,v)\mto g_{ij}(x)(v)\]
is $C^r_\K$,
as $\psi_i(\psi^{-1}_j(x,v))=(x,G_{ij}(x,v))$ is
$C^r_\K$ in $(x,v)\in (U_i\cap U_j)\times F$.
By Proposition~\ref{reallyneed1}, $g_{ij}\colon U_i\cap U_j\to
L(F)_c$ is a $C^r_\K$-map,
and as a map to $L(F)_b$, it is at least~$C^{r-1}_\K$
(if $r\geq 1$).
Note that the ``transition maps''
$g_{ij}$ satisfy the ``cocycle conditions''
\[
\left\{
\begin{array}{ll}
(\forall i\in I)\,(\forall x\in U_i) &
g_{ii}(x)=\id_F\quad \mbox{and}\\
(\forall i,j,k \in I)\;(\forall x\in U_i\cap U_j\cap U_k) &
g_{ij}(x)\circ g_{jk}(x)=g_{ik}(x)\, .
\end{array}
\right.
\]
\end{numba}
\begin{prop}\label{standfact}
Let $\bL\in\{\R,\C\}$, $\K\in \{\R,\bL\}$. Assume that
\begin{itemize}
\item[\rm(a)]
$M$ is a $C^r_\K$-manifold modeled on a locally convex $\K$-vector
space~$Z$;
\item[\rm(b)]
$E$ is a set and $\pi\colon E\to M$ a surjective map;
\item[\rm(c)]
$F$ is a locally convex $\bL$-vector space;
\item[\rm(d)]
$(U_i)_{i\in I}$ is an open cover of~$M$;
\item[\rm(e)]
$(\psi_i)_{i\in I}$
is a family of bijections $\pi^{-1}(U_i)\to U_i\times F$
such that $\psi_i(\pi^{-1}(\{x\}))=\{x\}\times F$
for all $x\in U_i$;
\item[\rm(f)]
$g_{ij}(x)(v):=\pr_F(\psi_i(\psi_j^{-1}(x,v)))$
depends $\bL$-linearly on $v\in F$, for all $i,j\in I$,
$x\in U_i\cap U_j$;
\item[\rm(g)]
$G_{ij}\!: (U_i\cap U_j)\times F\to F$,
$G_{ij}(x,v):=g_{ij}(x)(v)$ is a $C^r_\K$-map.
\end{itemize}
Then there is a unique
$\bL$-vector bundle structure of class $C^r_\K$
on~$E$ making $\psi_i$
a local trivialization for each $i\in I$.
\end{prop}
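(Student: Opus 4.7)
The plan is to reverse-engineer the vector bundle structure from the given trivializations, in four steps: (i) transfer a vector space structure to each fibre; (ii) install a topology on~$E$ making each $\psi_i$ a homeomorphism; (iii) build a $C^r_\K$-atlas on~$E$ by combining manifold charts of~$M$ with the $\psi_i$; and (iv) verify the remaining bundle axioms together with uniqueness.

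First, I equip each fibre $E_x:=\pi^{-1}(\{x\})$ with the $\bL$-vector space structure transported from~$F$ via~$\psi_i$ for any~$i$ with $x\in U_i$. Hypothesis~(f) gives that $g_{ij}(x)$ is $\bL$-linear, and the fact that $\psi_i\circ\psi_j^{-1}$ and $\psi_j\circ\psi_i^{-1}$ are mutually inverse bijections forces $g_{ij}(x)\in\GL(F)$; hence the transported structure on $E_x$ does not depend on the chosen~$i$.

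Next, I declare $W\sub E$ to be open if and only if $\psi_i(W\cap\pi^{-1}(U_i))$ is open in $U_i\times F$ for every $i\in I$. That this is a topology for which every $\psi_i$ is a homeomorphism reduces to the continuity of $\psi_j\circ\psi_i^{-1}$ on its natural domain; but this map has the form $(x,v)\mto(x,G_{ji}(x,v))$, which is continuous by~(g). Hausdorffness follows by separating two points either in~$M$ and pulling the separation back through~$\pi$, or, if the points lie in a common fibre, by using the bijection onto $\{x\}\times F$ and the Hausdorffness of~$F$.

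For the $C^r_\K$-structure, I form an atlas on~$E$ from charts of the form $(\phi_\alpha\times\id_F)\circ\psi_i$ on $\pi^{-1}(V_\alpha)$, where $(\phi_\alpha,V_\alpha)$ runs through charts of~$M$ with $V_\alpha\sub U_i$. Two such charts overlap via a map of the shape $(y,v)\mto(\phi_\beta(\phi_\alpha^{-1}(y)),\, G_{ji}(\phi_\alpha^{-1}(y),v))$, which is $C^r_\K$ by~(a), (g), and the Chain Rule. In these charts $\pi$ becomes projection onto the first factor, hence $C^r_\K$, and by construction each $\psi_i$ is a $C^r_\K$-diffeomorphism that is $\bL$-linear on fibres, so Definition~\ref{defnbun} is satisfied. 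Uniqueness is immediate: for any other such bundle structure the $\psi_i$ are again $C^r_\K$-diffeomorphisms, so the identity map is $C^r_\K$ when read off in each $\psi_i$, hence globally. The main subtlety I anticipate is the topological bookkeeping---verifying Hausdorffness and the well-definedness of the topology before any manifold structure is available to lean on; once those are settled, everything else is a routine consequence of hypothesis~(g).
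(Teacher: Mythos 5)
Your proposal is correct and follows essentially the same route as the paper's proof: both reduce everything to the observation that $\psi_i\circ\psi_j^{-1}|_{(U_i\cap U_j)\times F}=(\pr_{ij},G_{ij})$ is $C^r_\K$ by hypothesis~(g), use this to glue a uniquely determined $C^r_\K$-manifold structure on~$E$ making each $\psi_i$ a diffeomorphism, and then transport the $\bL$-vector space structure fibre-wise via~(f), with well-definedness coming from $g_{ij}(x)\in\GL(F)$ and $g_{ij}(x)^{-1}=g_{ji}(x)$. The only difference is that you spell out the standard gluing bookkeeping (the topology, Hausdorffness, and the atlas) that the paper invokes as a known fact.
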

\begin{proof}
For $i,j\in I$, let $\pr_{ij}\colon (U_i\cap U_j)\times F\to U_i\cap U_j$
be the projection onto the first component.
As the maps
\[
\psi_i\circ
\psi_j^{-1}|_{(U_i\cap U_j)\times F}
= (\pr_{ij}, G_{ij})
\]
are $C^r_\K$,
there is a uniquely determined $C^r_\K$-manifold
structure on~$E$ making $\psi_i$
a $C^r_\K$-diffeomorphism
for each $i\in I$.
Given $x\in M$, we pick $i\in I$ with
$x\in U_i$; we give $E_x:=\pi^{-1}(\{x\})$
the unique $\bL$-vector space structure
making the bijection $\pr_F\circ \psi_i|_{E_x}\colon E_x\to F$
an isomorphism of vector spaces.
It is easy to see that the vector space structure on~$E_x$
is independent of the choice of~$\psi_i$,
and it is easily verified that we have turned
$E$ into an $\bL$-vector
bundle of class~$C^r_\K$ with the asserted properties.
\end{proof}
\begin{rem}\label{stand2}
Let $M$ be a $C^r_\K$-manifold,
$F$ be a locally convex $\bL$-vector space,
$(U_i)_{i\in I}$ be an open cover
of~$M$, and $(g_{ij})_{i,j\in I}$
be a family of maps $g_{ij}\colon U_i\cap U_j\to \GL(F)$
satisfying the cocycle conditions
and such that
\[
G_{ij}\colon (U_i\cap U_j)\times F\to F\, ,\quad
(x,v)\mto g_{ij}(x)(v)
\]
is $C^r_\K$,
for all $i,j\in I$.
Using Proposition~\ref{standfact},
the usual construction familiar from
the finite-dimensional case provides
an $\bL$-vector bundle $\pi\colon E\to M$
of class $C^r_\K$,
with typical fibre~$F$,
and a family $(\psi_i)_{i\in I}$ of local trivializations
$\pi^{-1}(U_i)\to U_i\times F$,
whose associated transition maps are the given $g_{ij}$'s.
The bundle~$E$ is unique up to canonical isomorphism.
\end{rem}
Combining Proposition~\ref{standfact}
and Proposition~\ref{reallyneed2}, we obtain:
\begin{cor}\label{cornew}
Retaining the hypotheses {\rm (a)--(f)}
from Proposition~{\rm \ref{standfact}}
but omitting {\rm (g)},
consider the following conditions:
\begin{itemize}
\item[\rm $\mbox{(g)}'$]
$g_{ij}(x)\in L(F)$
for all $i,j\in I$, $x\in U_i\cap U_j$,
and $g_{ij}\colon U_i\cap U_j\to L(F)_c$
is $C^r_\K$;
\item[\rm $\mbox{\,(g)}''$]
$g_{ij}(x)\in L(F)$
for all $i,j\in I$, $x\in U_i\cap U_j$,
and $g_{ij}\colon U_i\cap U_j\to L(F)_b$
is $C^r_\K$;
\item[\rm(i)]
$(Z\times F)\times (Z\times F)$
is a $k_\R$-space,
or $r=0$ and $Z\times F$ is a $k_\R$-space,
or $(r,\K)=(\infty,\C)$ and $Z\times F$
is a $k_\R$-space;
\item[\rm(ii)]
$\dim(M)<\infty$ and $F$ is barrelled;
\item[\rm(iii)]
$F$ is normable.
\end{itemize}
If {\rm$\mbox{(g)}'$}
holds as well as {\rm (i)} or {\rm(ii)},
then the conclusions of
Proposition~{\rm \ref{standfact}}
remain valid.
They also remain valid if
{\rm$\mbox{(g)}''$} and {\rm (iii)} hold.\,\Punkt
\end{cor}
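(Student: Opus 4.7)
The plan is to reduce all three implications to Proposition~\ref{standfact} by verifying its hypothesis~(g), namely that $G_{ij}\colon(U_i\cap U_j)\times F\to F$ is $C^r_\K$ for all $i,j\in I$. Because $C^r_\K$-ness on a manifold is a local condition, I will work chartwise: fix any chart $\varphi\colon W\to\varphi(W)\subseteq Z$ of~$M$ with $W\subseteq U_i\cap U_j$, and show that $(g_{ij}\circ\varphi^{-1})^\wedge\colon\varphi(W)\times F\to F$ is $C^r_\K$. Composing with $\varphi\times\id_F$ recovers $G_{ij}|_{W\times F}$, and ranging over a chart cover of $U_i\cap U_j$ then gives that $G_{ij}$ itself is $C^r_\K$. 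After this reduction, the situation is precisely that of Proposition~\ref{reallyneed2} applied with $k=1$, $X=Z$, $U=\varphi(W)$, $E_1=F$, and with~$f=g_{ij}\circ\varphi^{-1}$.

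Under hypothesis (g)$'$ combined with~(i), I invoke Proposition~\ref{reallyneed2}(a). The required $k_\R$-property on $(\varphi(W)\times F)\times(\varphi(W)\times F)$, or on $\varphi(W)\times F$ in the two reduced subcases of~(i), descends from the corresponding assumption on $(Z\times F)^2$, respectively on $Z\times F$, via Lemma~\ref{basic-k-R}(b): $(\varphi(W)\times F)^2$ is open in $(Z\times F)^2$, which is completely regular as a product of locally convex spaces, and the analogous statement holds without squaring. Under (g)$'$ combined with~(ii), the chart image $\varphi(W)\subseteq Z$ is finite-dimensional and~$F$ is barrelled, so Proposition~\ref{reallyneed2}(c) applies directly. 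Under (g)$''$ combined with~(iii), normability of~$F$ allows a direct appeal to Proposition~\ref{reallyneed2}(d), which exploits the bounded-set topology $L(F)_b$ on the target of~$g_{ij}$.

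In each of the three scenarios the matching part of Proposition~\ref{reallyneed2} supplies smoothness of $(g_{ij}\circ\varphi^{-1})^\wedge$, hence of $G_{ij}$. With Proposition~\ref{standfact}'s condition~(g) thereby established alongside the already-assumed (a)--(f), that proposition produces a unique $\bL$-vector bundle structure of class~$C^r_\K$ on~$E$ making every~$\psi_i$ a local trivialization, which is exactly the conclusion to be proved.

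I expect no substantive obstacle beyond localization bookkeeping: the core of the argument is simply to align each hypothesis pattern with the appropriate case of Proposition~\ref{reallyneed2}. The only point requiring any thought is the mismatch between the $k_\R$-hypotheses in~(i), which are stated for $Z\times F$, and the hypotheses of Proposition~\ref{reallyneed2}(a), which need them for an open subset; this mismatch is removed immediately by Lemma~\ref{basic-k-R}(b).
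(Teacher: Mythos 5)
Your proposal is correct and takes essentially the same route as the paper, whose proof is exactly the implicit one-liner ``combine Proposition~\ref{standfact} with Proposition~\ref{reallyneed2}'': you verify hypothesis~(g) of Proposition~\ref{standfact} chartwise by matching $\mbox{(g)}'$+(i), $\mbox{(g)}'$+(ii), and $\mbox{(g)}''$+(iii) with cases (a), (c), and (d) of Proposition~\ref{reallyneed2}, respectively. One minor simplification: the $k_\R$-hypotheses in Proposition~\ref{reallyneed2}\,(a) are imposed on the ambient product $(X\times E_1\times\cdots\times E_k)\times(X\times E_1\times\cdots\times E_k)$ rather than on an open subset, so with $X=Z$ and $E_1=F$ condition~(i) applies verbatim and your appeal to Lemma~\ref{basic-k-R}\,(b) is superfluous (the passage to open subsets is handled inside the proof of Lemma~\ref{diff-compo-with-hypo}).
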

Example~\ref{nodual} below
shows that Conditions
(a)--(f) and $\mbox{(g)}'$ alone are not sufficient
for the conclusion of Proposition~\ref{standfact},
without extra conditions on~$Z$ and~$F$.
Note that (i) is satisfied if both $Z$ and $F$
are metrizable, or both $Z$ and $F$
are $k_\omega$-spaces.\vspace{.5mm}
\begin{center}
{\bf Equivariant vector bundles}
\end{center}
Beyond vector bundles,
we shall discuss
\emph{equivariant} vector bundles
in the following, i.e.,
vector bundles together
with an action
of a (finite- or infinite-dimensional)
Lie group~$G$.
Choosing $G=\{e\}$ as a trivial group,
we obtain results about ordinary vector
bundles (without a group action),
as a special case.\\[2.5mm]
For the remainder of this section, and also in Section~\ref{sec-complete},
let
$\bL\in\{\R,\C\}$, $\K\in\{\R,\bL\}$,
$s\in \{\infty,\omega\}$, and $r\in \N_0\cup\{\infty,\omega\}$
with $r\leq s$.
Let $G$ be
a $C^s_\K$-Lie group
(modeled on a locally convex $\K$-vector space~$Y$)
and $M$ be a $C^r_\K$-manifold.
We assume that a $C^r_\K$-action
\[
\alpha\colon G\times M\to M
\]
is given.
Then $(M,\alpha)$
is called a \emph{$G$-manifold of class $C^r_\K$.}
\begin{defn}\label{def-equi}
An
\emph{equivariant $\bL$-vector bundle of class $C^r_\K$}
over a $G$-manifold $(M,\alpha)$ of class $C^r_\K$
is an $\bL$-vector bundle $\pi\colon E\to M$ of class $C^r_\K$,
together with a
$C^r_\K$-action
\[
\beta\colon G\times E\to E
\]
such that $\beta(g,E_x)\sub E_{\alpha(g,x)}$
for all $(g,x)\in G\times M$, and
$\beta(g,\cdot)|_{E_x}
\colon
E_x\to E_{\alpha(g,x)}$ is $\bL$-linear.
\end{defn}
In other words, $\beta(g,\cdot)$
takes fibres linearly to fibres and coincides
with $\alpha(g,\cdot)$ on the zero
section. The mapping~$\pi$ is then equivariant
in the sense that $\alpha\circ (\id_G\times \pi)=\pi\circ\beta$.
\begin{example}\label{acttan}
If $M$ is a $G$-manifold of class $C^r_\K$,
with $r\geq 1$, then the tangent bundle $TM$ is an
equivariant
$\bL$-vector bundle of class $C^{r-1}_\K$ in a natural way,
with $\bL:=\K$.
In fact, the action $\alpha\colon G\times M\to M$
has a tangent map
$T\alpha\colon T(G\times M)\to TM$ which is $C^{r-1}_\K$.
Let $0_G\colon G\to TG$ be the $0$-section.
Identifying $T(G\times M)$ with $TG\times TM$
in the usual way, we obtain a $C^{r-1}_\K$-map
$\beta\colon G\times TM\to TM$ via
\[
\beta:=(T\alpha)\circ (0_G\times \id_{TM})\, .
\]
It is easy to see that
$\beta(g,v)=T_x(\alpha(g,\cdot))(v)\in
T_{\alpha(g,x)}M$ for $g\in G$
and $v\in T_xM$,
whence $\beta(g,T_xM)\sub T_{\alpha(g,x)}M$
and $\beta(g,\cdot)|_{T_xM}
= T_x(\alpha(g,\cdot))$.
Clearly $\beta$ is an action
making $TM$ an
equivariant $\K$-vector bundle of class $C^{r-1}_\K$ over
the $G$-manifold~$M$.
\end{example}
\begin{center}
{\bf Induced action on an invariant subbundle}
\end{center}
Given an
$\bL$-vector bundle $\pi\!: E\to M$ of class~$C^r_\K$,
with typical fibre~$F$,
we call a subset $E_0\sub E$
a {\em subbundle\/}
if there exists a sequentially closed
$\bL$-vector subspace $F_0\sub F$
such that for each $x\in M$ there exists
a local trivialization $\psi\!: \pi^{-1}(U)\to U\times F$
of~$E$
such that $\psi(E_0\cap \pi^{-1}(U))=U\times F_0$.
It readily follows from
\cite[Lemma~1.3.19]{GaN}
and \cite[Proposition~2.11]{RES}
that there is a unique
$\bL$-vector bundle structure of class $C^r_\K$ on
$\pi|_{E_0}\!: E_0\to M$
making $\psi|_{\pi^{-1}(U)\cap E_0}\!:
\pi^{-1}(U)\cap E_0\to U\times F_0$
a local trivialization of~$E_0$,
for each local trivialization~$\psi$
as before.
Then the inclusion map $E_0\to E$ is $C^r_\K$, and a
mapping $N\to E$ from a $C^r_\K$-manifold~$N$ to~$E$
with image in~$E_0$ is~$C^r_\K$
as a mapping to~$E$
if and only if its co-restriction to~$E_0$ is~$C^r_\K$,
by the facts just cited.
In the preceding situation,
suppose that a $C^s_\K$-Lie group~$G$ acts~$C^s_\K$ on~$M$
and $E$ is an
equivariant vector bundle of class~$C^r_\K$
with respect to the action $\beta\!: G\times E\to E$.
If $E_0$ is invariant under the $G$-action, {\em i.e.}
if
$\beta(G\times E_0)\sub E_0$,
as a special case of the preceding observations
we deduce from the $C^r_\K$-property
of~$\beta$ that $\beta|_{G\times E_0}$ and
thus also $\beta|_{G\times E_0}\colon G\times E_0\to E_0$
is~$C^r_\K$. Summing up:
\begin{prop}\label{propsubbun}
If $E$ is an
equivariant $\bL$-vector bundle of class $C^r_\K$
over a $G$-manifold~$M$,
then the action induced
on any $G$-invariant subbundle~$E_0$
is $C^r_\K$
and thus makes the latter an
equivariant $\bL$-vector bundle of class $C^r_\K$.\, \Punkt
\end{prop}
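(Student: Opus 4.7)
The plan is to observe that essentially all the work is already done in the paragraph preceding the proposition, so the proof is a short assembly of the facts listed there together with the bundle structure on $E_0$ already identified. In particular, I would not need to revisit the hypocontinuity machinery of Sections~\ref{sec2}--\ref{sec-complete}; the proof is formal and relies on the definition of subbundle and on the transfer of $C^r_\K$-differentiability to and from a subbundle.

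First, I would invoke the discussion preceding the proposition to equip $E_0$ with its unique $C^r_\K$-vector bundle structure: pick local trivializations $\psi \colon \pi^{-1}(U)\to U\times F$ of~$E$ with $\psi(E_0\cap \pi^{-1}(U))=U\times F_0$ (for the sequentially closed subspace $F_0\sub F$ associated with~$E_0$), and declare their restrictions to be local trivializations of~$E_0$. The cited results \cite[Lemma~1.3.19]{GaN} and \cite[Proposition~2.11]{RES} make this well-defined and ensure that the inclusion $\iota\colon E_0\to E$ is~$C^r_\K$.

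Next, I would form the product map
\[
\id_G\times \iota \colon G\times E_0\to G\times E,
\]
which is $C^r_\K$ because products of $C^r_\K$-maps are~$C^r_\K$. Composing with the action $\beta\colon G\times E\to E$ yields a $C^r_\K$-map
\[
\beta\circ (\id_G\times \iota)\colon G\times E_0\to E,
\]
which is exactly the restriction $\beta|_{G\times E_0}$ considered as a map into~$E$. Since $E_0$ is $G$-invariant, its image lies in~$E_0$, so by the ``co-restriction'' principle recalled just before the proposition (namely that a $C^r_\K$-map $N\to E$ with image in~$E_0$ is~$C^r_\K$ if and only if its co-restriction $N\to E_0$ is $C^r_\K$), the co-restricted map $\beta_0\colon G\times E_0\to E_0$ is~$C^r_\K$.

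It remains to verify the structural axioms of Definition~\ref{def-equi} for $\beta_0$, but these are all inherited from~$\beta$: the fact that $\beta_0$ is a group action follows from the group action property of~$\beta$ on~$E$; fibre-preservation $\beta_0(g, (E_0)_x)\sub (E_0)_{\alpha(g,x)}$ follows from $\beta(g,E_x)\sub E_{\alpha(g,x)}$ combined with $G$-invariance of~$E_0$; and linearity of $\beta_0(g,\cdot)|_{(E_0)_x}$ is inherited from the linearity of $\beta(g,\cdot)|_{E_x}$. There is no genuine obstacle to overcome; the only thing that requires mild care is a clean invocation of the co-restriction lemma, and a reminder that $(E_0)_x = E_x \cap E_0$ carries the subspace topology from~$E_x$, so linearity passes verbatim.
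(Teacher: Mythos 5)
Your proposal is correct and coincides with the paper's own argument, which is exactly the paragraph preceding the proposition: equip $E_0$ with the unique $C^r_\K$-bundle structure via restricted trivializations (citing \cite[Lemma~1.3.19]{GaN} and \cite[Proposition~2.11]{RES}), note that the inclusion $E_0\to E$ is $C^r_\K$, and apply the co-restriction criterion to $\beta\circ(\id_G\times\iota)=\beta|_{G\times E_0}$, whose image lies in $E_0$ by invariance. Your explicit verification that the action axioms (fibre-preservation and linearity on fibres) are inherited is a harmless elaboration of what the paper leaves implicit.
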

\section{Completions of vector bundles}\label{sec-complete}
Let $\pi\colon  E\to M$ be an
equivariant $\bL$-vector bundle of class $C^r_\K$,
as in Definition~\ref{def-equi},
with typical fibre~$F$ and $G$-actions
$\alpha\colon G\times M\to M$ and
$\beta\colon G\times E\to E$.
Assume that $r\geq 1$.
Our goal is to complete the fibre of the bundle,
i.e., to find a $G$-equivariant
vector bundle~$\wt{E}$ whose typical
fibre is a completion of the locally convex space~$F$,
and which contains~$E$ as a dense subset.
\begin{numba}\label{the-setting-compl}
Let $\wt{F}$ be a completion of~$F$
such that $F\sub \wt{F}$
and, for each $x\in M$, let $\wt{E}_x$ be
a completion of $E_x$
such that $E_x\sub \wt{E}_x$.
We may assume that the sets $\wt{E}_x$
are pairwise disjoint for $x\in M$.
Consider the (disjoint) union
\[
\wt{E}\; :=\; \bigcup_{x\in M}\wt{E}_x\,.
\]
We shall turn $\wt{E}$
into an equivariant vector bundle.
Consider the map
$\wt{\beta}\colon G\times \wt{E}\to \wt{E}$,
defined using the continuous
extension $(\beta(g,\cdot)|_{E_x})\wt{\;}\colon \wt{E}_x\to \wt{E}_{\alpha(g,x)}$
of the linear mapping\linebreak
$\beta(g,\cdot)|_{E_x}\colon E_x\to E_{\alpha(g,x)}$
via
\[
\wt{\beta}(g,v)
:=
(\beta(g,\cdot)|_{E_x})\wt{\;}(v)
\]
for $g\in G$, $x\in M$, and $v \in \wt{E}_x$.
It is clear that $\wt{\beta}$
makes $\wt{E}$ a $G$-set.
Let
$\wt{\pi}\colon \wt{E}\to M$ be the map
taking elements
from $\wt{E}_x$ to~$x$.
Then $\wt{\pi}$ is $G$-equivariant.
If
$\psi\colon \pi^{-1}(U)\to U\times F$ is a local trivialization
of~$E$ and $\pr_F\colon U\times F\to F$, $(x,y)\mto y$, we define
\begin{equation}\label{wtpsi2}
\wt{\psi}\colon
\wt{\pi}^{-1}(U)\to U\times \wt{F}\,,\quad
\wt{E}_x \ni v \mto (x, (\pr_F\circ \psi|_{E_x})\wt{\;}(v))\,.
\end{equation}
Then the following holds:
\end{numba}
\begin{prop}\label{complbdl}
$(\wt{E},\wt{\beta})$ can be made
an equivariant $\bL$-vector bundle of class
$C^{r-1}_\K$ over the $G$-manifold~$M$,
such that $\wt{\psi}$
is a local trivialization of~$\wt{E}$
for each local trivialization $\psi$ of~$E$.
\end{prop}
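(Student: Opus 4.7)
My plan is to construct the vector bundle structure on $\wt E$ via cocycles (applying Remark \ref{stand2}), and then to deal with the $G$-action by an analogous extension argument in local coordinates.

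First I would fix an atlas $(\psi_i\colon \pi^{-1}(U_i)\to U_i\times F)_{i\in I}$ of trivializations of $E$, together with its $C^r_\K$-cocycle $(g_{ij},G_{ij})$ as in \ref{defncocyc}. Each $g_{ij}(x)\in\GL(F)$ extends uniquely to a continuous $\bL$-linear map $\wt g_{ij}(x)\in L(\wt F,\wt F)$, and by continuity the cocycle identities persist, so that $\wt g_{ij}(x)\wt g_{ji}(x)=\id_{\wt F}$ and hence $\wt g_{ij}(x)\in\GL(\wt F)$. Setting $\wt G_{ij}(x,w):=\wt g_{ij}(x)(w)$, the extensions $\wt\psi_i$ of (\ref{wtpsi2}) satisfy $\wt\psi_i\circ\wt\psi_j^{-1}(x,w)=(x,\wt G_{ij}(x,w))$ at the set-theoretic level.

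The central step, which I expect to be the main obstacle, is to show that each $\wt G_{ij}$ is $C^{r-1}_\K$ without invoking any $k_\R$-hypothesis. My approach is to go through the Lipschitz-differentiable calculus of Definition \ref{lip-diffble}: since $G_{ij}$ is $C^r_\K$ with $r\geq 1$, it is $LC^{r-1}_\K$. Fibrewise linearity in $v$ forces the iterated differentials of $G_{ij}$, for $k\leq r-1$, to be jointly linear in $v$ and in each $w_\nu$, with an explicit Leibniz-type expansion
\[
d^k G_{ij}\bigl((x,v),(y_1,w_1),\dots,(y_k,w_k)\bigr)=d_1^k G_{ij}(x,v,y_1,\dots,y_k)+\sum_{\nu=1}^{k}d_1^{\,k-1}G_{ij}\bigl(x,w_\nu,y_1,\dots,\widehat{y_\nu},\dots,y_k\bigr),
\]
where $d_1$ denotes partial differentiation in the base variable. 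By Lemma \ref{wippe}, each summand on a sufficiently small neighborhood is dominated by a product of continuous seminorms of $v$ (or of~$w_\nu$) and is locally Lipschitz in the base direction, with Lipschitz constants of the same form. Since $F$ is dense in $\wt F$ and continuous seminorms on $F$ extend continuously to $\wt F$, each summand extends uniquely and continuously to $(U_i\cap U_j)\times\wt F\times X^k\times\wt F^k\to \wt F$, giving natural candidates for $d^k\wt G_{ij}$. That these candidates are in fact the iterated G\^ateaux differentials of $\wt G_{ij}$ is verified by induction on $k$, using Lemma \ref{fanddf} to reduce to $k=1$ and passing the difference quotients through the extensions via the Lipschitz bounds of Lemma \ref{wippe}. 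The case $r=\omega$ is then reduced to the $C^\infty_\C$-case by complex analytic extension and the Identity Theorem, just as in the proof of Proposition \ref{reallyneed1}.

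Once $(\wt g_{ij})_{i,j\in I}$ is known to be a $C^{r-1}_\K$-cocycle in the sense of Remark \ref{stand2}, that remark produces an $\bL$-vector bundle $\wt\pi'\colon \wt E'\to M$ of class $C^{r-1}_\K$ with typical fibre~$\wt F$ and local trivializations realising the prescribed cocycle. Since fibrewise each $\wt E'_x$ and each $\wt E_x$ from \ref{the-setting-compl} is canonically a completion of $E_x$, there is a unique fibre-preserving bijection $\wt E\to \wt E'$ extending $\id_E$ and intertwining $\wt\psi_i$ with the trivializations of $\wt E'$; transporting the manifold structure along this bijection yields the sought $C^{r-1}_\K$-bundle structure on $\wt E$ for which the maps $\wt\psi_i$ are local trivializations. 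For the $G$-action, I would argue locally: near $(g_0,e_0)\in G\times\wt E$ with $\wt\pi(e_0)=x_0$ and $\alpha(g_0,x_0)=y_0$, choose trivializations $\psi_i$ around $x_0$ and $\psi_j$ around $y_0$ and a product neighborhood $V\times U\subseteq G\times U_i$ with $\alpha(V\times U)\subseteq U_j$. In these coordinates, $\beta$ reads $(g,x,v)\mapsto(\alpha(g,x),B(g,x,v))$ for some $C^r_\K$-map $B\colon V\times U\times F\to F$ which is $\bL$-linear in~$v$; applying the preceding extension argument to $B$ (with base $V\times U$ in place of $U_i\cap U_j$) yields a $C^{r-1}_\K$-extension $\wt B\colon V\times U\times\wt F\to \wt F$, and $\wt\beta$ reads $(g,x,w)\mapsto(\alpha(g,x),\wt B(g,x,w))$ in the trivializations $\wt\psi_i,\wt\psi_j$. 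Thus $\wt\beta$ is $C^{r-1}_\K$ and maps fibres linearly to fibres, completing the construction.
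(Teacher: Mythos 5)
Your proposal is correct and follows the paper's global architecture (extend the cocycle maps $G_{ij}$ fibrewise, verify the cocycle identities and the invertibility of $\wt g_{ij}(x)$ by density and continuity, build the bundle via Proposition~\ref{standfact}/Remark~\ref{stand2}, and treat the $G$-action through a local fibre-linear map extended in the same way), but the engine you use for the central extension step is genuinely different. The paper funnels everything through Proposition~\ref{extend-second-multi}, whose proof combines three general tools: Proposition~\ref{rwafct} (extension of arbitrary $LC^r_\K$-maps into \emph{Banach} targets, obtained by factoring locally through the normed quotients $E_q$ and using uniform continuity), a projective-limit argument writing the complete target as a projective limit of Banach spaces, and the rescaling trick of Lemma~\ref{enblinduc} to propagate the extension over the whole completed fibre. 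You instead exploit fibrewise linearity from the outset: your Leibniz-type formula for $d^kG_{ij}$ reduces everything to the partial differentials $d_1^m G_{ij}(x,\cdot,y_1,\ldots,y_m)$, which are continuous linear in the fibre slot and hence extend to $\wt F$ merely because $\wt F$ is complete, with the estimates of Lemma~\ref{wippe} (applicable for all $m\leq r-1$, since $d_1^mG_{ij}$ is then still $C^1$) furnishing joint continuity of the candidates and the control of difference quotients needed in the induction. This is sound, and it buys directness: no Banach reduction or projective limits are needed, precisely because the base stays fixed and only the fibre is completed --- which also makes transparent why the pathology of Appendix~\ref{appB} (which concerns completing the \emph{domain}) does not intervene here. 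What the paper's route buys is generality and reusability: Proposition~\ref{rwafct} needs no linearity, and Proposition~\ref{extend-second-multi} is quoted verbatim a second time for the action. Three points to tighten: let $I$ index \emph{all} local trivializations of~$E$ (as the paper does), so that $\wt\psi$ is a trivialization for every $\psi$, as the statement demands; check that your fibre-preserving bijection $\wt E\to\wt E'$ is well defined before transporting the structure (the paper sidesteps this by applying an $LC^r_\K$-analogue of Proposition~\ref{standfact} directly to the maps $\wt\psi_i$ on $\wt E$ itself); and for $r=\omega$ the bare reduction to the $C^\infty_\C$-case does not suffice --- one needs the tube-domain argument from the proof of Proposition~\ref{extend-second-multi}, where the maps $g_{x,n}(z,y)=n^k h(z,(1/n)y)$ are glued by the Identity Theorem to obtain a fibrewise-globally defined analytic extension before completing; your citation of Proposition~\ref{reallyneed1} points at the right technique, but this step must be supplied.
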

\begin{rem}\label{strength-compl}
Omitting the hypothesis that $r\geq 1$,
assume instead that~$E$ is a equivariant $\bL$-vector bundle
\emph{of class $LC^r_\K$}.
That is, both $E$ and $M$
are $LC^r_\K$-manifolds (each admitting an atlas
with transition maps of class $LC^r_\K$),
a family of local trivializations can be chosen
with $LC^r_\K$-transition maps,
and the $G$-actions on~$E$ and~$M$
are $LC^r_\K$.
Then also
$\wt{E}$ is an equivariant vector bundle
of class $LC^r_\K$ $($and hence of class $C^r_\K)$.
\end{rem}
{\bf Extension of differentiable maps to subsets of the completions.}
\,To enable the proof of Proposition~\ref{complbdl},
we need to discuss conditions ensuring
that a $C^r$-map\linebreak
$f\colon E\supseteq U\to F$ (with
locally convex spaces $E$ and~$F$)
can be extended to a $C^r$-map $\wt{U}\to \wt{F}$
on an open subset of the completion $\wt{E}$ of~$E$,
or at least to a $C^{r-1}$-map.
Although this is not possible
in general, it is possible if~$F$
is normed and~$r$ is finite.
This will be sufficient  for
our ends. The natural framework for the discussion
of the problem are not $C^r$-maps, but Lipschitz
differentiable maps,
as in Definition~\ref{lip-diffble}.
\begin{prop}\label{rwafct}
Let $E$ be a locally convex
$\K$-vector space,
$(F,\|\cdot\|)$ be a Banach space over~$\K$,
$U\sub E$ be open and
$f\colon U\to F$
be an $LC_\K^r$-map,
where $r\in \N_0$.
Let $\wt{E}$
be a completion of~$E$
such that $E\sub \wt{E}$.
Then $f$ extends to an
$LC^r_\K$-map
$\wt{f}\colon \wt{U}\to F$
on an open subset $\wt{U}\sub \wt{E}$
which contains~$U$ as a dense subset.
\end{prop}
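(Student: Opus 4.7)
The plan is to prove Proposition~\ref{rwafct} by induction on $r$, with the Banach-valued Lipschitz structure as the essential input. The base case $r=0$ is a local Lipschitz-extension argument, and the inductive step deduces differentiability from a fundamental-theorem-of-calculus identity after extending $df$ separately via the inductive hypothesis.

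For the base case, for each $x_0\in U$ the $LC^0_\K$-property supplies a continuous seminorm $p=p_{x_0}$ on~$E$ with $B^p_1(x_0)\sub U$ and $\|f(y)-f(z)\|\le p(y-z)$ on this ball. First I would extend $p$ uniquely to a continuous seminorm $\tilde p$ on $\tilde E$; then $\tilde B(x_0):=\{\tilde y\in\tilde E:\tilde p(\tilde y-x_0)<1\}$ is open in $\tilde E$ and contains $B^p_1(x_0)$ as a dense subset. For $\tilde y\in\tilde B(x_0)$ I would choose a net $(y_\alpha)$ in $B^p_1(x_0)$ with $y_\alpha\to\tilde y$; then $(f(y_\alpha))$ is Cauchy in~$F$ by the Lipschitz bound, and its limit --- which exists because~$F$ is Banach --- is independent of the net. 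This defines $\tilde f(\tilde y)$. Local extensions agree on overlaps by density and continuity, so $\tilde U:=\bigcup_{x_0\in U}\tilde B(x_0)$ and the corresponding $\tilde f\colon\tilde U\to F$ are well defined; $\tilde f$ is $LC^0_\K$ because the Lipschitz estimates pass to the completion.

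For the inductive step $r\ge 1$, the differential $df\colon U\times E\to F$ is $LC^{r-1}_\K$ by definition of $LC^r_\K$. Applying the inductive hypothesis to~$df$, with ambient space $E\times E$ and completion $\tilde E\times \tilde E$, yields an $LC^{r-1}_\K$-extension $\widetilde{df}\colon V\to F$ on an open $V\sub\tilde E\times\tilde E$ containing $U\times E$ densely. Because $df(x,\cdot)$ is continuous linear on~$E$ for each $x\in U$, its Lipschitz bounds let me arrange (after shrinking) that $V=\tilde U_1\times\tilde E$ for some open $\tilde U_1\sub\tilde E$ with $U\sub\tilde U_1$ and that $\widetilde{df}(\tilde x,\cdot)$ is continuous linear on~$\tilde E$ for each $\tilde x\in\tilde U_1$. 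Intersecting with the base-case extension gives $\tilde U\sub\tilde U_1$ and $\tilde f\colon\tilde U\to F$. The fundamental-theorem identity
\[
f(x+y)-f(x)=\int_0^1 df(x+sy,y)\,ds
\]
for $x\in U$, $y\in E$ with $x+[0,1]y\sub U$ extends by density and by uniform control in $s\in[0,1]$ to
\[
\tilde f(\tilde x+\tilde y)-\tilde f(\tilde x)=\int_0^1 \widetilde{df}(\tilde x+s\tilde y,\tilde y)\,ds
\]
whenever $\tilde x+[0,1]\tilde y\sub\tilde U$. Linearity of $\widetilde{df}$ in its second argument then identifies $d\tilde f=\widetilde{df}|_{\tilde U\times\tilde E}$, which is $LC^{r-1}_\K$ by induction; hence $\tilde f$ is $LC^r_\K$.

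The delicate part will be the inductive step, specifically the bookkeeping ensuring that the segment $\tilde x+[0,1]\tilde y$ lies in the joint domain of $\tilde f$ and $\widetilde{df}$ and that the limit passage in the integral is legitimate. Controlling the integrand uniformly in $s\in[0,1]$ requires the explicit Lipschitz bounds furnished by the $LC^0_\K$-property of $\widetilde{df}$, together with sufficient shrinking of $\tilde U$ so that the integration paths stay inside the common domain. The extension of a continuous seminorm from~$E$ to its completion, and the consistency checks for the $r=0$ extension on overlapping balls, are comparatively routine.
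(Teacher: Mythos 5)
Your proposal is correct and follows the paper's proof in all essentials: induction on $r$, a base case that extends $f$ by uniform continuity into the Banach target (the paper packages your net-limit construction by factoring through the normed quotient $E_q$ and its completion, which is equivalent), and an inductive step that extends $df$ via the inductive hypothesis and identifies $d\wt{f}$ with the extension through a parameter-dependent integral/Mean Value Theorem argument. The one step you defer---arranging the domain of the extension of $df$ to be of the form $\wt{U}_1\times\wt{E}$ with fibrewise linearity---is precisely the paper's Lemma~\ref{enblinduc}, whose actual mechanism is not Lipschitz bounds but homogeneity plus density (the identity $h(y,zv)=z^k h(y,v)$ propagated from a dense subset, followed by the rescaling $(y,v)\mapsto z^{-k}h(y,zv)$).
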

The following lemma enables an inductive proof
of Proposition~\ref{rwafct}.
\begin{la}\label{enblinduc}
Let $k\in \N$,
$X$ be a locally convex $\K$-vector space, and
$E_1,\ldots, E_k,F$
be locally convex $\bL$-vector spaces,
with completions $\wt{X}$, $\wt{E}_1,\ldots, \wt{E}_k$
and $\wt{F}$.
Let $U\sub X$ be open and $f\colon U\times E_1\times \cdots\times E_k\to F$
be a map such that $f^\vee(x):=f(x,\cdot)\colon
E_1\times \cdots\times E_k\to F$
is $k$-linear over~$\bL$ for each $x\in U$.
Assume that there exists an $LC^r_\K$-map
$h \colon W\to \wt{F}$ which extends~$f$,
defined on
an open set
$W\sub \wt{X}\times \wt{E}_1\times \cdots\times \wt{E}_k$
in which $U\times E_1\times\cdots\times E_k$
is dense.
Then there exists an $LC^r_\K$-map
\begin{equation}\label{desidat}
\wt{f}\colon \wt{U}\times \wt{E}_1\times\cdots\times \wt{E}_k\to
\wt{F}
\end{equation}
which extends~$f$, for some open subset
$\wt{U}\sub \wt{E}$ in which~$U$ is dense.
The maps $(\wt{f})^\vee(x):=\wt{f}(x,\cdot)\colon
\wt{E}_1\times\cdots\times \wt{E}_k\to \wt{F}$
are $k$-linear over~$\bL$, for each $x\in \wt{U}$.
\end{la}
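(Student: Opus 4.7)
Set $\wt{U} := \{x \in \wt{X} \colon (x, 0, \ldots, 0) \in W\}$, which is open in $\wt{X}$ as the preimage of $W$ under the continuous embedding $x \mapsto (x, 0, \ldots, 0)$. The inclusion $U \times E_1 \times \cdots \times E_k \sub W$ gives $U \sub \wt{U}$, and density of $U$ in $\wt{U}$ follows from the density hypothesis by projecting onto the first factor.

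For each $(x_0, v_1^0, \ldots, v_k^0) \in \wt{U} \times \wt{E}_1 \times \cdots \times \wt{E}_k$, choose an open neighbourhood $V_0 \sub \wt{U}$ of $x_0$ and balanced open $0$-neighbourhoods $Y_j \sub \wt{E}_j$ with $V_0 \times Y_1 \times \cdots \times Y_k \sub W$; then pick scalars $t_j \in \bL^\times$ such that $t_j v_j^0$ lies in the interior of $Y_j$, and open neighbourhoods $N_j$ of $v_j^0$ in $\wt{E}_j$ with $t_j N_j \sub Y_j$. On $V_0 \times N_1 \times \cdots \times N_k$, define $\wt{f}$ via the \emph{scaling formula}
\[
\wt{f}(y, w_1, \ldots, w_k) := (t_1 \cdots t_k)^{-1}\, h(y, t_1 w_1, \ldots, t_k w_k).
\]
Since the rescaling $(y, w_1, \ldots, w_k) \mapsto (y, t_1 w_1, \ldots, t_k w_k)$ is continuous $\K$-linear into $W$ and $h$ is $LC^r_\K$, this locally presents $\wt{f}$ as an $LC^r_\K$-map (by Lemma~\ref{lem-lipdiff}).

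The central task is to verify that the local definitions are consistent, yielding a globally defined $\wt{f} \colon \wt{U} \times \wt{E}_1 \times \cdots \times \wt{E}_k \to \wt{F}$ of class $LC^r_\K$. This reduces to a \emph{partial multilinearity} property of $h$: for $y \in V_0$, $w_j \in Y_j$ and $s_j \in \bL$ with $s_j w_j \in Y_j$,
\[
h(y, s_1 w_1, \ldots, s_k w_k) = s_1 \cdots s_k \, h(y, w_1, \ldots, w_k),
\]
together with an analogous additivity identity whenever all arguments remain within the respective $Y_j$. These identities hold on $(V_0 \cap U) \times (Y_1 \cap E_1) \times \cdots \times (Y_k \cap E_k)$ by the $k$-linearity of $f = h$ there, and extend by continuity of $h$ using that $V_0 \cap U$ is dense in $V_0$ and $Y_j \cap E_j$ is dense in $Y_j$ (standard properties of the completion). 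They also deliver the $k$-linearity of each slice $(\wt{f})^\vee(x) = \wt{f}(x, \cdot)$; that $\wt{f}$ extends $f$ is immediate from the multilinearity of $f$ applied to the scaling formula.

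The delicate point lies in the density argument for the partial multilinearity: for fixed scalars $s_j$, the constraint $s_j w_j \in Y_j$ cuts out a possibly thin subset of $V_0 \times Y_1 \times \cdots \times Y_k$, and one must verify that this subset still has dense intersection with $(V_0 \cap U) \times (Y_1 \cap E_1) \times \cdots \times (Y_k \cap E_k)$ so that continuity of $h$ transfers the identity from the dense part. Once this is in hand, the openness and density assertions about $\wt{U}$, independence of the local definitions from the choices of $V_0, Y_j, t_j, N_j$, multilinearity of the slices, $LC^r_\K$-regularity, and agreement with $f$ all follow routinely.
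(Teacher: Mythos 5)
Your route is essentially the paper's: extend by a scaling/homogeneity trick on basic product neighbourhoods inside $W$, transfer the needed algebraic identities from $f$ to $h$ by continuity and density, glue the local definitions, and get the $LC^r_\K$-property locally from Lemma~\ref{lem-lipdiff}\,(b) since each local formula is $h$ precomposed with a continuous linear rescaling. The only substantive difference is that you rescale each factor by a separate scalar $t_j$, while the paper rescales the whole tuple by a single scalar $z$ with $|z|\leq 1$, using a \emph{balanced} $0$-neighbourhood $Q_x$ of the product $\wt{E}_1\times\cdots\times\wt{E}_k$; this choice is exactly what makes your ``delicate point'' disappear in the paper, since $zv\in Q_x$ automatically whenever $v\in Q_x$, so the identity $h(y,zv)=z^kh(y,v)$ is between two functions defined and continuous on all of $V_x\times Q_x$, where $(V_x\cap U)\times(Q_x\cap(E_1\times\cdots\times E_k))$ is dense.

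In any case, the point you flag is not actually delicate and you should close it as follows: for \emph{fixed} scalars $s_1,\ldots,s_k$, the constraint set $D_s:=\{(y,u)\in W\colon (y,s_1u_1,\ldots,s_ku_k)\in W\}$ is \emph{open} (it is the intersection of $W$ with the preimage of $W$ under a continuous linear map), and a subset dense in $W$ is dense in every open subset of $W$; hence $h(y,s_1u_1,\ldots,s_ku_k)=s_1\cdots s_k\,h(y,u_1,\ldots,u_k)$ holds on all of $D_s$, since both sides are continuous there and agree on $D_s\cap(U\times E_1\times\cdots\times E_k)$. Note that you need this cross-patch form (both $(y,u)$ and $(y,su)$ in $W$, not both in the same $V_0\times Y_1\times\cdots\times Y_k$) to compare two overlapping local definitions with different data $(t_j,Y_j)$ and $(t'_j,Y'_j)$, via $u_j:=t_jw_j$ and $s_j:=t'_j/t_j$; your one-patch formulation is slightly too narrow as stated, though the same density argument proves the general version. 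Also, as in the paper, no additivity identity is needed for the gluing—homogeneity alone gives consistency; additivity enters only for the final assertion, and once the global continuous $\wt{f}$ is in hand it is most cleanly obtained by a single density argument on $\wt{U}\times\wt{E}_1^{\,2}\times\wt{E}_2\times\cdots\times\wt{E}_k$ (the defect of linearity in each slot is continuous and vanishes on the dense subset with base point in $U$ and vectors in the $E_j$), rather than via the constrained local identities.
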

\begin{proof}
For each $x\in U$, there exist
an open neighbourhood $V_x$ of~$x$ in~$\wt{X}$
and a balanced, open $0$-neighbourhood
$Q_x\sub \wt{E}_1\times\cdots\times \wt{E}_k$
such that $V_x\times Q_x\sub W$.
After shrinking~$V_x$, we may assume that $X\cap V_x=U$,
whence $U\cap V_x=X\cap V_x$ is dense in~$V_x$.
Given $z\in \bL$ such that $|z|\leq 1$,
consider the map
\[
V_x\times Q_x\to \wt{F}\,,\quad (y,v)\mto
h(y,zv)-z^kh(y,v)\,.
\]
This map vanishes, because it is continuous
and vanishes on the dense subset\linebreak
$(V_x\cap X)\times (Q_x\cap (E_1\times\cdots\times E_k))$.
As a consequence, we obtain a well-defined
map
\[
f_x\colon V_x\times
\wt{E}_1\times\cdots\times \wt{E}_k\to \wt{F}\,,
\quad
(y,v)\mto
z^{-k} h(y,zv)
\]
for $y\in V_x$, $v\in
\wt{E}_1\times\cdots\times \wt{E}_k$
and $z\in \bL\setminus\{0\}$ with
$zv\in Q_x$.
As $f_x(y,v)=z^{-k} h(y,zv)$
is $LC^r_\K$ in $(y,v)\in V_x\times z^{-1}Q_x$ and these
sets form an open cover of $V_x\times
\wt{E}_1\times\cdots\times \wt{E}_k$,
we see that $f_x$ is~$LC^r_\K$.
Given $x,y\in U$,
the set $U\cap V_x\cap V_y=X\cap V_x\cap V_y$
is dense in the open set $V_x\cap V_y\sub \wt{X}$.
Since $f_x$, $f_y$, and $f$ coincide
on the set $(U\cap V_x\cap V_y)\times E_1\times \cdots\times E_k$,
it follows that the continuous maps
$f_x$ and $f_y$ coincide on the set
$(V_x\cap V_y)\times \wt{E}_1\times\cdots\times \wt{E}_k$
in which the former set is dense.
Hence, setting $\wt{U}:=\bigcup_{x\in U}V_x$,
a well-defined map
$\wt{f}$ as in~(\ref{desidat}) is obtained
if we set
\[
\wt{f}(y,v):=f_x(y,v)\quad \mbox{if $\,x\in U$, $y\in V_x$ and $v\in
\wt{E}_1\times\cdots\times \wt{E}_k$.}
\]
The final assertion follows by continuity
from the linearity of the maps $f^\vee(x)$
for $x\in U$.
\end{proof}
{\bf Proof of Proposition~\ref{rwafct}.}
We proceed by induction on $r\in \N_0$.\\[2.5mm]
\emph{The case $r=0$.}
Given $x\in U$, there exists a continuous seminorm
$q$ on~$E$ such that $B^q_1(x)\sub U$ and
\begin{equation}\label{ensfactor}
\|f(z)-f(y)\|\leq q(z-y)\quad\mbox{for all $\,y,z\in B^q_1(x)$.}
\end{equation}
Then $N_q:=\{y\in E\colon q(y)=0\}$
is a closed vector subspace of~$E$
and $\|y+N_q\|_q:=q(y)$ for $y\in E$
defines a norm on $E_q:=E/N_q$
making the map $\alpha_q\colon E\to E_q$, $y\mto y+N_q$
continuous linear.
By (\ref{ensfactor}),
we have $\|f(z)-f(y)\|=0$
for all $y,z\in B^q_1(x)$ such that $y-z\in N_q$.
Hence
\[
h\colon \alpha_q(B^q_1(x))\to F\,,\quad
y+N_q\mto f(y)
\]
is a well-defined map.
Note that $\alpha_q(B^q_1(x))$ is the
open ball $B:=\{y\in E_q\colon \|y-\alpha_q(x)\|_q<1\}$
in~$E_q$.
Let $\wt{E}_q$ be the completion of
the normed space $E_q$;
the extended norm will again be denoted by $\|.\|_q$.
Applying (\ref{ensfactor})
to representatives,
we see that
\[
\|h(z)-h(y)\|\leq \|z-y\|_q\quad \mbox{for all $\, y,z\in B$.}
\]
Hence $h$ satisfies a global Lipschitz
condition (with Lipschitz constant~$1$),
and hence $h$ is uniformly
continuous, entailing that
$h$ extends uniquely
to a uniformly continuous map
\[
\wt{h}\colon \wt{B}\to F
\]
on the corresponding open ball $\wt{B}$ in $\wt{E}_q$.
Then $\|\wt{h}(z)-\wt{h}(y)\|\leq \|z-y\|_q$
for all $y,z\in \wt{B}$, by continuity.
Let $\wt{\alpha}_q\colon \wt{E}\to
\wt{E}_q$ be the continuous extension of
the continuous linear map~$\alpha_q$.
Then
$V_x:=(\wt{\alpha}_q)^{-1}(\wt{B})$
is an open neighbourhood of~$x$
in~$\wt{E}$
such that $V_x\cap E=B^q_1(x)\sub U$.
Moreover,
$f_x:=\wt{h}\circ \wt{\alpha}_q|_{V_x}$
is a continuous map extending $f|_{V_x\cap E}$,
which furthermore
satisfies
\begin{equation}\label{fxlipcts}
\|f_x(z)-f_x(y)\|\,\leq \, \wt{q}(z-y)\quad\mbox{for all $\, y,z\in V_x$,}
\end{equation}
where we use the continuous seminorm
$\wt{q}:=\|.\|_q\circ \wt{\alpha}_q\colon \wt{E}\to [0,\infty[$
extending~$q$.
Then
\[
\wt{U}:=\bigcup_{x\in U}V_x
\]
is an open subset of~$\wt{E}$
and $E\cap \wt{U}=U$
is dense in~$\wt{U}$.
Given $x,y\in U$,
the set $U\cap V_x\cap V_y=E\cap V_x\cap V_y$
is dense in the open set $V_x\cap V_y\sub \wt{E}$.
Since
\[
f_x|_{U\cap V_x\cap V_y}\;=\;
f|_{U\cap V_x\cap V_y}
\; =\; f_y|_{U\cap V_x\cap V_y}\,,
\]
it follows that
$f_x|_{V_x\cap V_y}
=f_y|_{V_x\cap V_y}$.
Hence
\[
\wt{f} \colon \wt{U} \to F\,,\quad
z \mto f_x(z)\quad \mbox{for $\, x\in U$
such that $z\in V_x$}
\]
is a well-defined map.
Since $\wt{f}|_{V_x}=f_x$ is~$LC^0_\K$
for each $x\in U$
(by (\ref{fxlipcts})),
the map $\wt{f}$ is~$LC^0_\K$.
Furthermore, $\wt{f}$ extends~$f$ by construction.\\[2.5mm]
Induction step. If $f$ is $LC^{r+1}_\K$,
then $f$ extends to an $LC^0_\K$-map
$\wt{f}\colon \wt{U}\to F$ on an open
subset $\wt{U}\sub \wt{E}$
such that $\wt{U}\cap E=U$,
and $df\colon U\times E\to F$
extends to an $LC^r_\K$-map
$h\colon W\to F$
on an open subset $W$ of $\wt{E}\times \wt{E}$,
by induction.
Using Lemma~\ref{enblinduc},
we find an open neighbourhood~$V$ of~$U$ in~$\wt{E}$
and an $LC^r_\K$-map $g\colon V\times \wt{E}\to F$
which extends $df$.
After replacing $\wt{U}$ and~$V$
with their intersection, we may assume
that $\wt{U}=V$.
If $x_0\in \wt{U}$ and $y_0\in \wt{E}$,
there exist
open neighbourhoods $Q$ of~$x_0$
and $P$ of~$y_0$ in~$\wt{E}$,
and $\ve>0$
such that $Q+\bD_\ve P \sub \wt{U}$.
Then the map
\[
\ell\colon Q\times P\times \bD_\ve \to F,
\quad
(x,y,t)\mto \int_0^1 g(x+sty,y)\,ds
\]
is continuous, being given
by a parameter dependent
weak integral with continuous integrand.
For $(x,y,t)$
in the dense subset
$(Q\cap E)\times (P\cap E)\times (\bD_\ve\setminus\{0\})$
of
$Q\times P \times (\bD_\ve\setminus\{0\})$,
the Mean Value Theorem implies that
\[
\ell(x,y,t)\;=\;\frac{f(x+ty)-f(x)}{t}\;=\;
\frac{\wt{f}(x+ty)-\wt{f}(x)}{t}.
\]
Then
$\ell(x,y,t)
=\frac{\wt{f}(x+ty)-\wt{f}(x)}{t}$
for all
$(x,y,t)\in Q\times P \times (\bD_\ve\setminus\{0\})$,
by continuity. Thus
\[
\frac{f(x_0+ty_0)-f(x_0)}{t}\;=\;
\ell(x_0,y_0,t)\;\to\; \ell(x_0,y_0,0)\;=\; g(x_0,y_0)
\]
as $t\to 0$.
Hence $d\wt{f}(x_0,y_0)=g(x_0,y_0)$.
Since $g$ is $LC^r_\K$, it follows that $\wt{f}$
is $LC^{r+1}_\K$.\,\vspace{2.4mm}\Punkt

\noindent
The conclusion of Proposition~\ref{rwafct} becomes
false in general if the Banach space~$F$ is replaced by a
complete locally convex space.
In fact, there exists a smooth map
$E \to(\ell^1)^\Omega$ from
a proper, dense vector subspace~$E$ of~$\ell^1$
to a suitable power of~$\ell^1$
which has no continuous extension
to $E\cup\{x\}$ for any $x\in \ell^1\setminus E$
(see Appendix~\ref{appB}).
Nonetheless,
we have the following result.
\begin{prop}\label{extend-second-multi}
Let $k\in \N$,
$X$ be a locally convex $\K$-vector space,
and $E_1,\ldots, E_k,F$
be locally convex $\bL$-vector spaces,
with completions $\wt{X}$, $\wt{E}_1,\ldots, \wt{E}_k$
and $\wt{F}$, respectively.
Let $U\sub X$ be open
and $f\colon U\times E_1\times \cdots\times E_k\to F$
be a mapping such that $f^\vee(x):=f(x,\cdot)\colon$
$E_1\times \cdots\times E_k\to F$
is $k$-linear over~$\bL$ for each $x\in U$.
If $f$ is $LC^r_\K$
for some $r\in \N_0\cup\{\infty\}$
$($resp., $C^r_\K$ for some
$r\in \N\cup\{\infty,\omega\})$,
then there exists a unique map
\begin{equation}\label{desidat2}
\wt{f}\colon U \times
\wt{E}_1\times\cdots\times \wt{E}_k\to
\wt{F}
\end{equation}
which is $LC^r_\K$ $($resp., $C^{r-1}_\K)$
and extends~$f$.
The maps $\wt{f}^\vee(x):=\wt{f}(x,\cdot)\colon
\wt{E}_1\times\cdots\times \wt{E}_k\to \wt{F}$
are $k$-linear over~$\bL$, for each $x\in U$.
\end{prop}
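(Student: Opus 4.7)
The plan is to establish uniqueness first, then existence in the $LC^r_\K$-case by induction on $r$ (with the base case $r=0$ handled by a multilinear scaling argument modelled on Lemma~\ref{wippe}), and finally to reduce the $C^r_\K$- and $C^\omega_\R$-cases to these. Uniqueness is immediate: for each fixed $y\in U$, two candidate extensions produce continuous $k$-linear maps $\wt{E}_1\times\cdots\times\wt{E}_k\to\wt{F}$ agreeing on the dense subspace $E_1\times\cdots\times E_k$, hence coincide.

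For the base case, fix $x_0\in U$ and a continuous seminorm $q$ on $F$. Applying the $LC^0_\K$-condition at $(x_0,0,\ldots,0)$, using that $f(x,v_1,\ldots,v_k)=0$ whenever some $v_j=0$ by multilinearity, and scaling each $v_j$ by suitably large factors as in the proof of Lemma~\ref{wippe}, I would produce continuous seminorms $p_0$ on $X$ and $p_j$ on $E_j$ and $\delta>0$ with $B^{p_0}_\delta(x_0)\sub U$ such that
\[
\|f(y,v)\|_q\;\le\;\|v_1\|_{p_1}\cdots\|v_k\|_{p_k}\quad\text{and}\quad\|f(y,v)-f(x_0,v)\|_q\;\le\;\|y-x_0\|_{p_0}\|v_1\|_{p_1}\cdots\|v_k\|_{p_k}
\]
for all $y\in B^{p_0}_\delta(x_0)$ and $v\in E_1\times\cdots\times E_k$. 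The first estimate lets each continuous $k$-linear map $f^\vee(y)$ extend uniquely by continuity to $\wt{f}^\vee(y)\colon \wt{E}_1\times\cdots\times\wt{E}_k\to\wt{F}$; setting $\wt{f}(y,\wt{v}):=\wt{f}^\vee(y)(\wt{v})$ and passing both estimates to the completions yields the $LC^0_\K$-property of $\wt{f}$ at each point $(x_0,\wt{v}_0)$.

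For the induction $r-1\impl r\ge 1$, expanding $f(x+t\xi,v+tw)$ by multilinearity in $v$ gives
\[
df((x,v),(\xi,w))\;=\;g(x,\xi,v)\;+\;\sum_{j=1}^k f(x,v_1,\ldots,v_{j-1},w_j,v_{j+1},\ldots,v_k),
\]
where $g(x,\xi,v):=df((x,v),(\xi,0))$ is $k$-linear in $v$ and of class $LC^{r-1}_\K$. Applying the inductive hypothesis to $f$ (with base $U$) and to $g$ (with base $U\times X$, open in $X\times X$) yields $LC^{r-1}_\K$-extensions $\wt{f}$ and $\wt{g}$; I then define the candidate derivative
\[
G((y,\wt{v}),(\xi,\wt{w})):=\wt{g}(y,\xi,\wt{v})+\sum_{j=1}^k\wt{f}(y,\wt{v}_1,\ldots,\wt{v}_{j-1},\wt{w}_j,\wt{v}_{j+1},\ldots,\wt{v}_k),
\]
which is $LC^{r-1}_\K$, and verify $G=d\wt{f}$ by approximating $\wt{v}_j,\wt{w}_j$ by elements of $E_j$, applying the Mean Value Theorem on $U\times E_1\times\cdots\times E_k$, and passing to the limit in the completion using the base-case uniform estimates. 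Then $\wt{f}$ is $C^1_\K$ with $d\wt{f}=G$ of class $LC^{r-1}_\K$, hence $LC^r_\K$; the $LC^\infty_\K$-case follows by intersection over $r\in\N$.

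For the $C^r_\K$-case with $r\in\N\cup\{\infty\}$, the implication $C^r_\K\impl LC^{r-1}_\K$ recorded after Definition~\ref{lip-diffble} reduces matters to the previous step, yielding an $LC^{r-1}_\K$- and in particular $C^{r-1}_\K$-extension. The $C^\omega_\R$-case is handled by complexification: $f$ extends to a $C^\omega_\C$-map $f_\C$ on an open neighborhood of $U\times E_1\times\cdots\times E_k$ in $X_\C\times(E_1)_\C\times\cdots\times(E_k)_\C$; the Identity Theorem applied to $z\mto f_\C(x,\ldots,zv_j,\ldots)-zf_\C(x,\ldots,v_j,\ldots)$ forces $f_\C$ to be $\C$-multilinear in its fiber variables on its domain, so the $C^\infty_\C$-case applied to $f_\C$ (using $\wt{(E_j)_\C}\isom(\wt{E_j})_\C$) followed by restriction to the reals delivers the desired real analytic extension $\wt{f}$. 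The main obstacle I anticipate is the verification $G=d\wt{f}$ in the induction step: uniformly controlling the difference quotients of $\wt{f}$ under approximation of $\wt{v}$ by elements of $E_1\times\cdots\times E_k$ requires carefully combining the base-case multilinear estimates with the Mean Value Theorem, and this bookkeeping is the only nontrivial technical ingredient beyond the routine multilinear algebra.
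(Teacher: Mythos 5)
Your proposal is correct in its essentials, but it takes a genuinely different route from the paper. The paper never works directly with seminorm estimates at this point: it reduces to the case of a \emph{Banach} target by writing the complete space $\wt{F}$ as a projective limit $\pl\, F_j$ of Banach spaces, invokes Proposition~\ref{rwafct} to extend each $p_j\circ f$ over an open subset of the \emph{completed base} $\wt{X}\times\wt{E}_1\times\cdots\times\wt{E}_k$ (this is where the Lipschitz property is exploited, via factorization through normed quotients $E_q$ and uniform continuity), and then uses the scaling/Identity-type Lemma~\ref{enblinduc} to recover full fibers $\wt{E}_1\times\cdots\times\wt{E}_k$ before restricting the base back to~$U$. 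You instead never complete the base at all: you extend only fiberwise, using product estimates of Lemma~\ref{wippe} type derived from the $LC^0$-condition by multilinear rescaling, and verify $d\wt{f}=G$ by a Mean Value Theorem plus density argument. Your route is more self-contained (no Banach reduction, no detour through $\wt{X}$) and makes transparent why the pathology of Appendix~\ref{appB} -- which obstructs extension in the base direction -- is irrelevant here; the paper's route has the advantage of reusing Proposition~\ref{rwafct}, which is of independent interest, and of outsourcing all analytic work to statements about Banach-valued maps. Your inductive verification of $d\wt{f}=G$ does work as sketched: the $1/t$ in the difference quotient is absorbed by the two-point Lipschitz-in-base estimate, and an $\ve/3$-argument closes the limit. (Minor point: uniqueness needs only continuity of the extensions plus density of $U\times E_1\times\cdots\times E_k$; the $k$-linearity of the candidates' fiber maps is not hypothesized and not needed.)

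Two fixable omissions deserve mention. First, in the real analytic case you apply the $C^\infty_\C$-case to the complex extension $f_\C$, but $f_\C$ is a priori defined only on an open \emph{neighbourhood} of $U\times E_1\times\cdots\times E_k$ in $X_\C\times(E_1)_\C\times\cdots\times(E_k)_\C$, not on a product $Q\times(E_1)_\C\times\cdots\times(E_k)_\C$ with full fibers, which is what the proposition's hypotheses require; one must first propagate $f_\C$ to full fibers using multilinearity (the paper does this with the rescaled maps $(z,y)\mto n^kh(z,(1/n)y)$ glued via the Identity Theorem) and then check, by density, that the resulting extension maps $U\times\wt{E}_1\times\cdots\times\wt{E}_k$ into the real form $\wt{F}$ rather than merely $\wt{F}_\C$. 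Since you already use exactly this rescaling trick in your base case, this is a gap in the write-up rather than in the method. Second, your stated base-case estimates are centered at $x_0$ (bounding $f(y,v)-f(x_0,v)$), which upon extension yields only continuity of $\wt{f}$; the $LC^0$-property requires the two-point version $\|f(y',v)-f(y,v)\|_q\leq\|y'-y\|_{p_0}\|v_1\|_{p_1}\cdots\|v_k\|_{p_k}$ for $y,y'$ both varying near $x_0$ -- this follows by the same rescaling applied to the Lipschitz condition at $(x_0,0)$, and is also what your induction step consumes, so it should be recorded explicitly.
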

\begin{proof}
Abbreviate $E:=E_1\times\cdots\times E_k$
and $\wt{E}:=\wt{E}_1\times\cdots\times \wt{E}_k$.
Assume first that $r\not=\omega$.
Since $LC^r_\K$-maps are continuous
and $U\times E$
is dense in $U\times\wt{E}$,
there is at most one map $\wt{f}$
with the asserted properties. We may therefore
assume that $r\in \N_0$.
We may also assume that $F$ is complete.
Then $F=\pl\, F_j$ for some
projective system $((F_j)_{j\in J}, (p_{ij})_{i\leq j})$
of Banach spaces $F_j$ and continuous linear maps
$p_{ij}\colon F_j\to F_i$,
with limit maps $p_j\colon F\to F_j$.
We claim that $p_j\circ f\colon U\times
E \to F_j$
has an $LC^r_\K$-extension $g_j:=(p_j\circ f)\wt{\;}\colon
U\times\wt{E}\to F_j$, for each $j\in J$.
If this is true, then
$p_{ij}\circ g_j=g_i$ for $i\leq j$,
by uniqueness
of continuous extensions.
Hence, by the universal property
of the projective limit,
there exists a unique map $\wt{f}\colon U\times\wt{E}\to F$ such that
$p_j\circ \wt{f}=g_j$.
Then $p_j\circ \wt{f}|_{U\times E}=g_j|_{U\times E}=p_j\circ f$
and hence $\wt{f}|_{U\times E}=f$. Furthermore, $\wt{f}$ is $LC^r_\K$,
by Lemma~\ref{lem-lipdiff}\,(d).
To prove the claim,
note that Proposition~\ref{rwafct} yields
an $LC^r_\K$-extension $h_j\colon W_j\to F_j$
of $p_j\circ f$ to an open subset $W_j\sub \wt{X}\times \wt{E}$
which contains $U\times E$ as a dense subset.
Now Lemma~\ref{enblinduc}
yields an open subset $U_j\sub \wt{X}$
in which $U$ is dense,
and an $LC^r_\K$-extension $e_j\colon U_j\times \wt{E}\to F_j$
of~$p_j\circ f$. Then $g_j:=e_j|_{U\times\wt{E}}$ is as desired.\vspace{1mm}

We now consider the case $(r,\K)=(\omega,\R)$. If $\bL=\C$,
by density of $U\times E$ in $U\times\wt{E}$,
for any real analytic
extension $\wt{f}\colon U\times \wt{E}\to\wt{F}$
and $x\in U$, the map $\wt{f}(x,\cdot)$
will be $k$-linear over~$\bL$.
We may therefore assume that $\bL=\R$.
Let $h\colon W\to F_\C$
be a $\C$-analytic extension
of~$f$, defined on an open subset
$W\sub X_\C\times E_\C$ such that $U\times E\sub W$.
For each $x\in U$,
there exist an open $x$-neighbourhood $U_x\sub U$
and balanced open $0$-neighbourhoods $V_x\sub X$ and $W_x\sub E_\C$
such that $(U_x+iV_x)\times W_x\sub W$.
We claim that there exists a $\C$-analytic
map $g_x \colon (U_x+iV_x)\times E_\C\to F_\C$
such that $g_x|_{U_x\times E}=f|_{U_x\times E}$.
For $x,y\in U$,
the intersection $((U_x+iV_x)\times E_\C)\cap ((U_y+iV_y)\times E_\C)
=((U_x\cap U_y)+i(V_x\cap V_y))\times E_\C$
is connected and meets $U\times E$ whenever
it is non-empty.
Hence, by the Identity Theorem,
$g_x$ and $g_y$ coincide on the intersection
of their domains. We therefore obtain a well-defined
$\C$-analytic map $g\colon Q\times E_\C\to F_\C$
such that $g|_{(U_x+iV_x)\times E_\C}=g_x$
for each $x\in U$, using the open subset
$Q:=\bigcup_{x\in U}(U_j+iV_j)$ of $X_\C$.
For each $x\in U$, the map $g(x,\cdot)|_E=g_x(x,\cdot)|_E=f(x,\cdot)$
is $k$-linear over~$\R$.
Using the Identity Theorem,
we see that $g(x,\cdot)$ is $k$-linear over~$\C$
for each $x\in U$, and hence for each $x\in Q$
by the Identity Theorem.
By the case $(\infty,\C)$,
$g$ has a $\C$-analytic extension $\wt{g}\colon Q\times \wt{E}_\C\to \wt{F}_\C$.
Since $g(U\times E)=f(U\times E)\sub F\sub\wt{F}$
and $U\times E$ is dense in $U\times \wt{E}$,
we deduce that $\wt{g}(U\times \wt{E})\sub \wt{F}$;
we therefore obtain a map
\[
\wt{f}\colon U\times\wt{E}\to\wt{F},\quad (x,y)\mto \wt{g}(x,y)
\]
for $x\in U$, $y\in \wt{E}$.
Since $\wt{g}$ is a $\C$-analytic
extension for~$\wt{f}$,
the function $\wt{f}$ is $\R$-analytic.
To prove the claim, consider for $x\in U$
and $n\in\N$ the $\C$-analytic map
\[
g_{x,n}\colon (U_x+iV_x)\times n W_x\to F_\C,\quad
(z,y)\mto n^kh(z,(1/n)y).
\]
If $n\leq m$ and $y\in nW_x\cap E$,
we have for all $z\in U_x$ 
\[
g_{x,m}(z,y)=m^kh(z,(1/m)y)=m^kf(z,(1/m)y)
=f(z,y)=n^kf(t,(1/n)y)=g_{x,n}(z,y),
\]
whence $g_{x,m}(z,y)=g_{x,n}(z,y)$
for all $z\in U_x+iV_x$ and $y\in nW_x$,
by the Identity Theorem. Thus $g_x\colon$ $(U_x+iV_x)\times E_\C\to F_\C$,
$(z,y)\mto g_{x,n}(z,y)$ if $y\in nW_x$
is a well-defined $\C$-analytic
extension of $f|_{U_x\times E}$.
\end{proof}
{\bf Proof of Proposition~\ref{complbdl}.}
It suffices to prove the strengthening described in Remark~\ref{strength-compl}.
Let $(\psi_i)_{i\in I}$
be a family of local trivializations
$\psi_i\colon \pi^{-1}(U_i)\to U_i\times F$ of an $LC^r_\K$-vector
bundle~$E$
such that each local trivialization is some~$\psi_i$.
Let $(g_{ij})_{i,j\in I}$ be the corresponding cocycle
and $G_{ij}$
be the $LC^r_\K$-map $g_{ij}^\wedge\colon (U_i\cap U_j)\times F\to F$
which is $\bL$-linear in the second argument.
By Proposition~\ref{extend-second-multi},
there is a unique $LC^r_\K$-map
$\wt{G}_{ij}\colon U\times\wt{F}\to\wt{F}$
which extends~$G_{ij}$,
and $\wt{G}_{ij}$ is $\bL$-linear in the second argument.
Thus, we obtain a map
\[
\wt{g}_{ij}\colon U_i\cap U_j\to L_{\bL}(\wt{F}),\quad
x\mto \wt{G}_{ij}(x,\cdot).
\]
By continuity and density, for all $i\in I$
we have $\wt{G}_{ii}(x,y)=y$ for all $(x,y)\in U_i\times \wt{F}$.
Thus $\wt{g}_{ii}(x)=\id_{\wt{F}}$
for all $x\in U$.
For all $i,j,k\in I$, we have
\[
\wt{G}_{ij}(x,\wt{G}_{jk}(x,y))=\wt{G}_{ik}(x,y)\quad\mbox{for all
$(x,y)\in (U_i\cap U_j\cap U_k)\times \wt{F}$,}
\]
as both sides are continuous in $(x,y)$
and equality holds for~$y$ in the dense
subset~$F$ of~$\wt{F}$; thus
$\wt{g}_{ij}(x)\circ \wt{g}_{jk}(x)=\wt{g}_{ik}(x)$.
Notably, $\wt{g}_{ij}(x)\circ\wt{g}_{ji}(x)=\wt{g}_{ii}(x)=\id_{\wt{F}}$
for all $x\in U$ and $i,j\in I$,
entailing that $\wt{g}_{ij}(x)\in \GL(\wt{F})$.
By the preceding, the $\wt{g}_{ij}$
satisfy the cocycle conditions.
Let $\wt{E}$ and $\wt{\pi}$
be as in \ref{the-setting-compl};
define $\wt{\psi}_i\colon\wt{\pi}^{-1}(U_i)\to U_i\times\wt{F}$
as in (\ref{wtpsi2}), replacing~$\psi$ with $\psi_i$.
For all $i,j\in I$ and $x\in U$,
we then have that
\[
\wt{\psi}_i(\wt{\psi}_j^{-1}(x,y))=(x,\wt{G}_{ij}(x,y))
\]
holds for all $y\in\wt{F}$, as equality holds for all
$y\in F$.
As an analogue of Proposition~\ref{standfact}
holds with $LC^r_\K$-maps in place of $C^r_\K$-maps,
we get a unique $\bL$-vector bundle structure of class $LC^r_\K$
on $\wt{E}$ making $\wt{\psi}_i$ a local trivialization for each $i\in I$.\vspace{1mm}

It is apparent that $\wt{\beta}\colon G\times \wt{E}\to \wt{E}$ is an action,
and $\wt{E}_x$
is taken
$\bL$-linearly to $\wt{E}_{\alpha(g,x)}$
by $\wt{\beta}(g,\cdot)$,
for each $g \in G$ and $x\in M$.
It only remains to show that~$\wt{\beta}$
is $LC^r_\K$.
To this end, let $g_0\in G$ and $x_0\in M$;
we show that $\wt{\beta}$
is $LC^r_\K$ on $U\times \wt{\pi}^{-1}(V)$
for some open neighbourhood~$U$
of~$g_0$ in $G$ and an open neighbourhood
$V$ of $x_0$ in~$M$.
Indeed, there exists a local trivialization
$\psi\colon \pi^{-1}(W)\to W\times F$ of~$E$
over an open neighbourhood~$W$ of $\alpha(g_0,x_0)$
in~$M$. The action $\alpha$ being continuous,
we find an open neighbourhood $U$ of $g_0$
in~$G$ and an open neighbourhood~$V$ of $x_0$ in~$M$
over which~$E$ is trivial, such that $\alpha(U\times V)\sub W$.
Let $\phi\colon \pi^{-1}(V)\to V\times F$
be a local trivialization of~$E$ over~$V$.
Then
\[
\phi(\beta(g^{-1},\psi^{-1}(\alpha(g,x),v)))=(x,A(g,x,v))\quad
\mbox{for all $\,g\in U$, $x\in V$, and $v\in F$,}
\]
for an $LC^r_\K$-map
$A\colon U\times V\times F\to F$
which is $\bL$-linear in the third argument.
By Proposition~\ref{extend-second-multi},
there is a unique extension of~$A$
to an $LC^r_\K$-map
\[
\wt{A}\colon U\times V\times\wt{F}\to\wt{F},
\]
and the latter is $\bL$-linear in its third
argument.
For all $g\in U$ and $x\in V$, we then have
\[
\wt{\phi}(\wt{\beta}(g^{-1},\wt{\psi}^{-1}(\alpha(g,x),v)))=(x,\wt{A}(g,x,v))
\]
for all $v\in \wt{F}$,
as equality holds for all $v\in F$.
Thus $\wt{\beta}$ is $LC^r_\K$.\, \Punkt
\section{Tensor products of vector bundles}
Throughout this section, let $\bL\in\{\R,\C\}$, $\K\in\{\R,\bL\}$,
$s\in \{\infty,\omega\}$, and $r\in \N_0\cup\{\infty,\omega\}$
such that $r\leq s$.
Let $G$ be a $C^s_\K$-Lie group modeled on a locally convex $\K$-vector space~$Y$,
$M$ be a $C^r_\K$-manifold modeled on a locally convex $\K$-vector space~$Z$,
and $\alpha\colon G\times M\to M$ be a $C^r_\K$-action.
For $k\in \{1,2\}$,
let $\pi_k\!: E_k\to M$
be an equivariant $\bL$-vector bundle of class~$C^r_\K$
over~$M$,
whose typical fibre is a locally convex $\bL$-vector space~$F_k$.
Let $\beta_k\colon G\times E_k\to E_k$ be the $G$-action of class~$C^r_\K$.
Consider the set~$\cA$
of all pairs
of local trivializations
of~$E_1$ and~$E_2$
trivializing these over the same
open subset of~$M$. Using an index set~$I$,
we have $\cA=\{(\psi_i^1,\psi_i^2)\!: i\in I\}$,
where
$\psi_i^k\!: \pi_k^{-1}(U_i)\to U_i\times F_k$
is a local trivialization of~$E_k$ for $k\in\{1,2\}$,
for each $i\in I$. Apparently,
$(U_i)_{i\in I}$ is an open cover of~$M$.
\begin{numba}\label{set-1-tensor}
For our first result concerning tensor products,
Proposition~\ref{proptensor},
we assume that~$F_1$
is finite dimensional. Then, fixing a basis
$e_1,\ldots, e_n$ for $F_1$,
the map
$\theta\colon (F_2)^n\to F_1\tensor_\bL F_2$, $(y_1,\ldots, y_n)\mto
\sum_{\tau=1}^n e_\tau\tensor y_\tau$
is an isomorphism of $\bL$-vector spaces.
We give $F_1\tensor_\bL F_2$
the topology $\cT$ making~$\theta$
a homeomorphism. This topology makes $F_1\otimes_\bL F_2$
a locally convex $\bL$-vector space
and~$\theta$ an isomorphism of topological
$\bL$-vector spaces.
It is easy to check (and well known) that
the topology~$\cT$
is independent of the chosen basis.
Let $e_1^*,\ldots, e_n^*\in F_1'$ be the basis dual
to $e_1,\ldots, e_n$.
Our goal is to make
the union
\[
E_1\tensor E_2:=\bigcup_{x\in M} (E_1)_x\tensor_\bL
(E_2)_x
\]
an equivariant $\bL$-vector bundle
of class~$C^r_\K$ over $M$, with typical fibre $F_1\tensor_\bL F_2$;
the tensor products $(E_1)_x\otimes_\bL (E_2)_x$
are chosen pairwise disjoint here for $x\in M$.
Let $\pi\!: E_1\tensor E_2\to M$ be the mapping which takes
$v\in (E_1)_x\tensor_\bL (E_2)_x$ to~$x$.
\end{numba}
\begin{numba}
We define $\psi_i\!: \pi^{-1}(U_i)\to U_i\times (F_1\tensor_\bL F_2)$
via
\[
\psi_i(v):=(x, ((\pr_{F_1}\circ \psi^1_i|_{(E_1)_x})\tensor
(\pr_{F_2}\circ \psi^2_i|_{(E_2)_x}))(v))
\]
for
$x\in U_i$ and
$v\in (E_1)_x\tensor_\bL
(E_2)_x$, where $\pr_{F_k}\colon M \times F_k\to F_k$ is the projection.
\end{numba}
\begin{numba}
Given $i,j\in I$ and $x\in U_i\cap U_j$,
we have $\psi_i^k((\psi_j^k)^{-1}(x,v))=(x, G_{ij}^k(x,v))$
for all $k\in\{1,2\}$ and $v\in F_k$,
where $G_{ij}^k\!:(U_i\cap U_j)\times F_k\to F_k$
is $C^r_\K$ and
$g^k_{ij}(x):=G^k_{ij}(x,\cdot)$
an $\bL$-linear mapping.
Then
$c_{\sigma,\tau}\!:
U_i\cap U_j\to \K$, $x\mto e_\sigma^*(G^1_{ij}(x,e_\tau))$
is $C^r_\K$,
and
$\psi_i((\psi_j)^{-1}(x,v))=(x,G_{ij}(x,v))$
for $x\in U_i\cap U_j$ and $v=\sum_{\tau=1}^n e_\tau\tensor v_\tau\in F_1\tensor_\bL F_2$,
where
\begin{eqnarray*}
G_{ij}(x,v) & = &(g_{ij}^1(x)\tensor g_{ij}^2(x))(v)
=\sum_{\tau=1}^n (g_{ij}^1(x)e_\tau)\tensor (g^2_{ij}(x)v_\tau)\\
& = & \sum_{\sigma,\tau=1}^n
e_\sigma\tensor (c_{\sigma,\tau}(x) g^2_{ij}(x)v_\tau)=
\theta\left(\left( \sum_{\tau=1}^n c_{\sigma,\tau}(x)
G^2_{ij}(x,v_\tau)\right)_{\sigma=1}^n\right)\,.
\end{eqnarray*}
As $F_1\tensor_\bL F_2\to F_2$,
$v\mto v_\tau=\pr_\tau(\theta^{-1}(v))$
is a continuous linear map
(where $\pr_\tau\!: (F_2)^n\to F_2$
is the projection onto the $\tau$-component),
in view of
the preceding formula~$G_{ij}$ is~$C^r_\K$.
Thus, by Proposition~\ref{standfact},
there is a unique
$\bL$-vector bundle structure of class~$C^r_\K$
on
$E_1\tensor E_2$ making each $\psi_i$ a local trivialization.
\end{numba}
\begin{numba}
Note that
$\beta\!: G\times (E_1\tensor E_2)\to E_1\tensor E_2$,
$(g,v)\mto (\beta_1(g,\cdot)|_{(E_1)_x}^{E_{\alpha(g,x)}}
\tensor \beta_2(g,\cdot)|_{(E_2)_x}^{(E_2)_{\alpha(g,x)}})(v)$
for $g\in G$, $x\in M$, $v\in (E_1\tensor E_2)_x$
defines an action of~$G$ on $E_1\tensor
E_2$ by $\bL$-linear mappings, which makes $\pi\!:
E_1\tensor E_2\to M$ an equivariant mapping
and such that $\beta(g,\cdot)$
is $\bL$-linear on $(E_1)_x\otimes_\bL(E_2)_x$
for all $g\in G$ and $x\in M$.
\end{numba}
\begin{numba}\label{acttensor}
To show that $\beta$ is $C^r_\K$,
let $g_0\in G$ and $x_0\in M$.
We pick $i\in I$ such that $\alpha(g_0,x_0)\in U_i$.
The mapping~$\alpha$ being continuous,
we find open neighbourhoods~$U$ of~$g_0$ in~$G$ and~$V$
of~$x_0$ in~$M$ such that $\alpha(U\times V)\sub U_i$.
There is $j\in I$ such that
$x_0\in U_j\sub V$.
For $k\in\{1,2\}$, $g\in U$, $x\in U_j$ and $v\in F_k$,
we have
\[
\psi_i^k(\beta_k(g,(\psi_j^k)^{-1}(x,v)))=(\alpha(g,x),a_k(g,x,v))
\]
for some $C^r_\K$-map
$a_k\!: U\times U_j\times F_k\to F_k$ which is $\bL$-linear in the
final argument.
Define
$b_{\sigma,\tau}\!: U\times U_j \to \bL$,
$(g,x)\mto e_\sigma^*(a_1(g,x,e_\tau))$;
then $b_{\sigma,\tau}$ is~$C^r_\K$.
If $g\in U$, $x\in U_j$ and $v=\sum_{\tau=1}^n
e_\tau\tensor v_\tau\in F_1\tensor_\bL F_2$,then
$\psi_i(\beta(g,\psi_j^{-1}(x,v)))$ equals
\[
\left(\alpha(g,x),
\sum_{\tau=1}^n
a_1(g,x,e_\tau)\tensor a_2(g,x,v_\tau)\right)
=
\left(\alpha(g,x),\theta\left(\left(\sum_{\tau=1}^n b_{\sigma,\tau}(g,x)a_2(g,x,v_\tau)
\right)_{\sigma=1}^n\right)\right),
\]
which is a $C^r_\K$-function
of $(g,x,v)$. As a consequence, $\beta|_{U\times \pi^{-1}(U_j)}$
is~$C^r_\K$
and thus $\beta$, being~$C^r_\K$ locally,
is~$C^r_\K$. Summing up:
\end{numba}
\begin{prop}\label{proptensor}
Let~$G$ be a $C^s_\K$-Lie group and $M$ be a
$G$-manifold of class~$C^r_\K$.
Let $E_1$ and $E_2$ be equivariant $\bL$-vector bundles of class~$C^r_\K$
over~$M$.
If the typical fibre of~$E_1$
is finite dimensional,
then $E_1\tensor E_2$, as defined above,
is an
equivariant $\bL$-vector bundle of class $C^r_\K$ over~$M$.\, \Punkt
\end{prop}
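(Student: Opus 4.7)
The plan is to construct $E_1\otimes E_2$ by taking fibrewise tensor products and trivialising it using tensor products of compatible local trivialisations of $E_1$ and $E_2$; the finite-dimensionality of $F_1$ is the feature that allows everything to be $C^r_\K$ without further hypotheses on $M$, $F_2$, or $\beta_1,\beta_2$. First I would choose a family $\{(\psi_i^1,\psi_i^2)\}_{i\in I}$ of pairs of local trivialisations $\psi_i^k\colon\pi_k^{-1}(U_i)\to U_i\times F_k$ over a common open cover $(U_i)_{i\in I}$ of~$M$ (obtained by mutual refinement). Fixing a basis $e_1,\ldots,e_n$ of $F_1$ gives an isomorphism $\theta\colon F_2^n\to F_1\otimes_\bL F_2$ of vector spaces, which I use to put a locally convex topology on $F_1\otimes_\bL F_2$, independent of the chosen basis. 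I then define $\psi_i\colon\pi^{-1}(U_i)\to U_i\times(F_1\otimes_\bL F_2)$ fibrewise, by tensoring the fibre components of $\psi_i^1$ and~$\psi_i^2$.

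Next I would verify the transition cocycle and invoke Proposition~\ref{standfact}. Writing $g_{ij}^k(x)=G_{ij}^k(x,\cdot)$ for the cocycles of $E_k$, the induced cocycle for $E_1\otimes E_2$ should satisfy $g_{ij}(x)=g_{ij}^1(x)\otimes g_{ij}^2(x)$ fibrewise. Using the dual basis $e_1^\ast,\ldots,e_n^\ast\in F_1'$ (all linear functionals on $F_1$ being continuous), the scalar functions $c_{\sigma,\tau}(x):=e_\sigma^\ast(G_{ij}^1(x,e_\tau))$ are $C^r_\K$ on $U_i\cap U_j$. On $v=\sum_\tau e_\tau\otimes v_\tau$ one computes
\[
G_{ij}(x,v)=\theta\!\left(\Bigl(\sum_{\tau=1}^n c_{\sigma,\tau}(x)\,G_{ij}^2(x,v_\tau)\Bigr)_{\sigma=1}^n\right),
\]
and since $\theta$ is a topological isomorphism and $F_1\otimes_\bL F_2\to F_2$, $v\mto v_\tau$, is continuous $\bL$-linear, $G_{ij}$ is a finite sum of compositions of $C^r_\K$ maps, hence $C^r_\K$. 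Proposition~\ref{standfact} then provides a unique $\bL$-vector bundle structure of class $C^r_\K$ on $E_1\otimes E_2$ making each $\psi_i$ a local trivialisation.

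Finally, the fibrewise tensor action $\beta(g,\cdot)=\beta_1(g,\cdot)\otimes\beta_2(g,\cdot)$ has to be shown $C^r_\K$ globally. Given $(g_0,x_0)\in G\times M$, by continuity of $\alpha$ I can find neighbourhoods $U$ of $g_0$ and $V$ of $x_0$ together with indices $i,j$ such that $\alpha(U\times V)\sub U_i$ and $x_0\in U_j\sub V$. Writing $\psi_i^k(\beta_k(g,(\psi_j^k)^{-1}(x,v)))=(\alpha(g,x),a_k(g,x,v))$ for $C^r_\K$-maps $a_k$ that are $\bL$-linear in $v$, and setting $b_{\sigma,\tau}(g,x):=e_\sigma^\ast(a_1(g,x,e_\tau))$, the same coordinate calculation yields
\[
\psi_i(\beta(g,\psi_j^{-1}(x,v)))=\Bigl(\alpha(g,x),\,\theta\bigl((\sum_{\tau=1}^n b_{\sigma,\tau}(g,x)\,a_2(g,x,v_\tau))_{\sigma=1}^n\bigr)\Bigr),
\]
which is $C^r_\K$ in $(g,x,v)$. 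Thus $\beta$ is locally $C^r_\K$ on $U\times\pi^{-1}(U_j)$ and hence $C^r_\K$. The main obstacle to watch is that, although tensor products of linear maps on infinite-dimensional spaces can fail to carry good continuity or differentiability properties, the explicit finite-dimensional reduction through $\theta$ turns the tensor cocycle and tensor action into finite $\K$-linear combinations of $F_2$-valued $C^r_\K$-maps with $C^r_\K$ scalar coefficients, so no $k_\R$-, barrelledness, or completeness hypothesis on $Z$ or $F_2$ is needed.
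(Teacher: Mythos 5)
Your proposal is correct and takes essentially the same route as the paper's own proof (developed in \ref{set-1-tensor}--\ref{acttensor}): the same basis-dependent isomorphism $\theta$, the same reduction of the cocycle $G_{ij}$ to the scalar $C^r_\K$-functions $c_{\sigma,\tau}$ combined with $G^2_{ij}$ so that Proposition~\ref{standfact} applies, and the same localized computation with the maps $a_k$ and $b_{\sigma,\tau}$ showing the tensor action is $C^r_\K$ near each $(g_0,x_0)$. There is nothing to add.
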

\begin{numba}\label{set-2-tensor}
Instead of $\dim(F_1)<\infty$ (as in~\ref{set-1-tensor}),
assume that $F_1$ and $F_2$ are Fr\'{e}chet
spaces and the modeling spaces of~$G$
and $M$ are metrizable.
The completed projective tensor product
\[
F:=F_1 \wh{\otimes}_\pi F_2
\]
over~$\bL$ then is a Fr\'{e}chet space (cf.\ \cite[p.\,438,
lines after Definitions 43.4]{Tre}).
We define
\[
E:=E_1\wh{\otimes}_\pi E_2:=\vspace{-.7mm}\bigcup_{x\in M} (E_1)_x\wh{\otimes}_\pi (E_2)_x,
\]
where the $(E_1)_x\wh{\otimes}_\pi (E_2)_x$
for $x\in M$ are chosen pairwise disjoint.
Let $\pi\colon E\to M$ be the map taking
$v\in E_x:=(E_1)_x\wh{\otimes}_\pi (E_2)_x$
to~$x$.
Define $\psi_i\!: \pi^{-1}(U_i)\to U_i\times (F_1\wh{\tensor}_\pi F_2)$
via
\[
\psi_i(v):=\big(x, \,((\pr_{F_1}\circ \psi^1_i|_{(E_1)_x})\wh{\tensor}_\pi
(\pr_{F_2}\circ \psi^2_i|_{(E_2)_x}))(v)\big)
\]
for
$x\in U_i$ and
$v\in (E_1)_x\wh{\tensor}_\pi
(E_2)_x$, where $\pr_{F_k}\colon M \times F_k\to F_k$ is the projection.
Note that
$\beta\!: G\times E\to E$,
$(g,v)\mto (\beta_1(g,\cdot)|_{(E_1)_x}
\wh{\tensor}_\pi \beta_2(g,\cdot)|_{(E_2)_x})(v)$
for $g\in G$, $x\in M$, $v\in E_x$
defines an action of~$G$ on~$E$
which makes $\pi\!:
E \to M$ an equivariant mapping.
We show:
\end{numba}
\begin{prop}\label{tensor-projective}
$\pi\colon E_1\wh{\otimes}_\pi E_2\to M$
admits a unique structure of equivariant $\bL$-vector bundle
of class~$C^r_\K$ over~$M$
such that $\psi_i$ is a local trivialization for each $i\in I$.
\end{prop}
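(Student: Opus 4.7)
My plan is to adapt the template of Proposition~\ref{proptensor}, replacing the algebraic tensor product by $\wh{\otimes}_\pi$ and invoking Corollary~\ref{cornew} (condition~(i)) rather than Proposition~\ref{standfact} directly. Since $F_1$, $F_2$, and the modelling space~$Z$ of~$M$ are Fr\'echet, $F:=F_1\wh{\otimes}_\pi F_2$ is Fr\'echet, so both $Z\times F$ and $(Z\times F)\times(Z\times F)$ are metrizable and hence $k$-spaces (Remark~\ref{metr-or-k-omega}). Thus it will suffice to build a cocycle $g_{ij}\colon U_i\cap U_j\to L_\bL(F)$ whose co-restriction to $L_\bL(F)_c$ is of class $C^r_\K$; Corollary~\ref{cornew} will then deliver the desired bundle together with all its trivializations $\psi_i$.

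For the cocycle, I set $g_{ij}(x):=g^1_{ij}(x)\wh{\otimes}_\pi g^2_{ij}(x)\in L_\bL(F)$, i.e.\ the unique continuous linear extension of $v_1\otimes v_2\mto g^1_{ij}(x)v_1\otimes g^2_{ij}(x)v_2$. By density of simple tensors and continuity, the cocycle conditions pass from the $g^k_{ij}$ to $g_{ij}$, and each $g_{ij}(x)\in\GL(F)$ with inverse $g_{ji}(x)$. To verify $C^r_\K$-dependence into $L_\bL(F)_c$, I will first observe that
\[
H\colon (U_i\cap U_j)\times F_1\times F_2\to F,\quad
H(x,v_1,v_2):=G^1_{ij}(x,v_1)\otimes G^2_{ij}(x,v_2),
\]
is $C^r_\K$ (composition of the $C^r_\K$-maps $G^k_{ij}$ with the continuous bilinear canonical map $\otimes\colon F_1\times F_2\to F$) and bilinear over $\bL$ in $(v_1,v_2)$. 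Proposition~\ref{reallyneed1}\,(a) then shows that $h\colon U_i\cap U_j\to L_\bL^2(F_1,F_2;F)_c$, $h(x):=H(x,\cdot,\cdot)$, is $C^r_\K$. The universal property of $\wh{\otimes}_\pi$ produces a linear bijection $\Phi\colon L_\bL^2(F_1,F_2;F)\to L_\bL(F_1\wh{\otimes}_\pi F_2,F)=L_\bL(F)$, and I will check that $\Phi$ is continuous into $L_\bL(F)_c$ by invoking Grothendieck's theorem that every compact $K\sub F$ lies in the closed absolutely convex hull of $K_1\otimes K_2$ for suitable compact $K_k\sub F_k$. Consequently $g_{ij}=\Phi\circ h$ is $C^r_\K$ into $L_\bL(F)_c$, so Corollary~\ref{cornew} yields the unique $C^r_\K$-vector bundle structure on $E_1\wh{\otimes}_\pi E_2$ for which each $\psi_i$ is a local trivialization.

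Finally, for the $G$-action $\beta$ I will repeat the local calculation of~\ref{acttensor}: around $(g_0,x_0)\in G\times M$, pick $i\in I$ with $\alpha(g_0,x_0)\in U_i$, open neighbourhoods $U\ni g_0$ and $V\ni x_0$ with $\alpha(U\times V)\sub U_i$, and $j\in I$ with $x_0\in U_j\sub V$. The $C^r_\K$-maps $a_k\colon U\times U_j\times F_k\to F_k$ describing $\beta_k$ in the charts $\psi_j^k,\psi_i^k$ (linear in the third argument) assemble, by exactly the same $\Phi$-construction as above, into a $C^r_\K$-map $\tilde a\colon U\times U_j\to L_\bL(F)_c$; Proposition~\ref{reallyneed2}\,(a) (applicable because $Y\times Z\times F$ and its square are metrizable) then turns $\tilde a$ into a joint $C^r_\K$-map $U\times U_j\times F\to F$, which describes $\beta$ in $\psi_j,\psi_i$. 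Since the open sets $U\times\pi^{-1}(U_j)$ cover $G\times E$, $\beta$ is globally $C^r_\K$. The main obstacle is the nonformal step: proving that $\Phi\colon L^2_\bL(F_1,F_2;F)_c\to L_\bL(F)_c$ is continuous. Grothendieck's lemma on compacta in Fr\'echet projective tensor products is what makes this work; without it, one would have to retreat to the $b$-topology via Proposition~\ref{reallyneed1}\,(b), losing a derivative and forfeiting the promised $C^r_\K$-regularity.
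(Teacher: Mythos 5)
Your proposal is correct, and its overall architecture coincides with the paper's: produce the operator-valued cocycle $x\mto g^1_{ij}(x)\wh{\otimes}_\pi g^2_{ij}(x)$ as a $C^r_\K$-map into $L_\bL(F)_c$, feed it into Proposition~\ref{standfact} (your use of Corollary~\ref{cornew} with condition~(i) is equivalent in the metrizable setting, since that corollary is exactly Proposition~\ref{standfact} combined with Proposition~\ref{reallyneed2}\,(a), which the paper invokes directly), and handle the $G$-action by the identical local computation with the maps $a_k$. The one step where you genuinely deviate is the middle one. The paper applies Proposition~\ref{reallyneed1}\,(a) separately to each factor, obtaining $C^r_\K$-maps $g^k_{ij}\colon U_i\cap U_j\to L(F_k)_c$, and then composes with the continuous (hence $C^r_\K$) bilinear map $\Xi\colon L_\bL(F_1)_c\times L_\bL(F_2)_c\to L_\bL(F_1\wh{\otimes}_\pi F_2)_c$, $(S_1,S_2)\mto S_1\wh{\otimes}_\pi S_2$ of Lemma~\ref{via-cp-in-proj}. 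You instead form the joint map $H(x,v_1,v_2)=G^1_{ij}(x,v_1)\otimes G^2_{ij}(x,v_2)$, apply Proposition~\ref{reallyneed1}\,(a) once in its multilinear case $k=2$ to land in $L^2_\bL(F_1,F_2,F)_c$, and compose with the linearization bijection $\Phi\colon L^2_\bL(F_1,F_2,F)\to L_\bL(F)$, whose continuity for the $c$-topologies you derive from the same Grothendieck fact (compacta of $F_1\wh{\otimes}_\pi F_2$ lie in closed absolutely convex hulls of sets $K_1\otimes K_2$, \cite[p.\,465, Corollary 2 to Theorem~45.2]{Tre}) that the paper uses to prove Lemma~\ref{via-cp-in-proj}; note that well-definedness of $\Phi$ uses completeness of $F=F_1\wh{\otimes}_\pi F_2$, which holds as $F$ is Fr\'{e}chet. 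So the key analytic ingredient is identical and only the bookkeeping differs: your route makes the final composition step cheaper ($\Phi$ is linear, so no appeal to smoothness, or real analyticity when $r=\omega$, of continuous bilinear maps is needed), at the cost of invoking the $k=2$ case of Proposition~\ref{reallyneed1} and proving continuity of $\Phi$ by hand; the paper's $\Xi$ requires the same estimate but is packaged so that its Hilbert-space analogue in \ref{cont-if-Hilbert} can be substituted wholesale to yield Proposition~\ref{bdl-hilbert}. Your closing observation is also accurate and matches the paper's Hilbert variant: retreating to the $b$-topology via Proposition~\ref{reallyneed1}\,(b) would cost exactly one derivative.
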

\begin{proof}
The uniqueness for prescribed local trivializations is clear.
Let us show existence of the structure.
Given $i,j\in I$ and $x\in U_i\cap U_j$,
we have $\psi_i^k((\psi_j^k)^{-1}(x,v))=(x, G_{ij}^k(x,v))$
for all $k\in\{1,2\}$ and $v\in F_k$,
where $G_{ij}^k\!:(U_i\cap U_j)\times F_k\to F_k$
is $C^r_\K$ and
$g^k_{ij}(x):=G^k_{ij}(x,\cdot)$
an $\bL$-linear mapping.
By Proposition~\ref{reallyneed1}\,(a),
the map $g_{ij}^k\colon U_i\cap U_j\to L(F_k)_c$
is~$C^r_\K$. Now
\[
L_\bL(F_1)_c\times L_\bL(F_2)\to L_\bL(F_1\wh{\otimes}_\pi F_2)_c,\;\;
(S,T)\mto S\wh{\otimes}_\pi T
\]
being continuous $\bL$-bilinear (as recalled in Lemma~\ref{via-cp-in-proj}),
we deduce that
\[
g_{ij}\colon U_i\cap U_j\to L_\bL(F_1\wh{\otimes}_\pi F_2)_c,\quad
x\mto g_{ij}^1(x)\wh{\otimes}_\pi g_{ij}^2(x)
\]
is $C^r_\K$. Hence $G_{ij}:=g_{ij}^\wedge\colon (U_i\cap U_j)\times
(F_1\wh{\otimes}_\pi F_2)\to F_1 \wh{\otimes}_\pi F_2$,
$(x,v)\mto g_{ij}(x)(v)$ is $C^r_\K$, by Proposition~\ref{reallyneed2}\,(a).
We easily check that
$\psi_i((\psi_j)^{-1}(x,v))=(x,G_{ij}(x,v))$
holds for~$G_{ij}$ as just defined,
for all $x\in U_i\cap U_j$ and $v\in F_1\wh{\otimes}_\pi F_2$.
Hence $E_1\wh{\otimes}_\pi E_2$
can be made an $\bL$-vector bundle
of class $C^r_\K$
in such a way that each $\psi_i$
is a local trivialization, by Proposition~\ref{standfact}.
Note that
$\beta(g,\cdot)$ is $\bL$-linear on $E_x$ for
all $g\in G$ and $x\in M$.
To show that $\beta$ is $C^r_\K$,
let $g_0$, $x_0$,
$i$, $U$, $V$, $j$ and the $C^r_\K$-map $a_k$
be as in the proof of Proposition~\ref{proptensor}.
By Proposition~\ref{reallyneed1}\,(a), $a_k^\vee\colon U\times U_j\to L(F_k)_c$,
$(g,x)\mto a_k(g,x,\cdot)$
is $C^r_\K$. Hence
\[
a\colon U\times U_j\to L(F_1\wh{\otimes}_\pi F_2)_c,\;\;
(g,x)\mto a_1^\vee(g,x)\wh{\otimes}_\pi a_2^\vee(g,x)
\]
is $C^r_\K$, by the Chain Rule and Lemma~\ref{via-cp-in-proj}.
Using Proposition~\ref{reallyneed2}\,(a), we find that the map
$a^\wedge\colon U\times U_j\times (F_1\wh{\otimes}_\pi F)\to F_1\wh{\otimes}_\pi F_2$,
$(g,x,v)\mto a(g,x)(v)$ is~$C^r_\K$.
We easily verify
that
$\psi_i(\beta(g,(\psi_j)^{-1}(x,v)))=(\alpha(g,x),a^\wedge(g,x,v))$
for all $(g,x,v)\in U\times U_j\times (F_1\wh{\otimes}_\pi F_2)$.
Thus $\psi_i(\beta(g,(\psi_j)^{-1}(x,v)))$ is $C^r_\K$ in
$(g,x,v)$, which completes the proof.
\end{proof}
We used the following fact:
\begin{la}\label{via-cp-in-proj}
Let $E_1$, $E_2$, $F_1$, and $F_2$ be Fr\'{e}chet
spaces over $\bL\in\{\R,\C\}$.
Then the following bilinear map is continuous:
\[
\Xi \colon L_\bL(E_1,F_1)_c\times L_\bL(E_2,F_2)_c\to L_\bL((E_1\wh{\otimes}_\pi E_2),
(F_1\wh{\otimes}_\pi F_2))_c,\;\;
(S_1,S_2)\mto S_1\wh{\otimes}_\pi S_2.
\]
\end{la}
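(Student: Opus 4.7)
The plan is to reduce continuity of the bilinear map $\Xi$ to an explicit estimate
\[
\sup_{z\in K}\; r\bigl((S_1\wh{\otimes}_\pi S_2)(z)\bigr)\;\leq\; C\cdot \sup_{x_1\in K_1}\!p_1(S_1(x_1))\;\cdot\;\sup_{x_2\in K_2}\!p_2(S_2(x_2)),
\]
valid for each compact $K\sub E_1\wh{\otimes}_\pi E_2$ and each continuous seminorm $r$ on $F_1\wh{\otimes}_\pi F_2$, with suitable compact sets $K_1\sub E_1$, $K_2\sub E_2$ and a constant $C\geq 0$. Such an estimate is precisely what is required for a basic seminorm of the compact-open topology on the target to be dominated by a product of basic seminorms on the factors, which is the defining property of continuity for a bilinear map into a locally convex space.

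First, I would reduce the problem to a tensor-product seminorm on the target: the projective tensor topology on $F_1\wh{\otimes}_\pi F_2$ is generated by the seminorms $p_1\otimes_\pi p_2$ for $p_1$ a continuous seminorm on $F_1$ and $p_2$ a continuous seminorm on $F_2$, and these seminorms extend continuously to the completion. It therefore suffices to dominate $\sup_{z\in K}(p_1\otimes_\pi p_2)((S_1\wh{\otimes}_\pi S_2)(z))$. Before completion, $(S_1\otimes S_2)(\sum_i x_i\otimes y_i)=\sum_i S_1(x_i)\otimes S_2(y_i)$ satisfies $(p_1\otimes_\pi p_2)(\cdot)\leq \sum_i p_1(S_1 x_i)p_2(S_2 y_i)$, and this estimate passes to the completion by taking the infimum over representations.

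Next, I would invoke Grothendieck's structure theorem for compact subsets of the projective tensor product of two Fr\'echet spaces (see, e.g., \cite[Ch.\,45]{Tre}): every compact set $K\sub E_1\wh{\otimes}_\pi E_2$ is contained in the closed absolutely convex hull of a set of the form $\{x_n\otimes y_n:n\in\N\}$ with $x_n\to 0$ in $E_1$ and $y_n\to 0$ in $E_2$. Setting $K_1:=\{x_n:n\in\N\}\cup\{0\}$ and $K_2:=\{y_n:n\in\N\}\cup\{0\}$, both are compact, and every $z\in K$ admits a representation $z=\sum_n\lambda_n\, x_n\otimes y_n$ with $\sum_n|\lambda_n|\leq C$ for a universal constant~$C$ (coming from the closed absolutely convex hull). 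Applying $S_1\wh{\otimes}_\pi S_2$ term-by-term and the basic estimate from the previous step yields
\[
(p_1\otimes_\pi p_2)\bigl((S_1\wh{\otimes}_\pi S_2)(z)\bigr)\;\leq\;\sum_n|\lambda_n|\,p_1(S_1 x_n)\,p_2(S_2 y_n)\;\leq\;C\cdot \|S_1\|_{K_1,p_1}\cdot\|S_2\|_{K_2,p_2},
\]
where $\|\cdot\|_{K_i,p_i}$ denotes the seminorm of uniform convergence on~$K_i$ with respect to~$p_i$, which generates the compact-open topology on $L_\bL(E_i,F_i)_c$.

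The main obstacle is the appeal to Grothendieck's representation of compact sets in the projective tensor product; this is where metrizability of the~$E_i$ is essential and where the Fr\'echet hypothesis enters decisively. Once this ingredient is in hand, the rest is formal: term-by-term application of $S_1$ and $S_2$, use of subadditivity and homogeneity of the projective seminorm on the completion, and the fact that the family of seminorms $(p_1\otimes_\pi p_2)$ generates the topology of $F_1\wh{\otimes}_\pi F_2$. A small technical check to keep in mind is that $S_1\wh{\otimes}_\pi S_2$ is indeed well defined as a continuous linear map between completions: this follows because $S_1\otimes S_2$ is continuous from $E_1\otimes_\pi E_2$ to $F_1\otimes_\pi F_2$ by the universal property of the projective tensor product, and hence extends uniquely by continuity to the completions.
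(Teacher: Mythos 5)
Your proposal is correct and follows essentially the same route as the paper: both reduce to seminorms of the form $q_1\otimes q_2$ on $F_1\wh{\otimes}_\pi F_2$ and invoke Grothendieck's description of compact subsets of $E_1\wh{\otimes}_\pi E_2$ from \cite[Chapter~45]{Tre}. The only difference is bookkeeping: you expand points of~$K$ as $\ell^1$-series over a null sequence of elementary tensors and estimate termwise, whereas the paper notes that $K$ lies in the closed absolutely convex hull of $K_1\otimes K_2$ for compact $K_i\sub E_i$ and that a continuous seminorm's supremum over this hull equals its supremum over $K_1\otimes K_2$, which yields $\sup q\bigl((S_1\wh{\otimes}_\pi S_2)(K)\bigr)\leq \sup q_1(S_1(K_1))\cdot\sup q_2(S_2(K_2))$ directly, without any series representation.
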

\begin{proof}
Let $K\sub E_1\wh{\otimes}E_2$ be compact, $q$
be a continuous seminorm on $F_1\wh{\otimes}_\pi F_2$,
and $\ve>0$.
After increasing~$q$, we may assume that
$q=q_1\otimes q_2$ for continuous seminorms
$q_k$ on~$F_k$ for $k\in\{1,2\}$.
By \cite[p.\,465, Corollary 2 to Theorem~45.2]{Tre},
$K$ is contained in the closed absolutely convex hull of $K_1\otimes K_2$
for certain compact subsets $K_k\sub E_k$ for $k\in\{1,2\}$.
For all $S_k\in L(E_k,F_k)$ such that
$\sup q_k(S_k(K_k))\leq\sqrt{\ve}$,
we have
\[
\sup q((S_1\wh{\otimes}_\pi S_2)(K))\leq
\sup q((S_1\wh{\otimes}_\pi S_2)(K_1\otimes K_2))
=\sup q_1(S_1(K_1))q_2(S_2(K_2))\leq \sqrt{\ve}^2=\ve,
\]
using \cite[Proposition~43.1]{Tre}.
The assertion follows.
\end{proof}
\begin{numba}\label{cont-if-Hilbert}
If $E_1$ and $E_2$ are Hilbert spaces over~$\bL$
with Hilbert space tensor product $E_1\wh{\otimes} E_2$,
and also $F_1$ and $F_2$ are Hilbert space over~$\bL$,
then the bilinear map
\[
\Xi\colon L(E_1,F_1)_b\times L(E_2,F_2)_b\to L((E_1\wh{\otimes} E_2),(F_1\wh{\otimes} F_2))_b
\]
is continuous, as $\|S_1\wh{\otimes} S_2\|_{\op}
\leq \|S_1\|_{\op}\|S_2\|_{\op}$.
\end{numba}
\begin{numba}
Replace the hypotheses in \ref{set-1-tensor}
and \ref{set-2-tensor}
with the requirements
that $G$ and $M$ are modeled on metrizable
locally convex spaces, $r\geq 1$
and $F_1$, $F_2$ are Hilbert
spaces. We now use \ref{cont-if-Hilbert}
instead of Lemma~\ref{via-cp-in-proj},
replace
$F_1\wh{\otimes}_\pi F_2$
with the Hilbert space $F_1\wh{\otimes}F_2$,
Proposition~\ref{reallyneed1}\,(a)
with Proposition~\ref{reallyneed1}\,(b)
(so that operator-valued maps are only $C^{r-1}_\K$)
and use Proposition~\ref{reallyneed2}\,(b)
with $r-1$ in place of~$r$.
Repeating the proof of Proposition~\ref{tensor-projective},
we get:
\end{numba}
\begin{prop}\label{bdl-hilbert}
On $E_1\wh{\otimes}E_2=\bigcup_{x\in M}(E_1)_x\wh{\otimes}(E_2)_x$,
there is a unique equivariant
$\bL$-vector bundle structure of class $C^{r-1}_\K$
over~$M$ whose typical
fibre is the Hilbert space $F_1\wh{\otimes}F_2$,
such that $\psi_i\colon \pi^{-1}(U_i)\to U_i\times
(F_1\wh{\otimes} F_2)$
is a local trivialization for each~$i\in I$. \Punkt
\end{prop}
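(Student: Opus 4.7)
The plan is to mimic the proof of Proposition~\ref{tensor-projective} essentially verbatim, with two systematic substitutions: replace the projective tensor product $F_1 \wh{\otimes}_\pi F_2$ with the Hilbert space tensor product $F_1 \wh{\otimes} F_2$, and work throughout with the $b$-topology on spaces of operators instead of the $c$-topology. This forces the class of regularity to drop from $C^r_\K$ to $C^{r-1}_\K$ at the points where Proposition~\ref{reallyneed1}(b) is used in place of Proposition~\ref{reallyneed1}(a), which matches the conclusion and is the reason for assuming $r \geq 1$.

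Uniqueness is immediate once the local trivializations $\psi_i$ are prescribed, since they determine the smooth structure and the linear structure on each fibre. For existence, I would first establish the bundle structure via cocycles. From the given local trivializations of $E_1$ and $E_2$, one obtains $C^r_\K$-maps $G^k_{ij}\colon (U_i \cap U_j)\times F_k \to F_k$ that are $\bL$-linear in the second argument; by Proposition~\ref{reallyneed1}(b), the corresponding maps $g_{ij}^k\colon U_i\cap U_j \to L_\bL(F_k)_b$ are $C^{r-1}_\K$. Using the continuity of $\Xi$ on the $b$-topology recorded in~\ref{cont-if-Hilbert}, the fibrewise Hilbert tensor product
\[
g_{ij}\colon U_i\cap U_j \to L_\bL(F_1\wh{\otimes}F_2)_b,\quad x\mto g_{ij}^1(x)\wh{\otimes}g_{ij}^2(x),
\]
is then $C^{r-1}_\K$. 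Since $F_1\wh{\otimes}F_2$ is normable, Proposition~\ref{reallyneed2}(d) (or equivalently~(b), because $Z$ and $F_1\wh{\otimes}F_2$ are metrizable so $(Z\times(F_1\wh{\otimes}F_2))^2$ is a $k$-space) yields that $G_{ij}:=g_{ij}^\wedge$ is $C^{r-1}_\K$. A direct computation on elementary tensors, extended by density and continuity, shows $\psi_i\circ\psi_j^{-1}(x,v)=(x,G_{ij}(x,v))$, so Proposition~\ref{standfact} (applied with $C^{r-1}_\K$ in place of $C^r_\K$) produces the desired $\bL$-vector bundle structure of class $C^{r-1}_\K$.

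For the $G$-action, fix $g_0 \in G$ and $x_0 \in M$, and choose indices $i,j$ and neighbourhoods $U\subseteq G$ of $g_0$ and $U_j\subseteq M$ of $x_0$ with $\alpha(U\times U_j)\subseteq U_i$, exactly as in~\ref{acttensor}. The resulting maps $a_k\colon U\times U_j\times F_k \to F_k$ encoding the action through the local trivializations are $C^r_\K$ and $\bL$-linear in the last argument, so by Proposition~\ref{reallyneed1}(b), $a_k^\vee \colon U\times U_j \to L_\bL(F_k)_b$ is $C^{r-1}_\K$. Applying~\ref{cont-if-Hilbert} and the Chain Rule yields that
\[
a\colon U\times U_j \to L_\bL(F_1\wh{\otimes}F_2)_b,\quad (g,x)\mto a_1^\vee(g,x)\wh{\otimes}a_2^\vee(g,x),
\]
is $C^{r-1}_\K$. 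Again, because $F_1\wh{\otimes}F_2$ is normable, Proposition~\ref{reallyneed2}(d) (or~(b)) gives that $a^\wedge$ is $C^{r-1}_\K$, and the identity $\psi_i(\beta(g,\psi_j^{-1}(x,v)))=(\alpha(g,x),a^\wedge(g,x,v))$ then shows $\beta$ is $C^{r-1}_\K$ on $U\times\pi^{-1}(U_j)$. Covering arguments complete the proof.

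The only real obstacle is bookkeeping: one must be careful that the loss of one derivative happens at precisely one place (the passage to the $b$-topology via Proposition~\ref{reallyneed1}(b)) and that the subsequent steps — the Hilbert tensor product of operators~\ref{cont-if-Hilbert} and the reconstruction of $G_{ij}$ and $a^\wedge$ via Proposition~\ref{reallyneed2} — all preserve the class $C^{r-1}_\K$. The normability of the tensor product fibre ensures that~\ref{cont-if-Hilbert} holds with operator norm estimates, and the metrizability of the modeling spaces ensures the required $k_\R$-hypothesis in Proposition~\ref{reallyneed2}; no further topological subtleties arise.
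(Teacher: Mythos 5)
Your proposal is correct and follows essentially the same route as the paper, which likewise obtains Proposition~\ref{bdl-hilbert} by repeating the proof of Proposition~\ref{tensor-projective} with the Hilbert space tensor product and the $b$-topology in place of the projective tensor product and the $c$-topology, using \ref{cont-if-Hilbert} instead of Lemma~\ref{via-cp-in-proj}, Proposition~\ref{reallyneed1}\,(b) instead of (a) (whence the drop to $C^{r-1}_\K$ and the hypothesis $r\geq 1$), and Proposition~\ref{reallyneed2} with $r-1$ in place of~$r$ to recover $G_{ij}=g_{ij}^\wedge$ and $a^\wedge$. Your observation that one may invoke Proposition~\ref{reallyneed2}\,(d) directly (normability of $F_1\wh{\otimes}F_2$, with no topological hypothesis on the base) rather than the metrizability/$k_\R$ route is a valid, marginally cleaner bookkeeping of the same argument.
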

\begin{rem}
If $r\geq 1$, $G$ and $M$ are modeled on metrizable spaces
and both $F_1$ and $F_2$ are pre-Hilbert spaces
with Hilbert space completions $\wt{F}_1$ and $\wt{F}_2$,\vspace{.4mm}
we can use the non-completed tensor product
$F_1\otimes_\bL F_2\sub \wt{F}_1\wh{\otimes} \wt{F}_2$
with the induced topology as the fibre
and get an equivariant $\bL$-vector bundle structure
over~$M$ of class $C^{r-1}_\K$ over~$M$
on $E_1\otimes E_2=\bigcup_{x\in M}(E_1)_x\otimes_\bL (E_2)_x$,
exploiting that the $\bL$-bilinear
map $L_\bL(F_1)_b\times L_\bL(F_2)_b\to L_\bL(F_1\otimes_\bL F_2)_b$,
$(S_1,S_2)\mto S_1\otimes S_2$ is continuous.
\end{rem}
\section{Locally convex direct sums of vector bundles}
Let $\bL\in\{\R,\C\}$,
$\K\in\{\R,\bL\}$,
$s\in \{\infty,\omega\}$,
$r\in \N_0\cup\{\infty,\omega\}$
such that $r\leq s$,
$G$ be a $C^s_\K$-Lie group
modeled on a locally convex space~$Y$,
and $M$ be a $C^r_\K$-manifold
modeled on a locally convex $\K$-vector space~$Z$,
together with a $C^r_\K$-action
$\alpha\colon G\times M\to M$.
\begin{numba}\label{setting Whitney}
Let $n\in\N$ and $\pi_k\!: E_k\to M$ be an
equivariant $\bL$-vector bundle of class $C^r_\K$
over~$M$
for $k\in\{1,\ldots, n\}$,
with typical fibre a locally convex $\bL$-vector space~$F_k$;
let $\beta_k\colon G\times E_k\to E_k$
be the $G$-action
and $\pr_{F_k}\colon M\times F_k\to F_k$
be the projection onto the 2nd component.
We easily check: There is a unique
$\bL$-vector bundle structure of class~$C^r_\K$
on the ``Whitney~sum''
\[
E:=E_1\oplus\cdots\oplus E_n:=
\bigcup_{x\in M} (E_1)_x\times\cdots \times (E_n)_x,\vspace{-.2mm}
\]
with the apparent map $\pi\!:
E\to M$,
such that $\psi\!: \pi^{-1}(U)
\to U\times F_1\times \cdots\times F_n$,
$v=(v_1,\ldots,v_n)\mto (\pi(v),\pr_{F_1}(\psi_1(v_1)),\ldots, \pr_{F_n}(\psi_n(v_n)))$
is a local trivialization of~$E$,
for all families $(\psi_k)_{k=1}^n$ of
local trivializations
$\psi_k\!: (\pi_k)^{-1}(U)\to U\times F_k$
which trivialize the $E_k$'s over a joint
open subset~$U$ of~$M$.
Then
$\beta(g,v):=(\beta_1(g,v_1),\ldots,
\beta_n(g,v_n))$
for $g\in G$, $v=(v_1,\ldots, v_n)\in
E$ yields an action of~$G$ on~$E$.
It is straightforward that~$\beta$ is~$C^r_\K$.
Thus:
\end{numba}
\begin{prop}\label{propfinsum}
If $E_1,\ldots,E_n$ are equivariant
$\bL$-vector bundles of class~$C^r_\K$
over a $G$-manifold~$M$ of class~$C^r_\K$,
then also $E_1\oplus \cdots\oplus E_n$ is an equivariant
$\bL$-vector bundle of class~$C^r_\K$ over~$M$.\, \Punkt
\end{prop}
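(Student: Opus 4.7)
The plan is to apply Proposition~\ref{standfact} to obtain the $\bL$-vector bundle structure of class $C^r_\K$ on $E := E_1\oplus\cdots\oplus E_n$, and then verify directly that the diagonal action $\beta$ is $C^r_\K$. Because everything will decouple into components, no hypocontinuity or $k_\R$-space hypotheses are needed here.

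First, I would arrange a common trivializing cover: choose an open cover $(U_i)_{i\in I}$ of~$M$ such that, for each $i\in I$ and each $k\in\{1,\ldots,n\}$, there is a local trivialization $\psi^k_i\colon \pi_k^{-1}(U_i)\to U_i\times F_k$. This is possible by refining finitely many trivializing covers of the individual bundles. Define $\psi_i\colon \pi^{-1}(U_i)\to U_i\times (F_1\times\cdots\times F_n)$ as in~\ref{setting Whitney}. The resulting transition maps take the form
\[
G_{ij}(x,v_1,\ldots,v_n)\;=\;\bigl(G^1_{ij}(x,v_1),\ldots,G^n_{ij}(x,v_n)\bigr),
\]
with each $G^k_{ij}\colon (U_i\cap U_j)\times F_k\to F_k$ a $C^r_\K$-map that is $\bL$-linear in~$v_k$, by hypothesis on~$E_k$. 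Since finite products of $C^r_\K$-maps are $C^r_\K$, the map $G_{ij}$ is $C^r_\K$ and $\bL$-linear in $(v_1,\ldots,v_n)$. Hypotheses (a)--(g) of Proposition~\ref{standfact} are thus satisfied, producing a unique $\bL$-vector bundle structure of class $C^r_\K$ on $E\to M$ for which each $\psi_i$ is a local trivialization.

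Next, I would verify that $\beta(g,v):=(\beta_1(g,v_1),\ldots,\beta_n(g,v_n))$ is $C^r_\K$ and equivariant. Given $(g_0,v_0)\in G\times E$ with $x_0:=\pi(v_0)$, pick $i\in I$ with $\alpha(g_0,x_0)\in U_i$ and, using continuity of~$\alpha$, open neighbourhoods $U$ of~$g_0$ and $V$ of~$x_0$ with $\alpha(U\times V)\sub U_i$; then choose $j\in I$ with $x_0\in U_j\sub V$. For each $k$, by the $C^r_\K$-property of $\beta_k$ we may write $\psi^k_i(\beta_k(g,(\psi^k_j)^{-1}(x,v_k)))=(\alpha(g,x),a_k(g,x,v_k))$ for a $C^r_\K$-map $a_k\colon U\times U_j\times F_k\to F_k$ that is $\bL$-linear in~$v_k$. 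Then
\[
\psi_i\bigl(\beta(g,\psi_j^{-1}(x,v))\bigr)\;=\;\bigl(\alpha(g,x),\,(a_1(g,x,v_1),\ldots,a_n(g,x,v_n))\bigr),
\]
which is $C^r_\K$ in $(g,x,v)\in U\times U_j\times (F_1\times\cdots\times F_n)$ as a product of $C^r_\K$-maps. Hence $\beta$ is $C^r_\K$ locally, and thus globally. By construction $\beta(g,\cdot)$ sends $E_x=(E_1)_x\times\cdots\times (E_n)_x$ linearly onto $E_{\alpha(g,x)}$, completing the verification of Definition~\ref{def-equi}.

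There is no serious obstacle: the entire argument reduces, via the product structure, to the corresponding statements for each summand. The only points requiring care are the refinement to a common trivializing cover and the identification of both transition data and action data as componentwise products; once these are in place, Proposition~\ref{standfact} delivers the bundle structure and the $C^r_\K$-property of~$\beta$ is immediate.
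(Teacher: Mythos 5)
Your proposal is correct and follows essentially the same route as the paper: the paper's treatment in~\ref{setting Whitney} defines the Whitney sum with componentwise trivializations and the diagonal action and leaves the verifications as ``easily checked,'' and your argument simply fills in exactly those routine details (common trivializing cover via finite intersections, componentwise transition maps fed into Proposition~\ref{standfact}, and the local-chart verification that $\beta$ is $C^r_\K$, in the same style as the proofs of Propositions~\ref{complbdl} and~\ref{proptensor}). Your observation that no hypocontinuity or $k_\R$-hypotheses are needed, since everything decouples into finitely many components, matches the paper's intent; note only that this decoupling is what fails for infinite sums, where the paper must impose condition~(c) of~\ref{sum-situa-3} because infinite intersections of trivializing neighbourhoods need not be open.
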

The following lemma allows
infinite direct sums to be tackled.
\begin{la}\label{mapsoplus}
Let $(E_i)_{i\in I}$ and $(F_i)_{i\in I}$ be families
of locally convex spaces over $\K\in\{\R,\C\}$, with locally convex direct sums
$E:=\bigoplus_{i\in I}E_i$
and $F:=\bigoplus_{i\in I} F_i$, respectively.
Let $V$ be an open subset of a locally convex $\K$-vector space~$Z$.
Let $r\in \N_0\cup\{\infty\}$,
and assume that $f_i\!: V\times E_i\to F_i$
is a mapping
which is linear in the second argument,
for each $i\in I$.
Moreover, assume that {\rm(a)}
or {\rm(b)} holds:
\begin{itemize}
\item[\rm(a)]
$Z$ is finite dimensional; or
\item[\rm(b)]
$Z$ and each $E_i$ is a $k_\omega$-space
and $I$ is countable.
\end{itemize}
If~$f_i$
is of class~$C^r_\K$ for each $i\in I$,
then also the following map is $C^r_\K$:
\[
f\!: V\times E\to F\, ,\;\;\;\;
(x,(v_i)_{i\in I})\mto (f_i(x,v_i))_{i\in I}.
\]
\end{la}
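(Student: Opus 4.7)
The plan is induction on $r\in\N_0\cup\{\infty\}$; the case $r=\infty$ follows once all finite cases are settled. The basic structural observation is that each $v\in E=\bigoplus_{i\in I}E_i$ has finite support, and, by linearity of $f_i$ in the second argument, $f_i(x,0)=0$; hence the $i$-th component $f_i(x,v_i)$ of $f(x,v)$ vanishes outside $\mathrm{supp}(v)$, so that $f(x,v)\in F$ is well defined and has finite support.

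The base case $r=0$ amounts to continuity of~$f$ at an arbitrary $(x_0,v_0)$. A basic absolutely convex $0$-neighbourhood in $F$ has the form $\bigoplus_{i\in I}U_i$ with absolutely convex $0$-neighbourhoods $U_i\subseteq F_i$, so all the differences $f_i(x,v_i)-f_i(x_0,v_{0,i})$ must be driven into $U_i$ \emph{simultaneously for every} $i$. Writing $v=v_0+u$ and using linearity of $f_i$ in the second variable yields the splitting
\[
f_i(x,v_i)-f_i(x_0,v_{0,i})=\bigl(f_i(x,v_{0,i})-f_i(x_0,v_{0,i})\bigr)+f_i(x,u_i),
\]
so pointwise continuity of the $f_i$ handles the finitely many $i\in J_0:=\mathrm{supp}(v_0)$, while the remaining term $f_i(x,u_i)$ requires a uniform-in-$i$ estimate of the form $f_i(K\times V_i)\subseteq\tfrac12 U_i$ on a \emph{common} neighbourhood~$K$ of $x_0$, with suitable $0$-neighbourhoods $V_i\subseteq E_i$. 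In case~(a), local compactness of~$V$ (from $\dim Z<\infty$) provides a compact neighbourhood $K\subseteq V$ of~$x_0$, and the tube lemma applied to each $f_i$ at the compact set $K\times\{0\}$ (where $f_i$ vanishes) supplies the required $V_i$, separately for each~$i$. In case~(b), $V\times E$ is a $k_\omega$-space, because countable direct sums of $k_\omega$-spaces are $k_\omega$ and products of $k_\omega$-spaces are $k_\omega$; hence continuity may be tested on compact subsets, and any compact subset of $E$ lies in some finite subsum $E_1\oplus\cdots\oplus E_n$, where $f$ coincides with the obviously continuous assembly $(x,v)\mapsto(f_1(x,v_1),\ldots,f_n(x,v_n),0,0,\ldots)\in F$.

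For the inductive step at order $r\geq1$, the pointwise calculation
\[
\frac{f_i(x+ty,v_i+tw_i)-f_i(x,v_i)}{t}=\frac{f_i(x+ty,v_i)-f_i(x,v_i)}{t}+f_i(x+ty,w_i)
\]
(using linearity of $f_i$ in the second variable) passes to the limit $t\to0$ and gives
\[
df((x,v),(y,w))=\bigl(d_1f_i((x,v_i),y)+f_i(x,w_i)\bigr)_{i\in I},
\]
where $d_1f_i$ denotes the partial derivative of $f_i$ in the first variable. The $i$-th entry vanishes outside the finite set $\mathrm{supp}(v)\cup\mathrm{supp}(w)$, so $df$ lands in~$F$. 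Writing $df=g_1+g_2$ with
\[
g_1(x,y,v):=\bigl(d_1f_i((x,v_i),y)\bigr)_{i\in I}\colon(V\times Z)\times E\to F,\qquad g_2(x,w):=f(x,w),
\]
each summand is again of the form treated in the lemma: the components $h_i(x,y,v_i):=d_1f_i((x,v_i),y)$ of $g_1$ are $C^{r-1}_\K$ and linear in $v_i$ (over the open set $V\times Z\subseteq Z\times Z$, with $Z\times Z$ finite dimensional in case~(a) and a $k_\omega$-space in case~(b)), while the components of $g_2$ are the original $f_i$ (now in the variable~$w$). The inductive hypothesis therefore gives $g_1,g_2\in C^{r-1}_\K$, hence $df\in C^{r-1}_\K$, and consequently $f\in C^r_\K$.

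The main obstacle is the uniform-in-$i$ continuity estimate at $r=0$: a single neighbourhood of~$x_0$ must control infinitely many continuity conditions. This is exactly what local compactness of~$V$ (case~(a), via the tube lemma applied once per index to the \emph{same} compact~$K$) or the $k_\omega$ structure theorem on compact subsets of countable direct sums (case~(b)) are designed to overcome. Once continuity is secured, the inductive passage is routine, since the directional derivative decomposes into two pieces of exactly the same structural shape as~$f$, enabling the induction to close.
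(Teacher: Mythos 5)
Your overall architecture --- a base case $r=0$ followed by an induction in which $df$ splits into two maps of the same structural shape as $f$ --- matches the paper's treatment of case~(a), and your route for case~(b) (showing $V\times E$ is a $k_\omega$-space, testing continuity on compact sets, and using that compact subsets of a locally convex direct sum lie in finite partial sums) is a workable alternative to the paper's argument, which instead invokes results on locally convex direct limits of $k_\omega$-spaces to obtain all orders $r$ at once in case~(b). Be aware, though, that your input ``countable locally convex direct sums of $k_\omega$-spaces are $k_\omega$'' is precisely the nontrivial identification of the direct-sum topology with the direct-limit topology that those citations encode, so this route is not really more elementary; it merely redistributes the same ingredient.

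The genuine gap is in your base case for~(a), where $I$ may be \emph{uncountable}. You assert that a basic absolutely convex $0$-neighbourhood of $F=\bigoplus_{i\in I}F_i$ has the form $\bigoplus_{i\in I}U_i$. This is true only for countable~$I$. In general, a basis of $0$-neighbourhoods is given by the absolutely convex hulls $\absconv\bigl(\bigcup_{i\in I}U_i\bigr)$; the box sets $\bigoplus_i U_i$ are $0$-neighbourhoods, but they do not form a basis when $I$ is uncountable. For instance, in $F=\R^{(I)}$ the hull $Q$ of $\bigcup_{i}\,]{-1},1[\,e_i$ contains no box: any box $\bigoplus_i\,]{-\ve_i},\ve_i[$ with all $\ve_i>0$ contains the elements $\sum_{i\in J}(\ve_i/2)e_i$, whose coordinate sums grow without bound as the finite set $J$ grows. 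Consequently, driving each component difference $f_i(y,w_i)-f_i(x_0,v_{0,i})$ into $U_i$ establishes continuity only for the strictly coarser box topology, not for the direct-sum topology, and your $r=0$ argument in case~(a) does not close. The missing idea is the convex-weight bookkeeping used in the paper: choose the neighbourhood of $v_0$ in $E$ of the form $v_0+\absconv\bigl(\bigcup_{i}P_i\bigr)$, so that a perturbation is $(t_ip_i)_{i\in I}$ with $t_i\in[0,1]$ and $\sum_i t_i=1$, and then use linearity of $f_i$ in the second argument to transfer the weights to the target: $f_i(y,t_ip_i)=t_if_i(y,p_i)\in t_iQ_i$, where $Q_i:=(\frac{1}{N}Q)\cap F_i$ and $N:=|\mathrm{supp}(v_0)|+1$. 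The total increment is then a subconvex combination lying in $\frac{1}{N}Q$, plus the finitely many terms for $i\in\mathrm{supp}(v_0)$, and convexity of $Q$ closes the estimate. Your tube-lemma/Wallace step, producing $P_i$ with $f_i(K\times P_i)\sub Q_i$ on a single compact neighbourhood $K$ of $x_0$, is correct and is exactly what the paper does; what fails is only the reduction of the target neighbourhood to a box. Your inductive step is sound as written and agrees with the paper's.
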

\begin{proof}
If (b) holds,
we may assume that $I$ is countably infinite,
excluding a trivial case. Thus, assume that $I=\N$.
For each $n\in \N$, identify $E_1\times\cdots\times E_n$
with a vector subspace of $E$,
identifying $x\in E_1\times\cdots \times E_n$ with $(x,0)$.
For each $n\in\N$, we then have
\[
Z\times E=\bigcup_{n\in\N}(Z\times E_1\times\cdots\times E_n)\quad\mbox{and}\quad
V\times E=\bigcup_{n\in \N}(V\times E_1\times\cdots\times E_n),\vspace{-.5mm}
\]
where $Z\times E_1\times\cdots\times E_n$
is a $k_\omega$-space in the product topology.
The inclusion map
\[
\lambda_n\colon F_1\times\cdots\times F_n\to\bigoplus_{i\in \N}F_i,\;\;
v\mto(v,0)\vspace{-1mm}
\]
is continuous and $\K$-linear.
Moreover,
\[
g_n\colon V\times E_1\times\cdots\times E_n\to
F_1\times\cdots\times F_n,\;\;
(x,v_1,\ldots,v_n)\mto(f_1(x,v_1),\ldots,f_n(x,v_n))
\]
is a $C^r_\K$-map and so is
$f|_{V\times E_1\times \cdots\times E_n}=\lambda_n\circ g_n$,
for each $n\in\N$.
Hence~$f$ is $C^r_\K$
on the open subset $V\times E$ of $Z\times E$,
considered as the locally convex direct limit
$\dl(Z\times E_1\times\cdots \times E_n)$,\vspace{-.5mm}
by \cite[Proposition 4.5\,(a)]{DSU}.
But this locally convex space equals
$Z\times \dl(E_1\times\cdots\times E_n)=Z\times E$\vspace{-1mm}
with the product topology (see \cite[Theorem~3.4]{Hir}).\vspace{1.3mm}

If (a) holds,
it suffices to prove the assertion for $r\in \N_0$. We proceed by induction.
{\em The case $r=0$}.
Let $(x,v)=(x,(v_i)_{i\in I})\in V\times E$;
we show that $f$ is continuous
at $(x,v)$.
To this end, let
$Q$ be
an absolutely convex,
open $0$-neighbourhood in~$F$.
There is a finite subset $J\sub I$
such that $v_i=0$ for all $i\in I\setminus  J$.
Let $N:=|J|+1$.
For each $i\in I$,
the intersection
$Q_i:=(\frac{1}{N}Q)\cap F_i$
is an absolutely convex, open
$0$-neighbourhood in~$F_i$.
For the absolutely convex hull, we get
$\absconv (\bigcup_{i\in I}Q_i)\sub \frac{1}{N}Q$.
Since $f_i$ is continuous
for each $i\in J$ and~$J$ is finite,
we find a compact neighbourhood~$K$
of~$x$ in~$V$ such that
$f_i(y,v_i)-f_i(x,v_i)\in Q_i$
for all $y\in K$ and $i\in J$.
Since $f_i(K\times\{0\})=\{0\}$, where~$K$ is compact
and $f_i$ is continuous,
for each $i\in I$
there is an absolutely convex, open $0$-neighbourhood~$P_i$
in~$E_i$
such that $f_i(K\times P_i)\sub Q_i$.
Then $W:=v+\absconv(\bigcup_{i\in I} P_i)$
is an open neighbourhood of~$v$ in~$E$.
Let
$y\in K$ and $w\in W$ be given, say $w=(w_i)_{i\in I}=v+(t_ip_i)_{i\in I}$
where $p_i\in P_i$
and $(t_i)_{i\in I}\in \bigoplus_{i\in I}\R$
such that $t_i\in [0,1]$
and $\sum_{i\in I} t_i=1$.
Then, for each $i\in I\setminus  J$,
since $v_i=0$ we obtain
\[
f_i(y,w_i)-f(x,v_i)=f_i(y,t_ip_i)=t_if_i(y,p_i)\in t_i Q_i\,.
\]
For $i\in J$ on the other hand, we have
\begin{eqnarray*}
f_i(y,w_i)-f(x,v_i) & = & f_i(y,w_i-v_i)+(f_i(y,v_i)-f_i(x,v_i))\\
& = & t_if_i(y,p_i)+(f_i(y,v_i)-f_i(x,v_i))\in t_iQ_i+Q_i.
\end{eqnarray*}
As a consequence,
$f(y,w)-f(x,v)\in (\prod_{i\in I}t_iQ_i)+\sum_{i\in J} Q_i
\sub \frac{1}{N}Q+\sum_{i\in J}\frac{1}{N}Q=Q$,
using the convexity of~$Q$.
We have shown that~$f$ is continuous at $(x,v)$.

{\em Induction step.} Let $r\geq 1$ and assume
the assertion is true for all numbers $<r$.
Given $u,v\in E$,
$x\in V$, and $z\in Z$,
we have $u,v\in \bigoplus_{i\in J}E_i=\prod_{i\in J}E_i$
for some finite subset $J\sub I$.
The map $f_J \!:
V\times \prod_{i\in J}E_i\to \prod_{i\in J}F_i$,
$(x,(v_i)_{i\in J})\mto (f_i(x,v_i))_{i\in J}$
is~$C^1_\K$,
whence
\begin{eqnarray*}
df_J((x,u),(z,v)) & = & \lim_{t\to 0}t^{-1}(f_J((x,u)+t(z,v))-f_J(x,u))\\
& = & \lim_{t\to 0}t^{-1}(f((x,u)+t(z,v))-f(x,u))
=df((x,u),(z,v))
\end{eqnarray*}
exists in $\prod_{i\in J} F_i$
and thus in~$F$; its $i$th component
is
\[
df_i((x,u_i),(z,v_i))=d_1f_i(x,u_i,z)+d_2f_i(x,u_i,v_i)
\]
in terms of partial differentials.
Note that the mappings $g_i\!:
(V\times Z)\times (E_i\times E_i)\to F_i$,\linebreak
$(x,z,u_i,v_i)\mto d_1f_i(x,u_i,z)$
and $h_i\!: (V\times Z)\times (E_i\times E_i)\to F_i$,
$(x,z,u_i,v_i)\mto 
d_2f_i(x,u_i,v_i)=f_i(x,v_i)$
are $C^{r-1}_\K\!$ and linear in $(u_i,v_i)$.
By induction, the mappings
\[g\!: (V\times Z)\times (E\times E)\to F,
\;\;\;(x,z,(u_i)_{i\in I},
(v_i)_{i\in I})\mto (g_i(x,z,u_i,v_i))_{i\in I}\;\;\;\mbox{and}
\]
\[h\!: (V\times Z)\times (E\times E)\to F,\;\;\;
(x,z,(u_i)_{i\in I},(v_i)_{i\in I})\mto (h_i(x,z,u_i,v_i))_{i\in I}
\]
are~$C^{r-1}_\K$,
using that $E\times E\isom \bigoplus_{i\in I} (E_i\times E_i)$.
Hence also
$df\!: (V\times E)\times (Z\times E)\to F$
is~$C^{r-1}_\K$,
as $df((x,u),(z,v))=g(x,z,u,v)+h(x,z,u,v)$.
Since $df$ exists and is~$C^{r-1}_\K$,
the continuous map~$f$ is~$C^r_\K$.
\end{proof}
\begin{rem}
The conclusion of
Lemma~\ref{mapsoplus}
does not hold for
$(r,\K)=(\omega,\R)$ in
the example $I=\N$, $V=Z=\R$, $E_k=\R$,
$f_k(r,t):= \frac{t}{1+kr^2}$,
using that the Taylor series
of $f_k(\cdot,t)$ around~$0$ has radius of convergence
$\frac{1}{\sqrt{k}}$ for all $t\in \R\setminus \{0\}$
(cf.\
\cite{GCX}, Remark~3.14).
\end{rem}
\begin{numba}\label{sum-situa-3}
Assuming now $r\not=\omega$,
let $(E_i)_{i\in I}$
be a family of equivariant $\bL$-vector bundles\linebreak
$\pi_i\!:
E_i \to M$ of class $C^r_\K$
with typical~fibre~$F_i$
and $G$-action $\beta_i\colon G\times E_i\to E_i$.
We assume that (a) or (b) is satisfied:
\begin{itemize}
\item[(a)]
$G$ and $M$ are finite dimensional; or
\item[(b)]
$I$ is countable
and each $F_i$ as well as the modeling spaces
of~$G$ and~$M$ are $k_\omega$-spaces.
\end{itemize}
Moreover, we assume:
\begin{itemize}
\item[(c)]
For each $x\in M$,
there exists an open neighbourhood
$U$ of~$x$ in~$M$, such that, for each $i\in I$,
the vector bundle~$E_i$ admits
a local trivialization $\psi_i\!: (\pi_i)^{-1}(U)\to U\times F_i$.
\end{itemize}
Thus, the $C^r_\K$-vector bundle
$E_i|_U$
is trivializable for each $i\in I$.
Define $E:=\bigcup_{x\in M}\bigoplus_{i\in I}(E_i)_x$
with pairwise disjoint direct sums
and $\pi\!: E\to M$, $\bigoplus_{i\in I}(E_i)_x\ni v \mto x$.
Then
\[
\beta\colon G\times E\to E,\quad (g,(v_i)_{i\in I})\mto(\beta_i(g,v_i))_{i\in I}
\]
is a $G$-action such that $\beta(g,\cdot)|_{E_x}\colon E_x\to E_{\alpha(g,x)}$
is $\bL$-linear for all $(g,x)\in G\times M$,
where $E_x:=\pi^{-1}(\{x\})$.
We readily deduce from
Proposition~\ref{standfact} and
Proposition~\ref{mapsoplus}
that there is a unique
$\bL$-vector bundle
structure of class~$C^r_\K$ on~$E$
such that
\[
\pi^{-1}(U)\to U\times \bigoplus_{i\in I}F_i,\quad
E_x\ni (v_i)_{i\in I}\mto (x,(\pr_{F_i}(\psi_i(v_i)))_{i\in I})
\]
is a local trivialization for~$E$,
for each family $(\psi_i)_{i\in I}$ of local trivializations
as above. The latter makes $E$
an equivariant $\bL$-vector bundle of class~$C^r_\K$.
In fact, the $C^r_\K$-property of~$\beta$
can be checked using pairs of local trivializations,
as in the proofs of Propositions~\ref{complbdl}, \ref{proptensor},
and \ref{tensor-projective}.
Then apply Proposition~\ref{mapsoplus},
with~$F_i$ in place of~$E_i$ and $Y\times Z$
in place of~$Z$.
Thus:
\end{numba}
\begin{prop}\label{propinfsum}
In the situation of {\rm\ref{sum-situa-3}},
$\bigoplus_{i\in I}E_i$
is an equivariant $\bL$-vector bundle
of class $C^r_\K$ over~$M$.\, \Punkt
\end{prop}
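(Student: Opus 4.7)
The plan is to carry out the construction already sketched in~\ref{sum-situa-3} rigorously, invoking Proposition~\ref{standfact} for the bundle structure and Proposition~\ref{mapsoplus} to control both the transition cocycle and the action. Fix an open cover $(U_j)_{j\in J}$ of~$M$ by sets satisfying~(c) of~\ref{sum-situa-3}, together with families $(\psi_{i,j})_{i\in I}$ of local trivializations $\psi_{i,j}\colon \pi_i^{-1}(U_j)\to U_j\times F_i$. Define the candidate trivializations
\[
\psi_j\colon \pi^{-1}(U_j)\to U_j\times \bigoplus_{i\in I}F_i,\quad
(v_i)_{i\in I}\mto\Big(\pi((v_i)_{i\in I}),\,(\pr_{F_i}(\psi_{i,j}(v_i)))_{i\in I}\Big).
\]
For overlapping $j,j'\in J$, the cocycle of $E_i$ furnishes $C^r_\K$-maps $G^{(i)}_{j j'}\colon (U_j\cap U_{j'})\times F_i\to F_i$, linear in the fibre variable. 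The induced transition map $\psi_j\circ\psi_{j'}^{-1}$ is given by $(x,(v_i))\mto(x,(G^{(i)}_{jj'}(x,v_i))_{i\in I})$. By Proposition~\ref{mapsoplus}, applied with $Z$ the modeling space of~$M$ and $F_i$ in place of~$E_i$, this map is of class $C^r_\K$: under hypothesis~(a) of~\ref{sum-situa-3} case~(a) of Proposition~\ref{mapsoplus} applies, and under~(b) case~(b). Proposition~\ref{standfact} then endows~$E$ with a unique $\bL$-vector bundle structure of class~$C^r_\K$ rendering every~$\psi_j$ a local trivialization.

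Next, I would verify that $\beta$ takes fibres $\bL$-linearly to fibres and satisfies $\pi\circ\beta=\alpha\circ(\id_G\times\pi)$; this is immediate from the componentwise definition. The main substantive step is to show that $\beta\colon G\times E\to E$ is of class~$C^r_\K$. Following the pattern of the proofs of Propositions~\ref{complbdl}, \ref{proptensor}, and \ref{tensor-projective}, let $g_0\in G$ and $x_0\in M$; pick $j_0\in J$ with $\alpha(g_0,x_0)\in U_{j_0}$. By continuity of $\alpha$, there exist open neighbourhoods $U$ of~$g_0$ in~$G$ and $V$ of~$x_0$ in~$M$, with $V\sub U_{j_1}$ for some $j_1\in J$, such that $\alpha(U\times V)\sub U_{j_0}$. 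For each $i\in I$, the equivariance of $E_i$ provides a $C^r_\K$-map
\[
A_i\colon U\times V\times F_i\to F_i,\quad (g,x,v)\mto\pr_{F_i}\bigl(\psi_{i,j_0}\bigl(\beta_i(g,\psi_{i,j_1}^{-1}(x,v))\bigr)\bigr),
\]
which is $\bL$-linear in~$v$.

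The key step is to combine these componentwise maps via Proposition~\ref{mapsoplus}, applied with $Y\times Z$ in place of~$Z$, $U\times V$ in place of~$V$, and $F_i$ in place of~$E_i$. Under hypothesis~(a) of~\ref{sum-situa-3} the parameter space $Y\times Z$ is finite-dimensional, so Proposition~\ref{mapsoplus}\,(a) applies; under~(b) the space $Y\times Z$ is a $k_\omega$-space as a finite product of $k_\omega$-spaces, and the countability of~$I$ together with the $k_\omega$-property of every $F_i$ delivers Proposition~\ref{mapsoplus}\,(b). Hence the assembled map
\[
A\colon U\times V\times\bigoplus_{i\in I}F_i\to\bigoplus_{i\in I}F_i,\quad
(g,x,(v_i)_{i\in I})\mto(A_i(g,x,v_i))_{i\in I},
\]
is of class~$C^r_\K$. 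Reading this through the trivializations $\psi_{j_0}$ and $\psi_{j_1}$ shows $\psi_{j_0}\circ\beta\circ(\id_U\times\psi_{j_1}^{-1})$ is~$C^r_\K$ on $U\times V\times\bigoplus_i F_i$, whence $\beta$ is $C^r_\K$ on the open neighbourhood $U\times\pi^{-1}(V)$ of $(g_0,z)$ for every $z\in\pi^{-1}(\{x_0\})$. Since $(g_0,x_0)$ was arbitrary, $\beta$ is $C^r_\K$ globally.

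The principal obstacle is the action step, where the parameter space becomes $Y\times Z$ rather than~$Z$; this is exactly why conditions~(a) and~(b) in~\ref{sum-situa-3} were formulated to be stable under passage to finite products, so that the hypotheses of Proposition~\ref{mapsoplus} remain available after adjoining the Lie group direction.
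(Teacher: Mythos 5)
Your proof is correct and takes essentially the same route as the paper, which proves the proposition exactly as sketched in~\ref{sum-situa-3}: the bundle structure via Proposition~\ref{standfact} together with Lemma~\ref{mapsoplus} applied to the componentwise transition maps (case~(a) versus case~(b) matched to the hypotheses as you do), and the $C^r_\K$-property of~$\beta$ checked in pairs of local trivializations, applying Lemma~\ref{mapsoplus} with $F_i$ in place of~$E_i$ and $Y\times Z$ in place of~$Z$. Your direct parametrization $A_i(g,x,v)=\pr_{F_i}\bigl(\psi_{i,j_0}\bigl(\beta_i(g,\psi_{i,j_1}^{-1}(x,v))\bigr)\bigr)$ is only an inessential variant of the paper's convention (which uses $\beta(g^{-1},\cdot)$ so that $A$ maps into the fibre over~$x$), and your closing observation about the stability of conditions~(a) and~(b) under adjoining the group direction is precisely the point of the paper's final instruction in~\ref{sum-situa-3}.
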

\begin{rem}
If $M$ is a $C^r_\R$-manifold,
then every $x\in M$
has an open neighbourhood~$U$
which is $C^r_\R$-diffeomorphic to
a convex open subset~$W$
in the modeling space~$Z$ of~$M$.
If $W$ can be chosen
$C^r_\R$-paracompact,
then every $C^r_\R$-vector bundle
over~$U$ is trivializable
(see \cite[Corollary~15.10]{SMO}).
The latter condition is satisfied,
for example, if~$Z$
is finite dimensional,
a Hilbert space, or a countable
direct limit of finite-dimensional
vector spaces
(cf.\ \cite[Corollary 16.16]{KaM}
and \cite[Proposition~3.6]{FUN}).
If $(r,\K)=(\infty,\C)$
and~$Z$ has finite dimension,
then each finite-dimensional
holomorphic vector bundle
over a, say, polycylinder
in~$Z$ is $C^\infty_\C$-trivializable
(cf.\ \cite{Gra}).
Under suitable hypotheses,
holomorphic Banach vector bundles
over contractible bases
are $C^\infty_\C$-trivializable
as well~\cite{Pat}.
\end{rem}
\section{Dual bundles and cotangent bundles}
In this section, we discuss conditions ensuring
that a vector bundle has a canonical dual
bundle. Let $\bL\in\{\R,\C\}$,
$\K\in\{\R,\bL\}$, $r\in\N_0\cup\{\infty,\omega\}$,
and $M$ be a $C^r_\K$-manifold modeled
on a locally convex space~$Z$.
\begin{defn}\label{defdual}
Let $\pi\colon E\to M$ be an $\bL$-vector bundle
of class $C^r_\K$, with typical fibre~$F$.
Consider the disjoint union
\[
E':=\bigcup_{x\in M}(E_x)'\, ;
\]
let $p\colon E'\to M$ be the map taking $\lambda\in (E_x)'$ to~$x$,
for each $x\in M$.
Given $t\in \N_0\cup\{\infty,\omega\}$
such that $t\leq r$,
we say that \emph{$E$ has a canonical dual bundle}
of class $C^t_\K$ with respect to $\cS\in \{b,c\}$
if $E'$ can be made an $\bL$-vector bundle
of class~$C^t_\K$ over~$M$, with typical fibre $F'_\cS$
and bundle projection~$p$,
such that
\begin{equation}\label{wtpsi}
\wt{\psi}\colon
p^{-1}(U)\to U\times F'_\cS\,,\quad
(E')_x=(E_x)'\ni \lambda\mto (x, ((\pr_F\circ \psi|_{E_x})^{-1})'(\lambda))
\end{equation}
is a local trivialization of~$E'$,
for each local trivialization $\psi\colon \pi^{-1}(U)\to U\times F$
of~$E$.
\end{defn}
To pinpoint situations
where the dual bundle exists, we recall
a fact concerning the formation of
dual linear maps (see \cite[Proposition~16.30]{MEM}):
\begin{la}\label{lemmem1}
Let $E$ and~$F$ be locally convex spaces,
and $\cS\in \{b,c\}$.
If the evaluation homomorphism
$\eta_{F,\cS}\colon F\to (F'_\cS)'_\cS$, $\eta_{F,\cS}(x)(\lambda):=\lambda(x)$
is continuous, then
\[
\Theta\colon L(E,F)_\cS\to L(F'_\cS,E'_\cS)_\cS\, ,\quad
\alpha\mto \alpha'
\]
is a continuous linear map.\,\Punkt
\end{la}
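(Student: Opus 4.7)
The plan is to note that $\Theta$ is manifestly linear, so only continuity has to be proved. A basic $0$-neighbourhood in the target $L(F'_\cS,E'_\cS)_\cS$ has the form $\lfloor B,V\rfloor=\{\beta\in L(F'_\cS,E'_\cS)\colon \beta(B)\subseteq V\}$, with $B\in\cS$ a subset of $F'_\cS$ and $V$ a $0$-neighbourhood in $E'_\cS$. Since the sets $\ve A^\circ$, for $A\in\cS$ a subset of $E$ and $\ve>0$, form a basis of $0$-neighbourhoods in $E'_\cS$, it suffices to work with $V=\ve A^\circ$. So one has to produce, given $B$, $A$ and $\ve$, a $0$-neighbourhood $W\subseteq L(E,F)_\cS$ with $\Theta(W)\subseteq\lfloor B,\ve A^\circ\rfloor$.

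The key input is the hypothesis on $\eta_{F,\cS}$. Since $B\in\cS$ is a bounded (resp.\ compact) subset of $F'_\cS$, the set $\ve B^\circ=\{\phi\in (F'_\cS)'\colon \phi(B)\subseteq\bD_\ve\}$ is a $0$-neighbourhood in $(F'_\cS)'_\cS$. By continuity of $\eta_{F,\cS}$, its preimage
\[
U \; :=\; \eta_{F,\cS}^{-1}(\ve B^\circ)\; =\; \{y\in F\colon |\lambda(y)|\leq\ve\text{ for all }\lambda\in B\}
\]
is a $0$-neighbourhood in~$F$. Then $W:=\lfloor A,U\rfloor$ is a $0$-neighbourhood in $L(E,F)_\cS$, and for any $\alpha\in W$, $a\in A$ and $\lambda\in B$ one computes
\[
|\alpha'(\lambda)(a)|\; =\; |\lambda(\alpha(a))|\;\leq\;\ve,
\]
since $\alpha(a)\in U$. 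Hence $\alpha'(\lambda)\in\ve A^\circ$ for every $\lambda\in B$, i.e.\ $\alpha'\in\lfloor B,\ve A^\circ\rfloor$, which is the required inclusion $\Theta(W)\subseteq\lfloor B,\ve A^\circ\rfloor$.

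A small preliminary check ensures that $\Theta$ really takes values in $L(F'_\cS,E'_\cS)$: for $\alpha\in L(E,F)$, the dual map $\alpha'\colon F'_\cS\to E'_\cS$ is continuous because $\alpha$ sends bounded subsets of $E$ to bounded subsets of~$F$ (and compact to compact), so the polars in $F'$ of such sets pull back into polars in $E'$. No real obstacle is expected in the argument; the only substantive step is the translation from $\cS$-subsets of $F'_\cS$ to $0$-neighbourhoods in~$F$, which is precisely what the continuity of $\eta_{F,\cS}$ provides.
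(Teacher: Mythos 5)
Your proof is correct, and it is essentially the argument the paper relies on: the paper gives no proof of Lemma~\ref{lemmem1} but cites \cite[Proposition~16.30]{MEM}, whose content is exactly your computation. The two substantive points are both present and right in your write-up: the well-definedness check that $\alpha'\colon F'_\cS\to E'_\cS$ is continuous (because $\alpha$ maps $\cS$-sets of $E$ to $\cS$-sets of $F$, so $\alpha'$ pulls the basic neighbourhood $(\alpha(A))^\circ$ into $A^\circ$), and the use of continuity of $\eta_{F,\cS}$ to turn the $\cS$-set $B\sub F'_\cS$ into the $0$-neighbourhood $U=\eta_{F,\cS}^{-1}(\ve B^\circ)$ of~$F$, which yields $\Theta(\lfloor A,U\rfloor)\sub\lfloor B,\ve A^\circ\rfloor$ and hence continuity at~$0$, which suffices by linearity.
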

\begin{rem}\label{lemmem2}
Let~$F$ be a locally convex $\K$-vector space over $\K\in\{\R,\C\}$.
It is known that $\eta_{F,b}$ is continuous if and only if
$F$ is quasi-barrelled,
i.e., every bornivorous barrel in~$F$
is a $0$-neighbourhood \cite[Proposition 2 in Section 11.2]{Jar}.
In particular,
$\eta_{F,b}$ is continuous if~$F$ is
bornological or barrelled.
It is also known that
$\eta_{F,c}$ is continuous
(and actually a topological embedding) if~$F$
is a $k_\R$-space.
If $\K=\R$, this follows from \cite[Theorem~2.3]{Nb2}
and \cite[Lemma~14.3]{Ban}
(cf.\ also \cite[Propositions~2.3 and 2.4]{Ban}).
If $\K=\C$ and~$F$ is a $k_\R$-space,
then $\eta_{F_\R,c}$ is a topological
embedding for the real topological vector space~$F_\R$
underlying~$F$.
Now $(F'_c)_\R\cong (F_\R)'$ as a real topological
vector spaces, using that a continuous $\C$-linear
functional $\lambda\colon F\to\C$
is determined by its real part.
Transporting the complex vector space structure
from $F'_c$ to $(F_\R)'$,
the latter becomes a complex locally convex space.
Thus $((F'_c)'_c)_\R$
can be identified with $((F_\R)'_c)'_c$,
and it is easy to verify that $\eta_{F,c}$ corresponds
to $\eta_{F_\R,c}$
if we make the latter identification.
\end{rem}
\begin{prop}\label{finalprop}
Let $\pi\colon E\to M$ be an
$\bL$-vector bundle of class~$C^r_\K$, with typical fibre~$F$.
Let $\cS\in \{b,c\}$.
If $\cS=c$, let $r_- := r$;
if $\cS=b$, assume $r\geq 1$ and set $r_-:=r-1$.
Consider the following
conditions:
\begin{itemize}
\item[\rm($\alpha$)]
The modeling space~$Z$ of~$M$
is finite dimensional, $\eta_{F,\cS}$ is continuous,
and $F'_\cS$ is barrelled.\footnote{Example for $\cS=b$:
if $F$ is a reflexive locally convex space,
then~$\eta_{F,b}$ is continuous and $F'_b$ is barrelled, being reflexive.}
\item[\rm($\beta$)]
$\eta_{F,\cS}$ is continuous
and, moreover,
$(Z\times F'_\cS)\times (Z\times F'_\cS)$
is a $k_\R$-space,
or $r_-=0$ and $Z\times F'_\cS$
is a $k_\R$-space, or $(r,\K)=(\infty,\C)$
and $Z\times F'_\cS$ is a $k_\R$-space.
\item[\rm($\gamma$)]
$F$ is normable.
\end{itemize}
If $(\alpha)$ or $(\beta)$ is satisfied
with $\cS=c$,
then $E$ has a canonical dual bundle
of class $C^r_\K$ with respect to $\cS=c$.
If $(\alpha)$, $(\beta)$, or $(\gamma)$ is satisfied
with $\cS=b$,
then $E$ has a canonical dual bundle
of class $C^{r-1}_\K$ with respect to $\cS=b$.
\end{prop}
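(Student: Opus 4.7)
\textbf{Proof plan for Proposition \ref{finalprop}.}
The natural approach is to construct $E'$ from a cocycle built out of the transpose maps of the original cocycle, and then invoke Corollary~\ref{cornew}. Fix an atlas $(\psi_i)_{i\in I}$ of local trivializations $\psi_i\colon \pi^{-1}(U_i)\to U_i\times F$ with associated cocycle $g_{ij}\colon U_i\cap U_j\to\GL(F)$, so that $G_{ij}:=g_{ij}^\wedge\colon (U_i\cap U_j)\times F\to F$ is $C^r_\K$. By Proposition~\ref{reallyneed1}, $g_{ij}$ is $C^r_\K$ as a map into $L(F)_c$, and if $r\geq 1$ it is $C^{r-1}_\K$ into $L(F)_b$. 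I would then define the prospective dual cocycle by
\[
h_{ij}\colon U_i\cap U_j\to L_\bL(F'_\cS),\qquad h_{ij}(x):=(g_{ji}(x))',
\]
corresponding precisely to the trivialization formula (\ref{wtpsi}), since $g_{ij}(x)^{-1}=g_{ji}(x)$.

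The next step is to upgrade $g_{ji}$ to a differentiable $L(F'_\cS)_\cS$-valued map. Since we assume $\eta_{F,\cS}$ is continuous in cases~$(\alpha)$ and $(\beta)$, Lemma~\ref{lemmem1} provides a continuous linear, hence $C^\omega_\K$, transposition map $\Theta\colon L(E,F)_\cS\to L(F'_\cS,E'_\cS)_\cS$. Applying this with $E=F$ (and composing with $g_{ji}$), I get that $h_{ij}=\Theta\circ g_{ji}\colon U_i\cap U_j\to L(F'_\cS)_\cS$ is $C^{r_-}_\K$. In the remaining case $(\gamma)$ with $\cS=b$, the space $F'_b$ is a Banach space, and the transposition $L(F)_b\to L(F'_b)_b$ is again continuous linear, so the same conclusion holds with $r_-=r-1$. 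The cocycle identities $h_{ii}(x)=\id_{F'}$ and $h_{ij}(x)h_{jk}(x)=h_{ik}(x)$ are immediate from the analogous relations for the $g_{ji}$, using contravariance of the transpose.

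With this in hand, the crucial point is to verify Corollary~\ref{cornew}'s hypothesis $(\mathrm{g})'$ (or $(\mathrm{g})''$ in case $(\gamma)$): that $H_{ij}:=h_{ij}^\wedge\colon (U_i\cap U_j)\times F'_\cS\to F'_\cS$ is $C^{r_-}_\K$. This is exactly where Proposition~\ref{reallyneed2} is applied, with $E_1:=F'_\cS$ (and $k=1$):
\begin{itemize}
\item Under $(\alpha)$, $Z$ is finite dimensional and $F'_\cS$ is barrelled, so Proposition~\ref{reallyneed2}(c) applies.
\item Under $(\beta)$, the asserted $k_\R$-hypotheses on $Z\times F'_\cS$ match those of Proposition~\ref{reallyneed2}(a).
\item Under $(\gamma)$ with $\cS=b$, $F'_b$ is normable, so Proposition~\ref{reallyneed2}(d) applies.
\end{itemize}
In each situation we conclude that $H_{ij}$ has the required regularity.

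Finally, I would package the construction. The disjoint union $E'=\bigcup_{x\in M}(E_x)'$ together with the projection $p$, the bijections $\wt{\psi}_i$ as in~(\ref{wtpsi}), and the relation $\wt{\psi}_i\circ\wt{\psi}_j^{-1}(x,\lambda)=(x,H_{ij}(x,\lambda))$, fits the hypotheses (a)--(f) of Proposition~\ref{standfact}; condition $(\mathrm{g})'$ (resp.\ $(\mathrm{g})''$) and the corresponding regularity condition (i), (ii), or (iii) of Corollary~\ref{cornew} have just been verified. This yields a unique $\bL$-vector bundle structure of class $C^{r_-}_\K$ on $E'$ for which each $\wt{\psi}_i$ is a local trivialization, and independence of the chosen atlas follows because the recipe~(\ref{wtpsi}) is intrinsic to each local trivialization~$\psi$ of~$E$. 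The main obstacle is the verification of the $C^{r_-}_\K$-property of $H_{ij}$, since this is the only place where the delicate topological hypotheses on $Z$ and $F'_\cS$ are used; everything else is essentially transport of structure along the continuous linear map~$\Theta$.
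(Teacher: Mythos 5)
Your proposal is correct and follows essentially the same route as the paper's proof: pass to the dual cocycle $h_{ij}=\Theta\circ g_{ji}$ via Proposition~\ref{reallyneed1} and Lemma~\ref{lemmem1}, then verify condition $(\mathrm{g})'$ (resp.\ $(\mathrm{g})''$) of Corollary~\ref{cornew} using exactly the case split $(\alpha)\leftrightarrow$ Proposition~\ref{reallyneed2}(c), $(\beta)\leftrightarrow$(a), $(\gamma)\leftrightarrow$(d). The only cosmetic difference is that the paper avoids your closing ``intrinsicness'' remark by letting $(\psi_i)_{i\in I}$ range over \emph{all} local trivializations of~$E$ from the start, so that canonicity in the sense of Definition~\ref{defdual} is immediate.
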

\begin{proof}
Let $E'$ be the disjoint union
$\bigcup_{x\in M}(E_x)'$,
and $p\colon E'\to M$ be as in Definition~\ref{defdual}.
Let $(\psi_i)_{i\in I}$ be a
family such that
the $\psi_i\colon \pi^{-1}(U_i)\to
U_i\times F$
form the set of all local trivializations of~$E$.
Let $(g_{ij})_{i,j\in I}$ be the associated
cocycle (see \ref{defncocyc}).
Then $G_{ij}:=g_{ij}^\wedge$ is
$C^r_\K$ and hence
$g_{ij}=(G_{ij})^\vee$
is $C^{r_-}_\K$,
by Proposition~\ref{reallyneed1}.
Given $i\in I$,
we define $\wt{\psi_i}\colon p^{-1}(U_i)\to U_i\times F'_\cS$
as in\,(\ref{wtpsi}),
using $\psi_i$ instead of~$\psi$.
Then
\begin{eqnarray*}
\wt{\psi}_i(\wt{\psi}_j^{-1}(x,\lambda))
& = & (x,
((\pr_F\circ \psi_i|_{E_x})^{-1})'\circ (\pr_F\circ \psi_j|_{E_x})'(\lambda))\\
& = & (x,(\pr_F\circ \psi_j|_{E_x}\circ (\pr_F\circ \psi_i|_{E_x})^{-1})'
(\lambda))\; = \; (x, g_{ji}(x)'(\lambda))
\end{eqnarray*}
for all $x\in U_i\cap U_j$ and $\lambda\in F'$
shows that
\[
(\wt{\psi}_i\circ \wt{\psi}_j^{-1})(x,\lambda)=
(x,h_{ij}(x)(\lambda))\, ,
\]
where $h_{ij}(x):=g_{ji}(x)'\in \GL(F'_\cS)$.
If ($\alpha$) or ($\beta$)
holds, then
$\eta_{F,\cS}\colon  F\to (F'_\cS)'_\cS$ is
continuous by hypothesis.
If $\cS=b$ and ($\gamma$) holds,
then $\eta_{F,b}$ is an isometric
embedding (as is well known) and hence continuous.
Thus $\Theta\colon L(F)_\cS\to L(F'_\cS)_\cS$,
$\alpha\mto\alpha'$ is a continuous $\bL$-linear map
(Lemma~\ref{lemmem1}).
Since
$g_{ji}\colon U_i\cap U_j\to L(F)_\cS$
is $C^{r_-}_\K$,
we deduce that
$h_{ij}=\Theta\circ g_{ji}\colon U_i\cap U_j\to L(F'_\cS)_\cS$
is $C^{r_-}_\K$.
Thus Condition\,$\mbox{(g)}'$ of Corollary~\ref{cornew}
is satisfied,
with $r_-$ in place of~$r$.
Conditions\,(a)--(f) being apparent,
the cited corollary provides an $\bL$-vector bundle
structure of class $C^{r_-}_\K$
on~$E'$.
\end{proof}
Without specific hypotheses, a canonical dual bundle
need not exist.
\begin{example}\label{nodual}
Let $A$ be a unital, associative, locally convex topological
$\K$-algebra whose group of units $A^\times$ is open
in~$A$, and such that the inversion map $\iota\colon
A^\times\to A^\times$
is continuous. Then $\iota$ is smooth (and
indeed $\K$-analytic), see \cite{ALG}.
We assume that the locally convex space underlying~$A$
is a non-normable Fr\'{e}chet-Schwartz space
and hence Montel, ensuring that $L(A)_b=L(A)_c$.
for example, we might take $A:=C^\infty(K,\K)$,
where~$K$ is a connected,
compact smooth manifold of positive dimension (cf.\ \cite{ALG}).
Let $r,t\in \N_0\cup \{\infty,\omega\}$
with $t\leq r$
and $\cS\in \{b,c\}$.
We
consider the trivial vector bundle
\[
\pr_1\colon E:=A^\times \times A\to A^\times\,.
\]
(Thus $E\isom T A^\times$, the tangent bundle).
Then $E$ is a $\K$-vector bundle of class $C^r_\K$
over the base $A^\times$,
with typical fibre~$A$.
Both
$\psi_1:=\id\colon A^\times\times A\to A^\times\times A$
and $\psi_2\colon A^\times\times A\to A^\times\times A$,
$(a,v)\mto (a,av)$ are global trivializations
of~$E$.
Identifying $E':=\bigcup_{a\in A^\times}(E_a)'$
with the set $A^\times\times A'$,
we consider the associated bijections
$\wt{\psi}_i\colon E'=A^\times\times A'\to A^\times \times A'$
for $i\in\{1,2\}$ (cf.\ (\ref{wtpsi})).
Thus $\wt{\psi}_1=\id$,
and $\wt{\psi}_2(a,\lambda)=(a,\lambda(a^{-1}\cdot))$
for $a\in A^\times$, $\lambda\in A'$.
The map $G_{ij}\!: A^\times \times A\to A$,
$(a,v)\mto \pr_2(\psi_i(\psi_j^{-1}(a,v)))$
is $C^r_\K$ for
$i,j\in \{1,2\}$, where $\pr_2\colon
A^\times\times A\to A$ is the projection
onto the second factor.
Then also $g_{ij}\colon A^\times\to L(A)_c=L(A)_b$,
$a\mto G_{ij}(a,\cdot)$ is $C^r_\K$, by
Proposition~\ref{reallyneed1}\,(a).
Now, $A$ being Fr\'{e}chet and thus
barrelled, the evaluation homomorphism
$\eta_{A,b}$ is continuous;
since $A$ is metrizable
and hence a $k$-space, also
$\eta_{A,c}$ is continuous
(see Remark~\ref{lemmem2}).
Since $g_{ij}$ is $C^r_\K$,
we deduce
with Lemma~\ref{lemmem1}
that also
$h_{ij}\colon A^\times \to L(A'_\cS)_\cS$,
$a\mto (g_{ji}(a))'$ is $C^r_\K$.
Define
\[
H_{ij} \colon A^\times \times A'_\cS\to A'_\cS\, \quad
(a,\lambda)\mto h_{ij}(a)(\lambda)
\]
for $i,j\in \{1,2\}$.
Then $H_{12}$ is discontinuous.
To see this, we compose $H_{12}$
with the map $\ev_1\colon
A'_b\to\K$, $\lambda\mto \lambda(1)$
which evaluates functionals at the identity element $1\in A$,
and recall that $\ev_1$ is continuous.
Then
$\ev_1(H_{12}(a,\lambda))=\lambda(g_{21}(a)(1))=\lambda(a)$
for $a\in A^\times$ and $\lambda\in A'$.
However, $A$ being a non-normable
locally convex space, the bilinear, separately continuous
evaluation map
$\ve\!: A\times A'_b \to \K$, $(a,\lambda)\mto \lambda(a)$
is discontinuous, and hence so is its
restriction $\ve|_{A^\times \times A'_b}=\ev_1\circ H_{12}$
to the non-empty
open subset $A^\times \times A'_b$,
as is readily verified.
Now $\ev_1\circ H_{12}$ being discontinuous,
also $H_{12}$ is discontinuous
(and therefore not $C^t_\K$).
As a consequence, also
$\wt{\psi}_1\circ \wt{\psi}_2^{-1}=(\pr_1,H_{12})$
is discontinuous. Summing up:\\[2.5mm]
{\em There
is no canonical vector bundle
structure of class $C^t_\K$ on~$E'$
because the two vector bundle structures
on $E'$ making $\wt{\psi}_1$ $($resp., $\wt{\psi}_2)$
a global trivialization do not coincide.}
\end{example}
\begin{rem}
In the preceding situation,
set $M:=A^\times$, $F:=A'_b$,
$I:=\{1,2\}$, $U_i:=M$ for $i\in I$,
and $\pi:=\pr_1\colon M\times F\to M$.
If we let 
$M\times A'_b$ play the role of $E$ in Proposition~\ref{standfact}
and $\wt{\psi}_i\colon \pi^{-1}(U_i)\to U_i\times F$
the role of $\psi_i$ in Proposition~\ref{standfact}\,(e),
then all of Conditions
(a)--(f) of Proposition~\ref{standfact}
and Condition $\mbox{(g)}'$ of Corollary~\ref{cornew}
are satisfied for $r\in \N_0\cup \{\infty,\omega\}$
(with $\bL:=\K$).
However, there is no $C^r_\K$-vector bundle structure
on~$M\times F$
making each $\wt{\psi}_i$ a trivialization,
as just observed,
i.e., the conclusion of Corollary~\ref{cornew}
becomes false.
\end{rem}
\begin{rem}
Let $\K\in\{\R,\C\}$,
$r\in\N\cup\{\infty,\omega\}$,
$t\in\N_0\cup\{\infty,\omega\}$
with $t\leq r$
and~$M$ be a $C^r_\K$-manifold
modeled on a locally convex space~$Z$.
Then the tangent bundle~$TM$ is a $\K$-vector bundle
of class $C^{r-1}_\K$ over~$M$,
with typical fibre~$Z$.
Pick a locally convex vector topology $\cT$
on~$Z'$.
Let $\cA$ be the set of all
maps $\wt{\psi}$
as in~(\ref{wtpsi}),
with $(Z',\cT)$ in place of $F'_\cS$,
for~$\psi$ ranging through the set
of all local trivializations of~$TM$
(alternatively,
only those of the form $(\pi_{TU},d\phi)$
for charts $\phi\colon U\to V\sub Z$
of~$M$, using the bundle projection
$\pi_{TU}\colon TU\to U$).
Let us say that~$M$ has a \emph{canonical
cotangent bundle} of class $C^t_\K$
with respect to~$\cT$
if $T'M:=\bigcup_{x\in M}(T_xM)'$
admits a $\K$-vector bundle structure
of class $C^t_\K$ over~$M$
with typical fibre $(Z',\cT)$,
which makes each $\wt{\psi}\colon p^{-1}(U)\to U\times (Z',\cT)$
a local trivialization
(with $p\colon T'M\to M$, $(T_xM)'\ni\lambda\mto x$).
Then the evaluation map
\[
\ve\colon (Z',\cT)\times Z\to \K,\quad (\lambda,x)\mto \lambda(x)
\]
must be continuous and hence~$Z$ normable.
For $\K=\R$,
this is explained in \cite[Remark~1.3.9]{NSU}
(written after Example~\ref{nodual} was found) if $r=\infty$.
This implies the case $r\in\N$.
As the diffeomorphism~$f$ employed as a change of charts
is real analytic,
the case $(\omega,\R)$ follows and also the
complex case, using a $\C$-analytic extension of~$f$.
When $\cT$ is the compact-open topology, existence of a canonical
cotangent bundle for~$M$ even implies that~$Z$
is finite dimensional.\footnote{If $\ve$ is continuous on
$Z'_c\times Z$, then there exists a compact subset $K\sub Z$
and a $0$-neighbourhood $W\sub Z$ such that $\ve((K^\circ)\times W)\sub \bD$.
Hence $K^\circ\sub W^\circ$. Since $K^\circ$ is a $0$-neighbourhood in~$Z'_c$
and $W^\circ$ compact (by Ascoli's Theorem),
$Z'_c$ is locally compact and hence finite dimensional.
As $Z'_c$ separates points on~$Z$, also~$Z$
must be finite dimensional.}\vspace{-1mm}
\end{rem}
Cotangent bundles
are not needed to define
$1$-forms on an infinite-dimensional manifold~$M$.
Following~\cite{Beg},
these can be considered
as smooth maps on $TM$ which are linear
on the fibres (and a similar remark applies
to differential forms of higher order).
\begin{center}
{\bf Differentiability properties of the {\boldmath$G$}-action
on the dual bundle}
\end{center}
Let $\bL\in\{\R,\C\}$, $\K\in\{\R,\bL\}$,
$s\in \{\infty,\omega\}$,
$r\in\N_0\cup\{\infty,\omega\}$
with $r\leq s$,
and $G$ be a $C^s_\K$-Lie group
modeled on a locally convex $\K$-vector space~$Y$.
Let $M$ be a $C^r_\K$-manifold
modeled on a locally convex $\K$-vector space~$Z$
and $\alpha\colon G\times M\to M$ be a $G$-action
of class~$C^r_\K$.
\begin{prop}\label{actondual}
Let $\pi\colon  E\to M$ be an
equivariant $\bL$-vector bundle of class $C^r_\K$,
with typical fibre~$F$ and $G$-action $\beta\colon G\times E\to E$
of class $C^r_\K$. Let
$\cS\in \{b,c\}$. If $\cS=c$, set $r_-:=r$;
if $\cS=b$, assume $r\geq 1$ and set $r_- :=r-1$.
Consider the following conditions:
\begin{itemize}
\item[\rm(a)]
$\eta_{F,\cS}$ is continuous,
and, moreover,
$(Y\times Z\times F'_\cS)\times (Y\times Z\times F'_\cS)$
is a $k_\R$-space,
or $r_- =0$ and $Y\times Z\times F'_\cS$ is a $k_\R$-space, or
$(r,\K)=(\infty,\C)$ and $Y\times Z\times F'_\cS$ is a $k_\R$-space;
\item[\rm(b)]
$M$ and $G$ are finite dimensional,
$\eta_{F,\cS}$ is continuous, and $F'_\cS$
is barrelled; or
\item[\rm(c)]
$F$ is normable.
\end{itemize}
If $\cS=c$ and {\rm (a)} or {\rm (b)} holds,
then $E$ has a canonical dual bundle~$E'$
of class $C^{r_-}_\K$
with respect to $\cS$,
and the map $\beta^*\colon G\times E'\to E'$,
defined using adjoint linear maps
via
\[
\beta^*(g,\lambda):=
(\beta(g^{-1},\cdot)|_{E_{\alpha(g,x)}}^{E_x})'(\lambda)
\]
for $g\in G$, $\lambda\in (E_x)'$,
turns $E'$ into
an equivariant $\bL$-vector bundle of class
$C^{r_-}_\K$ over the $G$-manifold~$M$.
If $\cS=b$
and {\rm (a), (b)}, or {\rm (c)} is satisfied,
then the same conclusion holds.
\end{prop}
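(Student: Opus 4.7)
I would split the proof into two parts: existence of the bundle $E'$, and smoothness of the action $\beta^*$. The existence follows immediately from Proposition~\ref{finalprop}: condition~(a) here implies condition ($\beta$) there (use Lemma~\ref{basic-k-R}(a) to descend from the $k_\R$-property of $(Y\times Z\times F'_\cS)\times(Y\times Z\times F'_\cS)$ to that of $(Z\times F'_\cS)\times(Z\times F'_\cS)$ by removing the non-empty factor $Y\times Y$, and similarly for the weaker variants); condition~(b) implies~($\alpha$); and condition~(c), relevant only when $\cS=b$, is~($\gamma$). The identities $\beta^*(hg,\lambda)=\beta^*(h,\beta^*(g,\lambda))$ and $\beta^*(e,\lambda)=\lambda$ follow routinely from dualizing the action law for $\beta$, and it is clear from the definition that $\beta^*(g,\cdot)$ takes $(E_x)'$ $\bL$-linearly to $(E_{\alpha(g,x)})'$. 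Hence the crux will be showing that $\beta^*$ is $C^{r_-}_\K$.

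To prove smoothness, I would fix $g_0\in G$ and $\lambda_0\in E'_{x_0}$. As in the proof of Proposition~\ref{complbdl}, choose local trivializations $\psi\colon\pi^{-1}(W)\to W\times F$ and $\phi\colon\pi^{-1}(V)\to V\times F$ of $E$ with $\alpha(g_0,x_0)\in W$ and $x_0\in V$, and an open $g_0$-neighbourhood $U\sub G$ with $\alpha(U\times V)\sub W$. This produces a $C^r_\K$-map
\[
A\colon U\times V\times F\to F,\qquad \phi\bigl(\beta(g^{-1},\psi^{-1}(\alpha(g,x),v))\bigr)=(x,A(g,x,v)),
\]
which is $\bL$-linear in $v$. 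A direct computation (unwinding the definition of $\wt{\psi}$ and $\wt{\phi}$ from~(\ref{wtpsi}) and of $\beta^*$) will show
\[
\wt{\psi}\bigl(\beta^*(g,\wt{\phi}^{-1}(x,\mu))\bigr)=\bigl(\alpha(g,x),\,A(g,x,\cdot)'(\mu)\bigr)
\]
for all $g\in U$, $x\in V$, $\mu\in F'$, so smoothness of $\beta^*$ near $(g_0,\lambda_0)$ is reduced to smoothness of the map $(g,x,\mu)\mto A(g,x,\cdot)'(\mu)$.

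By Proposition~\ref{reallyneed1}, the map $A^\vee\colon U\times V\to L(F)_\cS$, $(g,x)\mto A(g,x,\cdot)$, is $C^r_\K$ if $\cS=c$ and $C^{r-1}_\K$ if $\cS=b$, i.e.\ $C^{r_-}_\K$ in either case. Under each of (a), (b), (c) the evaluation homomorphism $\eta_{F,\cS}$ is continuous: by hypothesis in (a) and (b), and in (c) (where $\cS=b$) because normability of $F$ makes $\eta_{F,b}$ an isometric embedding. Lemma~\ref{lemmem1} then yields that $\Theta\colon L(F)_\cS\to L(F'_\cS)_\cS$, $\alpha\mto\alpha'$, is continuous and $\bL$-linear, so $B:=\Theta\circ A^\vee\colon U\times V\to L(F'_\cS)_\cS$, $(g,x)\mto A(g,x,\cdot)'$, is $C^{r_-}_\K$.

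It only remains to conclude that $B^\wedge\colon U\times V\times F'_\cS\to F'_\cS$, $(g,x,\mu)\mto B(g,x)(\mu)$, is $C^{r_-}_\K$. I would apply Proposition~\ref{reallyneed2} with $X:=Y\times Z$, $k=1$, and $E_1:=F'_\cS$: case~(a) invokes part~(a) of that proposition (the required $k_\R$-hypothesis on $(Y\times Z\times F'_\cS)\times(Y\times Z\times F'_\cS)$, or its weaker variants, is exactly what~(a) assumes); case~(b) invokes part~(c) (with $Y\times Z$ finite-dimensional and $F'_\cS$ barrelled); case~(c) invokes part~(d) (since $F$ normable implies $F'_b$ is normable). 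In each case $B^\wedge$ is $C^{r_-}_\K$, so $\beta^*$ is $C^{r_-}_\K$ on a neighbourhood of $(g_0,\lambda_0)$, hence globally. The main obstacle is just matching hypotheses carefully across Propositions~\ref{reallyneed1}, \ref{reallyneed2}, \ref{finalprop} and Lemma~\ref{lemmem1}---in particular verifying that the $k_\R$-condition in (a) is strong enough both for existence of $E'$ and for the final application of Proposition~\ref{reallyneed2}---no deeper obstruction arises.
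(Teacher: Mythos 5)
Your proposal is correct and takes essentially the same route as the paper's own proof: existence of $E'$ via Proposition~\ref{finalprop}, then local trivializations yielding the map $A$, with Proposition~\ref{reallyneed1}, Lemmas~\ref{lemmem1} and~\ref{lemmem2}, and the appropriate part of Proposition~\ref{reallyneed2} combining to show that $\wt{\psi}\bigl(\beta^*(g,\wt{\phi}^{-1}(x,\lambda))\bigr)=(\alpha(g,x),A(g,x,\cdot)'(\lambda))$ is $C^{r_-}_\K$. Your explicit matching of hypotheses (a), (b), (c) to parts (a), (c), (d) of Proposition~\ref{reallyneed2}, and the descent of the $k_\R$-property via Lemma~\ref{basic-k-R}(a), merely spell out steps the paper leaves implicit.
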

\begin{proof}
In view of Proposition~\ref{finalprop},
the hypotheses imply that~$E$ has a canonical dual bundle $p\colon
E'\to M$ of class $C^{r_-}_\K$.
It is apparent that $\beta^*\colon G\times E'\to E'$ is an action,
and $E_x'$
is taken
$\bL$-linearly to $E_{\alpha(g,x)}'$
by $\beta^*(g,\cdot)$,
for each $g \in G$ and $x\in M$.
It therefore only remains to show that~$\beta^*$
is~$C^{r_-}_\K$.
To this end, let $g_0\in G$ and $x_0\in M$;
we show that~$\beta^*$
is~$C^{r_-}_\K$ on $U\times p^{-1}(V)$,
for some open neighbourhood~$U$
of~$g_0$ in $G$ and an open neighbourhood~$V$ of~$x_0$ in~$M$.
Indeed, there exists a local trivialization
$\psi\colon \pi^{-1}(W)\to W\times F$ of~$E$
over an open neighbourhood~$W$ of $\alpha(g_0,x_0)$
in~$M$. The action $\alpha$ being continuous,
we find an open neighbourhood~$U$ of~$g_0$
in~$G$ and an open neighbourhood~$V$ of~$x_0$ in~$M$
over which~$E$ is trivial, such that $\alpha(U\times V)\sub W$.
Let $\phi\colon \pi^{-1}(V)\to V\times F$
be a local trivialization of~$E$ over~$V$.
Then
\[
\phi(\beta(g^{-1},\psi^{-1}(\alpha(g,x),v)))=(x,A(g,x,v))\quad
\mbox{for all $g\in U$, $x\in V$, and $v\in F$,}
\]
for a $C^r_\K$-map
$A\colon U\times V\times F\to F$
which is $\bL$-linear in the third argument.
By Corollary~\ref{reallyneed1},
the map $a\colon U\times V\to L(F)_\cS$,
$(g,x)\mto A(g,x,\cdot)$ is $C^{r_-}_\K$.
In view of the hypotheses,
Lemmas~\ref{lemmem1}
and~\ref{lemmem2} entail that also $a^*\colon U\times V\to L(F'_\cS)_\cS$,
$(g,x)\mto (a(g,x))'$ is $C^{r_-}_\K$-map.
Now, again using the specific hypotheses,
Proposition~\ref{reallyneed2} shows
that also the mapping $A^*\colon U\times V\times F'_\cS\to F'_\cS$,
$(g,x,\lambda)\mto a^*(g,x)(\lambda)$ is $C^{r_-}_\K$.
However, for $g\in U$, $x\in V$, and
$\lambda\in F'$, we calculate
\begin{eqnarray*}
\wt{\psi}
(\beta^*(g,\wt{\phi}^{-1}(x,\lambda)))
& = & \left(\alpha(g,x),\,
\left(\pr_F\circ \phi|_{E_x}\circ \beta(g^{-1},\cdot)|_{E_{\alpha(g,x)}}^{E_x}
\circ (\pr_F\circ \psi|_{E_{\alpha(g,x)}})^{-1}\right)'(\lambda)\right)\\
& = & (\alpha(g,x),A^*(g,x,\lambda))\,,
\end{eqnarray*}
using notation as in (\ref{wtpsi}).
We conclude that $\beta^*|_{U\times p^{-1}(V)}$
is $C^{r_-}_\K$.
\end{proof}
\begin{example}\label{mainexdiff}
For elementary examples, recall that
the group $\Diff(M)$
of all smooth diffeomorphisms
of a $\sigma$-compact, finite-dimensional smooth manifold~$M$
can be made a smooth Lie group,
modeled on the (LF)-space $\Gamma_c(TM)$
of compactly supported smooth vector fields on~$M$
(see~\cite{Mic, DIF, GaN}).
The natural action
$\Diff(M)\times M\to M$
is smooth~\cite{DIF}.
In view of Example~\ref{acttan},
Proposition~\ref{actondual}\,(b),
Proposition~\ref{proptensor}
and
Proposition~\ref{propsubbun},
we readily deduce
that also the natural action
of $\Diff(M)$ on $TM$ is smooth, as well as the natural
actions on $T^*M:=(TM)'$, $TM^{\tensor n}\tensor
(T^*M)^{\tensor m}$ for all $n,m\in \N_0$,
and the natural action on
the subbundles $S^n(T^*M)$
and $\bigwedge^n T^*M$
of $(T^*M)^{\tensor n}$
given by symmetric and exterior powers,
respectively.
\end{example}
As a consequence, also the natural actions
of $\Diff(M)$ on the associated spaces of smooth
(or smooth compactly supported) sections
are smooth (see \cite{DIF}).
For general results ensuring smoothness of
the action on the space of smooth sections
in a $G$-equivariant vector bundle, see
\cite[Proposition~7.4]{DIF}.
\section{Locally convex Poisson vector spaces}\label{secpoisson}
We discuss a slight generalization
of the concept of
a locally convex Poisson vector space
introduced in~\cite{MEM}. Fix $\K\in \{\R,\C\}$.
\begin{numba}
A bounded set-functor $\cS$
associates with each locally convex $\K$-vector space~$E$
a set $\cS(E)$
of bounded subsets of~$E$,
such that $\{\lambda(M)\colon M\in\cS(E)\}\sub \cS(F)$
for each continuous $\K$-linear
map $\lambda\colon E\to F$ between
locally convex $\K$-vector spaces
(cf.\ \cite[Definition 16.15]{MEM}).
Given locally convex $\K$-vector spaces~$E$ and~$F$,
we shall write $L(E,F)_\cS$
as a shorthand for $L_\K(E,F)_{\cS(E)}$.
We write $E'_\cS:=L_\K(E,\K)_\cS$.
\end{numba}
\begin{numba}\label{good-bounded-sets}
Throughout this section, we let
$\cS$ be a bounded set-functor
such that, for each locally convex space~$E$, we have
$\{K\sub E\colon \mbox{$K$ is compact}\}\sub \cS(E)$.
\end{numba}
Then $\{x\}\in\cS(E)$ for each $x\in E$,
entailing we get a continuous linear point evaluation
\[
\eta_{E,\cS}(x)\colon E'_\cS\to\K,\quad\lambda\mto \lambda(x).
\]
\begin{defn}\label{nonrcase}
A \emph{locally convex Poisson vector space}
with respect to $\cS$ is a locally convex $\K$-vector
space~$E$
such that $E\times E$ is a $k_\R$-space
and
\[
\eta_{E,\cS}\colon E\to (E'_\cS)'_\cS,\quad
x\mto \eta_{E,\cS}(x)
\]
a topological embedding,
together with a
bilinear map
\mbox{$[.,.] \colon E'_\cS \times E'_\cS \to E'_\cS$,}
$(\lambda,\eta)\mto [\lambda,\eta]$
which makes $E'_\cS$ a Lie algebra,
is $\cS(E'_\cS)$-hypocontinuous
in its 2nd argument,
and satisfies
\begin{equation}\label{compcond}
\eta_{E,\cS}(x)\circ \ad_\lambda\; \in \; \eta_{E,\cS}(E)\quad
\mbox{for all $\,x\in E$ and $\lambda\in E'$,}
\end{equation}
writing $\ad_\lambda:=\ad(\lambda):=[\lambda, .]\colon E'\to E'$.
\end{defn}
\begin{rem}
\begin{itemize}
\item[(a)]
Definition 16.35 in \cite{MEM}
was more restrictive; $E$ was assumed
to be a $k^\infty$-space there.
\item[(b)]
In \cite[16.31\,(b)]{MEM},
the following additional condition was
imposed:
For each $M\in \cS(E'_\cS)$
and $N\in \cS(E)$,
the set $\ve(M\times N)$
is bounded in~$\K$, where
$\ve\colon E'\times E\to\K$
is the evaluation map.
As we assume \ref{good-bounded-sets},
the latter condition is automatically
satisfied, by \cite[Proposition~16.11\,(a) and Proposition~16.14]{MEM}.
\item[(c)]
Let us say that a locally convex space~$E$
is \emph{$\cS$-reflexive}
if $\eta_{E,\cS}\colon E\to (E'_\cS)'_\cS$
is an isomorphism of topological vector spaces.
\item[(d)]
Of course, we are mostly interested
in the case where~$[.,.]$ is continuous,
but only hypocontinuity
is required for the basic theory.
\end{itemize}
\end{rem}
\begin{defn}\label{dfnpoivec}
Let $(E,[.,.])$ be a locally
convex Poisson vector space
with respect to~$\cS$,
and $U\sub E$ be open.
Given
$f,g\in C^\infty_\K(U,\K)$,
we define a function
$\{f,g\}\colon U\to\K$ via
\begin{equation}\label{poissonbr}
\{f,g\}(x)\; :=\; \langle [f'(x),g'(x)], x\rangle\quad
\mbox{for $x\in U$,}
\end{equation}
where $\langle .,.\rangle\colon E'\times E\to \K$,
$\langle \lambda,x\rangle :=\lambda(x)$
is the evaluation map and $f'(x)=df(x,.)$.\\[2.5mm]
Condition (\ref{compcond}) in Definition~\ref{nonrcase}
enables us to define
a map $X_f\colon U \to E$ via
\begin{equation}\label{defhamil}
X_f(x)\, :=\,  \eta_{E,\cS}^{-1}\bigl(\eta_{E,\cS}(x)
\circ \ad(f'(x))\bigr)\quad
\mbox{for $\,x\in U$.}
\end{equation}
\end{defn}
\begin{numba}
Using Lemma~\ref{diff-compo-with-hypo}
instead of \cite[Theorem~16.26]{MEM},
we see as in the proof of \cite[Theorem~16.40\,(a)]{MEM}
that the function $\{f,g\}\colon U\to \K$
is~$C^\infty_\K$.
The $C^\infty_\K$-function $\{f,g\}$
is called the \emph{Poisson bracket} of $f$ and $g$.
Using Lemma~\ref{diff-compo-with-hypo}
instead of \cite[Theorem~16.26]{MEM},
we see as in the proof of \cite[Theorem~16.40\,(b)]{MEM}
that $X_f\colon U \to E$ is a $C^\infty_\K$-map;
it is called the \emph{Hamiltonian vector field}
associated with~$f$.
As in \cite[Remark~16.43]{MEM},
we see that the Poisson bracket
just defined makes $C^\infty_\K(U,\K)$
a Poisson algebra.
\end{numba}
\begin{numba}
We shall write ``$b$'' and ``$c$''
in place of~$\cS$ if $\cS$
is the bounded set functor taking a locally convex space~$E$
to the set $\cS(E)$ of all bounded subsets
and compact subsets of~$E$, respectively.
Both of these satisfy the hypothesis of~\ref{good-bounded-sets}.
\end{numba}
In the following, we describe
new results for locally convex Poisson vector spaces
over $\cS=c$. We mention that
the embedding property
of $\eta_{E,c}$ is automatic
in this case, as $E\times E$
is a $k_\R$-space
in Definition~\ref{dfnpoivec};
thus~$E$ is a $k_\R$-space
and
Remark~\ref{lemmem2} applies.
\begin{example}
Let
$(\cg_j)_{j\in J}$
be a family of finite-dimensional real Lie algebras~$\cg_j$.
Endow $\cg:=\bigoplus_{j\in J}\cg_j$
with the locally convex direct sum
topology, which coincides with the
finest locally convex vector topology.
Then $\cg$ is $c$-reflexive,
like every vector space with its finest
locally convex vector topology
(see \cite[Theorem~7.30\,(a)]{HaM}).
As a consequence, also $\cg'_c$
is $c$-reflexive
(cf.\ \cite[Proposition 7.9\,(iii)]{HaM}).
Using
\cite[Proposition 7.1]{MEA},
we see that the component-wise
Lie bracket $\cg\times \cg\to\cg$
is continuous on the locally convex space
$\cg\times\cg$,
which is naturally isomorphic to the locally convex direct sum
$\bigoplus_{j\in J}(\cg_j\times\cg_j)$.
We set $E:=\cg_c'$ and give $E'_c$ the continuous Lie bracket
$[.,.]$
making $\eta_{\cg,c}\colon \cg\to (\cg'_c)'_c=E'_c$
an isomorphism of topological Lie algebras.
Then
\[
E=\cg'_c\cong \prod_{j\in J}(\cg_j)'_c
\]
is a $k_\R$-space, being a cartesian product
of locally compact spaces (see \cite{Nob} or \cite{GaM}).
Thus $(E,[.,.])$
is a locally convex Poisson vector space
over $\cS=c$,
in the sense of Definition~\ref{nonrcase}.
If $J$ has cardinality $\geq 2^{\aleph_0}$
and $\cg_j\not=\{0\}$ for all $j\in J$
(e.g., if we take an abelian $1$-dimensional
Lie algebra $\cg_j$
for each $j\in J$),
then $E\cong \R^J$ is not a $k$-space.
Hence $E$ is not a $k^\infty$-space,
and hence it is not a Poisson vector space
in the more restrictive sense of~\cite{MEM}.
\end{example}
\section{Continuity properties of the Poisson bracket}\label{nwsec1}
If $E$ and $F$ are locally convex $\K$-vector
spaces and $U\sub E$ an open subset, we endow
$C^\infty(U,F)$
with the compact-open $C^\infty$-topology.
Our goal is the following result:
\begin{thm}\label{poiss2}
Let $(E,[.,.])$ be a locally
convex Poisson vector space with respect to
$\cS=c$.
Let $U\sub E$ be open.
Then the Poisson bracket
\[
\{.,.\}\colon C^\infty_\K(U,\K)\times
C^\infty_\K(U,\K)\to C^\infty_\K(U,\K)
\]
is $c$-hypocontinuous in its second variable.
If $[.,.]\colon E'_c\times E'_c\to E'_c$
is continuous, then also
the Poisson bracket
is continuous.
\end{thm}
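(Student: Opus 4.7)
The plan is to factor the Poisson bracket through three function-space operations. Namely, write
$$\{f,g\}(x) \;=\; \bigl\langle [f'(x), g'(x)], x\bigr\rangle \;=\; \mathrm{Ev}\bigl(\mu(\Lambda f, \Lambda g)\bigr)(x),$$
where $\Lambda\colon C^\infty_\K(U, \K)\to C^\infty_\K(U, E'_c)$ sends $f$ to $f'$; where $\mu\colon C^\infty_\K(U, E'_c)\times C^\infty_\K(U, E'_c)\to C^\infty_\K(U, E'_c)$ sends $(\phi, \psi)$ to $x\mapsto[\phi(x), \psi(x)]$; and where $\mathrm{Ev}\colon C^\infty_\K(U, E'_c)\to C^\infty_\K(U, \K)$ sends $\phi$ to $x\mapsto\langle\phi(x), x\rangle$. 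The goal will then be to show that $\Lambda$ and $\mathrm{Ev}$ are continuous $\K$-linear, that $\mu$ is $c$-hypocontinuous in its second argument in general, and that $\mu$ is continuous bilinear when $[.,.]$ is continuous; the two conclusions of the theorem then follow by composition.

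For $\Lambda$, Corollary~\ref{dfisC}(a) supplies $f'\in C^\infty_\K(U, E'_c)$ together with the identity $(d^k\Lambda f)(x, y_1, \ldots, y_k)(z) = d^{k+1}f(x, y_1, \ldots, y_k, z)$ (after using symmetry of higher differentials). A basic seminorm on $C^\infty_\K(U, E'_c)$ indexed by a compact $K\subseteq U\times E^k$ and a compact $L\subseteq E$ (the latter defining a continuous seminorm on $E'_c$) therefore pulls back under $\Lambda$ to the basic seminorm of $C^\infty_\K(U, \K)$ indexed by the compact set $K\times L\subseteq U\times E^{k+1}$, which gives continuity of $\Lambda$. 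For $\mathrm{Ev}$, Lemma~\ref{diff-compo-with-hypo}(b), applied to the evaluation $\ve\colon E'_c\times E\to\K$ (which is $c$-hypocontinuous by Lemma~\ref{resulteval}), shows that $\mathrm{Ev}(\phi)$ is $C^\infty_\K$; here one uses that $U\times U$ is a $k_\R$-space as an open subset of the $k_\R$-space $E\times E$ (Lemma~\ref{basic-k-R}(b)). Continuity of $\mathrm{Ev}$ then follows by iterated Leibniz, which expresses $d^k\mathrm{Ev}(\phi)(x, y_1, \ldots, y_k)$ as a finite sum of terms $\langle d^j\phi(x, y_{i_1}, \ldots, y_{i_j}), z\rangle$ with $z\in\{x, y_1, \ldots, y_k\}$, each bounded by a basic seminorm of $\phi$ associated to a compact subset of $E$ that contains all such $z$-values.

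For $\mu$, a further application of Lemma~\ref{diff-compo-with-hypo}(b), now to the bracket $\beta = [.,.]\colon E'_c\times E'_c\to E'_c$, shows that $\mu(\phi, \psi)\in C^\infty_\K(U, E'_c)$. Iterated Leibniz expresses $d^k\mu(\phi, \psi)(x, y_1, \ldots, y_k)$ as a finite sum of terms $\beta(d^i\phi(x, \cdot), d^j\psi(x, \cdot))$ with $i+j = k$. Given a compact subset $M\subseteq C^\infty_\K(U, E'_c)$ and any compact $K\subseteq U\times E^j$, the continuous evaluation $M\times K\to E'_c$, $(\psi, (x, y_1, \ldots, y_j))\mapsto d^j\psi(x, y_1, \ldots, y_j)$, has compact image $N\subseteq E'_c$; the $c$-hypocontinuity of $\beta$ in its second argument then yields, for every continuous seminorm $q$ on $E'_c$, a continuous seminorm $p$ on $E'_c$ with $q(\beta(\lambda, \nu))\leq p(\lambda)$ for all $\lambda\in E'_c$ and $\nu\in N$. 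Substituting these bounds into the Leibniz expansion gives seminorm estimates on $\mu(\phi, \psi)$ that are controlled by seminorms of $\phi$ and uniform in $\psi\in M$, establishing $c$-hypocontinuity of $\mu$ in its second variable. If $[.,.]$ is continuous, the same argument applied to bilinear seminorm estimates yields full continuity of $\mu$, and hence of $\{.,.\}$.

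The hard part will be the bookkeeping in the treatment of $\mu$: for each derivative order $k$, one must track how compact subsets of $C^\infty_\K(U, E'_c)$ in the $\psi$-slot generate, via evaluation on compact subsets of $U\times E^j$, compact subsets of $E'_c$ in each term of the Leibniz expansion, then propagate the hypocontinuity of $\beta$ through the sum uniformly, while simultaneously accounting for the polar seminorms on $E'_c$ (themselves indexed by compact subsets of $E$) that define the $C^\infty$-topology on the target.
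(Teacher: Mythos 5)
Your factorization $\{.,.\}=\mathrm{Ev}\circ\mu\circ(\Lambda\times\Lambda)$ is exactly the paper's decomposition $\{.,.\}=\beta_*\circ C^\infty(U,[.,.])\circ(D\times D)$, and your arguments for the three factors reproduce the content of the paper's Lemmas~\ref{hlp0}, \ref{wannerf} and~\ref{hlp1} (the paper organizes the Leibniz/seminorm estimates as an induction on the derivative order, reducing to the $r=0$ case, rather than using the full partition formula at every order, but the ingredients --- compactness of evaluation images, hypocontinuity scaling, the formula for $d^k(\beta\circ(f,g))$ --- are identical). The proposal is essentially correct and follows the paper's route; the one point worth making explicit is that continuity of $\mu(\phi,\cdot)$ for fixed $\phi$ (part of the paper's definition of hypocontinuity) requires $c$-hypocontinuity of $[.,.]$ in its \emph{first} argument, which holds by antisymmetry of the Lie bracket --- the same implicit step the paper relies on when invoking Lemma~\ref{hlp1}\,(c).
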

Various auxiliary results are
needed to prove
Theorem~\ref{poiss2}.
With little risk of confusion
with subsets of spaces
of operators,
given a $0$-neighbourhood $W\sub F$
and a compact set $K\sub U$ we shall write
$\lfloor K, W\rfloor:=\{f \in C(U,F)\colon
f(K)\sub W\}$.
\begin{la}\label{hlp0}
Let $E, F$ be locally convex spaces
and $U\sub E$ be open.
Then the linear map
\[
D\colon C^\infty_\K(U,F)\to C^\infty_\K(U, L(E,F)_c)\,,\quad
f\mto f'
\]
is continuous.
\end{la}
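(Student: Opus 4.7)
The plan is to exploit the universal properties of the two compact-open $C^\infty$-topologies. By \ref{def-co-Cr}, the topology on the target $C^\infty_\K(U, L(E,F)_c)$ is initial with respect to the maps $g\mto d^jg$ into $C(U\times E^j, L(E,F)_c)_{c.o.}$ for $j\in\N_0$. Hence continuity of $D$ will follow once we have verified continuity of each composition
\[
C^\infty_\K(U,F)\xrightarrow{D} C^\infty_\K(U, L(E,F)_c)\xrightarrow{d^j} C(U\times E^j, L(E,F)_c)_{c.o.}\,,\quad f\mto d^j(f')\,.
\]

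The crucial input is Corollary~\ref{dfisC}\,(a) applied with $k=1$: it both guarantees that $f'$ lies in $C^\infty_\K(U, L(E,F)_c)$ for every $f\in C^\infty_\K(U,F)$ (so that $D$ is well defined) and provides the identity
\[
(d^j(f'))(x,y_1,\ldots,y_j)(v)\;=\;d^{j+1}f(x,v,y_1,\ldots,y_j)
\]
for all $x\in U$ and $y_1,\ldots,y_j,v\in E$. This is the workhorse of the argument.

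Since $d^j\circ D$ is linear, it suffices to check continuity at~$0$. A subbasic $0$-neighbourhood in $C(U\times E^j, L(E,F)_c)_{c.o.}$ has the form $\lfloor K,N\rfloor$ for compact $K\sub U\times E^j$ and a $0$-neighbourhood $N\sub L(E,F)_c$; we may further reduce to $N=\lfloor K',W\rfloor$ with $K'\sub E$ compact and $W\sub F$ an open $0$-neighbourhood. By the identity above, the preimage under $d^j\circ D$ is exactly $\{f\in C^\infty_\K(U,F)\colon d^{j+1}f(K'')\sub W\}$, where
\[
K''\;:=\;\{(x,v,y_1,\ldots,y_j)\colon (x,y_1,\ldots,y_j)\in K,\;v\in K'\}\;\sub\;U\times E^{j+1}\,.
\]
Since $K''$ is the image of $K'\times K$ under a coordinate permutation (a homeomorphism), it is compact; hence the set in question is precisely the preimage of $\lfloor K'',W\rfloor$ under the continuous map $f\mto d^{j+1}f$, and is therefore a $0$-neighbourhood in $C^\infty_\K(U,F)$ by definition of the compact-open $C^\infty$-topology.

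There is no real obstacle: the proof is a bookkeeping argument whose only substantive ingredient is the identification $d^j(f')\leftrightarrow d^{j+1}f$ from Corollary~\ref{dfisC}. No hypocontinuity or extra hypothesis on $E$ or $F$ is needed, because we work throughout with topologies of uniform convergence on compacta, and compact sets in product spaces behave well under coordinate permutations.
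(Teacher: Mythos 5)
Your proposal is correct and follows essentially the same route as the paper's proof: reduce via the initial topology to the maps $d^j\circ D$, use linearity to check continuity at $0$ on subbasic sets $\lfloor K,\lfloor K',W\rfloor\rfloor$, and translate via the identity $d^j(f')\leftrightarrow d^{j+1}f$ from Corollary~\ref{dfisC} into a condition $d^{j+1}f(K'')\sub W$ on a compact set. The only cosmetic difference is that you keep the evaluation slot in the second position (exactly as in Corollary~\ref{dfisC}), so your compact set $K''$ is a coordinate permutation of the paper's $K\times A$, where the paper places the slot last -- harmless, since the higher differentials of a smooth map are symmetric in the directional arguments.
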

\begin{proof}
By Corollary~\ref{dfisC},
$f'\in C^\infty_\K(U,L(E,F)_c)$
for each $f\in C^\infty_\K(U,F)$.
As $D$ is linear
and also $C^\infty(U,L(E,F)_c)\to
C(U\times E^k,L(E,F)_c)$,
$f \mto d^k f$
is linear for each $k\in \N_0$,
\begin{equation}\label{djD}
d^k\circ D\colon C^\infty(U,F)\to C(U\times E^k,L(E,F)_c)_{c.o.}
\end{equation}
is linear,
whence it will be continuous if it is continuous
at~$0$.
We pick a typical $0$-neighbourhood
in $C(U\times E^k,L(E,F)_c)_{c.o.}$,
say $\lfloor K,V\rfloor$
with a compact subset $K\sub U\times E^k$
and a $0$-neighbourhood $V\sub L(E,F)_c$.
After shrinking~$V$, we may assume
that $V=\lfloor A ,W\rfloor$
for some compact set $A\sub E$ and $0$-neighbourhood
$W\sub F$.\\[2.5mm]
We now recall that for $f\in C^\infty_\K(U,F)$,
we have
\begin{equation}\label{usfrec}
d^k(f')(x,y_1,\ldots,y_k)
\; =\; d^{k+1}f(x,y_1,\ldots, y_k,\cdot)\colon E\to F
\end{equation}
for all $k\in \N_0$, $x\in U$ and
$y_1,\ldots, y_k\in E$
(cf.\ Corollary~\ref{dfisC}).
Since $\lfloor K\times A, W\rfloor$
is an open $0$-neighbourhood in $C(U\times E^{k+1},F)$
and the map $C^\infty(U,F)
\to C(U\times E^{k+1},F)_{c.o.}$, $f \mto d^{k+1}f$
is continuous, we see that the set $\Omega$
of all $f \in C^\infty(U,F)$
such that $d^{k+1}f \in\lfloor K\times A, W\rfloor$
is a $0$-neighbourhood in
$C^\infty(U,F)$.
In view of (\ref{usfrec}),
we have $d^k(f')\in \lfloor K,\lfloor A,W\rfloor\rfloor$
for each $f \in\Omega$.
Hence $d^k\circ D$ from~(\ref{djD})
is continuous at~$0$, as required.
\end{proof}
\begin{la}\label{wannerf}
Let $X$ be a Hausdorff topological space,
$F$ be a locally convex space,
$K\sub X$ be compact
and $M\sub C(X,F)_{c.o.}$ be compact.
Let $\ev\colon C(X,F)\times X\to F$,
$(f,x)\mto f(x)$ be the evaluation map. 
Then $\ev(M\times K)$
is compact.
\end{la}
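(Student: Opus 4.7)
The plan is to reduce joint evaluation on $C(X,F)_{c.o.}\times K$ to the classical continuity of evaluation over a locally compact Hausdorff space, and then invoke the fact that continuous images of compact sets are compact.

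First I would consider the restriction map
\[
\rho\colon C(X,F)_{c.o.}\to C(K,F)_{c.o.}\,,\quad f\mto f|_K,
\]
which is well-known to be continuous: given a compact set $L\sub K$ and a $0$-neighbourhood $W\sub F$, one has $\rho^{-1}(\lfloor L,W\rfloor)=\lfloor L,W\rfloor$, where the sets on the right are $0$-neighbourhoods (resp., sub-basic sets) for the compact-open topology on $C(X,F)$ since $L\sub X$ is compact.

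Next I would use that $K$, being compact Hausdorff, is locally compact Hausdorff, so that the classical result on joint continuity of evaluation applies:
\[
\ev_K\colon C(K,F)_{c.o.}\times K\to F\,,\quad (g,x)\mto g(x)
\]
is continuous (see, e.g., \cite{Eng}). Composing, the restricted evaluation
\[
\ev|_{C(X,F)\times K}\;=\;\ev_K\circ (\rho\times \id_K)\colon C(X,F)_{c.o.}\times K\to F
\]
is continuous.

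Finally, since $M$ and $K$ are compact and the product of compact spaces is compact, $M\times K$ is compact in $C(X,F)_{c.o.}\times K$. Hence $\ev(M\times K)$ is the continuous image of a compact set in the Hausdorff space~$F$, and therefore compact. The only point requiring care is that we cannot appeal to joint continuity of $\ev$ on all of $C(X,F)_{c.o.}\times X$ (which would require local compactness of~$X$); restricting the second factor to the compact set $K$ resolves this issue, and this is the one step that must be carried out explicitly rather than by a one-line reference.
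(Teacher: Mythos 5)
Your proof is correct and follows essentially the same route as the paper: the paper also factors the evaluation through the restriction map $\rho\colon C(X,F)_{c.o.}\to C(K,F)_{c.o.}$ (continuous by \cite[\S3.2\,(2)]{Eng}) and the jointly continuous evaluation $C(K,F)_{c.o.}\times K\to F$ (by \cite[Theorem~3.4.2]{Eng}, valid since $K$ is compact, hence locally compact), writing $\ev(M\times K)$ as the continuous image of a compact set. Your explicit verification of the continuity of $\rho$ on subbasic sets and your remark about why one restricts the second factor to $K$ are both accurate refinements of the same argument.
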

\begin{proof}
The
map $\rho\colon C(X,F)_{c.o.}
\to C(K,F)_{c.o.}$, $f\mto f|_K$
is continuous by \cite[\S3.2\,(2)]{Eng}.
Thus $\rho(M)$ is compact
in $C(K,F)_{c.o.}$.
The map
$\ve\colon C(K,F)\times K\to F$, $(f,x)\mto f(x)$
is continuous by \cite[Theorem~3.4.2]{Eng}.
Hence
$\ev(M\times K)
=\ve(\rho(M)\times K)$ is compact.
\end{proof}
\begin{la}\label{hlp1}
Let $E$, $F_1$, $F_2$, and~$G$
be locally convex $\K$-vector spaces
and $\beta\colon F_1\times F_2\to G$
be a bilinear map which is $c$-hypocontinuous
in its second argument.
Let $U\sub E$ be an open subset and $r\in \N_0\cup\{\infty\}$.
Assume that $E\times E$ is a $k_\R$-space,
or $r=0$ and~$E$ is a $k_\R$-space,
or $(r,\K)=(\infty,\C)$ and~$E$ is a $k_\R$-space.
Then the following holds:
\begin{itemize}
\item[\rm(a)]
We have $\beta\circ (f,g)\in C^r_\K(U,G)$
for all $(f,g)\in C^r_\K(U,F_1)\times C^r_\K(U,F_2)$.
The map
\[
C^r_\K(U,\beta )\colon C^r_\K(U,F_1)\times C^r_\K(U,F_2)\to
C^r_\K(U,G),
\,\;(f,g)\mto \beta\circ (f, g)
\]
is bilinear. For each compact subset $M\sub C^r_\K(U,F_2)$
and $0$-neighbourhood $W\sub C^r_\K(U,G)$,
there is a $0$-neighbourhood
$V\sub C^r_\K(U,F_1)$ such that $C^r_\K(U,\beta)(V\times M)\sub W$.
\item[\rm(b)]
For each $g\in C^r_\K(U,F_2)$,
the map $C^r_\K(U,F_1)\to C^r_\K(U,G)$,
$f\mto \beta\circ (f,g)$ is continuous and linear.
\item[\rm(c)]
If $\beta$ is also $c$-hypocontinuous
in its first argument,
then $C^r_\K(U,\beta)$
is $c$-hypocontinuous
in its second argument
and $c$-hypocontinuous in its first
argument.
\item[\rm(d)]
If $\beta$ is continuous,
then $C^r_\K(U,\beta)$ is continuous.
\end{itemize}
\end{la}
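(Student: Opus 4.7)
The plan is to prove part (a) first, as parts (b), (c), (d) fall out of it quickly. For the first claim of (a), I will appeal to Lemma~\ref{diff-compo-with-hypo}(b) applied to the $2$-linear map $\beta$ (which is $c$-hypocontinuous in its second argument, i.e., in variables $(j,\ldots,k)$ with $j=k=2$), together with the $k_\R$-hypothesis on~$E$ matching the one assumed in the lemma. Bilinearity of $C^r_\K(U,\beta)$ is immediate from bilinearity of $\beta$.

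The substantive content of (a) is the hypocontinuity estimate, and this is where I expect the main work to lie. The key input is the Leibniz-type formula from the inductive argument in the proof of Lemma~\ref{diff-compo-with-hypo}(b); specialized to $k=2$, it reads
\[
d^j(\beta\circ(f,g))(x,y_1,\ldots,y_j) \;=\; \sum_{(I,J)} \beta\bigl(d^{|I|}f(x,y_I),\, d^{|J|}g(x,y_J)\bigr),
\]
the sum running over the $2^j$ ordered partitions $(I,J)$ of $\{1,\ldots,j\}$ into (possibly empty) disjoint subsets, with the notation~$y_I$ as in that proof. Given a basic $0$-neighbourhood $W=\bigcap_{j=0}^n\{h\colon d^jh(K_j)\sub W_j\}$ of $C^r_\K(U,G)$ (with compact $K_j\sub U\times E^j$ and absolutely convex open $W_j\sub G$), my plan is as follows. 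For each partition $(I,J)$ of $\{1,\ldots,j\}$, project $K_j$ onto $U\times E^{|J|}$ to obtain a compact $\widetilde K_J$ (and similarly $\widetilde K_I$); observe that $g\mapsto d^{|J|}g$ is continuous from $C^r_\K(U,F_2)$ to $C(U\times E^{|J|},F_2)_{c.o.}$ by the very definition of the compact-open $C^r$-topology, so the image $M_{|J|}$ of~$M$ is compact there, whence Lemma~\ref{wannerf} yields that $N_J:=\{\gamma(z)\colon \gamma\in M_{|J|},\,z\in \widetilde K_J\}$ is a compact subset of~$F_2$. Now $c$-hypocontinuity of $\beta$ in its second argument provides an absolutely convex open $V_{j,I}\sub F_1$ with $\beta(V_{j,I}\times N_J)\sub 2^{-j}W_j$, and the intersection
\[
V \;:=\; \bigcap_{j,(I,J)}\{f\in C^r_\K(U,F_1)\colon d^{|I|}f(\widetilde K_I)\sub V_{j,I}\}
\]
is a $0$-neighbourhood in $C^r_\K(U,F_1)$ with $C^r_\K(U,\beta)(V\times M)\sub W$, using the convexity of~$W_j$.

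Parts (b), (c), (d) should then follow quickly. For (b) I will specialize (a) to $M=\{g\}$ and use the obvious linearity. For (c) I will run the symmetric argument, using $c$-hypocontinuity of $\beta$ in its first argument to handle the case of $M\sub C^r_\K(U,F_1)$ compact with $V\sub C^r_\K(U,F_2)$ variable. For (d) I will replace the single $0$-neighbourhood $V_{j,I}\sub F_1$ in the above scheme by a product $V^1_{j,I}\times V^2_{j,I}\sub F_1\times F_2$ such that $\beta(V^1_{j,I}\times V^2_{j,I})\sub 2^{-j}W_j$, whose existence is guaranteed by continuity of~$\beta$; the resulting twin intersections define $0$-neighbourhoods in $C^r_\K(U,F_1)$ and $C^r_\K(U,F_2)$ whose product maps into~$W$. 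The main obstacle throughout is the notational bookkeeping of the Leibniz expansion and the combinatorics of partitions; the underlying idea is the standard transfer of hypocontinuity from values to functions, enabled by the compactness statement in Lemma~\ref{wannerf}.
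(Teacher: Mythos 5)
Your proposal is correct, but it reaches the key estimate in (a) by a genuinely different route. The paper proceeds by induction on $r$: it uses only the first-order product rule (\ref{formula-der}), encoded as the operator identity (\ref{git}) via the maps $\lambda_1,\lambda_2,\delta_j,\rho_j$, together with the fact that the compact-open $C^r$-topology is initial with respect to $f\mto f$ and $f\mto df\in C^{r-1}_\K(U\times E,G)$; the induction step then reuses the case $r-1$ with $U\times E$ in place of $U$, and $r=\infty$ is handled by a separate reduction to finite $r$. You instead argue directly from the full Leibniz expansion (\ref{partition-diff}) specialized to $k=2$: given a basic $0$-neighbourhood $\bigcap_{j\leq n}\{h\colon d^jh(K_j)\sub W_j\}$, you project the $K_j$, push $M$ forward under the continuous maps $g\mto d^{|J|}g$, apply Lemma~\ref{wannerf} to obtain the compact sets $N_J$, and split $W_j$ into $2^j$ summands $2^{-j}W_j$ using convexity --- the same mechanism the paper employs at $r=0$ and in the splitting $(1/2)W_2+(1/2)W_2$, but applied in one stroke. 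Your version costs the combinatorial bookkeeping and requires knowing that (\ref{partition-diff}) is available in all three $k_\R$-cases of the hypothesis (it is: once $\beta\circ(f,g)$ is known to be $C^r_\K$ by Lemma~\ref{diff-compo-with-hypo}, the formula follows from sequential continuity of~$\beta$, via Lemma~\ref{hypo-seq-cts}, by the telescoping computation of directional derivatives); in exchange it is non-recursive, dispenses with the appeal to \cite[Lemma~A.1\,(d)]{ZOO} for the recursive description of the topology, and covers $r=\infty$ automatically, since basic neighbourhoods involve only finitely many differentials. Parts (b) and (c) coincide with the paper's treatment (for (c), note --- as you implicitly do --- that the symmetric estimate specialized to singletons also supplies the separate continuity in the second slot which Lemma~\ref{prehypo} presupposes). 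For (d) the paper simply quotes that pushforward by the continuous, hence smooth, map~$\beta$ is smooth \cite[Proposition~4.16]{ZOO}; your product-neighbourhood variant establishes continuity of $C^r_\K(U,\beta)$ at $(0,0)$, which suffices by the standard fact that a bilinear map between topological vector spaces which is continuous at the origin is continuous --- worth stating explicitly, but not a gap.
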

\begin{proof}
(a) By Lemma~\ref{diff-compo-with-hypo},
$\beta\circ (f,g)\in C^r_\K(U,G)$.
The bilinearity of $C^r(U,\beta)$ is clear.
It suffices to prove the remaining assertion
for each $r\in \N_0$. To see this,
let $M\sub C^\infty_\K(U,F_2)$ be a compact subset
and $W\sub C^\infty_\K(U,G)$ be a $0$-neighbourhood.
Since the topology on
$C^\infty_\K(U,G)$ is initial
with respect to the maps inclusion map $C^\infty_\K(U,G)\to C^r_\K(U,G)$
for $r\in\N_0$,
there exists $r\in\N_0$
and a $0$-neighbourhood~$Q$ in $C^r_\K(U,G)$
such that $C^\infty_\K(U,G)\cap Q\sub W$.
If the assertion holds for~$r$,
we find a $0$-neighbourhood $P\sub C^r_\K(U,F_1)$
such that $C^r_\K(U,\beta)(P\times M)\sub Q$.
Then $V:=C^\infty_\K(U,F_1)\cap P$
is a $0$-neighbourhood in $C^\infty_\K(U,F_1)$
and $C^\infty_\K(U,\beta)(V\times M)\sub C^\infty_\K(U,G)\cap
C^r_\K(U,\beta)(P\times M)\sub C^\infty_\K(U,G)\cap Q\sub W$.\\[2.5mm]
The case $r=0$.
Let $M\sub C(U,F_2)$ be compact
and $W\sub C(U,G)$ be a
$0$-neighbourhood.
Then $\lfloor K, Q\rfloor\sub W$
for some compact subset $K\sub U$ and
some $0$-neighbourhood $Q\sub G$.
By Lemma~\ref{wannerf},
the set $N:=\ev(M\times K)\sub F_2$
is compact, where $\ev\colon C(U,F_2)\times U\to F_2$
is the evaluation map.
Since~$\beta$ is $c$-hypocontinuous in its second argument,
there exists a $0$-neighbourhood
$P\sub F_1$ with
$\beta(P\times N )\sub Q$.
Then
$\beta \circ (\lfloor K, P \rfloor\times M) \sub
\lfloor K, Q\rfloor\sub W$.\\[2.5mm]
Induction step.
Let $M\sub C^r_\K(U,F_2)$
be a compact subset and $W\sub C^r_\K(U,G)$
be a $0$-neighbourhood.
The topology on $C^r(U,G)$
is initial with respect to the linear
maps $\lambda_1\colon C^r_\K(U,G)\to C(U,G)_{c.o.}$,
$f \mto f$
and $\lambda_2\colon C^r_\K(U,G)\to C^{r-1}_\K(U\times E,G)$,
$f\mto df$
(by \cite[Lemma~A.1\,(d)]{ZOO}).\footnote{Note that
the ordinary $C^r$-topology is used there, by
\cite[Proposition 4.19\,(d) and Lemma~A2]{ZOO}.}
After shrinking~$W$, we may therefore
assume that
\[
W=(\lambda_1)^{-1}(W_1)\cap (\lambda_2)^{-1}(W_2)
\]
with absolutely convex $0$-neighbourhoods $W_1\sub C(U,G)$
and $W_2\sub C^{r-1}_\K(U\times E,G)$.
Applying the case $r=0$ to $C(U,\beta)$,
we find a $0$-neighbourhood $V_1\sub C(U,F_1)$
such that $C(U,\beta)(V_1\times M)\sub W_1$.
The map $\delta_j \colon C^r_\K(U,F_j)\to C^{r-1}_\K(U\times E,F_j)$,
$f\mto df$
is continuous linear
and $\pi\colon U\times E\to U$, $(x,y)\mto x$
is smooth, whence
$\rho_j \colon C^r_\K(U,F_j)\to C^{r-1}_\K(U\times E,F_j)$,
$f\mto f\circ \pi$ is continuous
linear (cf.\ \cite[Lemma~4.4]{ZOO}).
By~(\ref{formula-der}),
we have
\begin{equation}\label{git}
\lambda_2\circ C^r_\K(U,\beta)\;=\;
C^{r-1}_\K(U\times E,\beta)\circ (\delta_1\times \rho_2)
\,+\, C^{r-1}_\K(U\times E,\beta)\circ (\rho_1\times \delta_2)\,.
\end{equation}
The subsets $\rho_2(M)\sub C^{r-1}_\K(U\times E,F_2)$
and $\delta_2(M)\sub C^{r-1}_\K(U\times E,F_2)$
are compact. Using the case $r-1$ (with $U\times E$ in place of~$U$),
which holds as the inductive hypothesis,
we find $0$-neighbourhoods $V_2,V_3\sub C^{r-1}_\K(U\times E,F_1)$
such that $C^{r-1}_\K(U,\beta)(V_2\times\rho_2(M))\sub (1/2)W_2$
and $C^{r-1}_\K(U,\beta)(V_3\times \delta_2(M))\sub (1/2)W_2$.
Then $Q:=(\delta_1)^{-1}(V_2)\cap (\rho_1)^{-1}(V_3)$
is an open $0$-neighbourhood in $C^r_\K(U,F_1)$.
Since $(1/2)W_2+(1/2)W_2=W_2$,
we deduce from~(\ref{git}) that
\[
\lambda_2(C^r_\K(U,\beta)(Q\times M))
\sub C^{r-1}_\K(U\times E,\beta)(V_2\times \rho_2(M))
+C^{r-1}_\K(U\times E,\beta)(V_3\times \delta_2(M))\sub W_2.
\]
Thus $C^r_\K(U,\beta)(Q\times M)\sub (\lambda_2)^{-1}(W_2)$.
Now $V:=V_1\cap Q$ is a $0$-neighbourhood in $C^r_\K(U,F_1)$
such that $C^r_\K(U,\beta)(V\times M)\sub (\lambda_1)^{-1}(W_1)\cap(\lambda_2)^{-1}(W_2)
= W$.

(b) Since $C^r_\K(U,\beta)$ is bilinear,
the map $f\mto \beta\circ (f,g)$ is linear.
Its continuity follows from~(a),
applied with the singleton $M:=\{g\}$.

(c) By (a) just established, the condition in Lemma~\ref{prehypo}\,(a)
is satisfied.
By~(b), the map $C^r_\K(U,\beta)$
is continuous in its first argument.
Interchanging the roles of $F_1$ and $F_2$,
we see that $C^r_\K(M,\beta)$
is also continuous in its second argument
and hence $c$-hypocontinuous in its second argument.
Likewise, $C^r_\K(U,\beta)$ is $c$-hypoocontinuous
in its first argument.

(d) If $\beta$ is continuous
and hence smooth, then $C^r(U,\beta)$
is smooth and hence continuous,
as a very special case of
\cite[Proposition~4.16]{ZOO}.
\end{proof}
{\bf Proof of Theorem~\ref{poiss2}.}
By Lemma~\ref{hlp0},
the mapping $D\colon C^\infty(U,\K)\to C^\infty(U, E'_c)$,\linebreak
$f\mto f'$ is continuous and linear.
By Lemma~\ref{hlp1}\,(c),
the bilinear map
\[
C^\infty(U,[.,.])\colon C^\infty(U,E')\times
C^\infty(U,E')\to C^\infty(U,E')\,,\quad
(f,g)\mto (x\mto [f(x),g(x)])
\]
is $c$-hypocontinuous in its second argument;
if $[.,.]$ is continuous,
then also $C^\infty(U,[.,.])$,
by Lemma~\ref{hlp1}\,(d).
The evaluation map
$\beta \colon E\times E'_c\to \K$, $(x,\lambda)\mto \lambda(x)$
is $c$-hypocontinuous in its first argument,
by Proposition~\ref{resulteval}.
As a consequence, $\beta_*\colon C^\infty(U,E'_c)\to C^\infty(U,\K)$,
$f \mto \beta\circ (\id_U,f)$
is continuous linear by Lemma~\ref{hlp1}\,(b).
Since
\[
\{.,.\}\, =\, \beta_*\circ C^\infty(U,[.,.])
\circ (D\times D)
\]
by definition,
we see that $\{.,.\}$ is a composition of
continuous maps if~$[.,.]$ is continuous,
and hence continuous.
In the general case,
$\{.,.\}$ is a composition of a
bilinear map which is $c$-hypocontinuous in
its second argument and continuous linear maps,
whence $\{.,.\}$ is $c$-hypocontinuous in its second arguemnt.\,\Punkt
\section{Continuity of the map taking {\boldmath $f$} to the Hamiltonian vector field {\boldmath $X_f$}}\label{nwsec2}
In this section, we show continuity of the
mapping which takes
a smooth function to the corresponding Hamiltonian vector
field, in the case $\cS=c$.
\begin{thm}\label{poiss3}
Let $(E,[.,.])$ be a locally
convex Poisson vector space with respect to
$\cS=c$.
Let $U\sub E$ be an open subset.
Then the map
\begin{equation}\label{thmap}
\Psi \colon C^\infty_\K(U,\K)\to C^\infty_\K(U,E)\,,
\quad f\mto X_f
\end{equation}
is continuous and linear.
\end{thm}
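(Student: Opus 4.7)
The map $\Psi$ is plainly linear, since $f\mapsto f'$, $\ad$, $\eta_{E,c}$, and hence $\eta_{E,c}^{-1}$ on its image are all linear. For continuity, the strategy is to package the pointwise recipe for $X_f$ into a single bilinear map
\[
\Lambda\colon E\times E'_c\to E,\qquad \Lambda(x,\lambda):=\eta_{E,c}^{-1}\bigl(\eta_{E,c}(x)\circ\ad(\lambda)\bigr),
\]
well defined by~(\ref{compcond}) and characterised by $\mu(\Lambda(x,\lambda))=[\lambda,\mu](x)$ for all $\mu\in E'$ (unique because $\eta_{E,c}$ is injective). With $\iota\colon U\hookrightarrow E$ the smooth inclusion, one then has $\Psi(f)=\Lambda\circ(\iota,f')$. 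Once $\Lambda$ is shown to be $c$-hypocontinuous in both of its arguments, Lemma~\ref{hlp1}\,(a) and (c) (whose $k_\R$-hypothesis holds because $E\times E$ is a $k_\R$-space) give both that $\Psi(f)\in C^\infty_\K(U,E)$ and that $C^\infty_\K(U,\Lambda)$ is $c$-hypocontinuous in both arguments; the latter is in particular continuous in the second argument when the first is fixed at $\iota$, and composing with the continuous linear map $D\colon f\mapsto f'$ of Lemma~\ref{hlp0} then delivers the continuity of~$\Psi$.

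To establish the hypocontinuity of~$\Lambda$, I exploit that $E$ is a $k_\R$-space (Lemma~\ref{basic-k-R}\,(a)), whence Remark~\ref{lemmem2} makes $\eta_{E,c}$ a topological embedding and the polars $\ve K^\circ:=\{x\in E\colon |\mu(x)|\leq\ve\ \text{for all}\ \mu\in K\}$ of compact subsets $K\subset E'_c$ form a basis of $0$-neighbourhoods in~$E$. Fix such a $K$ and $\ve>0$. For the first-argument hypocontinuity, given compact $L\subset E$, the $c$-hypocontinuity of $[.,.]$ in its second argument yields a $0$-neighbourhood $W\subset E'_c$ with $[W,K]\subset\ve L^\circ$, so $|\mu(\Lambda(x,\lambda))|=|[\lambda,\mu](x)|\leq\ve$ for every $(x,\lambda,\mu)\in L\times W\times K$, i.e.\ $\Lambda(L\times W)\subset\ve K^\circ$. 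For the second-argument hypocontinuity, given compact $L\subset E'_c$, Lemma~\ref{prehypo}\,(b) recasts the hypocontinuity of $[.,.]$ as continuity of $\ad\colon E'_c\to L(E'_c)_c$, so $\ad(L)\subset L(E'_c)_c$ is compact; combining Lemma~\ref{resulteval} with Lemma~\ref{prehypo}\,(c) renders evaluation continuous on $L(E'_c)_c\times K$, whence $K_0:=\ad(L)(K)=\{[\lambda,\mu]\colon \lambda\in L,\,\mu\in K\}$ is compact in~$E'_c$. Then $\ve K_0^\circ\subset E$ is a $0$-neighbourhood, and $|\mu(\Lambda(x,\lambda))|=|[\lambda,\mu](x)|\leq\ve$ for every $(x,\lambda,\mu)\in\ve K_0^\circ\times L\times K$ gives $\Lambda(\ve K_0^\circ\times L)\subset\ve K^\circ$.

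The delicate step is the second-argument hypocontinuity of $\Lambda$: the hypothesis on $[.,.]$ hypo-controls the $\lambda$-slot, whereas what is needed is a $0$-neighbourhood in the $x$-slot. The resolution is to sweep the compact $K\subset E'_c$ through the continuous family of operators $\ad(L)$, producing the enlarged compact $K_0\subset E'_c$, and then to exploit the topological embedding property of $\eta_{E,c}$ to convert the compact polar $K_0^\circ$ into a genuine $0$-neighbourhood in~$E$.
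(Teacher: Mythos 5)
Your proposal is correct, and at the decisive step it takes a genuinely different route from the paper's. The shared scaffolding is the same: both factor $\Psi$ through the continuous linear map $D\colon f\mto f'$ of Lemma~\ref{hlp0} and a bilinear map handled by Lemma~\ref{hlp1}. But the paper writes $\Psi=\beta_*\circ C^\infty(U,\ad)\circ D$, keeping $\ad$ as a separate pushforward $C^\infty(U,\ad)$ (continuous since $\ad\colon E'_c\to L(E'_c,E'_c)_c$ is continuous linear by Lemma~\ref{prehypo}\,(b)) and working with the bilinear map $\beta\colon E\times V\to E$, $(x,A)\mto\eta_{E,c}^{-1}(\eta_{E,c}(x)\circ A)$ on the auxiliary operator subspace $V=\{A\in L(E'_c,E'_c)\colon(\forall x\in E)\;\eta_{E,c}(x)\circ A\in\eta_{E,c}(E)\}$; the $c$-hypocontinuity of $\beta$ in its first argument is then imported from Bourbaki's theorem that the composition map $\Gamma\colon (E'_c)'_c\times L(E'_c,E'_c)_c\to(E'_c)'_c$ is hypocontinuous with respect to equicontinuous sets, combined with the observation that $\eta_{E,c}$ sends a compact $K\sub E$ into the equicontinuous bipolar $(K^\circ)^\circ$. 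You instead absorb $\ad$ into the bilinear map, working with $\Lambda(x,\lambda)=\eta_{E,c}^{-1}(\eta_{E,c}(x)\circ\ad(\lambda))$ on $E\times E'_c$, and verify its $c$-hypocontinuity in \emph{both} arguments by hand from the duality identity $\mu(\Lambda(x,\lambda))=[\lambda,\mu](x)$ and polar estimates; here the compactness of $\ad(L)(K)$ (via Lemmas~\ref{resulteval} and~\ref{prehypo}\,(c)) and the fact that the sets $\varepsilon K_0^\circ$ are $0$-neighbourhoods play exactly the role that equicontinuity plays in the paper. Your reliance on the embedding property of $\eta_{E,c}$ (automatic for $\cS=c$ since $E$ is a $k_\R$-space, Remark~\ref{lemmem2}) is legitimate, in both directions: continuity of $\eta_{E,c}$ makes $\varepsilon K_0^\circ$ a $0$-neighbourhood, and initiality makes the sets $\varepsilon K^\circ$ a basis, so it suffices to land in them. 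What each approach buys: the paper's argument is shorter at the key step (a quotable classical fact) and only needs one-sided hypocontinuity of $\beta$; yours is more elementary and self-contained within the paper's own lemmas, avoids the auxiliary space $V$, and yields two-sided hypocontinuity of the concrete coadjoint-type pairing $\Lambda$ as a by-product. Two cosmetic points only: hypocontinuity in the sense of Definition~\ref{defhypo} presupposes separate continuity, which your estimates do deliver (take singletons for the compact sets) but which you should state; and the continuity of $h\mto\Lambda\circ(\iota,h)$ is most directly Lemma~\ref{hlp1}\,(b) applied to $\Lambda$ with its arguments interchanged, rather than a specialization of~(c) — either reading is sound.
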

\begin{proof}
Let $\eta_E\colon E\to (E'_c)'_c$
be the evaluation homomorphism
and
$V:= \{A\in L(E'_c,E'_c)\colon$
$(\forall x\in E)\; \eta_E(x)\circ A\in
\eta_E(E)\}$.
Then~$V$ is a vector subspace of $L(E'_c,E'_c)$
and $\ad(E')\sub V$.
The composition map
$\Gamma\colon (E'_c)'_c\times L(E'_c,E'_c)_c\to
(E'_c)'_c$, $(\alpha ,A)\mto \alpha\circ A$
is hypocontinuous with respect to equicontinuous
subsets of $(E'_c)'_c$,
by Proposition~9 in \cite[Chapter~III, \S5, no.\,5]{Bou}.
If $K\sub E$ is compact, then the polar
$K^\circ$ is a $0$-neighbourhood in~$E'_c$,
entailing that $(K^\circ)^\circ\sub (E'_c)'$
is equicontinuous.
Hence $\eta_E$ takes
compact subsets of~$E$ to equicontinuous subsets
of $(E'_c)'$, and hence
\[
\beta\colon E \times V \to E\,,\quad
(x,A) \mto \eta_E^{-1}(\Gamma(\eta_E(x), A))
\]
is $c$-hypocontinuous in its first argument.
By Lemma~\ref{hlp1}\,(c),
$\beta_*\colon
C^\infty(U,V)\to C^\infty(U,E)$, $f\mto \beta\circ (\id_U,f)$
is continuous linear.
Also the map
$D\colon C^\infty(U,\K)\to C^\infty(U,E'_c)$,
$f\mto f'$ is continuous linear by Lemma~\ref{hlp0}.
Furthermore,
$\ad=[.,.]^\vee \colon E'_c\to L(E'_c,E'_c)_c$
is continuous linear since
$[.,.]$ is $c$-hypocontinuous in its second argument
(see Lemma~\ref{prehypo}\,(b)),
whence
\[
C^\infty(U,\ad)\colon C^\infty(U,E'_c)\to
C^\infty(U,L(E'_c,E'_c)_c)\,,\quad
f \mto \ad\circ \, f
\]
is continuous linear
(see, e.g., \cite[Lemma~4.13]{ZOO}).
Hence
$\Psi= \beta_*\circ C^\infty(U,\ad)\circ D$
is continuous and linear.
\end{proof}
\appendix
\section{Proofs for basic facts in Section~\ref{secprelim}}\label{appA}
{\bf Proof of Lemma~\ref{wippe}.}
Let $E:=E_1\times\cdots \times E_k$.
Since $df\colon U\times E\times X\times E\to F$
is continuous and $df(x,0,0,0)=0$,
given~$q$ there
exist a continuous seminorm~$p$ on~$X$
such that $B^p_1(x)\sub U$, and
continuous seminorms~$p_j$ on~$E_j$
for $j\in\{1,\ldots, k\}$ such that
\begin{equation}\label{wuse3}
\| df(y,v_1,\ldots, v_k,z,w_1,\ldots, w_k)\|_q\leq 1
\end{equation}
for all $v_j,w_j\in B^{p_j}_1(0)$,
$y\in B^p_1(x)$, and $z\in B^p_1(0)$.
For $y\in B^p_1(x)$ and
$(v_1,\ldots, v_k)\in B^{p_1}_1(0)\times\cdots \times B^{p_k}_1(0)$,
the Mean Value Theorem (see \cite[Proposition 1.2.6]{GaN})
shows that
\[
f(y,v_1,\ldots, v_k)=\int_0^1 df(y,tv_1,\ldots,tv_k,0,v_1,\ldots, v_k)\, dt\,.
\]
Since $\|df(y,tv_1,\ldots,tv_k,0,v_1,\ldots, v_k)\|_q\leq 1$
for each~$t$, it follows that
$\|f(y,v_1,\ldots, v_k)\|_q\leq 1$ in the preceding situation.
Because $f(y,\cdot)$ is $k$-linear,
we deduce that (\ref{wuse1}) holds.
To prove~(\ref{wuse2}),
we first note that (\ref{wuse3})
implies that
\begin{equation}\label{wuse4}
\|df(y,v_1,\ldots, v_k,z,0,\ldots,0)\|_q\leq \|z\|_p
\end{equation}
for all $y\in B^p_1(x)$,
$(v_1,\ldots, v_k)\in B^{p_1}_1(0)\times\cdots \times B^{p_k}_1(0)$
and $z\in X$, exploiting the linearity of
$df(y,v_1,\ldots, v_k,z,0,\ldots,0)$ in~$z$.
We now use
the Mean Value Theorem to write
\[
f(y,v_1,\ldots, v_k)-f(x,v_1,\ldots,v_k)
=\int_0^1 df(x+t(y-x),v_1,\ldots,v_k,y-x,0,\ldots, 0)\, dt
\]
for $y\in B^p_1(x)$ and
$(v_1,\ldots, v_k)\in B^{p_1}_1(0)\times\cdots \times B^{p_k}_1(0)$.
By (\ref{wuse4}), we have
\[
\|df(x+t(y-x),v_1,\ldots,v_k,y-x,0,\ldots, 0)\|_q\leq \|y-x\|_p
\]
and hence
$\|f(y,v_1,\ldots, v_k)-f(x,v_1,\ldots,v_k)\|_q\leq \|y-x\|_p$.
Now (\ref{wuse2}) follows,
using the $k$-linearity
of the map $f(y,\cdot)-f(x,\cdot)\colon E_1\times \cdots\times E_k \to F$.\,\Punkt\\[2.5mm]
{\bf Proof of Lemma~\ref{cont-via-gateaux}.}
By the Polarization Formula for symmetric $k$-linear
maps (see, e.g., \cite[Proposition 1.6.19]{GaN}),
we have
\[
f(x,y_1,\ldots, y_k)=\frac{1}{k!2^k}\sum_{\ve_1,\ldots,\ve_k\in \{{-1},1\}}
\ve_1\cdots \ve_k \, h(x,\ve_1 y_1+\cdots+\ve_ky_k)
\]
for all $x\in U$ and $y_1,\ldots, y_k\in E$.
Thus $f$ is $C^r_\K$ if $h$ is so.\,\Punkt\\[2.3mm]
{\bf Proof of Lemma~\ref{basic-k-R}.}
(a) Let $\pr_2\colon X_1\times X_2\to X_2$, $(x,y)\mto y$
be the projection onto the second component
and pick $x_0\in X_1$.
Since $\pr_2$ is continuous,
every $k$-continuous function $f\colon X_2\to\R$
yields a $k$-continuous function $f\circ \pr_2$
on~$X$. Then $f\circ \pr_2$ is continuous
and hence also $f=(f\circ \pr_2)(x_0,\cdot)$.

(b) Let $f\colon U\to \R$ be $k$-continuous and $x\in U$.
As~$X$ is completely regular,
we find a continuous function $g\colon X\to\R$
with $g(x)\not=0$ and support $\Supp(g)\sub U$.
Define $h\colon X\to\R$ via $h(y):=f(y)g(y)$
if $y\in U$, $h(y):=0$ if $y\in X\setminus\Supp(g)$.
If $K\sub X$ is a compact subset,
then each $x\in K$ has a compact neighbourhood
$K_x$ in~$K$ which is contained in $U$
or in $X\setminus \Supp(g)$.
In the first case, $h|_{K_x}=f|_{K_x}g|_{K_x}$
is continuous by $k$-continuity of~$f$.
In the second case, $h|_{K_x}=0$
is continuous as well. Thus $h|_K$
is continuous. Since $X$ is a $k_\R$-space,
continuity of $h$ follows.
Thus $f$ is continuous on the open $x$-neighbourhood
$g^{-1}(\R\setminus\{0\})$.\,\Punkt\\[2.5mm]
A simple fact will be useful (see, e.g.,
\cite[Lemma~1.13]{MEM}).
\begin{la}\label{pla}
Let~$X$ be a topological space, $F$ be a locally convex space,
and $BC(X,F)$ be the space of bounded $F$-valued
continuous functions on~$X$,
endowed with the topology of uniform convergence.
Then
$\mu\colon BC(X,F)\times X\to F$,
$(f,x)\mto f(x)$
is continuous.\,\Punkt
\end{la}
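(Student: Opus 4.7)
The plan is to verify continuity of $\mu$ directly at an arbitrary point $(f_0,x_0)\in BC(X,F)\times X$ by producing an explicit basic neighborhood on which the oscillation is controlled.

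First, I would fix a $0$-neighbourhood $W\sub F$ and, using the fact that $F$ is a locally convex (hence topological) vector space, pick a $0$-neighbourhood $W_0\sub F$ with $W_0+W_0\sub W$. Since $f_0$ is continuous at $x_0$, there is an open neighbourhood $V$ of $x_0$ with $f_0(V)\sub f_0(x_0)+W_0$. Next, recall that by definition of the topology of uniform convergence on $BC(X,F)$, the set
\[
U\;:=\;\{f\in BC(X,F)\colon f(x)-f_0(x)\in W_0\text{ for all }x\in X\}
\]
is an open neighbourhood of~$f_0$.

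The key step is then the telescopic estimate: for $(f,x)\in U\times V$,
\[
\mu(f,x)-\mu(f_0,x_0)\;=\;(f(x)-f_0(x))+(f_0(x)-f_0(x_0))\;\in\;W_0+W_0\;\sub\;W,
\]
which gives $\mu(U\times V)\sub \mu(f_0,x_0)+W$, establishing continuity at $(f_0,x_0)$.

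There is no real obstacle here; the only subtlety is remembering that the uniform-convergence topology on $BC(X,F)$ is the locally convex vector topology whose basic $0$-neighbourhoods are $\lfloor X,W_0\rfloor:=\{f\colon f(X)\sub W_0\}$ for $W_0$ running through $0$-neighbourhoods in $F$, so that $U$ above is genuinely open. Once that is in place the proof reduces to the standard $\ve/2$-type splitting.
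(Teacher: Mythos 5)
Your proof is correct; the paper itself does not prove Lemma~\ref{pla} but simply cites \cite[Lemma~1.13]{MEM}, and your argument --- splitting $\mu(f,x)-\mu(f_0,x_0)=(f(x)-f_0(x))+(f_0(x)-f_0(x_0))$ over the set $(f_0+\lfloor X,W_0\rfloor)\times V$ with $W_0+W_0\sub W$ --- is exactly the standard argument that such a reference stands for. One cosmetic remark: $\lfloor X,W_0\rfloor$ is a basic \emph{neighbourhood} of $0$ in the topology of uniform convergence but need not be open; this is harmless, since continuity of $\mu$ at $(f_0,x_0)$ only requires $U$ to be a neighbourhood of~$f_0$.
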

{\bf Proof of Lemma~\ref{prehypo}.}\footnote{If $k=2$,
see Proposition~3 and~4
in \cite[Chapter~III, \S5, no.\,3]{Bou}
for the equivalence
(a)$\aeq$(b)
and the implication (b)$\impl$(c);
(c)$\impl$(a) can be found in \cite[Proposition~1.8]{MEM}.}
(a)$\aeq$(b): $\beta(V\times M)\sub W$ is equivalent to
$\beta^\vee(V)\in \lfloor M,W\rfloor$.
Hence (a) is equivalent to continuity
of $\beta^\vee$ in~$0$ and hence to its
continuity (see Proposition~5 in
\cite[Chapter I, \S1, no.\,6]{Bou}).\vspace{1mm}

(b)$\impl$(c):
If $M\in \cS$,
then $\ve\colon L^{k-j+1}(E_j,\ldots, E_k,F)_\cS\times M\to F$,
$\ve(\alpha,x):=\alpha(x)$ is continuous
as a consequence of Lemma~\ref{pla}.
Hence $\beta|_{E_1\times\cdots\times E_{j-1}\times M}=\ve\circ
(\beta^\vee\times \id_M)$ is continuous.\vspace{1mm}

(c)$\impl$(a) if (\ref{simplf}) holds:
Given $M\in \cS$ and a $0$-neighbourhood
$W\sub F$, by hypothesis
we can find $N\in \cS$ such that
$\bD M\sub N$.
By continuity
of $\beta|_{E_1\times \cdots\times E_{j-1}\times N}$,
there exist
$0$-neighbourhoods $V_i\sub E_i$
for $i\in \{1,\ldots, k\}$
such that $\beta(V\times (N\cap U))\sub W$,
where $V:=V_1\times \cdots\times V_{j-1}$
and $U:=V_j\times\cdots\times V_k$.
Set $a:=\frac{j-1}{k-j+1}$.
Since~$M$ is bounded,
$M \sub n^a U$ for some $n\in \N$.
Then $\frac{1}{n^a}M\sub N\cap U$.
Using that~$\beta$ is $k$-linear, we obtain
$\beta((\frac{1}{n}V)\times M)=\beta(V \times(\frac{1}{n^a}M))
\sub \beta(V\times (N\cap U))\sub W$.\,\vspace{2.3mm}\Punkt

\noindent
{\bf Proof of Lemma~\ref{resulteval}.}
Given $\alpha \in L^k(E_1,\ldots,E_k,F)$, we have
$\ve^\vee(\alpha)=\ve(\alpha,\cdot)=\alpha$,
which is a continuous $k$-linear map.
The map~$\ve$ is also continuous in its first argument, as
the topology on $L^k(E_1,\ldots, E_k,F)_\cS$
is finer than the topology
of pointwise convergence,
by the hypothesis on~$\cS$.
The linear map $\ve^\vee\colon L^k(E_1,\ldots,E_k)_\cS\to L^k(E_1,\ldots, E_k)_\cS$,
$\alpha\mto\alpha$ being continuous,
condition~(b)
of Lemma~\ref{prehypo}
is satisfied by~$\ve$ in place of~$\beta$
and hence also the equivalent
condition~(a), whence
$\ve$ is $\cS$-hypocontinuous
in its arguments $(2,\ldots,k+1)$.\\[2.5mm]
Now assume that $k=1$.
Since~$\cO$ is finer than the topology of
pointwise convergence,
the map~$\ve$ remains separately continuous
in the situation described at the end
of the lemma.
Hence, if~$E$ is barrelled,
Lemma~\ref{auothypo}
ensures hypocontinuity
with respect to~$\cT$.\,\Punkt\\[2.5mm]
{\bf Proof of Lemma~\ref{compo-is-cts}.}
(a) The composition $\beta\circ f$ is sequentially continuous and hence
continuous, its domain $X$ being first countable.\vspace{1mm}

(b) Write $f=(f_1,\ldots, f_k)$
with components $f_j\colon X\to E_\nu$ for $\nu\in \{1,\ldots, k\}$.
If $K$ is a compact subset of~$X$, then
$M:=(f_j,\ldots, f_k)(K)$ is a compact subset of $E_j\times\cdots\times E_k$.
Since $\beta|_{E_1\times\cdots\times E_{j-1}\times M}$
is continuous by Lemma~\ref{prehypo}\,(c),
the composition
\[
\beta\circ f|_K=\beta|_{E_1\times\cdots\times E_{j-1}\times M}\circ f|_K
\]
is continuous. Thus $\beta\circ f$ is $k$-continuous
and hence continuous, as $X$ is a $k_\R$-space
and $F$ is completely regular.\,\Punkt\\[2.5mm]
{\bf Proof of Lemma~\ref{lem-lipdiff}.}
(a) The case $r=0$: Let $q$ be a continuous
seminorm on $F:=\prod_{j\in J}F_j$, and $x\in U$.
After increasing~$q$,
we may assume that
\begin{equation}\label{typisem}
q(y)=\max\{q_j(y_j)\colon j\in \Phi\}\quad\mbox{for all $\, y=(y_j)_{j\in J}\in F$,}
\end{equation}
for some non-empty, finite subset $\Phi\sub J$
and continuous seminorms~$q_j$ on~$F_j$ for $j\in \Phi$.
If each $f_j$ is $LC^0_\K$,
then we find a continuous seminorm~$p_j$ on~$E$ for each $j\in \Phi$
such that $B^{p_j}_1(x)\sub U$ and $q_j(f_j(z)-f_j(y))\leq p_j(z-y)$
for all $z,y\in B^{p_j}_1(x)$.
Then
\[
p\colon E\to [0,\infty[, \quad y\mto\max\{p_j(y)\colon j\in\Phi\}
\]
is a continuous seminorm on~$E$ such that $B^p_1(x)\sub U$ and
$q(f(z)-f(y))\leq p(z-y)$ for all $z,y\in B^p_1(x)$.
If~$f$ is $LC^0_\K$,
let us show that $f_j$ is $LC^0_\K$
for each $j\in J$.
Let $q$ be a continuous seminorm on
$F_j$ and $x\in U$. Let $\pr_j\colon F\to F_j$,
$(y_i)_{i\in J}\mto y_j$ be the continuous linear projection onto the $j$th
component.
Then $q\circ \pr_j$
is a continuous seminorm on~$F$,
whence we find a continuous seminorm~$p$
on~$E$ such that $B^p_1(x)\sub U$ and
$(q\circ \pr_j)(f(z)-f(y))\leq p(z-y)$ for all $z,y\in B^p_1(x)$.
Since $(q\circ \pr_j)(f(z)-f(y))=q(f_j(z)-f_j(y))$,
we see that $f_j$ is $LC^0_\K$.\vspace{1mm}

If $r\in \N\cup\{\infty\}$, then $f$ is $C^r_\K$
if and only if each $f_j$ is $C^r_\K$, and
$d^kf=(d^k f_j)_{j\in J}$ in this case
for all $k\in \N_0$ such that $k\leq r$
(see \cite[Lemma 1.3.3]{GaN}).
By the case $r=0$, the map $d^kf$ is $LC^0_\K$
if and only if $d^k(f_j)$ is $LC^0_\K$
for all $j\in J$. The assertion follows.\vspace{1mm}

(b) Let $E$, $F$, and $Y$ be locally convex $\K$-vector spaces
and $f\colon U\to F$ as well as $g\colon V\to Y$
be $LC^r_\K$-maps on open subsets $U\sub E$ and $V\sub F$,
such that $f(U)\sub V$.\vspace{1mm}

If $r=0$, let $x\in U$ and $q$ be a continuous
seminorm on~$Y$. There exists a continuous seminorm~$p$
on~$F$ such that $B^p_1(f(x))\sub V$ and
$q(g(b)-g(a))\leq p(b-a)$ for all $a,b\in B^p_1(f(x))$.
There exists a continuous seminorm~$P$ on~$E$
with $B^P_1(x)\sub U$ and $p(f(z)-f(y))$
$\leq P(z-y)$
for all $z,y\in B^P_1(x)$.
Then $f(B^P_1(x))\sub B^p_1(f(x))$ and hence
\[
q(g(f(z))-g(f(y)))\leq p(f(z)-f(y))\leq P(z-y)
\]
for all $y,z\in B^P_1(x)$. Thus $g\circ f\colon U\to Y$ is~$LC^0_\K$.

If $r\in\N\cup\{\infty\}$ and $k\in \N$ such that $k\leq r$,
we can use Fa\`{a} di Bruno's Formula
\begin{equation}\label{use-faa}
d^k(g\circ f)(x,y)
=\sum_{j=1}^k\sum_{P\in P_{k,j}}d^jg(f(x), d^{|I_1|}(x,y_{I_1}),\ldots,
d^{|I_j|}(x,y_{I_j}))
\end{equation}
for $x\in U$ and $y=(y_1,\ldots, y_k)\in E^k$,
as in \cite[Theorem 1.3.18]{GaN}.
Here $P_{k,j}$ is the set of all partitions
$P=\{I_1,\ldots, I_j\}$ of $\{1,\ldots, k\}$
into $j$ disjoint, non-empty subsets $I_1,\ldots, I_j\sub\{1,\ldots, k\}$.
For a non-empty subset $J\sub \{1,\ldots,k\}$
with elements $j_1<\cdots< j_m$,
let $y_J:=$ $(y_{j_1},\ldots, y_{j_m})$.
Using (a) and the case $r=0$,
we deduce from~(\ref{use-faa}) that $d^k(g\circ f)$
is~$LC^0_\K$.\vspace{1mm}

(c) For each continuous seminorm~$q$ on~$F$,
the restriction $q|_{F_0}$
is a continuous seminorm on~$F_0$,
and each continuous seminorm~$Q$ on~$F_0$
arises in this way.
In fact, we find an open, absolutely convex
$0$-neighbourhood $V\sub F$
such that $V\cap F_0\sub B^Q_1(0)$.
Then the absolutely convex hull~$W$
of $V\cup B^Q_1(0)$ is a $0$-neighbourhood
in~$F$ with $W\cap F_0=B^Q_1(0)$,
whence $q|_{F_0}=Q$ holds for the Minkowski funtional~$q$
of~$W$.
The case $r=0$ follows.\vspace{1mm}

If $r\in\N\cup\{\infty\}$,
let $\iota\colon F_0\to F$ be the inclusion map
and $f\colon U\to F_0$
be a map on an open subset $U\sub E$.
Then $f$ is $C^r_\K$ if and only
if $\iota\circ f$ is $C^r_\K$,
and $d^k(\iota\circ f)=\iota\circ (d^kf)$
for all $k\in \N_0$ such that $k\leq r$
(see \cite[Lemma~1.3.19]{GaN}). By the case $r=0$,
each of the maps $d^kf$ is $LC^0_\K$
if and only if $\iota\circ (d^kf)$ is so,
from which the assertion follows.\vspace{1mm}

(d) is immediate from (a) and (c).\, \Punkt\\[2.5mm]
{\bf Proof of Lemma~\ref{new-Ck-top-on-multi}.}
For each $x\in E_1\times\cdots\times E_k=:E$,
the point evaluation $\ev_x\colon C^r_\K(E,F)\to F$, $f\mto f(x)$
is linear and continuous,
the compact-open $C^r$-topology on $C^r_\K(E,F)$ being finer than the compact-open topology.
Now $L^k_\bL(E_1,\ldots, E_k,F)$
is closed in $C^r_\K(E,F)$, being the intersection of the closed sets
\[
\{f\in C^r_\K(E,F)\colon
\ev_z(f)- a\ev_x(f)- b \ev_{x'}(f)=0\}
\]
for $j\in \{1,\ldots, k\}$, $x=(x_1,\ldots, x_k)\in E$,
$x':=(x_1,\ldots, x_{j-1},x_j',x_{j+1},\ldots, x_k)$
with $x_j'\in E_j$
and $z:=(x_1,\ldots, x_{j-1},a x_j+ b  x_j',x_{j+1},\ldots,x_k)$
with $a,b\in \bL$.

Let $\iota\colon L^k_\bL(E_1,\ldots, E_k,F)\to C^k_\K(E,F)$
be the inclusion map. Then $d^0\circ \iota$
is the inclusion map $L^k_\bL(E_1,\ldots, E_k,F)\to C(E,F)_{c.o.}$,
which is a topological embedding (and hence continuous).
We claim: For all
$j\in\N$ such that $j\leq r$,
there are an $m_j\in \N$
and $C^\infty_\K$-maps
$g_{j,\mu}\colon E^{j+1}\to E$
for $\mu\in\{1,\ldots,m_j\}$
such that
\begin{equation}\label{highderbeta}
d^j\beta=\sum_{\mu=1}^{m_j}\beta\circ g_{j,\mu}\;\;\,
\mbox{for all $\, \beta\in L^k_\bL(E_1,\ldots, E_k,F)$.}
\end{equation}
If this is true, then $d^j\circ \iota\colon 
L^k_\bL(E_1,\ldots, E_k,F)\to C(E^{k+1},F)_{c.o.}$
is a restriction of the mapping
$\sum_{\mu=1}^{m_j}
(C(g_{j,\mu},F)\circ \iota)$
and hence continuous,
by \cite[Lemma~A.6.9]{GaN}; thus $\iota$ is continuous.
As $d^0\circ\iota$ is a topological embedding,
we deduce that also $\iota$ is a topological embedding.

To prove the claim, we proceed by induction.
For all $\beta\in L^k_\bL(E_1,\ldots, E_k,F)$,
$x=(x_1,\ldots, x_k)\in E$ and $y=(y_1,\ldots, y_k)\in E$,
we have
\begin{equation}\label{diffmultilin}
d\beta(x,y)=\sum_{\nu=1}^k \beta(x_1,\ldots, x_{\nu-1},y_\nu,x_{\nu+1},\ldots,x_k),
\end{equation}
see \cite[Example 1.2.3]{GaN}.
This establishes the claim for $j=1$.
Let $\pi\colon E^{j+2}\to E^{j+1}$
be the continuous linear projection
$(x,y_1,\ldots, y_{j+1})\mto (x,y_1,\ldots, y_j)$
and $\iota\colon E^{j+2}\to E^{2j+2}$
be the continuous linear map
taking $(x,y_1,\ldots,y_{j+1})$ to
$(x,y_1,\ldots,y_{j+1},0,\ldots,0)$.
If $1\leq j<r$ and the claim holds for $j$,
write $g_{j,\mu}=(g_{j,\mu,1},\ldots,g_{j,\mu,k})$
in components
for $\mu\in\{1,\ldots, m_j\}$.
Then
$d^{j+1}\beta=\sum_{\mu=1}^{m_j}\sum_{\nu=1}^k \beta\circ h_{\mu,\nu}$ with
$h_{\mu,\nu}\colon E^{j+2}\to E$,
\[
h_{\mu,\nu}:=(g_{j,\mu,1}\circ \pi,\, \ldots,\, g_{j,\mu,\nu-1}\circ\pi,
\, dg_{j,\mu,\nu}\circ\iota ,\, g_{j,\mu,\nu+1}\circ\pi,\,\ldots,\,
g_{j,\mu, k}\circ\pi).
\]
Thus also $d^{j+1}\beta$ is of the asserted
form and the claim holds for $j+1$.\,\Punkt
\section{Smooth maps need not extend to the completion}\label{appB}
Let $E:=\{(x_n)_{n\in\N}\in \ell^1\colon
(\exists N\in\N)(\forall n\geq N)\, x_n=0\}$ be the space of finite sequences,
endowed with the topology induced by the real Banach space
$\ell^1$ of absolutely summable real sequences.
Then $E$ is a dense proper vector subspace
of~$\ell^1$,
and $\ell^1$ is a completion of~$E$.
In this appendix, we provide a smooth map with
the following pathological properties.
\begin{prop}\label{pathoprop}
There exists a smooth map $f\colon E\to F$
to a complete
locally convex space~$F$
which does not admit a continuous
extension to $E\cup\{z\}$
for any $z\in \ell^1\setminus E$.
\end{prop}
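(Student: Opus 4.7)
The plan is to produce $f$ as an indexed product of smooth maps $f_z\colon E\to\ell^1$, one for each $z\in\Omega:=\ell^1\setminus E$, where each $f_z$ fails to extend continuously to $E\cup\{z\}$. Set $F:=(\ell^1)^\Omega$ with the product topology; this is a complete locally convex space. Define $f(x):=(f_z(x))_{z\in\Omega}$. Smoothness of $f$ into $F$ reduces componentwise to smoothness of each $f_z$ (the standard $C^\infty$-analogue of Lemma~\ref{lem-lipdiff}\,(a)). A continuous extension $\bar f$ of $f$ at some $z_0\in\Omega$ would, upon composition with the projection $\pr_{z_0}\colon F\to\ell^1$, yield a continuous extension of $f_{z_0}$ at $z_0$, contradicting the construction. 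So it suffices to build one $f_z$ per $z$.

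To construct $f_z$, enumerate the support of $z$ as $\{m_1<m_2<\cdots\}\sub\N$ (infinite, since $z\notin E$). For each $k\in\N$, choose a smooth bump $\eta_k\colon\R\to[0,1]$ with $\eta_k(0)=1$, strictly positive on $(-|z_{m_k}|/2,\,|z_{m_k}|/2)$, and (together with all its derivatives) identically zero outside this interval. Set
\[
f_z(x)\;:=\;\sum_{k=1}^\infty \phi_k(x)\,e_k,\qquad \phi_k(x)\,:=\,\prod_{j=1}^k\eta_j(x_{m_j}-z_{m_j}),
\]
where $(e_k)_{k\in\N}$ is the standard basis of~$\ell^1$. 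Three properties must be checked. First, if $x\in E$ has support in $\{1,\ldots,N\}$ and $k_0:=\min\{k:m_k>N\}$, then $\eta_{k_0}(-z_{m_{k_0}})=0$ forces all terms with $k\geq k_0$ to vanish, so $f_z(x)$ is a finite sum in $\ell^1$. Second, for $x_0\in E$ with support in $\{1,\ldots,N\}$ and any $\epsilon\in\bigl(0,\tfrac12\sum_{k\geq k_0}|z_{m_k}|\bigr)$, an $x$ with $\|x-x_0\|_1<\epsilon$ and $\phi_k(x)\neq 0$ for some $k\geq k_0$ must satisfy $|x_{m_j}|>|z_{m_j}|/2$ for every $k_0\leq j\leq k$, whence $\|x-x_0\|_1\geq\tfrac12\sum_{j=k_0}^k|z_{m_j}|$; this bounds $k$ by some $k_*(\epsilon,N)<\infty$, so on this $\ell^1$-ball $f_z$ is a finite sum of smooth functions and its iterated directional derivatives are likewise finite sums. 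Third, for $P_K(z):=\sum_{n\leq K}z_n e_n\in E$ the formula gives $\phi_k(P_K(z))=1$ iff $m_k\leq K$, and~$0$ otherwise; consequently $f_z(P_K(z))=\sum_{k\,:\,m_k\leq K}e_k$ has $\ell^1$-norm $|\{k:m_k\leq K\}|\to\infty$. Since $P_K(z)\to z$ in $\ell^1$, a continuous extension of $f_z$ at $z$ is ruled out.

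The main obstacle is the local-smoothness step: the series defining $f_z$ is infinite, and one must control it in the $\ell^1$-topology on a neighborhood of an arbitrary $x_0\in E$. The key geometric point is that the supports of the $\eta_k$ force any surviving factor in $\phi_k(x)$ with $k\geq k_0$ to contribute at least $|z_{m_k}|/2$ to the distance $\|x-x_0\|_1$; combined with $\sum|z_{m_k}|<\infty$, this converts the smallness of an $\ell^1$-neighborhood into an a priori upper bound on the number of surviving indices, reducing $f_z$ locally to a finite sum of smooth functions and allowing the iterated derivatives required by Keller's $C^\infty_c$-theory to be verified directly.
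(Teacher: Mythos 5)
Your proposal is correct and follows essentially the same route as the paper: both bundle per-point counterexamples $f_z\colon E\to\ell^1$ into the complete product $F=(\ell^1)^\Omega$ with the componentwise reduction you describe, and both construct $f_z$ from cumulative products of one-variable cutoffs that equal $1$ along the truncations of~$z$ but are annihilated as soon as a coordinate beyond the support of a finite sequence enters. The differences are only technical: the paper indexes over all of $\N$ with functions $h_n$ satisfying $h_n(z_n)=1$ and vanishing near $0$, gets local smoothness from the single-coordinate neighbourhood $\{y\in E\colon |y_N|<\ve\}$ (on which $f_z$ takes values in a fixed $\R^N$, a shortcut also available for your bumps via $\{x\colon |x_{m_{k_0}}|<|z_{m_{k_0}}|/2\}$, avoiding your summability estimate on an $\ell^1$-ball), and rules out an extension because the coordinatewise limit $(1,1,\ldots)$ fails to lie in $\ell^1$, whereas you rule it out via the norm blow-up $\|f_z(P_K(z))\|_1\to\infty$ --- both arguments are valid.
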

\begin{proof}
Given $z=(z_n)_{n\in\N}\in \ell^1\setminus E$,
the set $S:=\{n\in \N\colon z_n\not=0\}$ is infinite.
For each $n\in \N$, we pick a smooth map
$h_n\colon \R\to\R$ such that
$h_n(z_n)=1$;
if $n\in S$, we also require that
$h_n$ vanishes on some~$0$-neighbourhood.
Endow $\R^\N$ with the product topology.
Then
\[
g\colon \ell^1\to \R^\N\,,\quad x=(x_n)_{n\in \N}\mto
(h_1(x_1)\cdots h_n(x_n))_{n\in \N}
\]
is a smooth map, as its components $g_n\colon
\ell^1\to \R$,
$x\mto h_1(x_1)\cdots h_n(x_n)$ are smooth.
If $x=(x_n)_{n\in\N} \in E$, then there is $N\in S$
such that $x_n=0$ for all $n\geq N$.
Thus $g_n(x)=0$ for all $n\geq N$
and hence $g(x)\in E$.
Notably, $g(x)\in\ell^1$.
It therefore makes sense to define
\[
f_z\colon E\to \ell^1\,,\quad x\mto g(x)\,.
\]
We now show:
\emph{$f_z\colon E\to \ell^1$ is a smooth
map to~$\ell^1$
which does not admit a continuous
extension to $E\cup\{z\}$.}\\[2.5mm]
In fact, for $x$ and~$N$ as above,
there exists $\ve>0$ such that $h_N(t)=0$
for each $t\in \;]{-\ve},\ve[$.
Identify
$\R^N$ with the closed vector subspace $\R^N\times \{0\}$
of~$E$ and $\R^\N$.
Then
\[
U\; :=\; \{y=(y_n)_{n\in \N}\in E\colon |y_N|<\ve\}
\]
is an open neighbourhood of~$x$ in~$E$
such that $f_z(U)\sub\R^N$.
Thus $f_z|_U$ is smooth as a map to~$\R^N$
and hence also as a map to~$\ell^1$.
As a consequence, $f_z\colon E\to \ell^1$ is smooth.\\[2.5mm]
Now suppose that
$p =(p_n)_{n\in \N}\colon E\cup \{z\}\to\ell^1$
was a continuous extension of~$f_z$;
we shall derive a contradiction.
To this end, set $y_k:=(z_1,\ldots,z_k,0,0,\ldots)\in E$
for $k\in \N$.
Then $y_k\to z$ in~$E$ as $k\to \infty$.
The inclusion map $\ell^1\to \R^\N$ being continuous,
we deduce that
\[
p_n(y_k)\to p_n(z)\quad \mbox{as $\, k\to\infty$,}
\]
for each $n\in \N$.
Since $p_n(y_k)=g_n(y_k)=h_1(z_1)\cdots h_n(z_n)=1$
for all $k\geq n$, it follows that
$p_n(z)=1$ for all $n\in \N$
and thus $(1,1,\ldots)=p(z)\in \ell^1$,
which is absurd. Therefore $f_z$ has
all of the asserted properties.\\[2.5mm]
We now define $\Omega:=\ell^1\setminus E$
and endow $F:=(\ell^1)^\Omega$ with the product
topology.
We let $f:=(f_z)_{z\in \Omega}\colon E\to F$
be the map with components $f_z$ as defined
before. By construction, $f$
has the properties described in Proposition~\ref{pathoprop}.
\end{proof}
{\small}
\noindent
{\footnotesize
{\bf Helge Gl\"{o}ckner}, Universit\"{a}t Paderborn, Institut f\"{u}r Mathematik,
Warburger Str.\ 100, 33098 Paderborn, Germany.
\,Email: {\tt glockner@math.upb.de}
\end{document}